\documentclass[a4paper, reqno, 10pt]{amsart}
\usepackage{amssymb}
\usepackage{verbatim}
\usepackage{extarrows}
\usepackage{bm,stmaryrd}
\usepackage[centering]{geometry}
\usepackage{enumitem}
\usepackage{longtable}
\usepackage{multirow,tikz}
\usepackage{array}
\usepackage{graphics}
\usepackage{quiver}
\usepackage[all]{xy}
\usepackage{makecell}
\usepackage{diagbox}

\numberwithin{equation}{section}
\theoremstyle{definition}
\newtheorem{defn}{Definition}[section]
\newtheorem{rem}[defn]{Remark}

\newtheorem{exm}[defn]{Example}

\theoremstyle{plain}

\newtheorem{thm}[defn]{Theorem}
\newtheorem{lem}[defn]{Lemma}

\newtheorem{prop}[defn]{Proposition}
\newtheorem{fact}[defn]{Fact}
\newtheorem{defn-thm}[defn]{Definition-Theorem}

\newtheorem*{Freyd-Mitchell}{Freyd-Mitchell embedding Theorem}
\def\Fib{\operatorname{Fib}}
\def\CoFib{\operatorname{CoFib}}
\def\Weq{\operatorname{Weq}}
\def\Coker{\operatorname{Coker}}
\def\Hom{\operatorname{Hom}}

\def\Ker{\operatorname{Ker}}
\def\Id{\operatorname{Id}}
\def\TFib{\operatorname{TFib}}
\def\TCoFib{\operatorname{TCoFib}}
\def\Ho{\operatorname{Ho}}

\def\A{\mathcal{A}}

\def\M{\mathcal{M}}
\def\E{\mathcal{E}}

\def\Mor{\operatorname{Mor}}
\def\Iso{\operatorname{Iso}}

\newcommand{\xra}{\xlongrightarrow}
\title[Orthogonal Model Structures]{Orthogonal Model Structures}
\author[Yang Gao, Yu-xiao Yang, Pu Zhang] {Yang Gao, Yu-xiao Yang, Pu Zhang$^*$ \\ \\  School of Mathematical Sciences \\
Shanghai Jiao Tong University,  \ Shanghai \ 200240, \ China }
\thanks{$^*$ Corresponding author}
\thanks{gyang112358$\symbol{64}$sjtu.edu.cn \ \ \ \ yuxiaoyyx@sjtu.edu.cn\ \ \ \ pzhang$\symbol{64}$sjtu.edu.cn}
\thanks{This work was supported by the National Natural Science Foundation of China Grant No. 12131015.}

\begin{document}

\begin{abstract} This paper studies orthogonal model structures, i.e., model structures such that a lifting in the Lifting axiom is unique.
Cofibrant (fibrant) objects are defined without initial (terminal) objects.
For an orthogonal model structure on a category with enough cofibrant objects and fibrant objects, it is proved that the homotopy category
is equivalent to the full subcategory of cofibrant-fibrant objects. TTF model structures, bi-reflective model structures,
and torsion model structures, are introduced.
They are all orthogonal. One to one correspondences
between TTF model structures and TTF triples  in an abelian category, bi-reflective model structures and bi-reflective pairs in any category,
and torsion model structures and twin torsion pairs in an abelian category, are established in a constructive way.
Torsion model structures on a poset, on the category of $G$-sets, and on the category of topological groups, are also constructed.
The homotopy categories of all these model structures are computed. In particular, the homotopy category of a TTF model structure is an abelian category.

\vskip5pt

\noindent\textbf{Keywords:} (orthogonal, TTF, bi-reflective, torsion) model structure; (weak, orthogonal, torsion) factorization system; homotopy category; TTF triple;  bi-reflective pair; twin torsion pair

\vskip5pt

\noindent\textbf{2020 Mathematics Subject Classification:}  18N40, 18A32, 18E35, 18E40 

\end{abstract}

\maketitle

\vspace{-20pt}

\section{\bf Introduction}

\subsection{Orthogonal model structures}  Model structures and their homotopy categories, introduced by Quillen, have been widely used in representation theory, homological algebra,
topology, and algebraic geometry (see e.g. \cite{Q1, Q2, H1, MV, HSS, H2, Hir, BR, G2}).
Various variants of model structures are also of much interest. Orthogonal model structure is one of them.

\vskip 5pt

By definition, a model structure on a category is  \emph{an orthogonal model structure}, provided that a lifting in the Lifting axiom is unique.
Such a model structure has been called \emph{a Quillen factorization system} by Pultr and Tholen \cite{PT},
and  \emph{a one-dimensional model category} by Balchin and Garner \cite{BG}.

\vskip 5pt

As a model structure can be reformulated by two weak factorization systems (\cite{AHRT, Joy, St, DLL}),
an orthogonal model structure can be reformulated by two orthogonal factorization systems (\cite{FK, PT, Riehl}),
which is one of the main ways for constructing an orthogonal model structure. See Proposition \ref{prop:weq-two-out-of-three}.

\vskip 5pt

This paper is devoted to the fundamental problems of constructing  several orthogonal model structures, and determining their homotopy categories.

\subsection{\bf Homotopy categories of orthogonal model structures}
The homotopy category $\Ho(\mathcal A)$ of a model structure on category $\A$ is an important object of study in algebra and topology.

\vskip5pt

For a model structure on a category with initial (respectively, terminal) object, one has the notion of cofibrant (respectively, fibrant) object.
However, many categories possibly have no initial (respectively, terminal) objects.
For studying orthogonal model structures on such categories (for examples, bi-reflective model structures on an arbitrary category;
torsion model structures on a poset),
we need  the notion of cofibrant (respectively, fibrant) objects, without initial or terminal objects (see Definition \ref{defn:cofibrant-fibrant-objects}).
For an orthogonal model structure on category $\mathcal A$ with zero object, this coincides with the one in the usual sense in \cite{Q1}, and
$\mathcal A$ always has enough cofibrant objects and enough fibrant objects.

\vskip 5pt

\noindent\textbf{Theorem A.} (Theorem~\ref{thm:orthogonal-ho-CF}) \ {\it Let $\mathcal A$ be a category with enough cofibrant objects and enough fibrant objects, and  $(\CoFib,\Fib,\Weq)$ an orthogonal model structure on $\mathcal A$. Then
the restriction of the localization functor $\mathcal A\longrightarrow\Ho(\mathcal A)$ induces an equivalence $\mathcal C\cap\mathcal F\cong \Ho(\mathcal A)$ of categories,
where $\mathcal C$ $($respectively, $\mathcal F)$ is equivalent to the full subcategory of cofibrant $($respectively, fibrant$)$ objects.}

\vskip5pt

\noindent\textbf{Corollary.} \ {\it Let $\mathcal A$ be a category with zero object. Then the homotopy category of an orthogonal model structure on $\mathcal A$ is
the full subcategory of cofibrant-fibrant objects.}

\vskip5pt

For an orthogonal model structure on a category with zero object, one has $\mathcal C\cap\mathcal F\cap\mathcal W =0$ (see Fact~\ref{fact:orthogonal-zero}),
where $\mathcal W$ is the class of trivial objects.
Thus, if $\mathcal{A}$ is a weakly idempotent complete additive category, then Theorem A is known by \cite[Theorem 1.1]{LZ}, which claims that
the homotopy category $\Ho(\mathcal A)\simeq
(\mathcal C\cap\mathcal F)/(\mathcal C\cap\mathcal F\cap\mathcal W)$ for any model structure on $\mathcal{A}$.

\subsection{\bf TTF model structures} Let $\mathcal A$ be an abelian category.
From a torsion pair in $\mathcal A$, Everaert and Gran \cite[Proposition 1.4]{EG} have constructed two orthogonal factorization systems in $\mathcal A$ (see Lemma \ref{lem:torsion-pair-ofs}).

\vskip 5pt

By a TTF triple (\cite{J1}) in $\mathcal A$ one means a \emph{torsion-torsionfree triple} $(\mathcal C,\mathcal W,\mathcal F)$, i.e.,
$(\mathcal C, \mathcal W)$ and $(\mathcal W, \mathcal F)$ are torsion pairs in $\mathcal A$. In this case,
$\mathcal W$  is a Serre subcategory.

\vskip 5pt

By definition a TTF \emph{model structure} on abelian category $\mathcal A$ is an orthogonal model structure $(\CoFib,\Fib,\Weq)$ such that
\[
\CoFib=\{f\in {\rm Mor}(\mathcal A) \ \mid \ \operatorname{Coker}f\in\mathcal C\},
\qquad
\Fib=\{f\in {\rm Mor}(\mathcal A) \ \mid \ \Ker f\in\mathcal F\},
\]
where $\mathcal C$ and $\mathcal F$ are the classes of cofibrant objects and fibrant objects, respectively.

\vskip 5pt

As shown in the following theorem, there is a one to one correspondence between TTF triples and TTF model structures.

\vskip 5pt

\noindent\textbf{Theorem B.} (Theorem~\ref{thm:ttf-triples-ttf-orthogonal-model-structures}) \ {\it
Let $\mathcal A$ be an abelian category. Then
$$\Phi:\{{\rm TTF} \ \mbox{triples in} \ \mathcal A\} \longrightarrow \{{\rm TTF} \ \mbox{model structures on} \ \mathcal A\},  \ (\mathcal C,\mathcal W,\mathcal F) \longmapsto (\CoFib,\Fib,\Weq)$$
is a bijection,  where
\begin{align*}
\CoFib&=\{f\in {\rm Mor}(\mathcal A) \ \mid \ \operatorname{Coker}f\in\mathcal C\}, \qquad
\Fib =\{f\in {\rm Mor}(\mathcal A) \ \mid \ \Ker f\in\mathcal F\}\\
\Weq&=\{f\in {\rm Mor}(\mathcal A) \ \mid \ \Ker f\in\mathcal W, \ \operatorname{Coker}f\in\mathcal W\}
\end{align*}
with the inverse $\Psi:(\CoFib, \Fib, \Weq) \longmapsto (\mathcal C,\mathcal W,\mathcal F)$, where \ $\mathcal C$, $\mathcal F$, and $\mathcal W$ are respectively the classes of cofibrant objects, fibrant objects, and trivial objects.

\vskip5pt

In this case the homotopy category $\Ho(\mathcal A)$ is an abelian category, and it is equivalent to $\mathcal C\cap\mathcal F$ as a category.
In particular, $\mathcal C\cap\mathcal F$ enjoys a structure of an abelian category.}

\vskip 10pt

TTF model structures appear widely and naturally (see Remark \ref{remTTF}).
In contrast to the fact that the homotopy category of an abelian model structure (see \cite{H2}; also \cite{G2}) is a triangulated category (see \cite{NP}), the homotopy category of a TTF model structure is an abelian category.

\subsection{\bf Bi-reflective model structures} A full subcategory of a category is reflective (respectively, coreflective) if the inclusion functor
has the left (respectively, right) adjoint, which is called the reflector (respectively, coreflector). This is a fundamental notion (\cite {Mac}).
Cassidy - H\'{e}bert - Kelly \cite[Theorem 4.1]{CHK85} have introduced {\it semi-left-exactness} (respectively, {\it semi-right-exactness}) of the reflector (respectively, coreflector), and then constructed an associated orthogonal factorization system
(see Lemma \ref{lem:sle-reflective-ofs}).

\vskip 5pt

Based on this, bi-reflective pairs and bi-reflective model structures are introduced in this paper (see Definitions~\ref{defn:bi-reflectivepair} and \ref{defn:bi-reflective-orthogonal-model-structure}).
By definition bi-reflective model structures are  orthogonal. As shown in the following theorem, there is a one to one correspondence between bi-reflective pairs and bi-reflective model structures.

\vskip 5pt

\noindent\textbf{Theorem C.} (Theorem~\ref{thm:bi-reflective-pairs-bi-reflective-orthogonal-model-structures}) \
{\it Let $\mathcal A$ be a  category. Then
\begin{equation*}
\begin{aligned}
\Phi:\{\text{bi-reflective pairs in} \ \mathcal A\} &\longrightarrow \{\text{bi-reflective model structures on} \ \mathcal A\}
\\ (\mathcal C,\mathcal F) &\longmapsto (\mathcal E_t, \ \mathcal M_r, \ \mathcal M_t\circ \mathcal E_r)
\end{aligned}
\end{equation*}
is a bijection, where $t$ is the coreflector of $\mathcal C$ with the counit $\varepsilon$, $r$ is the reflector of $\mathcal F$ with the unit $\eta$,
\begin{equation*}
\begin{aligned}
\mathcal E_t &= \left\{
f:X\longrightarrow Y\ \middle|\begin{array}{c}
\xymatrix@C=0.9cm@R=0.4cm{
tX \ar[r]^-{\varepsilon_{_X}} \ar[d]_{t(f)} & X \ar[d]^{f} \\
tY \ar[r]^-{\varepsilon_{_Y}} & Y
}
\end{array}\text{ is a pushout}
\right\}
\\
\mathcal M_t
&= \{\ f \in {\rm Mor}(\mathcal A)\ \mid \  t(f)\text{ is an isomorphism}\}
\end{aligned}
\end{equation*}
\begin{equation*}
\begin{aligned}
\mathcal E_r &= \{f \in {\rm Mor}(\mathcal A)\ \mid \  r(f)\text{ is an isomorphism}\}
\\
\mathcal M_r
&=
\left\{
f: X\longrightarrow Y\ \middle|\begin{array}{c}
\xymatrix@C=0.9cm@R=0.4cm{
X\ar[r]^-{\eta_{_X}}\ar[d]_f & rX\ar[d]^{r(f)}\\
Y\ar[r]^-{\eta_{_Y}} & rY
}
\end{array}\text{is a pullback}
\right\}.
\end{aligned}
\end{equation*}

\vskip 5pt
\noindent
The inverse of $\Phi$ is $\Psi:(\CoFib, \Fib, \Weq) \longmapsto (\mathcal C,\mathcal F)$, where \ $\mathcal C$ and $\mathcal F$ are respectively the classes of cofibrant objects and fibrant objects.}

\vskip 10pt

Bi-reflective model structures appear widely and naturally. Even on an abelian category, a TTF model structure is in general not bi-reflective (see Remark \ref{both-TTF-and-bi-reflective}).

\subsection{\bf Torsion model structures}
Rosick\'y and Tholen \cite{RT} have defined \emph{a torsion factorization system} in a category as an orthogonal factorization system $(\mathcal E, \mathcal M)$ such that $\mathcal E$ and $\mathcal M$ have the two-out-of-three property;
and then for a homological category $\mathcal A$, they proved there is a one to one correspondence between normal torsion factorization systems and torsion pairs (\cite[Theorem~5.2]{RT}).

\vskip5pt

Motivated by this, a \emph{torsion model structure} on a category is introduced in this paper as an orthogonal model structure $(\CoFib,\Fib,\Weq)$ such that $\CoFib$ and $\Fib$ have the two-out-of-three property.
Torsion model structures on abelian category $\mathcal A$ enjoy a more pleasant property, in the sense that there is a one to one correspondence between torsion model structures on $\mathcal A$ and twin torsion pairs in $\mathcal A$.

\vskip5pt

By {\it a twin torsion pair} (\cite{T}) in an abelian category $\mathcal A$ one means a quadruple $(\mathcal T_1, \mathcal F_1; \mathcal T_2, \mathcal F_2)$,  where
$(\mathcal T_1, \mathcal F_1)$ and $(\mathcal T_2, \mathcal F_2)$ are torsion pairs in $\mathcal A$ with $\mathcal T_2 \subseteq \mathcal T_1$.
A torsion pair $(\mathcal T,\mathcal F)$ will be written as $(\mathcal T, \mathcal F, t, r)$, where $t:\mathcal A\longrightarrow \mathcal T$ and $r:\mathcal A\longrightarrow \mathcal F$ are functors such that $t(X)$ and $r(X)$
are  respectively the torsion part and the torsionfree part of $X$. In this way,
a twin torsion pair  $(\mathcal T_1, \mathcal F_1; \mathcal T_2, \mathcal F_2)$ will be written as $(\mathcal T_1, \mathcal F_1, t_1, r_1; \mathcal T_2, \mathcal F_2, t_2, r_2).$

\vskip 5pt

\vskip 5pt

\noindent\textbf{Theorem D.} (Theorem~\ref{thm:abelian-bijection-models-torsion-pairs}) \ {\it Let $\mathcal A$ be an abelian category.

\vskip 5pt

$(1)$ \ There is a one to one correspondence between torsion model structures on $\mathcal A$ and twin torsion pairs in $\mathcal A$.
Explicitly,
$$\Phi:  \left\{\text{\rm twin torsion pairs} \ \mbox{in} \ \mathcal A\right\} \longrightarrow \left\{\text{\rm torsion model structures on }\mathcal A \right\}$$
$$(\mathcal T_1, \mathcal F_1, t_1, r_1; \mathcal T_2, \mathcal F_2, t_2, r_2)  \longmapsto (\CoFib, \Fib, \Weq)$$
is a bijection, where
\begin{equation*}
\begin{aligned}
\CoFib & =\{f\in {\rm Mor}(\mathcal A) \ \mid \ r_1(f) \text{ is an isomorphism}\ \}
\\
\Fib &=\{f\in {\rm Mor}(\mathcal A) \ \mid \ t_2(f) \text{ is an isomorphism}\ \}
\\
\Weq &=\{f\in {\rm Mor}(\mathcal A) \ \mid \ H(f) \text{ is an isomorphism}\ \}
\end{aligned}
\end{equation*}
with $ H\cong t_1\circ r_2 \cong r_2\circ t_1: \mathcal A \longrightarrow \mathcal T_1\cap \mathcal F_2;$
and the inverse is  $$\Psi:(\CoFib, \Fib, \Weq) \longmapsto (\mathcal T_1, \mathcal F_1; \mathcal T_2, \mathcal F_2)$$ where
\begin{equation*}
\begin{aligned}
&\mathcal T_1:=\{X\in \mathcal A \ \mid \ 0\longrightarrow X \ \mbox{is in} \ \CoFib\},\ \ \ \ \ \mathcal F_1:=\{X\in \mathcal A \ \mid \ X\longrightarrow 0 \ \mbox{is in} \ \TFib\}
\\
&\mathcal T_2:=\{X\in \mathcal A \ \mid \ 0\longrightarrow X \ \mbox{is in} \ \TCoFib\},\ \ \ \mathcal F_2:=\{X\in \mathcal A \ \mid \ X \longrightarrow 0 \ \mbox{is in} \ \Fib\}.
\end{aligned}
\end{equation*}

\vskip 5pt

$(2)$ \ Any torsion model structure on $\mathcal A$ is a bi-reflective model structure.

\vskip 5pt

$(3)$ \ Let $(\CoFib, \Fib, \Weq)$ be a torsion model structure given by a twin torsion pair $(\mathcal T_1, \mathcal F_1; \mathcal T_2, \mathcal F_2)$.
Then the class of cofibrant objects is $\mathcal T_1$, the class of fibrant objects is $\mathcal F_2$, and the class of trivial objects is
$$\mathcal T_2*\mathcal F_1: = \{X\in \mathcal A \mid \exists\ \text{an exact sequence } 0\longrightarrow T_2 \longrightarrow X \longrightarrow F_1 \longrightarrow 0, \ T_2 \in \mathcal T_2, \ F_1 \in \mathcal F_1\}.$$
The homotopy category $\Ho(\mathcal A)$ is a quasi-abelian category, and it is equivalent to $\mathcal T_1\cap\mathcal F_2$ as a category.
In particular, every torsion class and every torsion-free class can be realized as the homotopy category of a torsion model structure.}

\vskip 10pt

On an abelian category $\mathcal{A}$, a TTF model structure is in general not a torsion model structure, although both of them are related with torsion pairs. See Remark \ref{both-TTF-and-torsion}.

\vskip 5pt

Torsion model structures on a poset, on the category $G\text{-}\mathsf{Set}$ of $G$-sets, and on the category $\mathsf{TopGrp}$ of topological groups, are constructed in Propositions \ref{prop:two-cut-points-poset-model}, \ref{prop:gset-normal-subgroup-torsion-model} and  \ref{prop:topological-group-torsion-model-structure}, respectively.
A torsion model structure which has neither cofibrant nor fibrant objects is also constructed in  Example \ref{exm:Z3-no-enough-CF}.
Some information on these torsion model structures is listed as follows:

\begin{table}[h!]
\begin{center}
\begin{tabular}{|c|c|c|c|c|}
\hline
\textbf{Category} & \textbf{\makecell[c]{Initial, terminal\\ and zero objects}} & \textbf{\makecell[c]{Has enough\\ cofibrant and \\fibrant objects}} & \textbf{Bi-reflectivity} & \textbf{\makecell[c]{Homotopy \\category}}\\
\hline
Poset $(P, \le)$ & \makecell[c]{In general, no initial object \\ no terminal object} & True & bi-reflective & $\mathcal C\cap\mathcal F$\\
\hline
$G\text{-}\mathsf{Set}$ & \makecell[c]{Has initial object\\ has terminal object \\ but no zero object}& True & Not bi-reflective & $\mathcal C\cap\mathcal F$\\
\hline
$\mathsf{TopGrp}$ & Has zero object & True & Not bi-reflective & $\mathcal C\cap\mathcal F$\\
\hline
Poset $(\mathbb Z^3, \le)$ & \makecell[c]{No initial object \\ no terminal object} & False & Not bi-reflective & Poset $(\mathbb Z, \le)$\\
\hline
\end{tabular}
\end{center}
\end{table}

\subsection{\bf The organization} The paper is organized as follows.

\vskip5pt

1 \ Introduction

2 \ Preliminaries

\hskip15pt 2.1 \ Model structures

\hskip15pt 2.2 \ Weak factorization systems

\hskip15pt 2.3 \ Orthogonal model structures

\hskip15pt 2.4 \ Localization of categories

3  \ Homotopy categories of orthogonal model structures

4 \  TTF model structures

5 \  Bi-reflective model structures

\hskip15pt 5.1 \  Orthogonal factorization systems via reflective subcategories

\hskip15pt 5.2 \  Bi-reflective pairs and  bi-reflective model structures

\hskip15pt 5.3 \ Examples

6 \ Torsion model structures

\hskip15pt 6.1 \  Torsion model structures on abelian categories

\hskip15pt 6.2 \ Torsion model structures on posets

\hskip15pt 6.3 \ Torsion model structures on $G\text{-}\mathsf{Set}$

\hskip15pt 6.4 \ Torsion model structures on $\mathsf{TopGrp}$

\hskip15pt 6.5 \ A torsion model structure without enough fibrant or cofibrant objects


\section{\bf Preliminaries}

We will recall the main notion and facts on (orthogonal) model structures, (weak, orthogonal) factorization systems, and localization of categories.

\subsection{\bf Model structures}

Let $\mathcal A$ be a category. A morphism $f:X\longrightarrow Y$ is a {\it retract} of a morphism
$g:X'\longrightarrow Y'$ if there is a commutative diagram
\[\xymatrix@R=0.4cm{X \ar[r]^{\varphi_1} \ar[d]_{f} & X' \ar[r]^{\psi_1} \ar[d]_{g} & X \ar[d]^{f} \\ Y \ar[r]^{\varphi_2} & Y' \ar[r]^{\psi_2}& Y}\]
such that $\psi_1\circ \varphi_1=\mathrm{Id}_X$ and
$\psi_2\circ \varphi_2=\mathrm{Id}_Y$.

\vskip 5pt

A class $\mathcal S$ of morphisms in $\mathcal A$ is \emph{closed under retracts}, provided that every retract of a morphism in $\mathcal S$ is in $\mathcal S$.

\vskip 5pt

The class $\mathcal S$  has the \emph{two-out-of-three property}, provided that for two composable morphisms $f: X \longrightarrow Y$ and $g: Y\longrightarrow Z$, whenever two of $f$, $g$, and $g\circ f$ are in $\mathcal S$, so is the third.

\begin{defn}[\cite{Q1,Q2}]\label{def:model-structure} \ A \emph{model structure} on a category $\mathcal A$ is a triple $(\CoFib,\Fib,\Weq)$ of classes of morphisms, satisfying the following axioms {\rm (CM1)} - {\rm (CM4)}:

\vskip5pt

{\rm (CM1)} \ {\rm(}\emph{Two-out-of-three axiom}{\rm)} \ The class $\Weq$ has the two-out-of-three property.

\vskip5pt

{\rm (CM2)} \ {\rm(}\emph{Retract axiom}{\rm)} \ The classes $\CoFib$, $\Fib$, and $\Weq$ are closed under retracts.

\vskip5pt

{\rm (CM3)} \ {\rm(}\emph{Lifting axiom}{\rm)} \  For every commutative square
$$\xymatrix@R=0.4cm{A\ar[r]^-a \ar[d]_-i & X \ar[d]^-p \\
B\ar[r]^-b \ar@{-->}[ru]^-s & Y }$$
with $i\in\CoFib$ and $p\in\Fib$, if either $i\in\CoFib\cap\Weq$ or $p\in\Fib\cap\Weq$, then there exists a morphism $s: B\longrightarrow X$ such that $a=s\circ i$ and $b=p\circ s$.
Such a morphism $s$ is called {\it a lifting}.

\vskip5pt

{\rm (CM4)} \ {\rm(}\emph{Factorization axiom}{\rm)} \ Every morphism $f:X\longrightarrow Y$ admits factorizations $f=p\circ i=q\circ j$, where $i\in\CoFib\cap\Weq$, \ $p\in\Fib$, \ $j\in\CoFib$, and $q\in\Fib\cap\Weq$.
\end{defn}

If  this is the case, then the morphisms in  $\CoFib, \ \Fib, \ \Weq$ are called \emph{cofibrations}, \emph{fibrations}, and \emph{weak equivalences}, respectively. Put $\TCoFib:=\CoFib\cap\Weq$ and $\TFib:=\Fib\cap\Weq$.
The morphisms in  $\TCoFib$ and $\TFib$ are called \emph{trivial cofibrations} and \emph{trivial fibrations}, respectively.

\vskip5pt

\begin{defn}[\cite{Q1}] \label{def:fibrantobject} \ Let  $(\CoFib$,  $\Fib$,  $\Weq)$ be a model structure on category $\mathcal A$.

\vskip5pt

$(1)$ \ Suppose that $\mathcal A$ has an initial object $\emptyset$.  An object $C$ is {\it a cofibrant object},  if $\emptyset\longrightarrow C$ is a cofibration.
Denote by $\mathcal C$ the class of cofibrant objects.

\vskip5pt

$(2)$ \ Suppose that  $\mathcal A$ has a terminal object $*$.  An object $F$ is {\it a fibrant object}, if  $F\longrightarrow *$ is a fibration.
Denote by $\mathcal F$ the class of fibrant objects.

\vskip5pt

$(3)$ \ Suppose that $\mathcal A$ has a zero object $0$. An object $X$ is {\it trivial} if $0 \longrightarrow X $ is a weak equivalence, or, equivalently,
$X\longrightarrow 0$ is a weak equivalence. Denote by $\mathcal W$ the class of trivial objects.

\vskip5pt

An object is {\it trivially cofibrant} (respectively, {\it trivially fibrant}) if it is both trivial and  cofibrant (respectively, fibrant).
\end{defn}

\vskip5pt

We also need to treat model structures on those categories, which have no initial objects (respectively, terminal objects).
In this case, the notion of cofibrant objects  (respectively, fibrant objects)
will be introduced in Definition \ref{defn:cofibrant-fibrant-objects}, and this notion will be important in this paper.

\vskip5pt

A morphism $u:A\longrightarrow B$ has the \emph{left lifting property} (LLP) with respect to a morphism $f:X\longrightarrow Y$, equivalently, $f$ has the \emph{right lifting property} (RLP) with respect to $u$, if every commutative square
$$\xymatrix@R=0.5cm@C=0.8cm{A\ar[r]^-x \ar[d]_-u & X \ar[d]^-f \\
B\ar[r]_-y \ar@{-->}[ru]^-d & Y }$$
admits a lifting $d:B\longrightarrow X$ such that $d\circ u=x$ and $f\circ d=y$.

\vskip 5pt

For classes of morphisms $\mathcal S$, denote by ${\rm LLP}(\mathcal S)$ the class of morphisms which have the left lifting property with respect to every morphism in $\mathcal S$, and
by ${\rm RLP}(\mathcal S)$ the class of morphisms which have the right lifting property with respect to every morphism in $\mathcal S$.

\vskip 5pt

For classes of morphisms $\mathcal S$ and $\mathcal T$, set $\mathcal T\circ\mathcal S:=\{f=p\circ u \mid u\in\mathcal S, \ p\in\mathcal T\}$.

\vskip 5pt

Denote by $\operatorname{Iso}(\mathcal A)$ the class of isomorphisms in $\mathcal A$.

\vskip 5pt

We need the following facts.

\begin{fact} \label{elementpropmodel} \ {\rm (\cite{Q1, Q2})} \ Let  $(\CoFib$, \ $\Fib$,  \ $\Weq)$ be a
model structure on category $\mathcal A$. Then

$(1)$ \ \ Any two classes of $\CoFib$, \  $\Fib$, \  $\Weq$ determine the third uniquely. Namely,
$$\CoFib = {\rm LLP}(\TFib), \  \TFib = {\rm RLP}(\CoFib); \ \ \Fib = {\rm RLP}(\TCoFib), \ \TCoFib = {\rm LLP}(\Fib)$$ and
$\Weq = \TFib\circ \TCoFib.$

$(2)$ \ Both the classes $\CoFib$ and $\Fib$ are closed under compositions.

$(3)$ \ $\CoFib \cap \Fib \cap \Weq = \operatorname{Iso}(\mathcal A)$.

$(4)$ \ Cofibrations are closed under pushouts, i.e., if the following commutative square
$$\xymatrix@R=0.4cm{\bullet\ar[r]^-i\ar[d] & \bullet \ar@{.>}[d] \\
\bullet \ar@{.>}[r]^-{i'} & \bullet}
$$
is a pushout square with $i\in \CoFib$, then $i'\in \CoFib$.

Also, trivial cofibrations are closed under pushouts.

$(4')$ \ Fibrations are closed under pullbacks$;$ and trivial fibrations are closed under pullbacks.\end{fact}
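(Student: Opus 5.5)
We prove the items of Fact \ref{elementpropmodel} in the order $(1)$, $(2)$, $(3)$, $(4)$, $(4')$. The main tool is the retract argument, applied together with (CM2), (CM3) and (CM4).

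For $(1)$, the inclusion $\CoFib\subseteq{\rm LLP}(\TFib)$ is exactly (CM3). For the converse, let $f\in{\rm LLP}(\TFib)$ and factor $f=q\circ j$ with $j\in\CoFib$ and $q\in\TFib$ by (CM4). Lifting in the square formed by $j$, $f$, $q$ and $\mathrm{Id}$ gives $s$ with $q s=\mathrm{Id}$ and $s f=j$. This exhibits $f$ as a retract of $j$, so $f\in\CoFib$ by (CM2).

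The equality $\TFib={\rm RLP}(\CoFib)$ is proved the same way. Given $p\in{\rm RLP}(\CoFib)$, factor $p=q\circ j$ and lift to exhibit $p$ as a retract of $q\in\TFib$. Since $\Fib$ and $\Weq$ are both closed under retracts, $p\in\TFib$. The dual statements for $\Fib$ and $\TCoFib$ follow by the dual arguments.

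For $\Weq=\TFib\circ\TCoFib$, the inclusion $\supseteq$ holds by (CM1). For $\subseteq$, factor $w\in\Weq$ as $w=p\circ i$ with $i\in\TCoFib$ and $p\in\Fib$. Two-out-of-three gives $p\in\Weq$, hence $p\in\TFib$.

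Uniqueness in $(1)$ then follows. From $\CoFib$ and $\Fib$ we recover $\TFib$ and $\TCoFib$, and hence $\Weq$. From $\CoFib$ and $\Weq$ we get $\TCoFib$, and then $\Fib={\rm RLP}(\TCoFib)$. Dually, $\Fib$ and $\Weq$ determine $\CoFib$.

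For $(2)$, observe that classes of the form ${\rm LLP}(\mathcal S)$ or ${\rm RLP}(\mathcal S)$ are closed under composition. One lifts in two stages: for $u_2u_1$, first lift against $u_1$, then against $u_2$.

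For $(3)$, let $f\in\CoFib\cap\TFib$. Lifting $f$ against itself in the square with identities on the top and bottom gives $s$ with $sf=\mathrm{Id}$ and $fs=\mathrm{Id}$, so $f$ is an isomorphism. Conversely, an isomorphism lifts against anything, so it lies in $\CoFib$ and $\Fib$ by $(1)$, and in $\Weq$ by (CM2), being a retract of an identity, which is a weak equivalence by two-out-of-three applied to $\mathrm{Id}\circ\mathrm{Id}$.

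For $(4)$, note that ${\rm LLP}(\mathcal S)$ is stable under pushouts. Given a square from $i'$ to $p\in\mathcal S$, precompose with the pushout square, lift against $i$, and use the pushout's universal property to produce the lift for $i'$. The two required triangles commute by the uniqueness part of the universal property. Applying this with $\mathcal S=\TFib$ and with $\mathcal S=\Fib$ handles cofibrations and trivial cofibrations. $(4')$ is dual.

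The only delicate points are bookkeeping: checking that the retract diagrams commute, and the uniqueness argument in the pushout step.
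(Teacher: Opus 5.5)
Your argument is the standard one. The paper gives no proof of this Fact and simply cites Quillen; your retract argument for $(1)$ is the same one the paper writes out in the proof of Proposition~\ref{prop:retract-argument}. Items $(1)$, $(2)$, $(4)$, $(4')$ and the inclusion $\CoFib\cap\Fib\cap\Weq\subseteq\operatorname{Iso}(\mathcal A)$ are fine.

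\textbf{The gap.} The one step that fails is in $(3)$, where you claim an identity is a weak equivalence ``by two-out-of-three applied to $\Id\circ\Id$''. Two-out-of-three only lets you conclude that the third map lies in $\Weq$ once two of $f$, $g$, $g\circ f$ are already known to be in $\Weq$. With $f=g=g\circ f=\Id_X$ this is circular, and none of (CM1)--(CM4) postulates that identities are weak equivalences. So, as written, the inclusion $\operatorname{Iso}(\mathcal A)\subseteq\Weq$ is unproved.

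\textbf{The repair.} It uses only what you have already established. An isomorphism $f$ has the left lifting property with respect to every morphism: take $s=a\circ f^{-1}$. Hence $f\in{\rm LLP}(\Fib)=\TCoFib\subseteq\Weq$ directly by $(1)$. Alternatively, factor $\Id_X=p\circ i$ with $i\in\TCoFib$ and $p\in\Fib$ by (CM4), and apply two-out-of-three to $i=i\circ\Id_X$. Two of the three maps are then the weak equivalence $i$, so $\Id_X\in\Weq$, after which your retract-of-an-identity argument goes through.

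A cosmetic point in $(4)$: the triangle $t\circ i'=a$ holds by construction of $t$; only $p\circ t=b$ needs the uniqueness clause of the pushout.
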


\subsection{\bf Weak factorization systems} \ Let $\operatorname{Mor}(\mathcal A)$ and $\operatorname{Iso}(\mathcal A)$ denote the class of morphisms and the class of isomorphisms in $\mathcal A$, respectively.

\begin{defn} {\rm(\cite[Appendix D]{Joy})} \label{defn:weak factorization system}\ A pair $(\mathcal L, \mathcal R)$ of classes of morphisms in a category $\mathcal A$ is a \emph{weak factorization system}, provided that the following conditions are satisfied$:$
\begin{enumerate}
    \item \ $\operatorname{Mor}(\mathcal A) = \mathcal R \circ \mathcal L.$
    \item \ $\mathcal L= {\rm LLP}(\mathcal R)$ and $\mathcal R={\rm RLP}(\mathcal L)$.
\end{enumerate}
\end{defn}

If this is the case, then one has $\mathcal L \cap \mathcal R = \operatorname{Iso}(\mathcal A)$.

\vskip5pt

\begin{prop}\label{prop:weak factorization system-equivalent}{\rm (\cite[Proposition D.1.9]{Joy})} \
A pair $(\mathcal L,\mathcal R)$ of classes of morphisms in a category $\mathcal A$ is a weak factorization system if and only if the following conditions are satisfied$:$

{\rm (i)} \ $\operatorname{Mor}\mathcal A=\mathcal R\circ\mathcal L;$

{\rm (ii)} \ every morphism in $\mathcal L$ has the left lifting property with respect to every morphism in $\mathcal R;$

{\rm (iii)} \ $\mathcal L$ and $\mathcal R$ are closed under retracts.
\end{prop}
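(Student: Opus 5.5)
The statement to prove is Proposition \ref{prop:weak factorization system-equivalent}. The forward direction is immediate. If $(\mathcal L,\mathcal R)$ is a weak factorization system, then (i) is condition (1) of Definition \ref{defn:weak factorization system}. Condition (ii) follows from $\mathcal L={\rm LLP}(\mathcal R)$. For (iii), any class of the form ${\rm LLP}(\mathcal S)$ or ${\rm RLP}(\mathcal S)$ is closed under retracts. Indeed, given a retract diagram exhibiting $f$ as a retract of $g\in{\rm LLP}(\mathcal S)$ and a square from $f$ to some $p\in\mathcal S$, precompose the square with $(\psi_1,\psi_2)$ to get a square from $g$ to $p$. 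Take a lifting $d$ for $g$; then $d\circ\varphi_2$ is a lifting for $f$. This uses $\psi_1\varphi_1=\Id$ and $\psi_2\varphi_2=\Id$. The dual argument handles ${\rm RLP}$.

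For the converse, assume (i)--(iii). Condition (ii) says exactly $\mathcal L\subseteq{\rm LLP}(\mathcal R)$ and $\mathcal R\subseteq{\rm RLP}(\mathcal L)$, so only the reverse inclusions remain. I will use the standard retract argument. Let $f:X\to Y$ lie in ${\rm LLP}(\mathcal R)$. By (i), factor $f=p\circ u$ with $u:X\to Z$ in $\mathcal L$ and $p:Z\to Y$ in $\mathcal R$. Consider the commutative square with top $u:X\to Z$, left $f$, right $p$, bottom $\Id_Y$. Since $f$ has the LLP with respect to $p$, there is $s:Y\to Z$ with $s\circ f=u$ and $p\circ s=\Id_Y$.

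Then $f$ is a retract of $u$ via the diagram
\[\xymatrix@R=0.4cm{X \ar[r]^{\Id} \ar[d]_{f} & X \ar[r]^{\Id} \ar[d]_{u} & X \ar[d]^{f} \\ Y \ar[r]^{s} & Z \ar[r]^{p}& Y}\]
Both squares commute, since $u=s f$ and $p u=f$, and the bottom row composes to $p s=\Id_Y$. By (iii), $f\in\mathcal L$, which gives ${\rm LLP}(\mathcal R)\subseteq\mathcal L$.

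The inclusion ${\rm RLP}(\mathcal L)\subseteq\mathcal R$ is dual. For $g\in{\rm RLP}(\mathcal L)$, factor $g=p\circ u$ and lift in the square with left $u$, right $g$, top $\Id$, bottom $p$. This gives $r$ with $r u=\Id$ and $g r=p$, which exhibits $g$ as a retract of $p$.

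The only point requiring care is choosing the right lifting square and checking that the retract diagram commutes in the correct orientation; the rest is formal.
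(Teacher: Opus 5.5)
Your proposal is correct. The forward direction (closure of ${\rm LLP}(\mathcal S)$ and ${\rm RLP}(\mathcal S)$ under retracts) checks out, and so does the converse via the retract argument, including the orientation of both lifting squares and both retract diagrams. The paper gives no proof of this statement and only cites Joyal; your argument is the standard one, and it is the same retract trick the paper uses in its proof of Proposition~\ref{prop:retract-argument}.
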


\begin{prop}\label{prop:weak factorization system-model-structure} {\rm(\cite[Proposition E.1.11]{Joy})} \ For a triple $(\CoFib,\Fib,\Weq)$ of classes of morphisms in $\mathcal A$, write $\TFib:=\Fib\cap\Weq$ and $\TCoFib:=\CoFib\cap\Weq$. Then $(\CoFib,\Fib,\Weq)$ is a model structure if and only if $(\CoFib,\TFib)$ and $(\TCoFib,\Fib)$ are weak factorization systems,  $\Weq$ has the two-out-of-three property, and $\Weq$ is closed under retracts.
\end{prop}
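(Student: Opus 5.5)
The plan is to reduce both directions to a direct comparison of the axioms (CM1)--(CM4) with the conditions of Proposition~\ref{prop:weak factorization system-equivalent}. Throughout, $\TFib$ and $\TCoFib$ are \emph{defined} as $\Fib\cap\Weq$ and $\CoFib\cap\Weq$. So the two factorization-system pairs are built from the same three classes as the model structure, and no identification of classes is needed.

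($\Rightarrow$) Assume $(\CoFib,\Fib,\Weq)$ is a model structure. I check conditions (i)--(iii) of Proposition~\ref{prop:weak factorization system-equivalent} for the pair $(\CoFib,\TFib)$.
\begin{enumerate}
\item Condition (i) is the factorization $f=q\circ j$ of (CM4), with $j\in\CoFib$ and $q\in\TFib$.
\item Condition (ii) is the case $p\in\Fib\cap\Weq$ of the Lifting axiom (CM3).
\item Condition (iii): $\CoFib$ is closed under retracts by (CM2). $\TFib=\Fib\cap\Weq$ is an intersection of two classes closed under retracts, hence is itself closed under retracts.
\end{enumerate}
The pair $(\TCoFib,\Fib)$ is handled symmetrically, using the other factorization $f=p\circ i$ of (CM4) and the case $i\in\CoFib\cap\Weq$ of (CM3). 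Finally, the two-out-of-three property of $\Weq$ is (CM1), and its closure under retracts is part of (CM2).

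($\Leftarrow$) Assume the two pairs are weak factorization systems and that $\Weq$ has the two-out-of-three property and is closed under retracts.
\begin{enumerate}
\item (CM1) is assumed.
\item (CM3): if $p\in\TFib$, the lifting exists because $\CoFib={\rm LLP}(\TFib)$. If $i\in\TCoFib$, it exists because $\Fib={\rm RLP}(\TCoFib)$.
\item (CM4) follows from $\operatorname{Mor}(\mathcal A)=\TFib\circ\CoFib=\Fib\circ\TCoFib$.
\item (CM2) for $\Weq$ is assumed. For $\CoFib$ and $\Fib$ it follows from Proposition~\ref{prop:weak factorization system-equivalent}(iii), which applies since the pairs are weak factorization systems.
\end{enumerate}

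The only step with any content is the retract-closedness in (CM2), and it is already packaged in Proposition~\ref{prop:weak factorization system-equivalent}. If one wanted to avoid citing it, the direct check would be the standard one: classes of the form ${\rm LLP}(\mathcal S)$ and ${\rm RLP}(\mathcal S)$ are closed under retracts. Given $f$ a retract of $g\in{\rm LLP}(\mathcal S)$ and a square from $f$ to some $s\in\mathcal S$, compose with the retraction maps $(\psi_1,\psi_2)$ to get a square from $g$ to $s$. Lift there, and precompose the lift with $\varphi_2$. The identities $\psi_1\varphi_1=\mathrm{Id}$ and $\psi_2\varphi_2=\mathrm{Id}$ then give the two required triangles.

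Otherwise the argument is bookkeeping. The one thing to watch is that both directions use exactly the same classes, which holds because $\TFib$ and $\TCoFib$ are defined as intersections with $\Weq$ rather than as lifting classes.
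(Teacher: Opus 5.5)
Your proof is correct. Because $\TFib$ and $\TCoFib$ are defined as intersections with $\Weq$, both directions reduce to matching (CM1)--(CM4) against the definition of a weak factorization system and Proposition~\ref{prop:weak factorization system-equivalent}, and your direct retract check for ${\rm LLP}(\mathcal S)$ is also right. The paper gives no proof here and only cites Joyal; since retract-closure of $\Weq$ appears explicitly on both sides of the equivalence, the argument never needs the non-formal fact that the weak equivalences of two such weak factorization systems with two-out-of-three are automatically closed under retracts, so the bookkeeping you give is all that is required.
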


We need the following property of weak factorization systems.

\begin{prop}\label{prop:retract-argument} \ {\rm (\cite{Joy}, \cite{Sat})} \ Let $(\CoFib,\TFib)$ and $(\TCoFib,\Fib)$ be weak factorization systems on category $\mathcal A$
such that $\TCoFib\subseteq\CoFib$. Put $\Weq: =\TFib\circ\TCoFib$. Then $$\CoFib\cap\Weq=\TCoFib, \ \ \Fib\cap\Weq=\TFib.$$
\end{prop}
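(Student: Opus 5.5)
We prove the two equalities separately; each inclusion from left to right is the easy one. Since $(\TCoFib,\Fib)$ is a weak factorization system, every identity lies in $\Fib$, and therefore every $u\in\TCoFib$ factors as $u=\Id\circ u$ with $\Id\in\TFib$ (identities are isomorphisms, and $\operatorname{Iso}(\mathcal A)\subseteq\CoFib\cap\TFib$ for a weak factorization system). Hence $\TCoFib\subseteq\Weq$, and together with $\TCoFib\subseteq\CoFib$ this gives $\TCoFib\subseteq\CoFib\cap\Weq$. Dually, $\TFib\subseteq\Weq$. For $\TFib\subseteq\Fib$, first note that the hypothesis $\TCoFib\subseteq\CoFib$ implies
\[
\TFib={\rm RLP}(\CoFib)\subseteq{\rm RLP}(\TCoFib)=\Fib .
\]
This gives $\TFib\subseteq\Fib\cap\Weq$.

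The substantive part is the reverse inclusions, and the tool is the retract argument. Take $f\in\CoFib\cap\Weq$ and write $f=p\circ u$ with $u\in\TCoFib$ and $p\in\TFib$, which is possible by the definition of $\Weq$. Now consider the square with $u$ on top, $f$ on the left, $p$ on the right and $\Id$ on the bottom. Since $f\in\CoFib={\rm LLP}(\TFib)$ and $p\in\TFib$, there is a lifting $s$ with $s\circ f=u$ and $p\circ s=\Id$. This exhibits $f$ as a retract of $u$: the diagram has $\Id_X$ on the top row and $s$, $p$ on the bottom row. Because $\TCoFib={\rm LLP}(\Fib)$ is closed under retracts, we get $f\in\TCoFib$.

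Dually, take $f\in\Fib\cap\Weq$ and write $f=p\circ u$ as before. Now $u\in\TCoFib={\rm LLP}(\Fib)$ and $f\in\Fib$, so the square with $\Id$ on top, $u$ on the left, $f$ on the right and $p$ on the bottom has a lifting $r$ with $r\circ u=\Id$ and $f\circ r=p$. This exhibits $f$ as a retract of $p$, and since $\TFib$ is closed under retracts, $f\in\TFib$.

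The main point requiring care is checking that each retract diagram commutes. Beyond that, one must notice that the hypothesis $\TCoFib\subseteq\CoFib$ is used only to ensure $\TFib\subseteq\Fib$ and $\TCoFib\subseteq\CoFib$, so that the two intersections contain the classes claimed.
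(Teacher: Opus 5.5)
Your proposal is correct and follows essentially the same route as the paper: factor $f=p\circ u$ through $\TCoFib$ and $\TFib$, lift in the square against $\Id$, and conclude by closure under retracts. The only difference is that you spell out the easy inclusions, in particular that $\TFib={\rm RLP}(\CoFib)\subseteq{\rm RLP}(\TCoFib)=\Fib$ follows from $\TCoFib\subseteq\CoFib$; the paper leaves this under ``dually''.
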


\begin{proof}  \ This result appears in the proof of \cite[Proposition E.1.11]{Joy} and is for finitely complete and cocomplete categories;
and in \cite[Lemma 2.1]{Sat} this result is stated for finitely complete and cocomplete categories. For convenience we include a proof.

\vskip5pt The inclusion $\TCoFib\subseteq\CoFib\cap\Weq$ is immediate. For $f:X\longrightarrow Y$ in $\CoFib\cap\Weq$, factorize $f=p\circ i$ with $i:X\longrightarrow Z$ in $\TCoFib$ and $p:Z\longrightarrow Y$ in $\TFib$. The lifting property gives $s:Y\longrightarrow Z$ in the diagram
\[
\xymatrix@R=0.5cm@C=0.8cm{
X\ar[d]_f\ar[r]^i & Z\ar[d]_p\\
Y\ar@{=}[r]\ar@{-->}[ur]^s & Y
}
\]
such that $s\circ f=i$ and $p\circ s=\Id_Y$. Thus,  $f$ is a retract of $i$:
\[
\xymatrix@R=0.5cm@C=0.8cm{
X\ar@{=}[r]\ar[d]_f & X\ar@{=}[r]\ar[d]_i & X\ar[d]_f\\
Y\ar[r]^s & Z\ar[r]^p & Y.
}
\]
Since $\TCoFib$ is closed under retracts (cf. Proposition \ref{prop:weak factorization system-equivalent}),  $f\in\TCoFib$. This proves $\CoFib\cap\Weq=\TCoFib$.
Dually, one has $\Fib\cap\Weq=\TFib.$
\end{proof}

\subsection{\bf Orthogonal model structures} We fix the following notations throughout this paper.

\vskip5pt

For morphisms $u$ and $f$, we write $u\perp f$ provided that  $u$ has the left lifting property with respect to $f$,  and the lifting is unique.

\vskip5pt

Write $\mathcal S\perp\mathcal T$ if $u\perp f$ for every $u\in\mathcal S$ and $f\in\mathcal T$.

\vskip5pt

For a class $\mathcal S$ of morphisms, let ${}^{\perp}\mathcal S := \{f\mid f\perp u,\ \forall \ u\in\mathcal{S}\}$ and $\mathcal S^{\perp}:= \{f\mid u\perp f, \ \forall \ u\in\mathcal{S}\}$.

\begin{defn} {\rm(\cite{FK}, \cite{Riehl})} \ A pair $(\mathcal E,\mathcal M)$ of classes of morphisms in a category $\mathcal A$ is an {\it orthogonal factorization system}, provided that the following conditions are satisfied$:$
\begin{enumerate}
    \item \ $\operatorname{Mor}\mathcal A=\mathcal M\circ\mathcal E$.
    \item \ $\mathcal E={}^{\perp}\mathcal M$ and $\mathcal M=\mathcal E^{\perp}$.
\end{enumerate}
\end{defn}

It is clear that an orthogonal factorization system is a weak factorization system.

\begin{prop} {\rm(\cite[Lemma 1.8]{Riehl}, \cite[Proposition C.0.23]{Joy})} \label{prop:OFS-criterion} \ Let $(\mathcal E,\mathcal M)$ be classes of morphisms in a category $\mathcal A$. The following conditions are equivalent$:$

\vskip5pt

$(1)$ \  $(\mathcal E,\mathcal M)$ is an orthogonal factorization system.

\vskip5pt

$(2)$ \ $\operatorname{Mor}\mathcal A=\mathcal M\circ\mathcal E$, \ $\mathcal E\perp\mathcal M$, and $\mathcal E$ and $\mathcal M$ are closed under isomorphisms.

\vskip5pt

$(3)$ \ {\rm (i)} \  Every morphism admits a factorization $f=m\circ e$ with $e\in\mathcal E$ and $m\in\mathcal M;$ and this factorization is unique up to a unique isomorphism, i.e., if $f=m_1\circ e_1=m_2\circ e_2$ with $e_1,e_2 \in \mathcal{E}$ and $m_1,m_2 \in \mathcal{M}$, then there exists a unique isomorphism $\theta$ such that the following diagram commutes$:$
\[
\xymatrix@R=0.5cm@C=0.6cm{
  {} & Z\ar[dr]^-{m_1}\ar@{-->}[dd]^-{\exists !\theta} & {} \\
  X\ar[ur]^-{e_1}\ar[dr]_-{e_2} & {} & Y \\
  {} & Z'\ar[ur]_-{m_2} & {}
}\]

\hskip10pt {\rm (ii)} \ $\mathcal E$ and $\mathcal M$ are closed under compositions.

\hskip10pt {\rm (iii)} \ $\operatorname{Iso}(\mathcal A) \subseteq \mathcal E\cap\mathcal M$.
\end{prop}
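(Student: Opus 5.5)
The plan is to prove the cycle $(1)\Rightarrow(2)\Rightarrow(3)\Rightarrow(1)$. The main device, used repeatedly, is the following: if $e\perp m$ and one of the composites $m\circ e$ or $e\circ m$ is invertible, then the uniqueness of liftings forces the other map to be invertible too.

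\emph{$(1)\Rightarrow(2)$.} The factorization and $\mathcal E\perp\mathcal M$ are part of the definition. For closure under isomorphisms, take $u=\beta\circ e\circ\alpha$ with $\alpha,\beta$ isomorphisms and $e\in\mathcal E={}^{\perp}\mathcal M$. A lifting square for $u$ against $m\in\mathcal M$ is transported along $\alpha,\beta$ to a square for $e$, bijectively on liftings. Hence $u\in{}^{\perp}\mathcal M=\mathcal E$, and dually for $\mathcal M$.

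\emph{$(2)\Rightarrow(3)$.} For (i), let $f=m_1e_1=m_2e_2$.
\begin{itemize}
\item Lifting $e_1$ against $m_2$ in the square with top $e_2$ and bottom $m_1$ gives $\theta$ with $\theta e_1=e_2$ and $m_2\theta=m_1$.
\item Symmetrically one gets $\theta'$ in the other direction.
\item Both $\theta'\theta$ and $\Id$ are liftings of the square $(e_1,m_1)$ against itself, so they coincide by uniqueness; similarly $\theta\theta'=\Id$.
\end{itemize}
Uniqueness of $\theta$ is again uniqueness of liftings.

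For (iii), let $i$ be an isomorphism and factor $i=m\circ e$. Put $s:=e\circ i^{-1}$. Then $ms=\Id$, and both $sm$ and $\Id$ lift the square with left $e$, right $m$, top $e$, bottom $m$. Hence $sm=\Id$, so $m$ is an isomorphism. Then $i=m\circ e$ lies in $\mathcal E$ by closure under isomorphisms; dually $i\in\mathcal M$.

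For (ii), let $e,e'\in\mathcal E$ be composable. Then $e'e\perp\mathcal M$ by lifting in two stages, and uniqueness at each stage gives uniqueness overall. Factor $e'e=m\circ d$. The same invertibility argument, applied to the lifting of $e'e$ against $m$, shows $m$ is an isomorphism, so $e'e\cong d\in\mathcal E$. Dually, $\mathcal M$ is closed under composition.

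\emph{$(3)\Rightarrow(1)$, and the main obstacle.} First show $\mathcal E\perp\mathcal M$. Take a square $ma=be$ with $e:A\to B$ in $\mathcal E$ and $m:X\to Y$ in $\mathcal M$. Factor $a=m_1e_1$ and $b=m_2e_2$. By (ii), $(mm_1)\circ e_1$ and $m_2\circ(e_2e)$ are two $(\mathcal E,\mathcal M)$-factorizations of the same map, so (i) yields an isomorphism $\theta$ with $\theta e_1=e_2e$ and $m_2\theta=mm_1$. Then $s:=m_1\theta^{-1}e_2$ is a lifting.

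Uniqueness of the lifting is the step I expect to be delicate. Given any lifting $s$, factor $s=m'e'$. Then
\begin{itemize}
\item $a=m'\circ(e'e)$ and $b=(mm')\circ e'$ are further factorizations of $a$ and $b$;
\item comparing them with $m_1e_1$ and $m_2e_2$ via the unique isomorphisms of (i) expresses $m'$ and $e'$ in terms of $m_1$, $e_2$ and the canonical comparison isomorphisms;
\item the uniqueness clause in (i) then forces $s=m_1\theta^{-1}e_2$.
\end{itemize}

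Next, closure under isomorphisms follows from (ii) and (iii), which gives $\mathcal E\subseteq{}^{\perp}\mathcal M$ and $\mathcal M\subseteq\mathcal E^{\perp}$. For the converse, let $f\in{}^{\perp}\mathcal M$ and factor $f=m\circ e$. Lifting $f$ against $m$ in the square with top $e$ and bottom $\Id$ gives $s$ with $ms=\Id$ and $sf=e$. Then $sm$ and $\Id$ both lift the square $(e,m)$ against itself, so $sm=\Id$ by the uniqueness just established. Thus $m$ is an isomorphism and $f\in\mathcal E$; the case of $\mathcal E^{\perp}$ is dual.
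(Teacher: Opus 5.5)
Your proof is correct. Note that the paper does not prove this proposition itself: it quotes it from \cite[Lemma 1.8]{Riehl} and \cite[Proposition C.0.23]{Joy} and uses it as a black box, so there is no in-paper argument to compare against. Your cycle $(1)\Rightarrow(2)\Rightarrow(3)\Rightarrow(1)$ is the standard self-contained proof. It rests on one device: uniqueness of lifts upgrades a one-sided inverse to a two-sided one. I checked each step and they all go through, including closure under composition via two-stage lifting and the identification ${}^{\perp}\mathcal M\subseteq\mathcal E$.

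The only place your write-up is thinner than it should be is uniqueness of lifts in $(3)\Rightarrow(1)$. Here is the last step made explicit.
\begin{itemize}
\item Write $s=m'e'$ with $e'\colon B\to W$.
\item Let $\phi\colon U\to W$ be the comparison isomorphism from $m_1e_1$ to $m'(e'e)$, so $m'\phi=m_1$.
\item Let $\psi\colon V\to W$ be the comparison isomorphism from $m_2e_2$ to $(mm')e'$, so $\psi e_2=e'$.
\item Then $s=m_1\phi^{-1}\psi e_2$.
\end{itemize}
Both $\phi$ and $\psi\theta$ are isomorphisms comparing the two factorizations $(mm_1)\circ e_1$ and $(mm')\circ(e'e)$ of the diagonal $ma=be$. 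The uniqueness clause of (i), applied to $ma$, therefore gives $\phi=\psi\theta$, and hence $s=m_1\theta^{-1}e_2$. Since $\phi$ and $\psi\theta$ are both invertible, uniqueness among isomorphisms, which is all (i) asserts, is enough here.

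One sentence is also misphrased: closure under isomorphisms does not ``give'' $\mathcal E\subseteq{}^{\perp}\mathcal M$. That inclusion is the orthogonality you had just proved. Closure under isomorphisms, from (ii) and (iii), is what you use at the end to conclude $f=m\circ e\in\mathcal E$ once $m$ is known to be invertible.
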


\begin{exm}
In $\mathsf{Set}$, and in any abelian category, the classes of epimorphisms and monomorphisms form an orthogonal factorization system, denoted by $(\operatorname{Epic},\operatorname{Monic})$.
\end{exm}

\begin{defn} \ {\rm(\cite{PT}, \cite{BG})} \label{defn:orthogonal-model-structure} \ A model structure $(\CoFib,\Fib,\Weq)$ on a category $\mathcal A$ is  an \emph{orthogonal model structure}, provided that
$\TCoFib\perp\Fib$ and $\CoFib\perp\TFib$.
\end{defn}

In fact, Pultr and Tholen \cite{PT} call an orthogonal model structure as \emph{a Quillen factorization system}; Balchin and Garner \cite{BG} called it a \emph{one-dimensional model category}.

\vskip5pt

\begin{prop} {\rm (\cite[Proposition~3.3]{PT})} \ \label{prop:weq-two-out-of-three} \ Let $\mathcal A$ be an arbitrary category,  $(\CoFib,\Fib,\Weq)$ a triple of classes of morphisms in $\mathcal A$.
Then   $(\CoFib, \Fib, \Weq)$ is an orthogonal model structure if and only if $(\CoFib, \Fib\cap\Weq)$ and $(\CoFib\cap\Weq, \Fib)$ are orthogonal factorization systems and $\Weq$ has the two-out-of-three property.
\end{prop}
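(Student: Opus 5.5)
For the forward direction I would argue directly from the earlier results. If $(\CoFib,\Fib,\Weq)$ is an orthogonal model structure, then Proposition \ref{prop:weak factorization system-model-structure} shows that $(\CoFib,\TFib)$ and $(\TCoFib,\Fib)$ are weak factorization systems, and (CM1) gives the two-out-of-three property for $\Weq$. The orthogonality hypothesis $\CoFib\perp\TFib$ and $\TCoFib\perp\Fib$ says these lifts are unique. Each class of a weak factorization system is closed under retracts (Proposition \ref{prop:weak factorization system-equivalent}), hence under isomorphisms, and each system gives factorizations $\operatorname{Mor}\mathcal A=\TFib\circ\CoFib=\Fib\circ\TCoFib$. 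Criterion (2) of Proposition \ref{prop:OFS-criterion} then shows that both pairs are orthogonal factorization systems.

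For the converse, assume both pairs are orthogonal factorization systems and $\Weq$ has the two-out-of-three property. Then both are in particular weak factorization systems, and $\TCoFib=\CoFib\cap\Weq\subseteq\CoFib$ trivially. Once $\Weq$ is shown to be closed under retracts, Proposition \ref{prop:weak factorization system-model-structure} gives a model structure, and uniqueness of lifts makes it orthogonal. The first step is to show $\Weq=\TFib\circ\TCoFib$.
\begin{itemize}
\item The inclusion $\supseteq$ holds because $\Weq$ is closed under composition, which follows from two-out-of-three.
\item For $\subseteq$, take $f\in\Weq$ and factor $f=p\circ i$ with $i\in\CoFib\cap\Weq$ and $p\in\Fib$. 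Two-out-of-three gives $p\in\Weq$, so $p\in\TFib$.
\end{itemize}

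The main obstacle is closure of $\Weq$ under retracts. This is where I would use orthogonality, through the fact that orthogonal factorizations are functorial.
\begin{itemize}
\item Given a commutative square from $f$ to $g$, and chosen $(\TCoFib,\Fib)$-factorizations $f=p_f\circ i_f$ and $g=p_g\circ i_g$, the unique lifting property $i_f\perp p_g$ yields a unique morphism between the middle objects making everything commute.
\item Uniqueness of this morphism forces compatibility with composition of squares and with identities.
\item Now let $f$ be a retract of $g\in\Weq$. Applying this construction to both squares of the retract diagram shows that $p_f$ is a retract of $p_g$. The composite on the middle objects is the induced map for the identity square on $f$, which by uniqueness is the identity.
\item Since $g\in\Weq$ and $i_g\in\Weq$, two-out-of-three gives $p_g\in\TFib$.
\item $\TFib$ is closed under retracts, being the right class of a weak factorization system, so $p_f\in\TFib$.
\item Hence $f=p_f\circ i_f\in\TFib\circ\TCoFib=\Weq$.
\end{itemize}
This completes the verification of (CM2) for $\Weq$, and so the converse.
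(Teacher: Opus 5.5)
Your proof is correct and is essentially the paper's argument in dual form. The paper factors $f$ through $(\CoFib,\TFib)$ and uses uniqueness of lifts to exhibit the left factor $e$ as a retract of the left factor $e'\in\TCoFib$ of the weak equivalence. You instead factor through $(\TCoFib,\Fib)$ and exhibit $p_f$ as a retract of $p_g\in\TFib$. You also write out the necessity direction, which the paper treats as immediate.
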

\begin{proof} \ In fact, this result is not explicitly stated in \cite{PT}. For convenience we include a proof.

\vskip5pt

We only need to justify the sufficiency. Assume that $\Weq$ has the two-out-of-three property. By Proposition~\ref{prop:weak factorization system-model-structure}, it remains to prove that $\Weq$ is closed under retracts.
Let $f:X\longrightarrow Y$ be a retract of weak equivalence $f':X'\longrightarrow Y'$. Put $\TFib:=\Fib\cap\Weq$ and $\TCoFib:=\CoFib\cap\Weq$.
Factorizing $f=m\circ e$ and $f'=m'\circ e'$,  with $e\in\CoFib,$  $m\in\TFib$, $e'\in\TCoFib$ and $m'\in\TFib$,  we claim that there exists the following commutative diagram:
\[
\xymatrix@C=1cm@R=0.5cm{
X \ar[r]^{\varphi_1} \ar[d]_{e} & X' \ar[r]^{\psi_1} \ar[d]_{e'} & X \ar[d]^{e} \\
M \ar@{-->}[r]^{\varphi_0} \ar[d]_{m} & M' \ar@{-->}[r]^{\psi_0} \ar[d]_{m'} & M \ar[d]^{m} \\
Y \ar[r]^-{\varphi_2} & Y' \ar[r]^-{\psi_2} & Y
}
\]
In fact, by the commutative squares
\[
\xymatrix@R=0.7cm@C=0.8cm{
X\ar[d]_e\ar[r]^{e' \circ \varphi_1} & M'\ar[d]^{m'} \\
M\ar[r]_{\varphi_2 \circ m}\ar@{-->}[ur]^{\varphi_0} & Y',
}
\qquad\qquad\qquad\qquad
\xymatrix@R=0.7cm@C=0.8cm{
X'\ar[d]_{e'}\ar[r]^{e \circ \psi_1} & M\ar[d]^{m}\\
M'\ar[r]_{\psi_2 \circ m'}\ar@{-->}[ur]^{\psi_0} & Y
}
\]
there exist a unique morphism $\varphi_0$ and a unique morphism $\psi_0$ such that $$\varphi_0\circ e =e'\circ \varphi_1, \ \ m'\circ\varphi_0 = \varphi_2\circ m, \ \ \psi_0\circ e' =e\circ \psi_1, \ \ m\circ\psi_0 = \psi_2\circ m'.$$
This justifies the claim. And then one has
$$\psi_0\circ \varphi_0\circ e = \psi_0\circ e'\circ \varphi_1 = e\circ \psi_1\circ \varphi_1 = e, \ \ \ m\circ \psi_0\circ \varphi_0 = \psi_2\circ m'\circ \varphi_0 =  \psi_2\circ\varphi_2\circ m = m.$$
By the following commutative square
\[
\xymatrix@R=0.7cm@C=1.1cm{
X\ar[d]_e\ar[r]^{e} & M\ar[d]^{m} \\
M\ar[r]_{m}\ar@{-->}[ur]^{\psi_0\circ \varphi_0}_{\operatorname{Id}_M} & Y
}
\]
It follows from the definition of orthogonal factorization system that $\psi_0\circ \varphi_0=\operatorname{Id}_M$. Therefore $e$ is a retract of $e'$. It follows from Proposition \ref{prop:weak factorization system-equivalent} that $e\in\TCoFib$.
Thus $f=m\circ e\in\TFib\circ\TCoFib \subseteq \Weq\circ \Weq \subseteq \Weq$. Thus $\Weq$ is closed under retracts. \end{proof}

\subsection{\bf Localization of categories}
Let $\mathcal S$ be a class of morphisms in a category $\mathcal A$. By definition the localization category ${\mathcal A}[\mathcal S^{-1}]$ of $\mathcal A$ with respect to $\mathcal S$
is a category together with localization functor $\gamma:\mathcal A\longrightarrow {\mathcal A}[\mathcal S^{-1}]$,
which sends morphisms in $\mathcal S$ to isomorphisms, and has the universal property with this respect.

\vskip 5pt

By definition the \emph{homotopy category} (\cite{Q1}) of a model structure $(\CoFib,\Fib,\Weq)$ on category $\mathcal A$ is the localization category $\mathcal A[\Weq^{-1}]$, denoted by $\Ho(\mathcal A)$.

\vskip 5pt

By \cite{GZ}, the localization category ${\mathcal A}[\mathcal S^{-1}]$ always exists,  for any category $\mathcal A$ and for any class $\mathcal S$ of morphisms.
For later application, we need to recall the construction of ${\mathcal A}[\mathcal S^{-1}]$,
via the path category ${\rm Path}Q(\mathcal A, \mathcal S)$ of quiver $Q(\mathcal A, \mathcal S)$,
given by Gabriel and Zisman \cite{GZ}. See also Krause \cite{K}.
Vertices  of the quiver $Q(\mathcal A, \mathcal S)$ are the objects of $\mathcal A$.
For two vertices $X$ and $Y$,
the set of arrows of $Q(\mathcal A, \mathcal S)$ from $X$ to $Y$ is the disjoint union
$$\Hom_\mathcal A(X, Y) \ \overset\bullet \bigcup \ \mathcal S(Y, X)$$
where $\mathcal S(Y, X)$ is the set of morphisms
$g: Y\longrightarrow X$ in $\mathcal S$. Thus, if $g: Y\longrightarrow X$ is a morphism in $\mathcal S$, then in  $Q(\mathcal A, \mathcal S)$ one has
an arrow $g: Y\longrightarrow X$, and also a new added arrow $X\longrightarrow Y$, denoted by $g^{-1}: X\longrightarrow Y$. Thus, for each vertex $X$ there is a loop
${\rm Id}_X: X\longrightarrow X$ in $Q(\mathcal A, \mathcal S)$, corresponding to the identity morphism of $X$.

\vskip 5pt

By definition, objects of the path category ${\rm Path}Q(\mathcal A, \mathcal S)$ are vertices of $Q(\mathcal A, \mathcal S)$
(thus, objects of $\mathcal A$); and morphisms of ${\rm Path}Q(\mathcal A, \mathcal S)$ from $X$ to $Y$ are paths from $X$ to $Y$ in $Q(\mathcal A, \mathcal S)$.
Then ${\mathcal A}[\mathcal S^{-1}]$ is
exactly the quotient category of ${\rm Path}Q(\mathcal A, \mathcal S)$, with respect to
the equivalent relations generated by the following relations:

\vskip5pt

(1) \ For each object $X$ of $\mathcal A$,  one has ${\rm Id}_X\sim e_X,$ where ${\rm Id}_X$ is the morphism
of ${\rm Path}Q(\mathcal A, \mathcal S)$ given by the loop at $X$, and $e_X$ is the morphism given by the path of length $0$ at $X$;

\vskip5pt

(2) \ For composable morphisms $s: X\longrightarrow Y$ and $t: Y\longrightarrow Z$ of $\mathcal A$,  one has
$t\circ s \sim ts,$ where $t\circ s$ is the morphism  in ${\rm Path}Q(\mathcal A, \mathcal S)$ given by the concatenation of arrows $s$ and $t$, and $ts$ is the morphism given by arrow $ts$.

\vskip5pt

(3) \ For any morphism \ $g: Y\longrightarrow X$ in $\mathcal S$, one has $g\circ g^{-1} \sim {\rm Id}_X$ and $g^{-1}\circ g \sim {\rm Id}_Y$.

\vskip5pt Thus, if $s\in \mathcal S$ and $t\in {\rm Mor}(\mathcal A)$ such that $ts\in \mathcal S$ (respectively, $st\in \mathcal S$),
then $(ts)^{-1}\circ t=s^{-1}$ (respectively, $t \circ (st)^{-1}=s^{-1}$) in ${\mathcal A}[\mathcal S^{-1}]$.

\vskip5pt

In conclusion, the localization category ${\mathcal A}[\mathcal S^{-1}]$ has the same objects as $\mathcal A$,
and a morphism $\delta: X\longrightarrow Y$ in ${\mathcal A}[\mathcal S^{-1}]$ is an equivalence class. If $\mathcal S$ is closed under compositions,
then $\delta$ can be written as a zigzag $\delta=f_n{w_{n-1}}^{-1}\cdots {w_1}^{-1}f_1$, where
\[
\xymatrix{
X\ar[r]^-{f_1}& A_1& B_1\ar[l]_-{w_1}\ar[r]^-{f_2} & A_2& B_2\ar[l]_-{w_2}\ar[r]^-{f_3}& \ldots \ar[r]^-{f_{n}}& A_{n}=Y
}
\]
is a sequence of morphisms of $\mathcal A$ and $w_1,\ldots, w_{n-1}\in \mathcal S$ (it is of this form, since if necessary one can add identities at the beginning or at the end). The localization functor $\gamma: \mathcal A \longrightarrow {\mathcal A}[\mathcal S^{-1}]$
is the identity on objects, and sends a morphism $f$ of $\mathcal A$ to the equivalence class where $f$ lies in.

\section{\bf Homotopy categories of orthogonal model structures}

This section is devoted to study the  homotopy category of orthogonal model structures.

\vskip5pt

Many categories possibly have no initial (respectively, terminal) objects.
For example, the category of posets. For studying orthogonal model structures on such categories,
we need to define the notion of cofibrant (respectively, fibrant) objects, without the existence of initial or terminal objects.

\begin{defn}\label{defn:cofibrant-fibrant-objects} \ Let $(\CoFib,\Fib,\Weq)$ be an orthogonal model structure on $\mathcal A$.

\vskip5pt

$(1)$ \ An object $C$ is \emph{a cofibrant object} (with respect to this model structure) if for every trivial fibration $p:X\longrightarrow Y$ and every morphism $u:C\longrightarrow Y$, there exists a unique morphism $v:C\longrightarrow X$ such that $p\circ v=u$:
\[
\xymatrix@C=1.2cm@R=0.8cm{
& X\ar[d]^p \\
C\ar[r]^u\ar@{-->}[ur]^v & Y.
}
\]

Denote by $\mathcal C$ the full subcategory of cofibrant objects.

\vskip 5pt

$(2)$ \ The category $\mathcal A$ has \emph{enough cofibrant objects} (with respect to this model structure) if for every object $A\in\mathcal A$, there exists a trivial fibration $q:C\longrightarrow A$ with $C\in\mathcal C$.
Such a trivial fibration $q$ will be called a \emph{cofibrant replacement} of $A$.

\vskip5pt

$(1')$ \ Dually, an object $F$ is \emph{a fibrant object} (with respect to this model structure) if for every trivial cofibration $i:X\longrightarrow Y$ and every morphism $u:X\longrightarrow F$, there exists a unique morphism $v:Y\longrightarrow F$ such that $v\circ i=u$:
\[
\xymatrix@C=1.2cm@R=0.8cm{
X\ar[r]^u\ar[d]_i & F \\
Y.\ar@{-->}[ur]_v &
}
\]

Denote by $\mathcal F$ the full subcategory of fibrant objects.

\vskip 5pt

$(2')$ \  The category $\mathcal A$ has \emph{enough fibrant objects} (with respect to this model structure) if for every object $A\in\mathcal A$, there exists a trivial cofibration $r:A\longrightarrow F$ with $F\in\mathcal F$.
Such morphism $r$ will be called a \emph{fibrant replacement} of $A$.
\end{defn}

\begin{rem} \ $(1)$ \ In case a category $\mathcal A$ equipped with an orthogonal model structure has an initial object $\emptyset$, an object $C$ is cofibrant if and only if it is cofibrant in the usual sense of \cite{Q1}, i.e.,
$\emptyset\longrightarrow C$ is a cofibration. In this case, $\mathcal A$ always has enough cofibrant objects.

\vskip5pt

$(1')$ \ Dually, in case the category $\mathcal A$ equipped with an orthogonal model structure has a terminal object $*$,
an object $F$ is fibrant if and only if it is fibrant in the usual sense of \cite{Q1}, i.e.,  $F\longrightarrow *$ is a fibration.
In this case,  $\mathcal A$ always has enough fibrant objects.

\begin{fact}\label{fact:orthogonal-zero} \ Let $(\CoFib,\Fib,\Weq)$ be an orthogonal model structure on a category $\mathcal A$ with a zero object. Then every cofibrant-fibrant-trivial object is isomorphic to the zero object, i.e.,
$\mathcal C\cap\mathcal F\cap\mathcal W =0.$
\end{fact}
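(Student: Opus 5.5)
Let $X$ be cofibrant, fibrant and trivial. The plan is to show that both maps $0\to X$ and $X\to 0$ lie in $\CoFib\cap\Fib\cap\Weq$, and then to use the orthogonal lifting property to conclude that $X\cong 0$.

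First, because $\mathcal A$ has an initial object $0$, the Remark identifies cofibrancy with $0\to X$ being a cofibration. Dually, fibrancy of $X$ means $X\to 0$ is a fibration. Triviality of $X$ means both maps are weak equivalences, as noted in Definition \ref{def:fibrantobject}(3). Consequently $0\to X$ is a trivial cofibration and $X\to 0$ is a trivial fibration.

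Next, consider the commutative square with left edge $0\to X$ (in $\TCoFib$), right edge $X\to 0$ (in $\Fib$), top $0\to X$ and bottom $X\to 0$. Both $\Id_X$ and the composite $X\to 0\to X$ make the two triangles commute. The top triangle commutes for either map because morphisms out of $0$ are unique. The bottom triangle commutes because morphisms into $0$ are unique. Since $\TCoFib\perp\Fib$, the lifting is unique, so $\Id_X$ factors through $0$. Hence $X$ is a retract of $0$, and therefore $X\cong 0$, because $X\to 0\to X$ is the identity and $0\to X\to 0$ is the identity automatically.

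I do not expect a real obstacle here. The only point to check is the identification of cofibrant and fibrant objects in Definition \ref{defn:cofibrant-fibrant-objects} with the usual ones when a zero object exists; this is exactly the content of the Remark.

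Alternatively, one can argue directly from Definition \ref{defn:cofibrant-fibrant-objects}(1'). Fibrancy applied to the trivial cofibration $0\to X$ and the morphism $u:0\to X$ gives a unique $v:X\to X$ with $v\circ(0\to X)=u$. Both $\Id_X$ and the zero map $X\to 0\to X$ satisfy this, so they are equal. This version uses only that $X$ is fibrant and trivial, and that $0\to X$ is a cofibration, which is cofibrancy.
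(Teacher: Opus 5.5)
Your proof is correct and is essentially the paper's argument. The paper uses the same commutative square, with $0\to X$ a trivial cofibration and $X\to 0$ a trivial fibration, notes that both $\Id_X$ and the zero map are liftings, and concludes $\Id_X=0$ by the uniqueness of liftings, so $X\cong 0$.
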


\begin{proof} \ Let $X\in\mathcal C\cap\mathcal F\cap\mathcal W$. Then $0\longrightarrow X$ is a trivial cofibration and $X\longrightarrow0$ is a trivial fibration. In the commutative square
\[
\xymatrix@R=0.4cm@C=0.7cm{
0\ar[r]\ar[d] & X\ar[d] \\
X\ar[r] & 0
}
\]
both $\Id_X$ and the $0$ are liftings. Thus $\Id_X=0$ and $X\cong0$.
\end{proof}

\end{rem}

\begin{prop}\label{prop:elementary-CF} \ Let $(\CoFib,\Fib,\Weq)$ be an orthogonal model structure on $\mathcal A$. Then one has

\vskip5pt

$(1)$ \ If $C$ is cofibrant and $i:C\longrightarrow C'$ is a cofibration, then $C'$ is cofibrant.

\vskip5pt

$(1')$ \  If $F$ is fibrant and $p:F'\longrightarrow F$ is a fibration, then $F'$ is fibrant.

\vskip5pt

$(2)$ \  If $C$ and $C'$ are cofibrant, then every trivial fibration $p:C\longrightarrow C'$ is an isomorphism.

\vskip5pt

$(2')$ \  If $F$ and $F'$ are fibrant, then every trivial cofibration $i:F\longrightarrow F'$ is an isomorphism.

\vskip5pt

$(3)$ \  If $C$ and $C'$ are cofibrant, then every morphism $f:C\longrightarrow C'$ is a cofibration.

\vskip5pt

$(3')$ \ If $F$ and $F'$ are fibrant, then every morphism $f:F\longrightarrow F'$ is a fibration.

\vskip5pt

$(4)$ \ Cofibrant replacement of an object $X$ is unique up to unique isomorphism, i.e., if $q:C\longrightarrow X$ and $q':C'\longrightarrow X$ are cofibrant replacements of an object $X$, then there is a unique isomorphism $a:C\longrightarrow C'$ such that $q'\circ a=q$.

\vskip5pt

$(4')$ \ Fibrant replacement of an object $X$ is unique up to unique isomorphism, i.e., if $r:X\longrightarrow F$ and $r':X\longrightarrow F'$ are fibrant replacements of an object $X$, then there is a unique isomorphism $b:F\longrightarrow F'$ such that $b\circ r=r'$.
\vskip5pt

$(5)$ \  If $X$ and $Y$ are cofibrant-fibrant objects, then every weak equivalence $u:X\longrightarrow Y$ is an isomorphism.
\end{prop}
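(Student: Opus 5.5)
We prove the unprimed statements; each primed statement follows by the dual argument in $\mathcal A^{op}$. Throughout we use Definition \ref{defn:cofibrant-fibrant-objects}: a cofibrant object $C$ has unique lifts $C\to X$ along any trivial fibration $X\to Y$. We also use that $(\CoFib,\TFib)$ and $(\TCoFib,\Fib)$ are orthogonal factorization systems (Proposition \ref{prop:weq-two-out-of-three}), and that cofibrations and fibrations are closed under composition (Fact \ref{elementpropmodel}).

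For $(1)$, let $C$ be cofibrant and $i:C\to C'$ a cofibration. Take a trivial fibration $p:X\to Y$ and a morphism $u:C'\to Y$. Since $C$ is cofibrant, there is a unique $w:C\to X$ with $p\circ w=u\circ i$. This gives a commutative square with $i$ on the left and $p$ on the right. Because $\CoFib\perp\TFib$, there is a unique $v:C'\to X$ with $v\circ i=w$ and $p\circ v=u$.

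For uniqueness, suppose $v'$ also satisfies $p\circ v'=u$. Then $v'\circ i$ is a lift of $u\circ i$ through $p$, so $v'\circ i=w$ by uniqueness for $C$. Orthogonality then forces $v'=v$. Hence $C'$ is cofibrant.

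For $(2)$, let $p:C\to C'$ be a trivial fibration between cofibrant objects. Applying cofibrancy of $C'$ to $p$ and $\Id_{C'}$ gives $s:C'\to C$ with $p\circ s=\Id_{C'}$. Then $p\circ(s\circ p)=p=p\circ \Id_C$. Both $s\circ p$ and $\Id_C$ lift $p:C\to C'$ through $p$, so cofibrancy of $C$ gives $s\circ p=\Id_C$, and $p$ is an isomorphism.

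For $(3)$, let $f:C\to C'$ with $C,C'$ cofibrant. Factor $f=q\circ j$ with $j:C\to M$ in $\CoFib$ and $q:M\to C'$ in $\TFib$. By $(1)$, $M$ is cofibrant. By $(2)$, $q$ is an isomorphism, and isomorphisms are cofibrations. So $f=q\circ j$ is a cofibration, since cofibrations are closed under composition.

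For $(4)$, let $q:C\to X$ and $q':C'\to X$ be cofibrant replacements. Cofibrancy of $C$ applied to the trivial fibration $q'$ gives a unique $a:C\to C'$ with $q'\circ a=q$. By two-out-of-three, $a$ is a weak equivalence. I expect this step to be the main obstacle: it is not yet clear that $a$ is a fibration, so $(2)$ does not apply directly.

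To get around this, construct the inverse directly. Symmetrically, there is a unique $b:C'\to C$ with $q\circ b=q'$. Then $q\circ(b\circ a)=q=q\circ\Id_C$, so uniqueness of lifts for the cofibrant object $C$ through $q$ gives $b\circ a=\Id_C$. Similarly $a\circ b=\Id_{C'}$. So $a$ is an isomorphism, and it is unique because lifts are unique.

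For $(5)$, let $u:X\to Y$ be a weak equivalence with $X,Y$ cofibrant-fibrant. By $(3)$ and $(3')$, $u$ is both a cofibration and a fibration. Together with $u\in\Weq$, Fact \ref{elementpropmodel}(3) gives $u\in\CoFib\cap\Fib\cap\Weq=\Iso(\mathcal A)$.
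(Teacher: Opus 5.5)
Your proposal is correct and follows the paper's proof essentially step for step in (1)--(4). The obstacle you flag in (4), that $a$ need not yet be known to be a fibration, never actually arises: like the paper, you build the inverse $b$ directly from uniqueness of lifts.

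The only variation is in (5). The paper factors $u$ as a trivial cofibration followed by a trivial fibration through an object that is cofibrant-fibrant by (1) and (1$'$), and then shows both factors are isomorphisms by (2$'$) and (2). You instead obtain $u\in\CoFib\cap\Fib\cap\Weq=\Iso(\mathcal A)$ directly from (3), (3$'$) and Fact~\ref{elementpropmodel}(3). This is equally valid and slightly shorter.
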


\begin{proof} \ {\rm (1)} \ Let $p:X\longrightarrow Y$ be an arbitrary  trivial fibration and $u:C'\longrightarrow Y$ an arbitrary  morphism. Since $C$ is cofibrant,
there exists a unique morphism $v:C\longrightarrow X$ such that $p\circ v=u\circ i:$
\[\xymatrix@C=1.2cm@R=0.8cm{
& X\ar[d]^p \\
C\ar[r]^{u\circ i}\ar@{-->}[ur]^v & Y.
}
\]
Since $i\in\CoFib$ and $p\in\TFib$,  one has a unique lifting  $w:C'\longrightarrow X$ such that $p\circ w=u$ and $w\circ i=v:$
\[
\xymatrix@C=1.2cm@R=0.8cm{
C\ar[r]^v\ar[d]_i & X\ar[d]^p \\
C'\ar[r]^u\ar@{-->}[ur]^w & Y.
}
\]
We claim that the morphism $w: C' \longrightarrow X$ with  $p\circ w=u$ is unique. In fact, if $w': C' \longrightarrow X$ also satisfies $p\circ w'=u$, then $p\circ (w'\circ i) =u\circ i$.
By the uniqueness of $v$ one has $w'\circ i= v$.  By the uniqueness of the lifting one has $w' = w.$ By definition $C'$ is cofibrant.

\vskip 5pt

{\rm (2)} \ Let $p:C\longrightarrow C'$ be a trivial fibration between cofibrant objects. Since $C'$ is cofibrant, by definition there is a unique morphism $s: C' \longrightarrow C$ such that
$p\circ s=\Id_{C'}$:
\[
\xymatrix@C=1.2cm@R=0.8cm{
& C\ar[d]^p \\
C'\ar@{=}[r]\ar@{-->}[ur]^s & C'.
}
\]
Since both $s\circ p$ and $\Id_C$  make the diagram
\[
\xymatrix@C=1.2cm@R=0.8cm{
& C\ar[d]^p \\
C\ar[r]_p\ar@{-->}[ur]^{s\circ p}_{\Id_C} & C'
}
\]
commutes, by the uniqueness one has  $s\circ p=\Id_C$. Thus $p$ is an isomorphism.

\vskip 5pt

{\rm (3)} \ Factorize $f:C\longrightarrow C'$ as $f=qi$ with $i:C\longrightarrow Z$ in $\CoFib$ and $q:Z\longrightarrow C'$ in $\TFib$. By {\rm (1)}, $Z$ is cofibrant; and by {\rm (2)}, $q$ is an isomorphism. Hence $f$ is a cofibration.

\vskip 5pt

{\rm (4)} \ Let $q:C\longrightarrow X$ and $q':C'\longrightarrow X$ be cofibrant replacements of $X$. Then one has a unique morphism $a:C\longrightarrow C'$ such that $q'\circ a=q$. Also, there is a unique morphism
$b:C'\longrightarrow C$ such that $q\circ b=q'$:
\[
\xymatrix@C=1.2cm@R=0.8cm{
& C'\ar[d]^{q'} \\
C\ar[r]_q\ar@{-->}[ur]^a & X
}
\qquad \qquad \qquad \qquad
\xymatrix@C=1.2cm@R=0.8cm{
& C\ar[d]^q \\
C'\ar[r]_{q'}\ar@{-->}[ur]^b & X.
}
\]
Since both $ba$ and $\Id_C$ make  the following diagram
\[
\xymatrix@C=1.2cm@R=0.8cm{
& C\ar[d]^q \\
C\ar[r]_q\ar@{-->}[ur]^{b\circ a}_{\Id_C} & X
}
\]
commutes, by the uniqueness one has $ba=\Id_C$. Similarly, $ab=\Id_{C'}$. Thus $a$ is the unique isomorphism.

\vskip 5pt

{\rm (5)} \ Factorize $u:X\longrightarrow Y$ as $u = q\circ i$ with $i:X\longrightarrow Z$ in $\TCoFib$ and $q:Z\longrightarrow Y$ in $\TFib$. By ${\rm (1)}$ and ${\rm (1')}$, one has $Z\in\mathcal{C}\cap\mathcal{F}$.
By ${\rm (2')}$,  $i$ is an isomorphism;  by {\rm (2)}, $q$ is an isomorphism.  Thus $u$ is an isomorphism.

\vskip 5pt

The statements ${\rm (1')}, {\rm (2')}, {\rm (3')}$, and ${\rm (4')}$ are the dual of {\rm (1)}, {\rm (2)}, {\rm (3)}, and {\rm (4)}, respectively.
\end{proof}

\begin{rem} \ For a model structure on $\mathcal A$ with zero object, the assertions $(1)$ and $(1')$ in Proposition \ref{prop:elementary-CF} still hold true;
however, $(2), (2'), (3), (3'), (4), (4')$ and $(5)$ no longer hold true.
\end{rem}

\begin{thm}\label{thm:orthogonal-ho-CF} \ Let $(\CoFib,\Fib,\Weq)$ be an orthogonal model structure on $\mathcal A$ with enough cofibrant and enough fibrant objects. Then
the restriction of the localization functor $\gamma: \mathcal A\longrightarrow\Ho(\mathcal A)$ induces an equivalence $\mathcal C\cap\mathcal F\cong \Ho(\mathcal A)$ of categories.

\end{thm}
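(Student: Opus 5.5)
We will construct a functor $R:\mathcal A\longrightarrow \mathcal C\cap\mathcal F$ that inverts weak equivalences, and show that the induced functor $\bar R:\Ho(\mathcal A)\longrightarrow\mathcal C\cap\mathcal F$ is quasi-inverse to $\gamma|_{\mathcal C\cap\mathcal F}$.

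\textbf{Step 1: the replacement functor on objects.} For each object $A$, choose a cofibrant replacement $q_A:QA\longrightarrow A$, a trivial fibration with $QA\in\mathcal C$. Then choose a fibrant replacement $r_{QA}:QA\longrightarrow RA$, a trivial cofibration with $RA\in\mathcal F$. By Proposition \ref{prop:elementary-CF}(1), $RA$ is again cofibrant, so $RA\in\mathcal C\cap\mathcal F$. If $A$ already lies in $\mathcal C\cap\mathcal F$, choose $q_A$ and $r_{QA}$ to be identities.

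\textbf{Step 2: the replacement functor on morphisms.} Given $f:A\longrightarrow B$, the morphism $f\circ q_A:QA\longrightarrow B$ lifts uniquely through the trivial fibration $q_B$, because $QA$ is cofibrant. This gives a unique $Qf$ with $q_B\circ Qf=f\circ q_A$. Next, $r_{QB}\circ Qf:QA\longrightarrow RB$ extends uniquely along the trivial cofibration $r_{QA}$, because $RB$ is fibrant. This gives a unique $Rf$ with $Rf\circ r_{QA}=r_{QB}\circ Qf$. The uniqueness in Definition \ref{defn:cofibrant-fibrant-objects} forces $Q$ and $R$ to preserve identities and composition, so $Q$ and $R$ are functors. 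The maps $q$ and $r$ are natural, giving a zigzag of natural transformations $A\xleftarrow{q_A}QA\xrightarrow{r_{QA}}RA$ whose components are weak equivalences.

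\textbf{Step 3: $R$ inverts weak equivalences.} Suppose $f\in\Weq$. In $q_B\circ Qf=f\circ q_A$, both $q_A$ and $q_B$ are weak equivalences, so two-out-of-three (CM1) gives $Qf\in\Weq$. In $Rf\circ r_{QA}=r_{QB}\circ Qf$, the maps $r_{QA}$ and $r_{QB}$ are weak equivalences, so again $Rf\in\Weq$. Since $Rf$ is a weak equivalence between cofibrant-fibrant objects, Proposition \ref{prop:elementary-CF}(5) shows it is an isomorphism. By the universal property of localization, $R$ therefore factors as $\bar R\circ\gamma$ for a unique functor $\bar R:\Ho(\mathcal A)\longrightarrow\mathcal C\cap\mathcal F$.

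\textbf{Step 4: the two composites.} Write $\iota:\mathcal C\cap\mathcal F\longrightarrow\mathcal A$ for the inclusion. On $\mathcal C\cap\mathcal F$, the choices made in Step 1 give $\bar R\circ\gamma\circ\iota=R\circ\iota=\Id$. In the other direction, $\gamma(r_{Q-})\circ\gamma(q_-)^{-1}$ is a natural isomorphism $\gamma\Rightarrow\gamma\circ\iota\circ R=\gamma\iota\bar R\gamma$ of functors $\mathcal A\longrightarrow\Ho(\mathcal A)$. Since $\gamma$ is the identity on objects, and every morphism of $\Ho(\mathcal A)$ is a zigzag of images under $\gamma$ and their inverses, this transformation descends to a natural isomorphism $\Id_{\Ho(\mathcal A)}\cong\gamma\iota\bar R$. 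The same conclusion also follows from the 2-dimensional universal property of localization. Hence $\gamma|_{\mathcal C\cap\mathcal F}$ is an equivalence with quasi-inverse $\bar R$.

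\textbf{Main obstacle.} The delicate point is Step 2, which requires a strictly functorial replacement without assuming functorial factorizations. This is exactly where orthogonality is used: the uniqueness built into the definitions of cofibrant and fibrant objects gives functoriality for free. The other care point is the descent of the natural isomorphism in Step 4. To handle it, we check naturality only on generators of $\Ho(\mathcal A)$, namely images $\gamma(f)$ and formal inverses $\gamma(w)^{-1}$. Naturality on $\gamma(w)^{-1}$ follows from naturality on $\gamma(w)$ by inverting.
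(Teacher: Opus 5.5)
Your proof is correct and follows essentially the same route as the paper. Unique liftings make the replacements strictly functorial. Proposition~\ref{prop:elementary-CF}(5) makes the composite replacement invert weak equivalences, and naturality of the comparison isomorphism is checked on the generators $\gamma(f)$ and $\gamma(w)^{-1}$ of $\Ho(\mathcal A)$. The only difference is the order of the replacements: you replace cofibrantly first and then fibrantly, using Proposition~\ref{prop:elementary-CF}(1), whereas the paper builds $Q$ and $R$ separately, uses the composite $QR$ (fibrant replacement first), and invokes Proposition~\ref{prop:elementary-CF}$(1')$.
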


\begin{proof} \ For each object $X\in\mathcal A$, one fixes a cofibrant replacement $q_X: QX\longrightarrow X$, such that if $X\in\mathcal C$ then $QX=X$ and $q_X=\Id_X$.
By definition $q_X$ is a trivial fibration and $QX\in \mathcal C$.  For any morphism $f:X\longrightarrow Y$, since $QX$ is a cofibrant object and $q_Y$ is a  trivial fibration, it follows from
Definition \ref{defn:cofibrant-fibrant-objects}$(1)$ that there is a unique morphism $Qf: QX \longrightarrow QY$ such that the following diagram commutes:
\[
\xymatrix@C=1.2cm@R=0.8cm{
& QY\ar[d]^{q_Y} \\
QX\ar[r]_{f\circ q_X}\ar@{-->}[ur]^{Qf} & Y
}
\]
i.e., $q_Y\circ Qf=f\circ q_X$. In particular, if $f = \Id_X$, then  $q_X\circ Q\Id_X= q_X = q_X\circ \Id_{QX};$ and then the uniqueness in Definition \ref{defn:cofibrant-fibrant-objects}$(1)$ gives
$Q\Id_X=\Id_{QX}$.

\vskip5pt

Given morphisms $f:X\longrightarrow Y$ and $g:Y\longrightarrow Z$,  by construction one has $q_Z\circ Q(g\circ f)=(g\circ f)\circ q_X$ and
$$q_Z\circ Qg\circ Qf = g\circ q_Y\circ Qf = g\circ f \circ q_X$$
i.e., one has the commutative diagram
\[
\xymatrix@C=1.2cm@R=0.8cm{
&& QZ\ar[d]^{q_Z} \\
QX\ar[rr]_{g\circ f\circ q_X}\ar@{-->}[urr]^{Q(g\circ f)}_{Qg\circ Qf} && Z
}
\]
it follows from the uniqueness in Definition \ref{defn:cofibrant-fibrant-objects}$(1)$ that
$Q(g\circ f)=Qg\circ Qf$. Thus $Q: \mathcal A\longrightarrow\mathcal C$ is a functor. If $f\in\Weq$, then $Qf\in\Weq$,  by $q_Y\circ Qf=f\circ q_X$ and the Two-out-of-three axiom.

\vskip 5pt

Dually, one has a functor $R:\mathcal A\longrightarrow\mathcal F$, having the following properties:

(i) \ for each object $X\in\mathcal A$, there is a  trivial cofibration $r_X:X\longrightarrow RX$;

(ii) \ if $X\in\mathcal F$, then $RX=X$ and $r_X=\Id_X$;

(iii) \ For  $f:X\longrightarrow Y$,  $Rf:RX\longrightarrow RY$ is the unique morphism such that
\[
\xymatrix@C=1.2cm@R=0.8cm{
X\ar[r]^{r_Y\circ f}\ar[d]_{r_X} & RY \\
RX\ar@{-->}[ur]_{Rf} &,
}
\]
commutes, i.e.,  $Rf\circ r_X=r_Y\circ f;$

(iv) \ $R$ preserves weak equivalences.

\vskip 5pt

Consider the composition of the functors: $\mathcal A \stackrel R \longrightarrow \mathcal F \stackrel i \hookrightarrow \mathcal A \stackrel Q \longrightarrow \mathcal C$, where $i: \mathcal F \hookrightarrow \mathcal A$ is the inclusion.
For each object $X\in \mathcal A$,  $RX$ is fibrant and $q _{_{RX}}:QRX\longrightarrow RX$ is a fibration. By Proposition~\ref{prop:elementary-CF}$(1')$,  $QRX$ is fibrant, and hence
$QRX\in\mathcal C\cap \mathcal F$. Thus one gets a functor   $QR:\mathcal{A}\longrightarrow \mathcal C\cap\mathcal F$.
If $f\in\Weq$, then $QRf$ is a weak equivalence between cofibrant-fibrant objects, and it is an isomorphism, by Proposition~\ref{prop:elementary-CF}(5).
By the universal property of $\gamma: \mathcal A \longrightarrow \Ho(\mathcal A)$, there is a unique functor $H:\Ho(\mathcal A)\longrightarrow\mathcal C\cap\mathcal F$ such that $H\circ \gamma=QR.$

\vskip 5pt

Consider the functors $G=\gamma \circ j: \mathcal C\cap\mathcal F \longrightarrow \Ho(\mathcal A)$,  where
$j:\mathcal C\cap\mathcal F\hookrightarrow\mathcal A$ is the inclusion. We claim that
$G$ is an equivalence of categories with quasi-inverse $H$.

\vskip 5pt

Since $Q$ and $R$ are the identity on $\mathcal C\cap\mathcal F$, one has $H\circ G=H \circ \gamma \circ j=QR\circ j=\Id_{\mathcal C\cap\mathcal F}.$
It remains to construct a natural isomorphism $\alpha: \Id_{\Ho(\mathcal A)}\longrightarrow G\circ H$ of functors.

\vskip 5pt

For every $X\in\mathcal A$, define a morphism $\alpha_X: X\longrightarrow (G\circ H)(X)$ in $\Ho(\mathcal A)$ by
$$\alpha_X:=\gamma(q _{_{RX}})^{-1}\circ \gamma(r_X): X\longrightarrow (G\circ H)(X)=(QR)X.$$ This is an isomorphism in $\Ho(\mathcal A)$, for each object $X\in \mathcal A$.
It suffices to prove the square

\[
\xymatrix@C=1cm@R=0.8cm{
X\ar[r]^{f}\ar[d]_{\alpha_X} & Y\ar[d]^{\alpha_Y}\\
(QR)X\ar[r]^{(G\circ H)f} & (QR)Y}
\]
commutes for any morphism $f: X\longrightarrow Y$ in $\Ho(\mathcal A)$.

\vskip5pt

For this purpose, first let $f: X\longrightarrow Y$ be a morphism in $\mathcal A$. Consider the commutative diagram in $\mathcal A$:
\[
\xymatrix@C=1.4cm@R=0.8cm{
X\ar[r]^{r_X}\ar[d]_f & RX\ar[d]^{Rf} & QRX\ar[l]_{q _{_{RX}}}\ar[d]^{QRf} \\
Y\ar[r]^{r_Y} & RY & QRY\ar[l]_{q _{_{RY}}} .
}
\]

\vskip5pt
\noindent Applying $\gamma$ one has $\gamma(Rf)\circ \gamma(r_X)=\gamma(r_Y)\circ \gamma(f)$, and $\gamma(q_{_{RY}})\circ \gamma(QRf)=\gamma(Rf)\circ \gamma(q_{_{RX}})$. Thus
\begin{align*}(G\circ H)(\gamma(f))\circ \alpha_X & =\gamma(QRf)\circ \gamma(q _{_{RX}})^{-1}\circ \gamma(r_X) =\gamma(q _{_{RY}})^{-1}\circ \gamma(Rf) \circ \gamma(r_X) \\ & = \gamma(q _{_{RY}})^{-1}\circ \gamma(r_Y)\circ \gamma(f)=\alpha_Y\circ \gamma(f).\end{align*}

\vskip5pt

In general, notice that every morphism of $\Ho(\mathcal A)$ can be represented by a zigzag
\[
\xymatrix@C=1.0cm{
X\ar[r]^-{f_1} & A_1 & B_1\ar[l]_-{w_1}\ar[r]^-{f_2} & A_2 & B_2\ar[l]_-{w_2}\ar[r]^-{f_3} & \cdots & B_{n-1}\ar[l]_-{w_{n-1}}\ar[r]^-{f_n} & A_n=Y
}
\]
where $w_i:B_i\longrightarrow A_i$ belongs to $\Weq$ for $1\leq i\leq n-1$. The remaining arrows are $f_1:X\longrightarrow A_1$ and $f_{i+1}:B_i\longrightarrow A_{i+1}$ for $1\leq i\leq n-1$ (if necessary, one can add identity morphisms).
For every arrow $h:U\longrightarrow V$ in this zigzag, one has $(G\circ H)(\gamma(h))\circ \alpha_{_U}=\alpha_{_V}\circ \gamma(h)$.
In particular, for $w_i:B_i\longrightarrow A_i$ one has
\[
(G\circ H)(\gamma(w_i))\circ\alpha_{_{B_i}}
=\alpha_{_{A_i}}\circ\gamma(w_i).
\]
Since $w_i\in\Weq$, $\gamma(w_i)$ is invertible in $\Ho(\mathcal A)$, and hence $(G\circ H)(\gamma(w_i)^{-1})\circ\alpha_{_{A_i}} =\alpha_{_{B_i}}\circ\gamma(w_i)^{-1}.$
Thus the following diagram commutes in $\Ho(\mathcal A)$:
\[
\xymatrix@C=1.4cm{
X\ar[r]^{\gamma(f_1)}\ar[d]_{\alpha_{_X}} & A_1\ar[d]^{\alpha_{_{A_1}}} & B_1\ar[l]_{\gamma(w_1)}\ar[d]^{\alpha_{_{B_1}}}\ar[r]^{\gamma(f_2)} &  \cdots & B_{n-1}\ar[l]\ar[d]^{\alpha_{B_{n-1}}}\ar[r]^{\gamma(f_n)} & Y\ar[d]^{\alpha_{_Y}} \\
GHX\ar[r]^{GH(\gamma(f_1))} & GHA_1 & GHB_1\ar[l]_{GH(\gamma(w_1))}\ar[r]^{GH(\gamma(f_2))} & \cdots & GHB_{n-1}\ar[l]\ar[r]^{GH(\gamma(f_n))} & GHY.
}
\]
Thus, for any morphism $f:X\longrightarrow Y$ in $\Ho(\mathcal A)$ represented by this zigzag, one has $(G\circ H)(f)\circ\alpha_{_X}=\alpha_{_Y}\circ f.$
This completes the proof.
\end{proof}

\section{\bf TTF model structures}

In this section we will establish a one to one correspondence between TTF model structures and TTF triples. All these model structures are orthogonal model structures, on abelian categories.

\vskip 5pt

Let $\mathcal A$ be an abelian category.
Recall that a \emph{torsion pair} (\cite{Dickson}) in $\mathcal A$ is a pair $(\mathcal T,\mathcal F)$ of full subcategories such that $\Hom_{\mathcal A}(T,F)=0$ for $T\in\mathcal T$ and $F\in\mathcal F$, and every object $X\in\mathcal A$
admits an exact sequence $0\longrightarrow T\longrightarrow X\longrightarrow F\longrightarrow0$ with $T\in\mathcal T$ and $F\in\mathcal F.$
In this case, $\mathcal T$ and $\mathcal F$ are  respectively called the torsion class  and the torsion-free class.

\vskip 5pt

A \emph{torsion-torsionfree triple}  (\cite{J1}), or simply, a TTF \emph{triple} in $\mathcal A$,  is a triple $(\mathcal C,\mathcal W,\mathcal F)$ of full subcategories
such that $(\mathcal C,\mathcal W)$ and $(\mathcal W,\mathcal F)$ are torsion pairs in $\mathcal A$.
The subcategory $\mathcal W$ in a TTF triple $(\mathcal C,\mathcal W,\mathcal F)$ is a Serre subcategory.

\vskip 5pt

\begin{defn}\label{defn:ttf-orthogonal-model} \ Let $\mathcal A$ be an abelian category. A TTF \emph{model structure} on $\mathcal A$ is an orthogonal model structure $(\CoFib,\Fib,\Weq)$ such that
\[
\CoFib=\{f\in {\rm Mor}(\mathcal A) \ \mid \ \operatorname{Coker}f\in\mathcal C\},
\qquad
\Fib=\{f\in {\rm Mor}(\mathcal A) \ \mid \ \Ker f\in\mathcal F\},
\]
where $\mathcal C$ and $\mathcal F$ are the classes of cofibrant objects and fibrant objects, respectively.
\end{defn}

\begin{lem} {\rm (\cite[Proposition 1.4]{EG})} \label{lem:torsion-pair-ofs} \ Let $(\mathcal T,\mathcal F)$ be a torsion pair in an abelian category $\mathcal A$. Put
\begin{align*}
\mathcal E& =\{f \ \text{is an epimorphism} \ \mid \ \Ker f\in\mathcal T\}, \ \ \
\mathcal M =\{f\in {\rm Mor}(\mathcal A) \ \mid \ \Ker f\in\mathcal F\}
\\
\mathcal E'&=\{f\in {\rm Mor}(\mathcal A) \ \mid \ \operatorname{Coker}f\in\mathcal T\}, \ \ \ \ \ \ \ \ \ \
\mathcal M'=\{f\text{ is a monomorphism} \ \mid \ \operatorname{Coker}f\in\mathcal F\}.
\end{align*}
Then

\vskip5pt

$(1)$ \ $(\mathcal E,\mathcal M)$ is an orthogonal factorization system.

\vskip5pt

$(1')$ \ $(\mathcal E', \mathcal M')$ is an orthogonal factorization system.
\end{lem}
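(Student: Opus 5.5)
I will use Proposition~\ref{prop:OFS-criterion}(2): it suffices to show three things. First, $\operatorname{Mor}\mathcal A=\mathcal M\circ\mathcal E$. Second, $\mathcal E\perp\mathcal M$. Third, $\mathcal E$ and $\mathcal M$ are closed under isomorphisms. Statement $(1')$ is the dual of $(1)$: pass to $\mathcal A^{\rm op}$, where $(\mathcal F,\mathcal T)$ is a torsion pair and kernels become cokernels. So I treat only $(1)$.

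Closure under isomorphisms is immediate. Kernels of isomorphic arrows are isomorphic, and $\mathcal T,\mathcal F$ are closed under isomorphisms.

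\textbf{Factorization.} Given $f:X\to Y$, let $K=\Ker f$ and take the torsion subobject $tK\subseteq K\subseteq X$. Put $Z:=X/tK$, let $e:X\to Z$ be the projection, and let $m:Z\to Y$ be the induced map with $f=m\circ e$. Then $e$ is an epimorphism with $\Ker e=tK\in\mathcal T$, so $e\in\mathcal E$. Also $\Ker m=K/tK$, which lies in $\mathcal F$ by the canonical sequence $0\to tK\to K\to K/tK\to0$. Hence $m\in\mathcal M$.

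\textbf{Unique lifting.} Take a commutative square $a:A\to X$, $b:B\to Y$, with $m\circ a=b\circ e$, where $e:A\to B$ is in $\mathcal E$ and $m:X\to Y$ is in $\mathcal M$.

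Uniqueness is easy. Since $e$ is an epimorphism, $s\circ e=a$ determines $s$.

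For existence, since $e$ is the cokernel of $\ker e:\Ker e\to A$, it suffices to show $a\circ\ker e=0$. We have $m\circ a\circ\ker e=b\circ e\circ\ker e=0$. So $a\circ \ker e$ factors through $\Ker m\to X$, via a map $\Ker e\to\Ker m$. This map is a morphism from an object of $\mathcal T$ to an object of $\mathcal F$, hence zero, and so $a\circ\ker e=0$. Then $a=s\circ e$ for a unique $s:B\to X$. To get $m\circ s=b$, note $m\circ s\circ e=m\circ a=b\circ e$, and cancel the epimorphism $e$.

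The only mildly delicate point is this existence step, where the orthogonality $\Hom(\mathcal T,\mathcal F)=0$ is used together with the fact that $e$ is the cokernel of its kernel in an abelian category. Everything else is routine.
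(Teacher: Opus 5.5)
Your proposal is correct and follows essentially the same argument as the paper. Both use Proposition~\ref{prop:OFS-criterion}, factor $f$ through the quotient of $X$ by the torsion part of $\Ker f$, get the lifting from $\Hom(\mathcal T,\mathcal F)=0$ together with $e$ being the cokernel of its kernel, and treat $(1')$ by duality. The one cosmetic difference is that you identify $\Ker m\cong K/tK$ directly from the isomorphism theorems, while the paper reaches the same factorization through an auxiliary cokernel and the snake lemma.
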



\begin{proof} \ For convenience we include an alternative proof. To prove $(1)$,  by Proposition~\ref{prop:OFS-criterion}, it remains to prove $\operatorname{Mor}\mathcal A=\mathcal M\circ\mathcal E$ and $\mathcal E\perp\mathcal M$.

\vskip 5pt

Let $f:X\longrightarrow Y$ be a morphism with kernel $k:K\longrightarrow X$. Choose an exact sequence
$0\longrightarrow T_K\xra{u}K\xra{v}F_K\longrightarrow0$ with $T_K\in\mathcal T$ and $F_K\in\mathcal F$.
Let $e:X\longrightarrow L$ be the cokernel of $k \circ u:T_K\longrightarrow X$. Then there exists $\sigma:F_K\longrightarrow L$ such that $e \circ k=\sigma \circ v$.
Let $p:L\longrightarrow C$ be the cokernel of $\sigma$. Then there is $s:{\rm Im }f \longrightarrow C$ such that $p \circ e = s\circ \tilde{f}.$ See the following commutative diagram with exact rows and columns:
\[\xymatrix@C=0.7cm@R=0.9cm{
&T_K\ar@{=}[r]\ar@{^{(}->}[d]_-{u} & T_K\ar@{^{(}->}[d]_-{k\circ u} & {} & {} \\
 0\ar[r]& K\ar[r]^-{k}\ar@{->>}[d]_-{v} & X\ar[r]^-{\tilde{f}}\ar@{->>}[d]_-{e} & {\rm Im }f\ar@{-->}[d]_-{s}\ar[r]& 0 \\
 0\ar[r]& F_K\ar[r]^-{\sigma} & L\ar[r]^-{p} & C\ar[r]& 0.
}\]
Choose an Epi-Mono factorization of $f = j \circ \tilde{f}$. By the snake lemma, $s$ is an isomorphism. One has $f= j \circ \tilde{f}= j \circ (s^{-1}\circ p \circ e) $.
Put $m=j\circ s^{-1}\circ p: L\longrightarrow Y$. Then $f=m\circ e$, where $\Ker e=T_K\in\mathcal T,\ \Ker m\cong \Ker p=F_K\in\mathcal F.$
Thus $e\in\mathcal E$ and $m\in\mathcal M$, by definition. This proves $\operatorname{Mor}\mathcal A=\mathcal M\circ\mathcal E$.

\vskip 5pt

Given a commutative square as the square at the right hand side below, with $e:A\longrightarrow B$ in $\mathcal E$ and $m:X\longrightarrow Y$ in $\mathcal M$.
By definition $\Ker e \in \mathcal T$ and $\Ker m \in \mathcal F$.
Then one gets the following commutative diagram with exact rows:
\[
\xymatrix@C=1.1cm@R=0.9cm{
0\ar[r] & \Ker e\ar[r]^{\sigma}\ar@{-->}[d]_c & A\ar[r]^e\ar[d]_a & B\ar[d]^b\ar[r]\ar@{-->}[dl]_-{s} & 0\\
0\ar[r] & \Ker m\ar[r]^{\tau} & X\ar[r]^m & Y.
}
\]
As $\Ker e\in\mathcal T$ and $\Ker m\in\mathcal F$, one has $c=0$, and hence $a\circ \sigma=\tau \circ c=0$. Since $e$ is the cokernel of $\sigma$, there is a morphism $s:B\longrightarrow X$ such that $s\circ e=a$.
Since $e$ is epic and $m\circ s\circ e=m\circ a=b\circ e$, one has $m\circ s=b$.

\vskip 5pt

Assume that $s':B\longrightarrow X$ also satisfies $s'\circ e=a$. Since $e$ is epic and $s'\circ e = a = s\circ e$, one has $s' = s$.
Thus $\mathcal E\perp\mathcal M$.
By Proposition~\ref{prop:OFS-criterion}, $(\mathcal E,\mathcal M)$ is an orthogonal factorization system.

\vskip 5pt

Dually $(\mathcal E',\mathcal M')$ is an orthogonal factorization system.
\end{proof}

\begin{prop}\label{prop:ttf-triple-gives-orthogonal-model} \ Let $(\mathcal C, \mathcal W, \mathcal F)$ be a {\rm TTF} triple in an abelian category $\mathcal A$. Put
\begin{align*}
\CoFib&=\{f\in {\rm Mor}(\mathcal A) \ \mid \ \operatorname{Coker}f\in\mathcal C\}, \qquad
\Fib =\{f\in {\rm Mor}(\mathcal A) \ \mid \ \Ker f\in\mathcal F\},\\
\Weq&=\{f\in {\rm Mor}(\mathcal A) \ \mid \ \Ker f\in\mathcal W, \ \operatorname{Coker}f\in\mathcal W\}.
\end{align*}
Then $(\CoFib,\Fib,\Weq)$ is a {\rm TTF} model structure on $\mathcal A$, with $\mathcal{C}$ the class of cofibrant objects, $\mathcal{F}$ the class of fibrant objects, and $\mathcal{W}$ the class of trivial objects of $(\CoFib,\Fib,\Weq)$.
\end{prop}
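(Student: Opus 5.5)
The plan is to use Proposition~\ref{prop:weq-two-out-of-three}. We verify that $(\CoFib,\Fib\cap\Weq)$ and $(\CoFib\cap\Weq,\Fib)$ are orthogonal factorization systems, and that $\Weq$ has the two-out-of-three property. The orthogonal factorization systems come from Lemma~\ref{lem:torsion-pair-ofs}. Apply $(1')$ of that lemma to the torsion pair $(\mathcal C,\mathcal W)$: this gives the orthogonal factorization system $(\mathcal E',\mathcal M')$ with $\mathcal E'=\{f\mid \Coker f\in\mathcal C\}=\CoFib$ and $\mathcal M'=\{f \text{ monic}\mid \Coker f\in\mathcal W\}$. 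Apply $(1)$ to the torsion pair $(\mathcal W,\mathcal F)$: this gives $(\mathcal E,\mathcal M)$ with $\mathcal M=\{f\mid\Ker f\in\mathcal F\}=\Fib$ and $\mathcal E=\{f \text{ epic}\mid \Ker f\in\mathcal W\}$.

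It then remains to identify $\mathcal M'=\Fib\cap\Weq$ and $\mathcal E=\CoFib\cap\Weq$. For $f\in\Fib\cap\Weq$ we have $\Ker f\in\mathcal F\cap\mathcal W=0$, because $\Hom(\mathcal W,\mathcal F)=0$. So $f$ is monic with $\Coker f\in\mathcal W$. Conversely, a monomorphism with cokernel in $\mathcal W$ has kernel $0$, which lies in $\mathcal F\cap\mathcal W$. Dually, $\mathcal C\cap\mathcal W=0$ gives $\CoFib\cap\Weq=\mathcal E$.

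The two-out-of-three property of $\Weq$ I expect to be the main obstacle. Here $\Weq$ is exactly the class of morphisms inverted by the Serre quotient $\mathcal A\to\mathcal A/\mathcal W$. Since $\mathcal W$ is a Serre subcategory, one can argue directly as follows. For composable $f,g$ there is the standard six-term exact sequence
\[0\to\Ker f\to\Ker gf\to\Ker g\to\Coker f\to\Coker gf\to\Coker g\to 0.\]
In each of the three cases, two of the three morphisms have kernel and cokernel in $\mathcal W$. Exactness, together with closure of $\mathcal W$ under subobjects, quotients and extensions, then forces the remaining kernel and cokernel into $\mathcal W$. One has to check each case using the appropriate segments. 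For instance, if $f,gf\in\Weq$, then $\Ker g$ is an extension of a subobject of $\Coker f$ by a quotient of $\Ker gf$, and $\Coker g$ is a quotient of $\Coker gf$. The other cases are similar. So Proposition~\ref{prop:weq-two-out-of-three} shows that $(\CoFib,\Fib,\Weq)$ is an orthogonal model structure.

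Finally we identify the distinguished objects.

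For the cofibrant objects, $\mathcal A$ has the zero object, so by the Remark, $X$ is cofibrant iff $0\to X$ is a cofibration, i.e.\ $\Coker(0\to X)=X\in\mathcal C$.

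Dually, $X$ is fibrant iff $\Ker(X\to0)=X\in\mathcal F$.

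For the trivial objects, $X$ is trivial iff $X\to 0$ has kernel $X\in\mathcal W$ (its cokernel is $0$).

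Hence the classes of cofibrant and fibrant objects are exactly $\mathcal C$ and $\mathcal F$. The defining formulas of $\CoFib$ and $\Fib$ then match Definition~\ref{defn:ttf-orthogonal-model}, so the model structure is a TTF model structure.
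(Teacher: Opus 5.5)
Your proposal is correct and follows essentially the same route as the paper: both apply Lemma~\ref{lem:torsion-pair-ofs}$(1')$ to $(\mathcal C,\mathcal W)$ and $(1)$ to $(\mathcal W,\mathcal F)$, using $\mathcal C\cap\mathcal W=0=\mathcal W\cap\mathcal F$ to identify $\Fib\cap\Weq$ and $\CoFib\cap\Weq$. Both then get two-out-of-three from the six-term kernel--cokernel sequence and the Serre property of $\mathcal W$, conclude via Proposition~\ref{prop:weq-two-out-of-three}, and read off the cofibrant, fibrant and trivial objects from the maps $0\to X$ and $X\to 0$.
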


\begin{proof} \ Since $(\mathcal{W}, \mathcal{F})$ is a torsion pair, $\mathcal{W}\cap \mathcal{F} = 0.$ Thus
$$\Fib\cap\Weq = \{\ f \ \text{is a monomorphism} \ \mid \ \operatorname{Coker} f\in\mathcal W\}.$$
Applying Lemma \ref{lem:torsion-pair-ofs}$(1')$ to the torsion pair $(\mathcal{C}, \mathcal{W})$, one knows that $(\CoFib, \Fib\cap\Weq)$ is an orthogonal factorization system.

\vskip5pt

Similarly, since $(\mathcal{C}, \mathcal{W})$ is a torsion pair, $\mathcal{C}\cap \mathcal{W} = 0.$ Thus
$$\CoFib\cap\Weq = \{\ f \ \text{is an epimorphism} \ \mid \ \operatorname{Ker} f\in\mathcal W\}.$$
Applying Lemma \ref{lem:torsion-pair-ofs}$(1)$ to the torsion pair $(\mathcal{W}, \mathcal{F})$, one knows that $(\CoFib\cap\Weq, \Fib)$ is an orthogonal factorization system.

\vskip 5pt

To see that $(\CoFib,\Fib,\Weq)$ is an orthogonal model structure, by Proposition \ref{prop:weq-two-out-of-three},
it remains to verify that $\Weq$ has the two-out-of-three property. In fact, for $f:X\longrightarrow Y$ and $g:Y\longrightarrow Z$, one has kernel-cokernel sequence (see e.g. \cite[Corollary 1.5]{I}):
\[
0\longrightarrow \Ker f\longrightarrow \Ker(g\circ f)\longrightarrow \Ker g
\longrightarrow \operatorname{Coker}f\longrightarrow
\operatorname{Coker}(g\circ f)\longrightarrow \operatorname{Coker}g
\longrightarrow0.
\]
Since $\mathcal W$ is a Serre subcategory, it follows that one has

(i) \ if $\Ker f\in\mathcal W, \ \Coker f\in\mathcal W, \ \Ker g\in\mathcal W$ and $\Coker g\in\mathcal W,$ then $\Ker (g\circ f)\in\mathcal W, \ \Coker (g\circ f)\in\mathcal W$;

(ii) \ if $\Ker f\in\mathcal W, \ \Coker f\in\mathcal W, \ \Ker (g\circ f)\in\mathcal W$ and $\Coker (g\circ f)\in\mathcal W,$ then $\Ker g\in\mathcal W, \ \Coker g\in\mathcal W$;

(iii) \ if $\Ker g\in\mathcal W, \ \Coker g\in\mathcal W, \ \Ker (g\circ f)\in\mathcal W$ and $\Coker (g\circ f)\in\mathcal W,$ then $\Ker f\in\mathcal W, \ \Coker f\in\mathcal W$.

\noindent This implies that $\Weq=\{f\in {\rm Mor}(\mathcal A) \ \mid \ \Ker f\in\mathcal W, \ \operatorname{Coker}f\in\mathcal W\}$ satisfies (CM1).
So far $(\CoFib,\Fib,\Weq)$ is an orthogonal model structure.

\vskip 5pt

Hence  $0 \longrightarrow X$ is a cofibration if and only if $X \in \mathcal C$.
Similarly, $Y \longrightarrow 0$ is a fibration if and only if $Y \in \mathcal F$.
Thus $\mathcal{C}$ is precisely the class of cofibrant objects of $(\CoFib,\Fib,\Weq)$, and $\mathcal{F}$ is precisely the class of fibrant objects of $(\CoFib,\Fib,\Weq)$.
By definition $(\CoFib,\Fib,\Weq)$ is a TTF model structure.

\vskip5pt

By $\Weq =\{f\in {\rm Mor}(\mathcal A) \ \mid \ \Ker f\in\mathcal W, \ \operatorname{Coker}f\in\mathcal W\}$, one sees that
$0 \longrightarrow W$ is a weak equivalence if and only if $W \in \mathcal W$. Thus, $\mathcal W$ is precisely the class of trivial objects of $(\CoFib,\Fib,\Weq)$.
\end{proof}

\vskip 5pt

For a full subcategory $\mathcal S$ of additive category $\mathcal A$, set
$$\mathcal S^{\perp_0}=\{X\in\mathcal A\mid\Hom(S,X)=0,  \forall  S\in\mathcal S\}, \ \ \ {}^{\perp_0}\mathcal S=\{X\in\mathcal A\mid\Hom(X,S)=0,  \forall  S\in\mathcal S\}.$$

\begin{prop}\label{prop:ttf-orthogonal-model-gives-ttf-triple} \ Let $(\CoFib,\Fib,\Weq)$ be a {\rm TTF} model structure on an abelian category $\mathcal A$.
Let \ $\mathcal C$, $\mathcal F$, and $\mathcal W$ denote its full subcategories of cofibrant objects, fibrant objects, and trivial objects, respectively. Then $(\mathcal C,\mathcal W,\mathcal F)$ is a {\rm TTF} triple.
\end{prop}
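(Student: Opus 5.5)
We have to show that $(\mathcal C,\mathcal W)$ and $(\mathcal W,\mathcal F)$ are torsion pairs. Since $\mathcal A$ has a zero object, Remark 3.2 applies: $C\in\mathcal C$ iff $0\to C$ is a cofibration, and $F\in\mathcal F$ iff $F\to 0$ is a fibration. By the TTF hypothesis, a morphism $f$ is a cofibration iff $\Coker f\in\mathcal C$, and a fibration iff $\Ker f\in\mathcal F$.

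\textbf{Orthogonality.} Let $C\in\mathcal C$, $W\in\mathcal W$ and $h:C\to W$. The map $W\to 0$ is a weak equivalence, and its kernel is $W$. I will not assume it is a fibration. Instead I use the $(\CoFib,\TFib)$ factorization $W\to 0 = q\circ j$: then $q:Z\to 0$ is a trivial fibration and $Z$ is trivial by two-out-of-three. In the square with top edge $j\circ h$, bottom edge $0$, left edge $0\to C$ (a cofibration) and right edge $q$, both $j\circ h$ and $0$ are liftings. Uniqueness gives $j\circ h=0$.

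It remains to show $j$ is monic. Its kernel is $\Ker(q\circ j)=W\cap\Ker j$, and $\Ker j\in\mathcal F$ would be convenient. Here I expect to have to work harder: try to show $\TFib$ consists exactly of monomorphisms. If $f\in\TFib$, then $\Ker f\in\mathcal F$, and I claim $\Ker f\in\mathcal W$ as well. For this, factor $f$ through its kernel. A cleaner route: fibrant trivial objects vanish. Indeed, if $X\in\mathcal F\cap\mathcal W$, factor $0\to X$ as $p\circ i$ with $i\in\TCoFib$ and $p\in\Fib$. Then $\Coker i\in\mathcal C$, and so on.

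Rather than chase this, I will use the orthogonal structure directly via the Hom-sets. For $F\in\mathcal F$, $W\in\mathcal W$ and $g:W\to F$, the square with left edge $0\to W$ (in $\TCoFib$ after the appropriate replacement) gives uniqueness of extensions along trivial cofibrations into the fibrant object $F$ (Definition 3.1$(1')$). So I will show $0\to W$ is a trivial cofibration, i.e. $W\in\mathcal C$ up to the trivial part, via the factorization $0\to W = p\circ i$ and Proposition 3.4. Then both $g$ and $0$ extend $0\to F$ along a trivial cofibration, so $g=0$. The dual argument gives $\Hom(\mathcal C,\mathcal W)=0$. The main obstacle is this identification of $0\to W$ and $W\to 0$ as trivial (co)fibrations. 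I will attack it by factoring $0\to W$ as $p\circ i$ with $i\in\TCoFib$ and $p\in\Fib$, so $\Ker p\in\mathcal F$, and then showing $p$ is iso using $\Hom(\mathcal W,\mathcal F)$-type uniqueness.

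\textbf{Decomposition.} For $X\in\mathcal A$, take the $(\CoFib,\TFib)$ factorization $0\to X = q\circ j$ with $j:0\to QX$. Then $QX\in\mathcal C$ and $q:QX\to X$ is a trivial fibration, so $\Ker q\in\mathcal F\cap\mathcal W$. Once $\mathcal F\cap\mathcal W=0$ is established, as in Fact 3.3 using both liftings $\Id$ and $0$, $q$ is monic. Its cokernel is trivial by the long kernel-cokernel sequence and two-out-of-three applied to $0\to\Coker q$, which gives $0\to QX\to X\to\Coker q\to 0$ with $\Coker q\in\mathcal W$. Dually, the $(\TCoFib,\Fib)$ factorization $X\to 0$ gives $r:X\to RX$ epic with $RX\in\mathcal F$ and $\Ker r\in\mathcal W$. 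Combined with the Hom-vanishing, $(\mathcal C,\mathcal W)$ and $(\mathcal W,\mathcal F)$ are torsion pairs. Closure of $\mathcal W$ under the needed subobjects and quotients should follow from the trivial object descriptions in Fact 2.1$(1)$.
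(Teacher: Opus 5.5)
There is a genuine gap, exactly where you flag the ``main obstacle'': both of your routes to the Hom-vanishing aim at false intermediate claims. Since $0\to 0$ is an isomorphism, $0\in\mathcal C\cap\mathcal F$. So, by the TTF description, every epimorphism is a cofibration and every monomorphism is a fibration.

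\textbf{First route.} $W\to 0$ is already a cofibration. By uniqueness of $(\CoFib,\TFib)$-factorizations, your factorization $W\to Z\to 0$ has $Z\cong 0$. Hence $j$ is monic only if $W=0$, and $j\circ h=0$ carries no information.

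\textbf{Second route.} $0\to W$ can be a (trivial) cofibration only if $W=\Coker(0\to W)\in\mathcal C$. This fails for every nonzero $W\in\mathcal W$, since the conclusion you are proving entails $\mathcal C\cap\mathcal W=0$. Your plan to establish it via ``$\Hom(\mathcal W,\mathcal F)$-type uniqueness'' is also circular, and the ``dual'' argument would need $W\in\mathcal F$, which is equally false.

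\textbf{Missing idea.} The correct assignment is the reverse of what you tried. For $W\in\mathcal W$, the monomorphism $0\to W$ is a trivial fibration and the epimorphism $W\to 0$ is a trivial cofibration. Then any $h:C\to W$ with $C$ cofibrant lifts through $0\to W$ (Definition~\ref{defn:cofibrant-fibrant-objects}(1)), so $h=0$. Likewise any $g:W\to F$ with $F$ fibrant extends along $W\to 0$, so $g=0$.

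\textbf{Decomposition step.} This also rests on unproved claims.
\begin{itemize}
\item You assert $\Ker q\in\mathcal W$ without argument, and you never establish $\mathcal F\cap\mathcal W=0$; Fact~\ref{fact:orthogonal-zero} only gives $\mathcal C\cap\mathcal F\cap\mathcal W=0$. Monicity of $q$ should instead come from $\TFib=\CoFib^{\perp}\subseteq\operatorname{Epic}^{\perp}=\operatorname{Monic}$.
\item More seriously, $\Coker q\in\mathcal W$ does not follow from the kernel--cokernel sequence and two-out-of-three. At this stage you do not know that weak equivalences are detected by kernels and cokernels lying in $\mathcal W$; that description is available only once the TTF triple is in hand.
\end{itemize}

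\textbf{What is needed.} The paper characterizes $\TFib$ as the monomorphisms with cokernel in $\mathcal C^{\perp_0}$.
\begin{itemize}
\item Let $p:X\to Y$ be in $\TFib$ and $h:C\to\Coker p$ with $C\in\mathcal C$. Pull back $0\to X\to Y\to\Coker p\to 0$ along $h$. This gives a monomorphism $X\to Y'$ with cokernel $C$, hence a cofibration. Lifting in the square formed by $\Id_X$, this cofibration, $p$, and the projection $Y'\to Y$ shows the projection factors through $p$. This forces $h\circ(Y'\to C)=0$, so $h=0$ because $Y'\to C$ is epic.
\item Conversely, let $K\in\mathcal C^{\perp_0}$ and let $i:A\to B$ be a cofibration. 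Any $b:B\to K$ with $b\circ i=0$ factors through $\Coker i\in\mathcal C$, hence is $0$. So $0\to K$ has the right lifting property against every cofibration, giving $0\to K\in\TFib\subseteq\Weq$ and $\mathcal C^{\perp_0}\subseteq\mathcal W$.
\end{itemize}
Applied to $q$, this yields $\Coker q\in\mathcal W$. The torsion pair $(\mathcal W,\mathcal F)$ follows dually.
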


\begin{proof} \ First, we claim $\TFib=\{f \ \text{is a monomorphism}\ \mid \ \Coker f\in \mathcal C^{\perp_0}\}$.

\vskip 5pt

Since $(\CoFib, \TFib)$ and $(\operatorname{Epic},\operatorname{Monic})$ are orthogonal factorization systems and $\operatorname{Epic}\subseteq\CoFib$, it follows that $\TFib\subseteq\operatorname{Monic}$.
Let $p:X\longrightarrow Y$ lie in $\TFib$ and $h:C\longrightarrow \Coker p$ be an arbitrary morphism with $C\in\mathcal C$. Consider the pullback diagram
\[
\xymatrix@C=1.0cm@R=0.7cm{
0\ar[r] & X\ar@{=}[d]\ar[r]^j & Y'\ar[d]^b\ar[r]^q\ar@{-->}[ld]_s & C\ar[d]^h\ar[r] & 0\\
0\ar[r] & X\ar[r]^p & Y\ar[r]^-{g} & \Coker p\ar[r] & 0.
}
\]
By definition  $j\in \CoFib$. Since $(\CoFib,\TFib)$ is an orthogonal factorization system, there is a unique lifting $s: Y'\longrightarrow X$ such that $b=p\circ s$.
Thus $h\circ q=g\circ b=g\circ p\circ s=0$. Since $q$ is epic, $h=0$. Thus $\Coker p\in\mathcal C^{\perp_0}$.

\vskip 5pt

Conversely, let $f:X\longrightarrow Y$ be a monomorphism with $\operatorname{Coker}f\in\mathcal C^{\perp_0}$. We need to prove $f\in \TFib.$  It remains to prove that $f$ has the right lifting property with respect to $\CoFib$.
Assume that the square at the left hand side below is commutative with $i\in\CoFib$. Then one has the following diagram with exact rows:
\[
\xymatrix@C=1.0cm@R=0.7cm{
{} & A\ar[d]_a\ar[r]^i & B\ar[d]^b\ar[r]^-q\ar@{-->}[ld]_s & \operatorname{Coker}i\ar[d]^h\ar[r] & 0\\
0\ar[r] & X\ar[r]^f & Y\ar[r]^-g & \operatorname{Coker}f\ar[r] & 0
}
\]
with $\operatorname{Coker}i\in\mathcal C$. Since $\operatorname{Coker}f\in\mathcal C^{\perp_0}$, one has $h = 0$ and hence $g\circ b = h\circ q = 0$.
Thus, there is a morphism $s: B\longrightarrow X$ such that $f\circ s=b$.
Since $f\circ s\circ i = b\circ i = f\circ a$ and $f$ is a monomorphism, thus $a = s\circ i$. This proves that $f$ has the right lifting property with respect to $\CoFib$, and hence
$f\in\TFib$.

\vskip 5pt

This proves $\TFib=\{f \ \text{is a monomorphism}\ \mid \ \Coker f\in \mathcal C^{\perp_0}\}$.

\vskip 5pt

Now we prove that $(\mathcal{C},\mathcal{W})$ is a torsion pair. If $W\in \mathcal W$, the morphism $0\longrightarrow W$ lies in $\Weq$. By the definition of a TTF model, $0\longrightarrow W$ lies in $\Fib$,  and hence it lies in $\TFib$. By the claim one has $W \in \mathcal C^{\perp_0}$.
This proves that $\Hom_{\mathcal A}(C,W)=0$ with $C \in \mathcal C$ and $W \in \mathcal W$.

\vskip 5pt

If $W\in \mathcal C^{\perp_0}$, then $0\longrightarrow W$ lies in $\TFib\subseteq \Weq$. This implies $W\in\mathcal W$. Thus $\mathcal C^{\perp_0} \subseteq \mathcal W$.

\vskip 5pt

Factorizing $0\longrightarrow X$ with respect to $(\CoFib,\TFib)$ gives a trivial fibration $f: C_X\longrightarrow X$.
By the claim,  $f$ is a monomorphism and $\Coker f\in \mathcal C^{\perp_0}$.
Thus one has a short exact sequence $0\longrightarrow C_X\stackrel f \longrightarrow X\longrightarrow \Coker f\longrightarrow0$, where $C_X\in\mathcal C$ and $\Coker f\in \mathcal C^{\perp_0} \subseteq \mathcal W$.
Thus $(\mathcal C,\mathcal W)$ is a torsion pair.

\vskip 5pt

Dually, $(\mathcal W,\mathcal F)$ is a torsion pair. Thus $(\mathcal C,\mathcal W,\mathcal F)$ is a TTF triple. \end{proof}

The main result on TTF model structures is as follows.

\begin{thm}\label{thm:ttf-triples-ttf-orthogonal-model-structures} \ Let $\mathcal A$ be an abelian category. Then
$$\Phi:\{{\rm TTF} \ \mbox{triples in} \ \mathcal A\} \longrightarrow \{{\rm TTF} \ \mbox{model structures on} \ \mathcal A\},  \ (\mathcal C,\mathcal W,\mathcal F) \longmapsto (\CoFib,\Fib,\Weq)$$
is a bijection,  where
\begin{align*}
\CoFib&=\{f\in {\rm Mor}(\mathcal A) \ \mid \ \operatorname{Coker}f\in\mathcal C\}, \qquad
\Fib =\{f\in {\rm Mor}(\mathcal A) \ \mid \ \Ker f\in\mathcal F\}\\
\Weq&=\{f\in {\rm Mor}(\mathcal A) \ \mid \ \Ker f\in\mathcal W, \ \operatorname{Coker}f\in\mathcal W\}
\end{align*}
with the inverse $\Psi:(\CoFib, \Fib, \Weq) \longmapsto (\mathcal C,\mathcal W,\mathcal F)$, where \ $\mathcal C$, $\mathcal F$, and $\mathcal W$ are respectively the classes of cofibrant objects, fibrant objects, and trivial objects.
\vskip5pt

In this case the homotopy category $\Ho(\mathcal A)$ is an abelian category, and it is equivalent to $\mathcal C\cap\mathcal F$ as a category.
In particular, $\mathcal C\cap\mathcal F$ enjoys a structure of an abelian category.
\end{thm}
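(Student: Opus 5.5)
The bijection part is assembled from the two preceding propositions. By Proposition \ref{prop:ttf-triple-gives-orthogonal-model}, $\Phi$ is well defined: for a TTF triple $(\mathcal C,\mathcal W,\mathcal F)$ the triple $(\CoFib,\Fib,\Weq)$ is a TTF model structure. Moreover its classes of cofibrant, fibrant and trivial objects are exactly $\mathcal C$, $\mathcal F$ and $\mathcal W$, so $\Psi\circ\Phi=\Id$. By Proposition \ref{prop:ttf-orthogonal-model-gives-ttf-triple}, $\Psi$ is well defined. For $\Phi\circ\Psi=\Id$, start from a TTF model structure $(\CoFib,\Fib,\Weq)$ with associated triple $(\mathcal C,\mathcal W,\mathcal F)$. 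By the definition of a TTF model structure, $\CoFib$ and $\Fib$ already coincide with the classes given by $\mathcal C$ and $\mathcal F$. Since a model structure is determined by any two of its three classes (Fact \ref{elementpropmodel}(1)), the weak equivalences must agree with those of $\Phi(\mathcal C,\mathcal W,\mathcal F)$. Indeed, both $(\CoFib,\Fib,\Weq)$ and $\Phi(\Psi(\CoFib,\Fib,\Weq))$ are model structures with the same cofibrations and fibrations, and $\Weq=\TFib\circ\TCoFib$ with $\TFib={\rm RLP}(\CoFib)$ and $\TCoFib={\rm LLP}(\Fib)$. Hence $\Phi\Psi=\Id$.

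For the homotopy category, $\mathcal A$ has a zero object. By the Remark after Definition \ref{defn:cofibrant-fibrant-objects}, the model structure therefore has enough cofibrant and fibrant objects. Theorem \ref{thm:orthogonal-ho-CF} then gives $\Ho(\mathcal A)\simeq\mathcal C\cap\mathcal F$. It remains to show that $\Ho(\mathcal A)$ is abelian. The plan is to identify $\Ho(\mathcal A)$ with the Serre quotient $\mathcal A/\mathcal W$, which is abelian because $\mathcal W$ is a Serre subcategory. By construction, the Serre quotient is the localization of $\mathcal A$ at the class of morphisms whose kernel and cokernel lie in $\mathcal W$, and this class is exactly $\Weq$. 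Hence $\Ho(\mathcal A)=\mathcal A[\Weq^{-1}]\simeq\mathcal A/\mathcal W$ by the universal properties. Transporting the structure along the equivalence then makes $\mathcal C\cap\mathcal F$ abelian.

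The main obstacle is justifying this identification cleanly. The Gabriel quotient is usually defined by calculus of fractions, and its universal property as a localization at $\Weq$ is a standard result of Gabriel that must be cited. If a more self-contained route is preferred, the fallback is to prove directly that $\mathcal C\cap\mathcal F$ is abelian. Here $\mathcal C\cap\mathcal F$ is additive, being closed under finite biproducts since torsion and torsionfree classes are closed under finite direct sums. A kernel of $f:X\to Y$ in $\mathcal C\cap\mathcal F$ would be $QR(\Ker f)$, or rather $Q(\Ker f)$, since $\Ker f\in\mathcal F$ already. Dually, the cokernel would be $R(\Coker f)$. One then checks that the canonical comparison between coimage and image is a weak equivalence between cofibrant-fibrant objects, hence an isomorphism by Proposition \ref{prop:elementary-CF}(5). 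I expect the Serre-quotient identification to be the shorter proof and would write that one, checking only that $\Weq$ matches the class inverted by Gabriel's quotient functor.
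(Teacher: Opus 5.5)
Your proposal is correct and follows the paper's proof: $\Phi$ and $\Psi$ are well defined by Propositions~\ref{prop:ttf-triple-gives-orthogonal-model} and~\ref{prop:ttf-orthogonal-model-gives-ttf-triple}, $\Psi\circ\Phi=\Id$ because the cofibrant, fibrant and trivial objects are identified, $\Phi\circ\Psi=\Id$ follows from Fact~\ref{elementpropmodel}(1), and $\Ho(\mathcal A)$ is abelian because $\mathcal A[\Weq^{-1}]$ is Gabriel's Serre quotient, with Theorem~\ref{thm:orthogonal-ho-CF} supplying $\Ho(\mathcal A)\simeq\mathcal C\cap\mathcal F$. You also state explicitly that the zero object gives enough cofibrant and fibrant objects, a hypothesis of that theorem which the paper uses without comment, and your fallback direct argument essentially mirrors the paper's later remark computing kernels and cokernels in $\mathcal C\cap\mathcal F$.
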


\begin{proof} \ By Propositions \ref{prop:ttf-triple-gives-orthogonal-model} and \ref{prop:ttf-orthogonal-model-gives-ttf-triple}, $\Phi$ and $\Psi$ are well-defined.

\vskip5pt

For any TTF triple $(\mathcal C,\mathcal W,\mathcal F)$, by Propositions \ref{prop:ttf-triple-gives-orthogonal-model},
$\mathcal{C}$, $\mathcal{F}$ and $\mathcal{W}$ are precisely the classes of cofibrant objects, fibrant objects and trivial objects of TTF model structure $\Phi(\mathcal C,\mathcal W,\mathcal F)$, respectively;
and then by definition $(\Psi\circ\Phi)(\mathcal C,\mathcal W,\mathcal F)=(\mathcal C,\mathcal W,\mathcal F).$

\vskip 5pt
Conversely, for any TTF model structure $(\CoFib, \Fib, \Weq)$, by definition one has
\[
\CoFib=\{f\in {\rm Mor}(\mathcal A) \ \mid \ \operatorname{Coker}f\in\mathcal C\},
\qquad
\Fib=\{f\in {\rm Mor}(\mathcal A) \ \mid \ \Ker f\in\mathcal F\}.
\]
By definition $\Psi(\CoFib, \Fib, \Weq) = (\mathcal C,\mathcal W,\mathcal F),$
where \ $\mathcal C$, $\mathcal F$, and $\mathcal W$ are respectively the classes of cofibrant objects, fibrant objects, and trivial objects, of $(\CoFib, \Fib, \Weq)$, and hence by definition one has
$$(\Phi\circ\Psi)(\CoFib, \Fib, \Weq) = \Phi(\Psi(\CoFib, \Fib, \Weq)) =  \Phi( (\mathcal C,\mathcal W,\mathcal F)) = (\CoFib', \Fib', \Weq')$$
where
\begin{align*}
\CoFib'&=\{f\in {\rm Mor}(\mathcal A) \ \mid \ \operatorname{Coker}f\in\mathcal C\} = \CoFib \\
\Fib' &=\{f\in {\rm Mor}(\mathcal A) \ \mid \ \Ker f\in\mathcal F\} = \Fib.
\end{align*}
By Fact~\ref{elementpropmodel}{\rm (1)} one has
$$(\Phi\circ\Psi)(\CoFib, \Fib, \Weq) = (\CoFib', \Fib', \Weq') = (\CoFib, \Fib, \Weq).$$
This justifies the bijection.

\vskip 5pt

By Theorem~\ref{thm:orthogonal-ho-CF}, one has $\Ho(\mathcal A) \simeq \mathcal C\cap\mathcal F$ as a category. Since $\mathcal W$ is a Serre subcategory of an abelian category $\mathcal A$ and
$$\Weq =\{f\in {\rm Mor}(\mathcal A) \ \mid \ \Ker f\in\mathcal W, \ \operatorname{Coker}f\in\mathcal W\}$$
it follows that $\mathcal A[\Weq^{-1}]$ is an abelian category (see \cite{Gabriel}). Thus
$\Ho(\mathcal A)= \mathcal A[\Weq^{-1}]$ is an abelian category. \end{proof}

\begin{rem} \ We include a sketch of a direct proof of the following interesting result:

\vskip5pt

Let $(\CoFib,\Fib,\Weq)$ be a {\rm TTF} model structure on an abelian category $\mathcal A$,
$\mathcal C$ and $\mathcal F$ the classes of cofibrant objects and fibrant objects, respectively. Then $\mathcal C\cap\mathcal F$ enjoys a structure of an abelian category.
We stress that this structure of abelian category on  $\mathcal C\cap\mathcal F$ is not directly inherited from the one on $\mathcal A$.

\vskip 5pt

First, we clarify kernels in $\mathcal C\cap\mathcal F$. For a morphism $f:X\longrightarrow Y$ in $\mathcal C\cap\mathcal F$, let $k:K\longrightarrow X$ be the kernel of $f$ in $\mathcal A$.
By Proposition \ref{prop:ttf-orthogonal-model-gives-ttf-triple},  $(\mathcal C,\mathcal W,\mathcal F)$ is a {\rm TTF} triple. Thus, there is an exact sequence
$0 \longrightarrow Q\overset{q}{\longrightarrow} K\overset{p}{\longrightarrow} M\longrightarrow 0$ with $Q \in \mathcal C$ and $M\in\mathcal{W}$. It is clear that $k\circ q: Q\longrightarrow X$ is a monomorphism in $\mathcal C\cap\mathcal F$.
We claim that $k\circ q: Q\longrightarrow X$ is the kernel of $f$ in $\mathcal C\cap\mathcal F$.

\vskip 5pt

In fact, since $X \in \mathcal F$ and  $\mathcal F$ is closed under subobjects, it follows that $K \in \mathcal F$ and $Q\in \mathcal F$. Thus
$Q \in \mathcal C\cap\mathcal F$.  For any morphism $u:U \longrightarrow X$ in $\mathcal{C}\cap\mathcal{F}$ with $f\circ u = 0$, since $k$ is the kernel of $f$, there exists a morphism $s:U\longrightarrow K$ such that $k\circ s = u$.
Since $U\in\mathcal{C}$ and $M\in\mathcal{W}$, one has $p\circ s = 0$.
Thus there exists a morphism $t:U\longrightarrow Q$ such that $q\circ t = s$:
\[\xymatrix{
  Q\ar@{^{(}->}[d]_-{q} & U\ar@{-->}[l]_-{t}\ar@{-->}[dl]_-{s}\ar[d]^-{u} & {} \\
  K\ar@{^{(}->}[r]^-{k}\ar@{->>}[d]_-{p} & X\ar[r]^-{f} & Y,\\
  M
}
\]
where $(k \circ q) \circ t=k \circ s = u$.
Thus $k\circ q:Q\longrightarrow X$ is the kernel of $f$ in $\mathcal C\cap\mathcal F$.

\vskip 5pt

Dually, for a morphism $f:X\longrightarrow Y$ in $\mathcal C\cap\mathcal F$,
let $c:Y\longrightarrow Z$ be cokernel of $f\in\mathcal{A}$ and $0 \longrightarrow N\overset{n}{\longrightarrow} Z\overset{r}{\longrightarrow} R\longrightarrow 0$ be the exact sequence with $N \in \mathcal W$ and $R\in\mathcal F$.
Then $r\circ c:Y\longrightarrow R$ is the cokernel of $f$ in $\mathcal C\cap\mathcal F$.

\vskip 5pt

Now we prove that the coimage in $\mathcal C\cap\mathcal F$ is canonically isomorphic to the image in $\mathcal C\cap\mathcal F$.
Let $e:X\longrightarrow \Coker k$ be the cokernel of $k$ in $\mathcal A$, and $m:\Ker c\longrightarrow Y$ be the kernel of $c$ in $\mathcal A$.
Since $\operatorname{can}:\Coker k\longrightarrow \Ker c$ is canonical isomorphism in $\mathcal A$, and since $X\in\mathcal{C}$ and  $Y\in\mathcal{F}$, it follows that $\Coker k\cong\Ker c\in \mathcal C\cap\mathcal F$.
We claim that $e$ is the cokernel of $k\circ q$ in $\mathcal C\cap\mathcal F$.

\vskip 5pt

In fact, let $d:X\longrightarrow \Coker (k\circ q)$ be the cokernel of $k\circ q$ in $\mathcal A$.
Applying the kernel-cokernel sequence for $q:Q \longrightarrow K$ and $k:K \longrightarrow X$ one gets an exact sequence in $\mathcal A$
$$0 = \Ker k \longrightarrow M= \Coker q\overset{\alpha}{\longrightarrow} \Coker (k\circ q) \overset{\beta}{\longrightarrow} \Coker k \longrightarrow 0.$$
See the following commutative diagram:
\[\xymatrix{
Q\ar@{=}[r]\ar@{^{(}->}[d]_-{q} & Q\ar@{^{(}->}[d]^-{k \circ q} & {} & {} & {} & {} \\
K\ar@{^{(}->}[r]^-{k}\ar@{->>}[d]_-{p} & X\ar[rrr]^-{f}\ar@{->>}[d]^-{d}\ar@{->>}[dr]^-{e} & {} & {} & Y\ar@{->>}[r]^-{c} & Z \\
M\ar@{^{(}->}[r]^-{\alpha} & {\rm Coker\ }(k \circ q)\ar@{->>}[r]^-{\beta} & {\rm Coker\ }k\ar@{.>}[r]^-{\rm can} & {\rm Ker\ }c\ar@{^{(}->}[ur]^m & {} & {}}
\]
For any morphism $w: X\longrightarrow W$ in $\mathcal C\cap\mathcal F$ with $w\circ (k\circ q) = 0$, there exists a morphism $j:\Coker(k\circ q)\longrightarrow W$ in $\mathcal A$ such that $j\circ d = w$.
Since $M\in\mathcal{W}$ and $(\mathcal W, \mathcal F)$ is a torsion pair, one has $j\circ\alpha = 0$.
Thus there exists a morphism $l:\Coker k\longrightarrow W$ such that $l\circ \beta = j$.
\[\xymatrix{
 {} & {} & M\ar@{^{(}->}[d]^-{\alpha} \\
 Q\ar@{^{(}->}[r]^-{k\circ q} & X\ar@{->>}[r]^-{d}\ar[d]^-{w} & {\rm Coker\ }(k\circ q)\ar@{-->}[dl]_-{j}\ar@{->>}[d]^-{\beta} \\
 {} & W & {\rm Coker} \ k.\ar@{-->}[l]^-{l}}
\]
Thus $l\circ e = l\circ(\beta\circ d) = j\circ d = w$. This proves the claim that $e$ is cokernel of $k\circ q$ in $\mathcal C\cap\mathcal F$.

\vskip 5pt

Dually, $m$ is the kernel of $r\circ c$ in $\mathcal C\cap\mathcal F$. This completes the proof. \hfill $\square$ \end{rem}

\vskip5pt

As an application of Theorem~\ref{thm:ttf-triples-ttf-orthogonal-model-structures}, one has the following equivalent description of TTF model structures.

\begin{prop}\label{prop:TTF-orthogoanl-equivalent-condition}
Let $\mathcal A$ be an abelian category, $(\CoFib,\Fib,\Weq)$ be an orthogonal model structure on $\mathcal A$. Then it is a {\rm TTF} model structure if and only if
\begin{align*}
\TCoFib&=\{ f \ \text{is an epimorphism}\ \mid \ \Ker f\in\mathcal W\},\\
\TFib&=\{ f \ \text{is a monomorphism}\ \mid \ \Coker f\in\mathcal W\},
\end{align*}
where $\mathcal W$ is the class of trivial objects.
\end{prop}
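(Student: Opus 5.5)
The necessity is immediate from the bijection in Theorem~\ref{thm:ttf-triples-ttf-orthogonal-model-structures}. If $(\CoFib,\Fib,\Weq)$ is a TTF model structure, then it equals $\Phi(\mathcal C,\mathcal W,\mathcal F)$, where $\mathcal W$ is the class of trivial objects. The proof of Proposition~\ref{prop:ttf-triple-gives-orthogonal-model} already computes
$$\TFib=\{f \text{ monic}\mid \Coker f\in\mathcal W\} \quad\text{and}\quad \TCoFib=\{f \text{ epic}\mid \Ker f\in\mathcal W\},$$
using $\mathcal W\cap\mathcal F=0$ and $\mathcal C\cap\mathcal W=0$. So I would simply quote that computation.

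For sufficiency, assume the two displayed descriptions hold. I will show that $(\mathcal C,\mathcal W,\mathcal F)$ is a TTF triple and that $\CoFib$ and $\Fib$ have the required kernel/cokernel descriptions. I proceed in four steps.

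\emph{Step 1: $\mathcal W$ is a torsion-free class for $\mathcal C$.} Observe that $X\in\mathcal W$ iff $0\to X$ lies in $\TFib$: the description gives that $0\to X$ is monic with cokernel $X$, and conversely $\TFib\subseteq\Weq$. Next, $C$ is cofibrant iff $0\to C\in\CoFib$. This holds because $\CoFib={}^{\perp}\TFib$ and $\mathcal A$ has the zero object $0$, which is initial; the first Remark after Definition~\ref{defn:cofibrant-fibrant-objects} gives the equivalence.

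\emph{Step 2: $(\mathcal C,\mathcal W)$ is a torsion pair.} Let $C\in\mathcal C$ and $W\in\mathcal W$, and take $h:C\to W$. Lift in the square formed by $0\to C$ (a cofibration), $0\to W$ (a trivial fibration), and $h$. Both $h$ and $0$ are liftings, so uniqueness gives $h=0$. Hence $\Hom(\mathcal C,\mathcal W)=0$. For the torsion sequence, factor $0\to X$ as a cofibration $0\to C_X$ followed by a trivial fibration $p:C_X\to X$. Then $C_X\in\mathcal C$, and by hypothesis $p$ is monic with $\Coker p\in\mathcal W$, which yields $0\to C_X\to X\to \Coker p\to 0$. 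Dually, $(\mathcal W,\mathcal F)$ is a torsion pair, using that $X\in\mathcal W$ iff $X\to 0\in\TCoFib$.

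\emph{Step 3: identify the cofibrations.} Now apply Proposition~\ref{prop:ttf-triple-gives-orthogonal-model} to the triple. It gives a TTF model structure $(\CoFib',\Fib',\Weq')$ whose trivial fibrations are $\{f \text{ monic}\mid\Coker f\in\mathcal W\}$. By hypothesis these coincide with $\TFib$. Fact~\ref{elementpropmodel}(1) then gives $\CoFib=\mathrm{LLP}(\TFib)=\CoFib'=\{f\mid\Coker f\in\mathcal C\}$. Dually $\Fib=\Fib'=\{f\mid \Ker f\in\mathcal F\}$, and $\Weq=\TFib\circ\TCoFib=\Weq'$. Hence the structure is TTF.

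The main obstacle I expect is the bookkeeping in Step 1. I must check that the classes $\mathcal C,\mathcal F$ in the sense of Definition~\ref{defn:cofibrant-fibrant-objects} coincide with the usual ones, so that Step 3 really produces the cofibrant and fibrant classes. This is resolved by the Remark, since a zero object exists.
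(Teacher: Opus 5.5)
Your proposal is correct and follows the paper's proof. Necessity comes from the bijection together with $\mathcal C\cap\mathcal W=0=\mathcal W\cap\mathcal F$. Sufficiency comes from showing that $(\mathcal C,\mathcal W)$ and $(\mathcal W,\mathcal F)$ are torsion pairs, using the lifting axiom and the factorization of $0\to X$. Your Step~3 usefully spells out the identification $\CoFib=\mathrm{LLP}(\TFib)=\mathrm{LLP}(\TFib')=\CoFib'$, which the paper compresses into ``by Theorem~\ref{thm:ttf-triples-ttf-orthogonal-model-structures} it suffices.''

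There is one small slip in Step~2. A lift in your square (left map $0\to C$, right map $0\to W$, bottom map $h$) is a morphism $C\to 0$, not $C\to W$, so ``both $h$ and $0$ are liftings'' does not typecheck. The correct argument uses the same square: the lift $s:C\to 0$ gives $h=(0\to W)\circ s=0$, and mere existence of the lift suffices, so uniqueness is not needed.
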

\begin{proof} \ Assume that $(\CoFib,\Fib,\Weq)$ is a {\rm TTF} model structure.  By Theorem~\ref{thm:ttf-triples-ttf-orthogonal-model-structures} one has
\begin{align*}
\CoFib&=\{ f\in {\rm Mor}(\mathcal A) \ \mid \ \operatorname{Coker}f\in\mathcal C\}, \qquad
\Fib =\{ f\in {\rm Mor}(\mathcal A) \ \mid \ \Ker f\in\mathcal F\},\\
\Weq&=\{ f\in {\rm Mor}(\mathcal A) \ \mid \ \Ker f\in\mathcal W, \ \operatorname{Coker}f\in\mathcal W\}
\end{align*}
and $(\mathcal C, \mathcal W, \mathcal F)$ is a TTF triple, where $\mathcal C$ and $\mathcal F$ are the classes of cofibrant objects and fibrant objects, respectively.
Since $(\mathcal C, \mathcal W)$ and $(\mathcal W, \mathcal F)$ are torsion pairs, it follows that $\mathcal C\cap \mathcal W = 0 = \mathcal W\cap \mathcal F$. Thus
\begin{align*}
\TCoFib &= \CoFib\cap\Weq=\{ f \ \text{is an epimorphism}\ \mid \ \Ker f\in\mathcal W\}\\
\TFib &= \Fib\cap\Weq=\{ f \ \text{is a monomorphism}\ \mid \ \Coker f\in\mathcal W\}.
\end{align*}

\vskip 5pt

Conversely, assume that $(\CoFib, \Fib, \Weq)$ is an orthogonal model structure such that $\TCoFib$ and $\TFib$ are given as above.
To see that $(\CoFib,\Fib,\Weq)$ is a {\rm TTF} model structure, by Theorem~\ref{thm:ttf-triples-ttf-orthogonal-model-structures} it suffices to
show that $(\mathcal{C},\mathcal{W})$ and $(\mathcal{W},\mathcal{F})$ are torsion pairs.

\vskip 5pt

We will prove that $(\mathcal{W}, \mathcal{F})$ is a torsion pair. For any morphism $f:W\longrightarrow F$ with $W\in\mathcal{W}$ and $F\in\mathcal{F}$, the morphism  $W\longrightarrow 0$ lies in $\TCoFib$, and $F\longrightarrow 0$ lies in $\Fib$.
By the Lifting axiom one has $f = 0$:
\[
\xymatrix@R=0.5cm@C=0.7cm{
W\ar[r]^f\ar[d] & F\ar[d] \\
0\ar@{=}[r]\ar@{-->}[ur] & 0.
}
\]
Thus $\operatorname{Hom}_{\mathcal{A}}(\mathcal{W},\mathcal{F}) = 0$. For any object $X\in\mathcal A$, factorize the morphism $X\longrightarrow 0$ as the composition $X\overset{p}{\longrightarrow} F\longrightarrow 0$ with $p\in \TCoFib$ and $F \in \mathcal{F}$.
Thus one gets an exact sequence
\[
0\longrightarrow \Ker p\longrightarrow X\overset{p}{\longrightarrow} F\longrightarrow 0
\]
with $\Ker p \in \mathcal{W}$ and $F \in \mathcal{F}$. This shows that $(\mathcal{W},\mathcal{F})$ is a torsion pair.

\vskip 5pt

Dually, $(\mathcal{C},\mathcal{W})$ is a torsion pair. This completes the proof.
\end{proof}

\begin{rem} \label{remTTF} \ Let $R$ be a ring. By \cite[2.1, 2.2]{J1}, TTF triples in $R\text{-}\operatorname{Mod}$ are in bijection with idempotent two-sided ideals of $R$. Thus, by Theorem~\ref{thm:ttf-triples-ttf-orthogonal-model-structures}
one has a bijection between idempotent two-sided ideals of $R$ and TTF model structures on $R\text{-}\operatorname{Mod}$.
\end{rem}

\section{\bf Bi-reflective model structures}

The aim of this section is to establish a one to one correspondence between bi-reflective model structures and bi-reflective pairs. All these model structures are orthogonal.

\subsection{\bf Orthogonal factorization systems via reflective subcategories}

In this subsection we will recall the process of getting an orthogonal factorization system via a reflective subcategory with the semi-left-exact reflector,
given by Cassidy--H\'ebert--Kelly \cite{CHK85}. This is a basis for introducing bi-reflective pairs and bi-reflective model structures, in the next subsection.

\begin{defn} \label{defn:strong-approximation} \ Let $\mathcal A$ be a category.

\vskip5pt

$(1)$ \ \ {\rm (\cite{F})} \  Let $\mathcal B$ be a full subcategory of $\mathcal A$. A morphism $u:A\longrightarrow B$ is a \emph{left $\mathcal B$-strong-approximation} of $A$,
provided that $B\in\mathcal B$, and for each object $B'\in\mathcal B$ and each morphism $f: A\longrightarrow B'$, there is a unique morphism $\overline f: B\longrightarrow B'$ with $f=\overline f\circ  u$:
\[\xymatrix@C=1.0cm@R=0.5cm{
A\ar[r]^u\ar[d]_f & B\ar@{-->}[ld]^-{\overline f} \\
B'.}\]

\vskip5pt

$(1')$ \ Dually, a morphism $c: B\longrightarrow A$ is a \emph{right $\mathcal B$-strong-approximation} of $A$, provided that $B\in\mathcal B$,  and for each object $B'\in\mathcal B$ and each morphism $g:B'\longrightarrow A$, there is a unique morphism $\overline g:B'\longrightarrow B$ with $g=c\circ \overline g$:
\[\xymatrix@C=1.0cm@R=0.5cm{
& B'\ar@{-->}[ld]_-{\overline g}\ar[d]^g & {} \\
B\ar[r]^c & A.
}\]

\vskip10pt

$(2)$ \ {\rm (\cite[IV, 3]{Mac})} \ A full subcategory $\mathcal F\subseteq\mathcal A$ is \emph{reflective},  provided that the inclusion $i:\mathcal F\longrightarrow\mathcal A$ admits a left adjoint $r:\mathcal A\longrightarrow\mathcal F$.
If this is the case, $r$ is called a \emph{reflector} of $\mathcal F$, and write $\eta:\Id_{\mathcal A}\longrightarrow i\circ r$ for the unit.

\vskip5pt

$(2')$ \  Dually, a full subcategory $\mathcal C\subseteq\mathcal A$ is  \emph{coreflective}, provided that the inclusion $i:\mathcal C\longrightarrow\mathcal A$ admits a right adjoint $t:\mathcal A\longrightarrow\mathcal C$.
If this is the case,  $t$ is called a \emph{coreflector} of $\mathcal C$, and write $\varepsilon: i\circ t\longrightarrow\Id_{\mathcal A}$ for the counit.

\vskip5pt

$(3)$ \ {\rm (\cite[Theorem~4.3]{CHK85})} \ A reflector $r:\mathcal A\longrightarrow\mathcal F$ of a reflective subcategory $\mathcal F\subseteq\mathcal A$ is \emph{semi-left-exact},
provided that for every left $\mathcal F$-strong-approximation $u:A\longrightarrow F$ of $A$ and every $g:E\longrightarrow F$ with $E\in\mathcal F$, there exists a pullback square in $\mathcal A$
\[
\xymatrix@C=0.9cm@R=0.6cm{
P\ar[r]\ar[d]_{u'} & A\ar[d]^u\\
E\ar[r]^-g & F
}
\]
such that $u':P\longrightarrow E$ is a left $\mathcal F$-strong-approximation of $P$.

\vskip 5pt

\vskip5pt

$(3')$ \ Dually, a coreflector $t:\mathcal A\longrightarrow\mathcal C$ of a coreflective subcategory $\mathcal C\subseteq\mathcal A$ is \emph{semi-right-exact}, provided that for every right $\mathcal C$-strong-approximation $c:C\longrightarrow A$ of $A$ and every $h:C\longrightarrow D$ with $D\in\mathcal C$, there exists a pushout square in $\mathcal A$
\[
\xymatrix@C=0.9cm@R=0.6cm{
C\ar[r]^{h}\ar[d]_{c} & D\ar[d]^{c'}\\
A\ar[r] & Q
}
\]
such that $c':D\longrightarrow Q$ is a right $\mathcal C$-strong-approximation of $Q$.
\end{defn}

Note that in Definition~\ref{defn:strong-approximation}{\rm (2)}, the counit $\varepsilon: r\circ i\longrightarrow\Id_{\mathcal F}$ of the adjoint pair $(r, i)$ is a natural isomorphism. Dually, in Definition~\ref{defn:strong-approximation}{$(2')$},
the unit $\eta:\Id_{\mathcal C}\longrightarrow t\circ i$ of the adjoint pair $(i, t)$ is a natural isomorphism.

\vskip5pt

We omit the proof of the following fact, which is direct.

\begin{fact}\label{fact:approximation-basic}\  Let $\mathcal A$ be a category.

\vskip5pt

$(1)$ \ Let $\mathcal F\subseteq\mathcal A$ be a reflective subcategory with reflector $r:\mathcal A\longrightarrow\mathcal F$ and the unit $\eta:\Id_{\mathcal A}\longrightarrow i\circ r$. Then for every $A\in\mathcal A$ one has

\vskip5pt

\hskip20pt ${\rm (i)}$ \  $\eta_{_A}:A\longrightarrow rA$ is a left $\mathcal F$-strong-approximation of $A$.
\vskip5pt

\hskip20pt ${\rm (ii)}$ \  If $u:A\longrightarrow F$ and $u':A\longrightarrow F'$ are left $\mathcal F$-strong-approximation of $A$, then there is a unique isomorphism $v:F\longrightarrow F'$ such that $u'=v\circ u$.
\vskip5pt

\hskip20pt ${\rm (iii)}$ \ $r(\eta_{_A}):rA\longrightarrow r^2A$ is an isomorphism.
\vskip5pt

$(1')$ \ Dually, let $\mathcal C\subseteq\mathcal A$ be a coreflective subcategory with coreflector $t:\mathcal A\longrightarrow\mathcal C$. Then for every $A\in\mathcal A$ one has

\vskip5pt

\hskip20pt ${\rm (i')}$ \  $\varepsilon_A:tA\longrightarrow A$ is a right $\mathcal C$-strong-approximation of $A$.
\vskip5pt

\hskip20pt ${\rm (ii')}$ \ If $c:C\longrightarrow A$ and $c':C'\longrightarrow A$ are right $\mathcal C$-strong-approximations of $A$, then there is a unique isomorphism $v:C\longrightarrow C'$ such that $c'\circ v=c.$
\vskip5pt

\hskip20pt ${\rm (iii')}$ \ $t(\varepsilon_A):t^2A\longrightarrow tA$ is an isomorphism.
\end{fact}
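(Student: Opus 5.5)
The statement to prove is Fact~\ref{fact:approximation-basic}. I will treat (1) directly from the universal property of the unit of the adjunction $r\dashv i$; then $(1')$ follows by duality, i.e.\ by applying (1) in $\mathcal A^{\rm op}$, where coreflective subcategories become reflective ones and right strong-approximations become left ones.

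For (i), I use the standard fact that the unit of an adjunction is universal. For $F'\in\mathcal F$, the adjunction bijection
$$\Hom_{\mathcal F}(rA,F')\cong\Hom_{\mathcal A}(A,iF')$$
is given by $\overline f\mapsto \overline f\circ\eta_{_A}$. Bijectivity says exactly that every $f:A\longrightarrow F'$ factors uniquely through $\eta_{_A}$. Since $rA\in\mathcal F$, the map $\eta_{_A}$ is a left $\mathcal F$-strong-approximation.

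For (ii), I run the usual uniqueness-of-universal-arrows argument. The universal property of $u$ gives a unique $v:F\longrightarrow F'$ with $v\circ u=u'$. The universal property of $u'$ gives a unique $w:F'\longrightarrow F$ with $w\circ u'=u$. Then $w\circ v\circ u=u=\Id_F\circ u$, so uniqueness in the universal property of $u$ forces $w\circ v=\Id_F$. Symmetrically $v\circ w=\Id_{F'}$, and uniqueness of $v$ is part of the construction.

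For (iii), I use the triangle identity for $r\dashv i$. It says $\varepsilon_{rA}\circ r(\eta_{_A})=\Id_{rA}$, where $\varepsilon$ is the counit. As noted just before the fact, $\varepsilon$ is a natural isomorphism because $i$ is fully faithful. Hence $r(\eta_{_A})=\varepsilon_{rA}^{-1}$ is an isomorphism.

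As an alternative for (iii) avoiding the counit: $\eta_{rA}:rA\longrightarrow r^2A$ and $\Id_{rA}$ are both left $\mathcal F$-strong-approximations of $rA\in\mathcal F$. By (ii), $\eta_{rA}$ is then an isomorphism. Naturality of $\eta$ gives $r(\eta_{_A})\circ\eta_{_A}=\eta_{rA}\circ\eta_{_A}$, and uniqueness of factorizations through $\eta_{_A}$ then yields $r(\eta_{_A})=\eta_{rA}$.

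I see no real obstacle here. The only point needing care is (iii), specifically the identification of $r(\eta_{_A})$ with an isomorphism via the triangle identity or the naturality argument above.
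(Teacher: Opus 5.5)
Your proof is correct. The paper omits the proof of this fact as ``direct'', and your argument is exactly the standard verification intended: universality of the unit for (i), the usual uniqueness-of-universal-arrows argument for (ii), and for (iii) the triangle identity $\varepsilon_{rA}\circ r(\eta_{_A})=\Id_{rA}$ together with the invertibility of the counit noted just before the fact. Passing to $\mathcal A^{\rm op}$ gives $(1')$; for (ii$'$), apply (ii) with the roles of $c$ and $c'$ interchanged to get $v:C\longrightarrow C'$ with $c'\circ v=c$.
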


\vskip5pt

\begin{lem} {\rm(\cite[Theorem 4.1]{CHK85})}\label{lem:sle-reflective-ofs} \ $(1)$ \ Let $\mathcal F\subseteq\mathcal A$ be a reflective subcategory with semi-left-exact reflector $r:\mathcal A\longrightarrow\mathcal F$.
Set
\begin{equation*}
\begin{aligned}
\mathcal E_r &= \{f \in {\rm Mor}(\mathcal A)\ \mid \  r(f)\text{ is an isomorphism}\}
\\
\mathcal M_r
&=
\left\{
f: X\longrightarrow Y\ \middle|\begin{array}{c}
\xymatrix@C=0.9cm@R=0.4cm{
X\ar[r]^-{\eta_{_X}}\ar[d]_f & rX\ar[d]^{r(f)}\\
Y\ar[r]^-{\eta_{_Y}} & rY
}
\end{array}\text{is a pullback}
\right\}.
\end{aligned}
\end{equation*}
Then $(\mathcal E_r,\mathcal M_r)$ is an orthogonal factorization system, and $\mathcal E_r$ satisfies the two-out-of-three property.

\vskip5pt

$(1')$ \ Dually,  let $\mathcal C\subseteq\mathcal A$ be a coreflective subcategory with semi-right-exact coreflector $t:\mathcal A\longrightarrow\mathcal C$.
Set
\begin{equation*}
\begin{aligned}
\mathcal E_t &= \left\{
f:X\longrightarrow Y\ \middle|\begin{array}{c}
\xymatrix@C=0.9cm@R=0.4cm{
tX \ar[r]^-{\varepsilon_{_X}} \ar[d]_{t(f)} & X \ar[d]^{f} \\
tY \ar[r]^-{\varepsilon_{_Y}} & Y
}
\end{array}\text{ is a pushout}
\right\}
\\
\mathcal M_t
&= \{\ f \in {\rm Mor}(\mathcal A)\ \mid \  t(f)\text{ is an isomorphism}\}.
\end{aligned}
\end{equation*}
Then $(\E_t,\M_t)$ is an orthogonal factorization system, and $\mathcal M_t$ satisfies the two-out-of-three property.
\end{lem}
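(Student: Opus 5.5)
The plan is to verify criterion (2) of Proposition~\ref{prop:OFS-criterion} for $(\mathcal E_r,\mathcal M_r)$. That is, I will show that every morphism factors as $\mathcal M_r\circ\mathcal E_r$, that $\mathcal E_r\perp\mathcal M_r$, and that both classes are closed under isomorphisms. Part $(1')$ is exactly dual, with pushouts and semi-right-exactness in place of pullbacks and semi-left-exactness, so I will only treat $(1)$.

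Two of these points are formal. Closure of $\mathcal E_r$ under isomorphisms holds because $r$ sends isomorphisms to isomorphisms. Closure of $\mathcal M_r$ holds because pullback squares are stable under composing with isomorphism squares, and naturality of $\eta$ provides such squares. The two-out-of-three property of $\mathcal E_r$ is immediate: $r$ is a functor and isomorphisms have the two-out-of-three property.

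\emph{Orthogonality.} Take a commutative square $m\circ a=b\circ e$ with $e:A\to B$ in $\mathcal E_r$ and $m:X\to Y$ in $\mathcal M_r$. Since $X$ is the pullback of $r(m)$ and $\eta_Y$, a morphism $B\to X$ is the same as a compatible pair $B\to Y$, $B\to rX$. I take the pair $b$ and $r(a)\circ r(e)^{-1}\circ\eta_B$. They are compatible, since
\[
r(m)\circ r(a)\circ r(e)^{-1}\circ\eta_B=r(b)\circ\eta_B=\eta_Y\circ b .
\]
This yields $s:B\to X$ with $m\circ s=b$.

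To get $s\circ e=a$, I compose both sides with the two pullback projections $m$ and $\eta_X$ and use naturality of $\eta$. The key step is
\[
r(a)\circ r(e)^{-1}\circ\eta_B\circ e=r(a)\circ\eta_A=\eta_X\circ a .
\]
For uniqueness, let $s'$ be any lift. Then $r(s')\circ r(e)=r(a)$, so $r(s')=r(a)\circ r(e)^{-1}$, and hence $\eta_X\circ s'=r(s')\circ\eta_B$ equals the component chosen above. The universal property of the pullback then gives $s'=s$.

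\emph{Factorization.} Given $f:X\to Y$, I apply semi-left-exactness with $u=\eta_Y$, which is a left $\mathcal F$-strong-approximation by Fact~\ref{fact:approximation-basic}, and with $g=r(f):rX\to rY$. This yields a pullback $P$ of $r(f)$ along $\eta_Y$, with projections $m:P\to Y$ and $u':P\to rX$, where $u'$ is a left $\mathcal F$-strong-approximation of $P$. Let $e:X\to P$ be induced by the pair $(f,\eta_X)$, so that $f=m\circ e$.

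By Fact~\ref{fact:approximation-basic}(ii) there is a unique isomorphism $\theta:rP\to rX$ with $\theta\circ\eta_P=u'$. The step I expect to need the most care is checking that under $\theta$ the naturality square of $m$ becomes exactly the given pullback square. Concretely, I must verify $r(m)\circ\theta^{-1}=r(f)$ by testing both sides against $\eta_P$ using the universal property. Granting this, $m\in\mathcal M_r$.

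It then remains to show $e\in\mathcal E_r$. From $u'\circ e=\eta_X$ we get $\theta\circ\eta_P\circ e=\eta_X$, which gives $\theta\circ r(e)=\mathrm{Id}_{rX}$ by the uniqueness in the universal property of $\eta_X$. Therefore $r(e)$ is an isomorphism, and $e\in\mathcal E_r$.
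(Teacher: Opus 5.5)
Your proof is correct. The paper gives no proof of this lemma; it simply quotes it from Cassidy--H\'ebert--Kelly, so your argument supplies what the paper takes on citation.

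Verifying criterion (2) of Proposition~\ref{prop:OFS-criterion} is a sound route. Your orthogonality argument uses only that the unit square of $m$ is a pullback and that $r(e)$ is invertible. So it shows that $\mathcal E_r\perp\mathcal M_r$ holds for an \emph{arbitrary} reflection. Semi-left-exactness is needed only to produce factorizations, via the pullback of $\eta_Y$ along $r(f)$.

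A further merit is that you use no limits in $\mathcal A$ beyond the pullbacks that the definition of semi-left-exactness itself provides. This is the generality the paper needs, since it applies the lemma to an arbitrary category $\mathcal A$ (Theorem~\ref{thm:bi-reflective-pairs-bi-reflective-orthogonal-model-structures}).

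The step you flagged does go through. We have $r(m)\circ\eta_P=\eta_Y\circ m=r(f)\circ u'=r(f)\circ\theta\circ\eta_P$, hence $r(m)=r(f)\circ\theta$ by the universal property of $\eta_P$. So the unit square of $m$ is the given pullback square transported along the isomorphism $\theta\colon rP\to rX$, and is therefore a pullback.

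In the last step you implicitly use naturality, $\eta_P\circ e=r(e)\circ\eta_X$, to rewrite $\theta\circ\eta_P\circ e=\eta_X$ as $\theta\circ r(e)\circ\eta_X=\eta_X$. With that made explicit, $\theta\circ r(e)=\operatorname{Id}_{rX}$ and $r(e)=\theta^{-1}$, as you conclude.
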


\vskip5pt

Let $G$ be a group. By the trivial action, any set is a $G$-set. Thus $\mathsf{Set}$ is a full subcategory of
$G\text{-}\mathsf{Set}$, the category of $G$-sets. For a $G$-set $X$, denote by $X/G$ the set of $G$-orbits of $X$,
and for a $G$-map $f: X\longrightarrow Y$, denote by $r(f): X/G \longrightarrow Y/G$ the map given by $Gx \longmapsto Gf(x)$.

\vskip5pt

\begin{prop} \label{reflG-set} \ Let $G$ be a group, and $r: G\text{-}\mathsf{Set} \longrightarrow  \mathsf{Set}$ the functor defined by $r(X) = X/G$.
Then $\mathsf{Set}$ is a reflective subcategory of $G\text{-}\mathsf{Set}$ with reflector $r$, and $r$ is semi-left-exact, and $(\mathcal E_{\mathrm{orb}},\mathcal M_{\mathrm{orb}})$ is an  orthogonal factorization system, where
\begin{equation*}
\begin{aligned}
\mathcal E_{\mathrm{orb}}
&=
\{\ G\text{-map} \ f:X\longrightarrow Y \  \mid \ r(f):X/G\longrightarrow Y/G
 \text{ is a bijection}\}\\
\mathcal M_{\mathrm{orb}}
&=
\left\{G\text{-map} \ f:X\longrightarrow Y \ \middle| \ \begin{array}{c}
\xymatrix@C=1.0cm@R=0.4cm{
X\ar[r]^-{\eta_{_X}}\ar[d]_{f} & X/G\ar[d]^-{r(f)}\\
Y\ar[r]^-{\eta_{_Y}} & Y/G
}
\end{array}\text{ is a pullback in} \ G\text{-}\mathsf{Set}
\right\}.
\end{aligned}
\end{equation*}
\end{prop}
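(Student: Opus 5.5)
The plan has three steps: show that $r$ is a reflector, check semi-left-exactness by an explicit pullback computation, and then read off the factorization system from Lemma~\ref{lem:sle-reflective-ofs}(1).

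\emph{Step 1: $r$ is a reflector.} For a $G$-set $X$ let $\eta_X: X\longrightarrow X/G$ be the orbit map $x\longmapsto Gx$. It is a $G$-map, since $X/G$ carries the trivial action. Let $S$ be a set with trivial action and $f:X\longrightarrow S$ a $G$-map. Then $f(gx)=gf(x)=f(x)$, so $f$ is constant on orbits. Hence $f$ factors uniquely as $\overline f\circ\eta_X$, and uniqueness holds because $\eta_X$ is surjective. So $\eta_X$ is a left $\mathsf{Set}$-strong-approximation, and $r$ is left adjoint to the inclusion, with unit $\eta$ and $r(f)$ as defined. Naturality of $\eta$ is immediate.

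\emph{Step 2: semi-left-exactness.} By Fact~\ref{fact:approximation-basic}(1)(ii), every left $\mathsf{Set}$-strong-approximation $u:A\longrightarrow F$ differs from $\eta_A$ by a unique isomorphism $F\cong A/G$. Replacing $u$ by $\eta_A$ changes the pullback only up to isomorphism, so it suffices to treat $u=\eta_A$.

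Let $g:E\longrightarrow A/G$ be any map of sets. Form the pullback in $G\text{-}\mathsf{Set}$:
\[
P=\{(e,a)\in E\times A \mid g(e)=Ga\},
\]
with action $h(e,a)=(e,ha)$. This is the set-theoretic pullback with the diagonal action, which is the pullback in $G\text{-}\mathsf{Set}$ because limits are computed on underlying sets.

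The key verification is that the projection $u':P\longrightarrow E$ is a left $\mathsf{Set}$-strong-approximation, i.e. that it identifies $P/G$ with $E$.

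\textbf{Main obstacle (surjectivity).} Given $e\in E$, pick $a\in A$ with $Ga=g(e)$; this is possible since $g(e)$ is an orbit, and orbits are nonempty. Then $(e,a)\in P$ and $u'(e,a)=e$.

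\textbf{Injectivity on orbits.} Suppose $(e,a),(e,a')\in P$. Then $Ga=g(e)=Ga'$, so $a'=ha$ for some $h\in G$, and $(e,a')=h(e,a)$. Thus two points of $P$ with the same image under $u'$ lie in the same $G$-orbit.

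\textbf{Conclusion of Step 2.} $u'$ is surjective, $G$-invariant, and its fibres are exactly the orbits of $P$. So $u'$ is isomorphic to $\eta_P$ over $P$, and by Step 1 it is therefore a left $\mathsf{Set}$-strong-approximation.

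\emph{Step 3: the factorization system.} Since $r$ is a semi-left-exact reflector, Lemma~\ref{lem:sle-reflective-ofs}(1) applies directly. Here $\mathcal E_r=\mathcal E_{\mathrm{orb}}$ and $\mathcal M_r=\mathcal M_{\mathrm{orb}}$, because isomorphisms in $\mathsf{Set}$ are exactly bijections. Hence $(\mathcal E_{\mathrm{orb}},\mathcal M_{\mathrm{orb}})$ is an orthogonal factorization system.
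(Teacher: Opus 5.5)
Your proposal is correct and follows essentially the same route as the paper. Both arguments identify the orbit map $\eta_X\colon X\to X/G$ as the unit, reduce an arbitrary left $\mathsf{Set}$-strong-approximation to $\eta_A$ via Fact~\ref{fact:approximation-basic}(1)(ii), form the pullback $P$ with $G$ acting only on the $A$-coordinate, show that the fibres of the projection $P\to E$ are exactly the $G$-orbits of $P$ (the paper's $\rho^{-1}(e)=h(e)\times\{e\}$), and finish with Lemma~\ref{lem:sle-reflective-ofs}(1).
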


\begin{proof} \ It is clear that $(r, i)$ is an adjoint pair, where $i: \mathsf{Set} \hookrightarrow G\text{-}\mathsf{Set}$ is the inclusion. The unit is given by the $G$-map $\eta_{_X}: X\longrightarrow X/G, \ x\longmapsto Gx$.
By Fact \ref{fact:approximation-basic}(1)(i), $\eta_{_X}: X\longrightarrow X/G$ is a left $\mathsf{Set}$-strong-approximation of $X$,
and by Fact \ref{fact:approximation-basic}(1)(ii) any left $\mathsf{Set}$-strong-approximation of $X$ is isomorphic to $\eta_{_X}$.
Let $E$ be a set regarded as a trivial $G$-set, and $h:E\longrightarrow X/G$ be a map.
Put $$P=\{(x,e)\in X\times E\mid \eta_{_X}(x)=h(e)\}.$$ Since $\eta_{_X}: X\longrightarrow X/G$ is surjective, it follows that $P\ne \emptyset$. Consider a $G$-action on $P$ defined by $\sigma\cdot(x,e)=(\sigma x,e),\ \forall \ \sigma \in G.$
This is well defined, since $\eta_X(\sigma x)=\sigma\eta_X(x)=\eta_X(x)=h(e)$.
Then one gets a commutative square
\[
\xymatrix@C=1.0cm@R=0.7cm{
P\ar[r]^-p\ar[d]_{\rho} & X\ar[d]^{\eta_{_X}} \\
E\ar[r]^-h & X/G
}
\]
where $\rho$ and $p$ are projections. By the construction it is clear that this is a pullback square.

\vskip5pt

It remains to prove that $\rho: P \longrightarrow E$ is a  left $\mathsf{Set}$-strong-approximation of $P$.

\vskip5pt

In fact, for each $e\in E$, $h(e)\in X/G$ is a $G$-orbit of $X$, and
\[
\rho^{-1}(e)
=\{(x,e)\in P\}
=\{(x,e)\mid x\in h(e)\}
= h(e)\times\{e\}.
\]
For elements $(x,e), (x',e)\in \rho^{-1}(e)$, since $h(e)$ is a $G$-orbit of $X$, there is $\sigma\in G$ such that $ x'=\sigma x.$
Thus $\sigma\cdot(x,e)=(\sigma x,e)=(x',e).$ This shows that $\rho^{-1}(e)$ is a $G$-orbit of $P$.

\vskip5pt

By definition the $G$-action on $P$ does not change the second coordinate. It follows that any $G$-orbit of $P$ is of the form $\rho^{-1}(e)$ with $e\in E$.
Thus $P$ is the disjoint union
$$ P = \overset \cdot {\bigcup\limits_{e\in E}} \rho^{-1}(e)$$
and hence $E$ is precisely the set of $G$-orbits of $P$, and has the commutative diagram
\[
\xymatrix@C=1.0cm@R=0.5cm{
P\ar@{=}[r]\ar[d]_{\rho} & P\ar[d]^{\eta_{_P}} \\
E\ar[r]^-\phi & P/G
}
\]
where $\phi: E\longrightarrow P/G$ is a bijection given by $e\longmapsto \rho^{-1}(e).$
By Fact \ref{fact:approximation-basic}(1)(i), the $G$-map $\eta_{_P}: P\longrightarrow P/G$ is a left $\mathsf{Set}$-strong-approximation of $P$, and hence
$\rho:P\longrightarrow E$ is also a left $\mathsf{Set}$-strong-approximation of $P$. This proves $r$ is semi-left-exact.

\vskip 5pt

By Lemma~\ref{lem:sle-reflective-ofs}, one gets the orthogonal factorization system $(\mathcal E_{\mathrm{orb}},\mathcal M_{\mathrm{orb}})$.
\end{proof}

\vskip 5pt
Let $\mathbb Q\text{-Vect}$  denote the category of vector spaces over the field $\mathbb Q$ of rational numbers.

\vskip 5pt

\begin{prop} \ \label{reflAb} \ The category $\mathbb Q\text{-Vect}$ is a reflective subcategory of $\operatorname{Ab}$ with reflector $-\otimes_{\mathbb Z}\mathbb Q: \operatorname{Ab} \longrightarrow \mathbb Q\text{-Vect}$, and $-\otimes_{\mathbb Z}\mathbb Q$ is semi-left-exact,
and $(\mathcal E_{\mathbb Q},\mathcal M_{\mathbb Q})$ is an orthogonal factorization system, where
\begin{equation*}
\begin{aligned}
\mathcal E_{\mathbb Q}
&=
\{f \in {\rm Mor}({\rm Ab})\ \mid \  \Ker f\text{ and }\operatorname{Coker}f
 \text{ are abelian torsion groups}\}\\
\mathcal M_{\mathbb Q}
&=
\left\{
f:A\longrightarrow B\ \middle|\begin{array}{c}
\xymatrix@C=1.0cm@R=0.4cm{
A\ar[r]^{\eta_{_A}}\ar[d]_{f}
 & A\otimes_{\mathbb Z}\mathbb Q
     \ar[d]^{f\otimes_{\mathbb Z}\mathbb Q}\\
B\ar[r]_{\eta_{_B}}
 & B\otimes_{\mathbb Z}\mathbb Q
}
\end{array}\text{ is a pullback in} \ {\rm Ab}
\right\}.
\end{aligned}
\end{equation*}
\end{prop}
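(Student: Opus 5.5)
Following the pattern of Proposition \ref{reflG-set}, I would verify three things in order: that $-\otimes_{\mathbb Z}\mathbb Q$ is a reflector onto $\mathbb Q\text{-Vect}$, that it is semi-left-exact, and that $\mathcal E_r$ equals the class described in the statement. The orthogonal factorization system then follows from Lemma~\ref{lem:sle-reflective-ofs}$(1)$, with $\mathcal M_{\mathbb Q}=\mathcal M_r$ by definition.

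\emph{Reflectivity.} The unit is $\eta_A:A\longrightarrow A\otimes_{\mathbb Z}\mathbb Q$, $a\longmapsto a\otimes 1$. For a $\mathbb Q$-vector space $V$, any group homomorphism $f:A\longrightarrow V$ extends uniquely to the $\mathbb Q$-linear map $a\otimes q\longmapsto qf(a)$. Uniqueness holds because every element of $A\otimes\mathbb Q$ has the form $a\otimes \frac1n$, and $\mathbb Q$-linearity forces its image to be $\frac1n f(a)$. I will also use that $\Ker\eta_A$ is the torsion subgroup of $A$ and $A\otimes\mathbb Q$ is the localization $S^{-1}A$ with $S=\mathbb Z\setminus\{0\}$.

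\emph{Description of $\mathcal E_r$.} Since $\mathbb Q$ is flat over $\mathbb Z$, $-\otimes\mathbb Q$ is exact. Hence $f\otimes\mathbb Q$ is an isomorphism if and only if $\Ker f\otimes\mathbb Q=0=\Coker f\otimes\mathbb Q$. An abelian group $T$ satisfies $T\otimes\mathbb Q=0$ exactly when $T$ is torsion, since $a\otimes1=0$ iff $a$ is torsion and $T\otimes\mathbb Q$ is generated by elements $a\otimes\frac1n$.

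\emph{Semi-left-exactness.} This is the main step. By Fact~\ref{fact:approximation-basic}(1)(ii), every left strong-approximation of $A$ is isomorphic to $\eta_A$ under $A$, so I may take $u=\eta_A$. Let $g:E\longrightarrow A\otimes\mathbb Q$ with $E$ a $\mathbb Q$-vector space, and let $P=\{(a,e)\in A\oplus E\mid a\otimes 1=g(e)\}$ be the pullback, with projection $u':P\longrightarrow E$. I must show $u'$ is a left strong-approximation, i.e. that $u'\otimes\mathbb Q$ composed with the identification $E\otimes\mathbb Q\cong E$ is an isomorphism. Equivalently, $\Ker u'$ and $\Coker u'$ are torsion, because $\eta_P$ factors as $P\xrightarrow{u'}E\cong E\otimes\mathbb Q$ and $u'\otimes \mathbb Q$ is an isomorphism iff $u'\in\mathcal E_r$.
\begin{itemize}
\item $\Ker u'=\{(a,0)\mid a\otimes1=0\}\cong\Ker\eta_A$, which is torsion.
\item For $\Coker u'$: given $e\in E$, write $g(e)=a\otimes\frac1n$. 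Then $ne\in E$ satisfies $g(ne)=a\otimes 1$, so $(a,ne)\in P$ and $ne\in\operatorname{Im}u'$. Hence $\Coker u'$ is torsion.
\end{itemize}
Therefore $u'\otimes\mathbb Q$ is an isomorphism, so $u'$ is isomorphic to $\eta_P$ and is a left $\mathbb Q\text{-Vect}$-strong-approximation of $P$.

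The obstacle I expect is exactly this semi-left-exactness verification. In particular, I need to be careful that "left strong-approximation" is preserved under the isomorphism $E\cong E\otimes\mathbb Q$; this can be handled by uniqueness in Fact~\ref{fact:approximation-basic}.
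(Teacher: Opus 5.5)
Your proposal is correct. Its overall architecture matches the paper's: reflectivity via the unit $a\mapsto a\otimes 1$, then the description of $\mathcal E_r$ via exactness of $-\otimes_{\mathbb Z}\mathbb Q$ together with ``$T\otimes_{\mathbb Z}\mathbb Q=0$ iff $T$ is torsion'', then Lemma~\ref{lem:sle-reflective-ofs}. The genuine difference is in the semi-left-exactness step.

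The paper never looks at elements of the pullback $P$. It uses that the exact functor $-\otimes_{\mathbb Z}\mathbb Q$ preserves pullbacks, so $p\otimes_{\mathbb Z}\mathbb Q$ is a pullback of $\eta_A\otimes_{\mathbb Z}\mathbb Q$. The latter is an isomorphism by Fact~\ref{fact:approximation-basic}(1)(iii), hence so is $p\otimes_{\mathbb Z}\mathbb Q$. That argument is conceptual: it shows that any reflector preserving the relevant pullbacks of units (in particular any left exact reflector) is semi-left-exact.

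You instead compute $\Ker u'\cong\Ker\eta_A$ and show $ne\in\operatorname{Im}u'$ by hand. You then invoke your already-established characterization of $\mathcal E_r$ to conclude that $u'\otimes_{\mathbb Z}\mathbb Q$ is invertible. This is more elementary and self-contained, but it relies on the explicit localization description of $A\otimes_{\mathbb Z}\mathbb Q$ (every element is $a\otimes\frac1n$, and $a\otimes 1=0$ iff $a$ is torsion). It also requires proving the description of $\mathcal E_r$ before semi-left-exactness. That ordering is legitimate, since $\mathcal E_r$ is defined from $r$ alone.

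One small wording fix: $\eta_P$ does not literally factor through $u'$. Naturality gives $\eta_E\circ u'=(u'\otimes_{\mathbb Z}\mathbb Q)\circ\eta_P$, so $u'=\eta_E^{-1}\circ(u'\otimes_{\mathbb Z}\mathbb Q)\circ\eta_P$, where $\eta_E^{-1}\circ(u'\otimes_{\mathbb Z}\mathbb Q)$ is an isomorphism. This is exactly the paper's final step, with $\tau=\eta_E^{-1}$.
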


\begin{proof} \
It is clear that $(-\otimes_{\mathbb Z}\mathbb Q, i)$ is an adjoint pair, where $i: \mathbb Q\text{-Vect} \hookrightarrow \operatorname{Ab}$ is the inclusion. The unit is given by the $\eta_{_A}: A\longrightarrow A\otimes_{\mathbb Z}\mathbb Q, \ a\longmapsto a\otimes_{\mathbb Z}1$.
By Fact \ref{fact:approximation-basic}(1)(i), $\eta_{_A}: A\longrightarrow A\otimes_{\mathbb Z}\mathbb Q$ is a left $\mathbb Q\text{-Vect}$-strong-approximation of $A$,
and by Fact \ref{fact:approximation-basic}(1)(ii) any left $\mathbb Q\text{-Vect}$-strong-approximation of $A$ is isomorphic to $\eta_{_A}$.
For a $\mathbb Q$-vector space $V$ and a $\mathbb Q$-linear map $h:V\longrightarrow A\otimes_{\mathbb Z}\mathbb Q$, consider the pullback in $\operatorname{Ab}$
\[
\xymatrix@C=1.0cm@R=0.7cm{
P\ar[r]\ar[d]_p & A\ar[d]^{\eta_{_A}} \\
V\ar[r]^-h & A\otimes_{\mathbb Z}\mathbb Q .
}
\]

\vskip5pt\noindent Since $_\mathbb Z\mathbb Q$ is a flat module, it follows that $-\otimes_{\mathbb Z}\mathbb Q: \operatorname{Ab} \longrightarrow \mathbb Q\text{-Vect}$ is an exact functor, and
$-\otimes_{\mathbb Z}\mathbb Q$ preserves pullbacks. Thus
\[\xymatrix@C=1.0cm@R=0.7cm{
P\otimes_{\mathbb Z}\mathbb Q\ar[r]\ar[d]_{p\otimes_{\mathbb Z}\mathbb Q} & A\otimes_{\mathbb Z}\mathbb Q\ar[d]^{\eta_{_A}\otimes_{\mathbb Z}\mathbb Q} \\
V\otimes_{\mathbb Z}\mathbb Q\ar[r]^-{h\otimes_{\mathbb Z}\mathbb Q} & (A\otimes_{\mathbb Z}\mathbb Q)\otimes_{\mathbb Z}\mathbb Q
}
\]
\vskip5pt\noindent is a pullback square in $\mathbb Q\text{-Vect}$.
By Fact \ref{fact:approximation-basic}(1)(iii),  $\eta_{_A}\otimes_{\mathbb Z}\mathbb Q$ is an isomorphism.
Hence $p\otimes_{\mathbb Z}\mathbb Q$ is an isomorphism.
Since $\tau:V\otimes_{\mathbb Z}\mathbb Q\longrightarrow V,\ v\otimes q\mapsto qv,$ is an isomorphism,
one has the isomorphism $\tau\circ(p\otimes_{\mathbb Z}\mathbb Q): P\otimes_{\mathbb Z}\mathbb Q \longrightarrow V$.
By Fact \ref{fact:approximation-basic}(1)(i), $\eta_{_P}: P\longrightarrow P\otimes_{\mathbb Z}\mathbb Q$ is a left $\mathbb Q\text{-Vect}$-strong-approximation of $P$.
It follows from the commutative diagram
\[
\xymatrix@C=1.8cm@R=0.7cm{P\ar@{=}[r]\ar[d]_-{\eta_{_P}}
 &  P\ar[d]^-{p} \\ P\otimes_{\mathbb Z}\mathbb Q
\ar[r]^-{\tau\circ (p\otimes_{\mathbb Z}\mathbb Q)} & V}
\]
that $p$ is a strong left $\mathbb Q\text{-Vect}$-approximation of $P$. This proves that $-\otimes_{\mathbb Z}\mathbb Q$ is semi-left-exact.

\vskip 5pt

Note that $A$ is an abelian torsion group if and only if $A\otimes_{\mathbb Z}\mathbb Q = 0$. In fact, if $A$ is an abelian torsion group, then it is clear that $A\otimes_{\mathbb Z}\mathbb Q = 0$.
Conversely, if $A\otimes_{\mathbb Z}\mathbb Q = 0$ and $A$ is not an abelian torsion group, then there is $a\in A$ whose order is infinite. Thus one gets $\langle a\rangle\cong \mathbb Z$ and an inclusion $j:\mathbb Z \hookrightarrow A$. Applying $-\otimes_{\mathbb Z}\mathbb Q$  one gets $j\otimes_{\mathbb Z}\mathbb Q = 0$, and hence ${\mathbb Z}\otimes_{\mathbb Z}\mathbb Q \hookrightarrow 0$, which is a contradiction.

\vskip 5pt
Thus
\begin{align*}
&\{\ f \in {\rm Mor}({\rm Ab})\
 \mid f\otimes_{\mathbb Z}\mathbb Q
 \text{ is an isomorphism}\} \\=
&\{f \in {\rm Mor}({\rm Ab})\ \mid \  \Ker f\otimes_{\mathbb Z}\mathbb Q = 0 = \operatorname{Coker}f\otimes_{\mathbb Z}\mathbb Q
\} \\=
&\{f \in {\rm Mor}({\rm Ab})\ \mid \  \Ker f\text{ and }\operatorname{Coker}f
 \text{ are abelian torsion groups}\}.
\end{align*}
By Lemma~\ref{lem:sle-reflective-ofs}, $(\mathcal E_{\mathbb Q},\mathcal M_{\mathbb Q})$ is an orthogonal factorization system.
\end{proof}

\subsection{\bf Bi-reflective pairs and  bi-reflective model structures}  We will define bi-reflective pairs and  bi-reflective model structures, and then give the one to one correspondence between them.

\begin{defn}\label{defn:bi-reflectivepair} \ Let $\mathcal A$ be a category. A pair $(\mathcal C,\mathcal F)$ of full subcategories of $\mathcal A$ is an \emph{bi-reflective pair} if the following conditions are satisfied:

\vskip5pt
\hskip20pt {\rm (A1)} \ $\mathcal C$ is a coreflective subcategory with coreflector $t: \mathcal A \longrightarrow \mathcal C$, and $t$ is semi-right-exact$;$
and $\mathcal F$ is a reflective subcategory with reflector $r: \mathcal A \longrightarrow \mathcal F$, and $r$ is semi-left-exact. Consider the orthogonal factorization systems
$(\mathcal E_t,\mathcal M_t)$ and $(\mathcal E_r,\mathcal M_r)$ as given in Lemma \ref{lem:sle-reflective-ofs}, i.e.,
\begin{equation*}
\begin{aligned}
\mathcal E_t &= \left\{
f:X\longrightarrow Y\ \middle|\begin{array}{c}
\xymatrix@C=0.9cm@R=0.4cm{
tX \ar[r]^-{\varepsilon_{_X}} \ar[d]_{t(f)} & X \ar[d]^{f} \\
tY \ar[r]^-{\varepsilon_{_Y}} & Y
}
\end{array}\text{ is a pushout}
\right\}
\\
\mathcal M_t
&= \{\ f \in {\rm Mor}(\mathcal A)\ \mid \  t(f)\text{ is an isomorphism}\}
\end{aligned}
\end{equation*}

 \begin{equation*}
 \begin{aligned}
\mathcal E_r &= \{f \in {\rm Mor}(\mathcal A)\ \mid \  r(f)\text{ is an isomorphism}\}
\\
\mathcal M_r
&=
\left\{
f: X\longrightarrow Y\ \middle|\begin{array}{c}
\xymatrix@C=0.9cm@R=0.4cm{
X\ar[r]^-{\eta_{_X}}\ar[d]_f & rX\ar[d]^{r(f)}\\
Y\ar[r]^-{\eta_{_Y}} & rY
}
\end{array}\text{is a pullback}
\right\}.
\end{aligned}
\end{equation*}

\vskip5pt
\hskip20pt {\rm (A2)} \   $\mathcal E_r\perp\mathcal M_t$. In particular, $\mathcal E_r\subseteq\mathcal E_t$ and $\mathcal M_t\subseteq\mathcal M_r$.

\vskip5pt
\hskip20pt {\rm (A3)} \ $\mathcal E_r\circ\mathcal M_t\subseteq\mathcal M_t\circ\mathcal E_r$.
\end{defn}
\vskip5pt

\begin{defn}\label{defn:bi-reflective-orthogonal-model-structure} \ Let $\mathcal A$ be a category. An orthogonal model structure $(\CoFib,\Fib,\Weq)$ on $\mathcal A$ is an \emph{bi-reflective model structure} if the following conditions are satisfied:

\vskip5pt
\hskip20pt {\rm (M1)} \  $\mathcal A$ has enough cofibrant objects and enough fibrant objects. Let $\mathcal C$ and $\mathcal F$ denote the classes of cofibrant objects and fibrant objects, respectively.
\vskip5pt
\hskip20pt {\rm (M2)} \ The classes $\TCoFib$ and $\TFib$ satisfy the two-out-of-three property.
\vskip5pt
\hskip20pt ${\rm (M3)}$ \  For every $p:C\longrightarrow D$ in $\TFib$ with $C\in\mathcal C$ and every $u:C\longrightarrow E$ in $\CoFib$, there exists a pushout square
\[
\xymatrix@C=0.9cm@R=0.6cm{
C\ar[r]^{u}\ar[d]_{p} & E\ar[d]^{q}\\
D\ar[r] & Q
}
\]
such that $q\in\TFib$.

\vskip5pt
\hskip20pt ${\rm (M3')}$ \  For every $i:X\longrightarrow Z$ in $\TCoFib$ with $Z\in\mathcal F$ and every $v:Y\longrightarrow Z$ in $\Fib$, there exists a pullback square
\[
\xymatrix@C=0.9cm@R=0.6cm{
P\ar[r]\ar[d]_{j} & X\ar[d]^{i}\\
Y\ar[r]^-{v} & Z
}
\]
such that $j\in\TCoFib$.
\end{defn}

The following  result will be useful in latter applications.

\begin{prop}\label{prop:weq-only-composition}\ Let $(\mathcal E,\mathcal M)$ and $(\mathcal E',\mathcal M')$ be orthogonal factorization systems on $\mathcal A$ such that $\mathcal E\perp\mathcal M'$. Suppose that $\mathcal E$ and $\mathcal M'$ satisfy the two-out-of-three property, and put $\Weq:=\mathcal M'\circ\mathcal E$. Then $\Weq$ satisfies the two-out-of-three property if and only if it is closed under compositions.
\end{prop}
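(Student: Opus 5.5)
The forward direction is immediate: if $\Weq$ has the two-out-of-three property, then taking $f,g\in\Weq$ shows $g\circ f\in\Weq$. So the plan concentrates on the converse. Assume $\Weq=\mathcal M'\circ\mathcal E$ is closed under compositions, and let $f:X\longrightarrow Y$, $g:Y\longrightarrow Z$ be composable. Only two cases need proof: (a) $f,\ g\circ f\in\Weq$ implies $g\in\Weq$; and (b) $g,\ g\circ f\in\Weq$ implies $f\in\Weq$. The tools are uniqueness of factorizations (Proposition \ref{prop:OFS-criterion}(3)), the two-out-of-three property of $\mathcal E$ and of $\mathcal M'$, and the orthogonality $\mathcal E\perp\mathcal M'$. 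The last gives $\mathcal E\subseteq{}^{\perp}\mathcal M'=\mathcal E'$ and $\mathcal M'\subseteq\mathcal E^{\perp}=\mathcal M$.

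First I record a characterization of $\Weq$. For any $h$, factor $h=m\circ e$ with $e\in\mathcal E$, $m\in\mathcal M$. Claim: $h\in\Weq$ if and only if $m\in\mathcal M'$. Suppose $h=m'\circ e''$ with $e''\in\mathcal E$ and $m'\in\mathcal M'\subseteq\mathcal M$. This is also an $(\mathcal E,\mathcal M)$-factorization of $h$, so by uniqueness $m\cong m'$. Since $\mathcal M'$ is closed under isomorphisms, $m\in\mathcal M'$. The dual statement holds for the $(\mathcal E',\mathcal M')$-factorization: $h\in\Weq$ if and only if its $\mathcal E'$-part lies in $\mathcal E$.

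For case (a), write $f=m_1 e_1$ with $m_1\in\mathcal M'$, $e_1\in\mathcal E$, and factor $g\circ m_1=m_2 e_2$ with $e_2\in\mathcal E$, $m_2\in\mathcal M$. Then $g\circ f=m_2\circ(e_2e_1)$, and $e_2e_1\in\mathcal E$. This is an $(\mathcal E,\mathcal M)$-factorization of a weak equivalence, so $m_2\in\mathcal M'$ by the characterization. Now $g\circ m_1=m_2e_2\in\Weq$. It remains to cancel $m_1$. I would factor $g=m_3e_3$ in $(\mathcal E,\mathcal M)$ and then factor $e_3\circ m_1=m_4e_4$. Comparing the resulting factorizations of $g\circ m_1$, namely $(m_3m_4)\circ e_4$ and $m_2\circ e_2$, gives $m_3m_4\cong m_2\in\mathcal M'$. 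The goal is to deduce $m_3\in\mathcal M'$ via two-out-of-three for $\mathcal M'$, which requires $m_4\in\mathcal M'$.

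So the crux is the following: if $e\in\mathcal E$ and $m\in\mathcal M'$, then the $\mathcal M$-part of $e\circ m$ lies in $\mathcal M'$. This is exactly the statement that $e\circ m\in\Weq$. That follows from the hypothesis that $\Weq$ is closed under composition, because $e,m\in\Weq$. Hence $m_4\in\mathcal M'$, and then $m_3\in\mathcal M'$, so $g=m_3e_3\in\Weq$.

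Case (b) is dual. Use $(\mathcal E',\mathcal M')$-factorizations and the dual characterization, cancel using two-out-of-three for $\mathcal E$, and invoke closure of $\Weq$ under composition for $m\circ e$ with $m\in\mathcal M'$ and $e\in\mathcal E$. The main obstacle is the bookkeeping in the cancellation step: making sure every comparison of factorizations is an honest comparison within a single factorization system, so that uniqueness up to isomorphism applies and closure under isomorphisms transfers membership.
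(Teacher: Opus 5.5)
Your proof is correct and is essentially the paper's argument. In both, uniqueness of $(\mathcal E,\mathcal M)$-factorizations together with $\mathcal M'\subseteq\mathcal M$ forces the $\mathcal M$-part of a weak equivalence into $\mathcal M'$. Closure of $\Weq$ under composition is used exactly once, and the cancellation is done by two-out-of-three for $\mathcal M'$. The paper applies closure to $i\circ f$ (with $g=p\circ i$) and compares directly with a factorization of $g\circ f$, so your preliminary step showing $g\circ m_1\in\Weq$ is not needed.

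One slip in your sketch of case (b): the composite that actually requires the closure hypothesis is again of the form $e\circ m$ with $e\in\mathcal E$ and $m\in\mathcal M'$. Concretely, it is the $\mathcal E$-part of $g$ composed after the $\mathcal M'$-part of the $(\mathcal E',\mathcal M')$-factorization of $f$. It is not $m\circ e$, which lies in $\Weq$ by definition.
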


\begin{proof}
Let $X\overset{f}{\longrightarrow}Y\overset{g}{\longrightarrow}Z$ be morphisms in $\mathcal A$.
Assume that $\Weq$ is closed under compositions. Suppose that $f\in\Weq$ and $g\circ f\in\Weq$. Factorize $g=p\circ i$ with $i\in\mathcal E$ and $p\in\mathcal M$.
Since any isomorphism is in $\mathcal E\cap \mathcal M = \mathcal E'\cap \mathcal M'$, it follows that  $i\in\Weq$, and hence  $(i\circ f)\in\Weq$.
By $\Weq =\mathcal M'\circ\mathcal E$ one has $i\circ f=q\circ j$ with $j\in\mathcal E$ and $q\in\mathcal M'$.
\[\xymatrix@C=0.8cm@R=0.3cm{
  X\ar[rr]^-{f}\ar[ddrr]_-{j} & {} & Y\ar[rr]^-{g}\ar[dr]_-{i} & {} & Z \\
  {} & {} & {} & W\ar[ur]_-{p} & {} \\
  {} & {} & L\ar[ur]_-{q} & {} &
}\]
Since $q\in\mathcal M'\subseteq\mathcal M$, it follows that $g\circ f =(p\circ q)\circ j$ with $j\in\mathcal E$ and $(p\circ q)\in\mathcal M$.
Since $(g\circ f)\in\Weq = \mathcal M'\circ\mathcal E$, one has  $g\circ f =u\circ v$ with $v:X \longrightarrow M$  in $\mathcal E$ and $u:M \longrightarrow Z$
in $\mathcal M'\subseteq\mathcal M$. Thus $(g\circ f)$ has two $(\mathcal E,\mathcal M)$-factorizations: $g\circ f =(p\circ q)\circ j =u\circ v$.
By Proposition~\ref{prop:OFS-criterion}(3), there exists a unique isomorphism $\theta$ such that $\theta \circ j=v$ and $u\circ \theta=p\circ q:$
\[\xymatrix@C=0.8cm@R=0.3cm{
  {} & {} & L\ar[dr]^-{q}\ar@{-->}[dd]^-{\theta} & {} & {} \\
  {} & {} & {} & W\ar[dr]^-{p} & {} \\
  X\ar[uurr]^-{j}\ar[rr]^-{v} & {} & M\ar[rr]^-{u} & {} & Z.
}\]
Since $q\in\mathcal M'$ and $p\circ q = u\circ \theta\in\mathcal M'$ and $\mathcal M'$ satisfies the two-out-of-three property, it follows that $p\in\mathcal M'$, and hence $g=p\circ i\in \mathcal M'\circ\mathcal E =\Weq$.

\vskip 5pt

Dually, if $g\in\Weq$ and $g\circ f\in\Weq$ then $f\in\Weq$. \end{proof}

\begin{prop} \label{prop:bi-reflective-A3}\
Let $\mathcal C$ be a coreflective subcategory of category $\mathcal A$ with semi-right-exact coreflector $t$, and $\mathcal F$ a reflective subcategory of category $\mathcal A$ with semi-left-exact reflector $r$.
Let  $(\mathcal E_t,\mathcal M_t)$ and $(\mathcal E_r,\mathcal M_r)$ be the orthogonal factorization systems given in {\rm Lemma \ref{lem:sle-reflective-ofs}}, i.e.,
\begin{equation*}
\begin{aligned}
\mathcal E_t &= \left\{
f:X\longrightarrow Y\ \middle|\begin{array}{c}
\xymatrix@C=0.9cm@R=0.4cm{
tX \ar[r]^-{\varepsilon_{_X}} \ar[d]_{t(f)} & X \ar[d]^{f} \\
tY \ar[r]^-{\varepsilon_{_Y}} & Y
}
\end{array}\text{ is a pushout}
\right\}
\\
\mathcal M_t
&= \{\ f \in {\rm Mor}(\mathcal A)\ \mid \  t(f)\text{ is an isomorphism}\}
\end{aligned}
\end{equation*}
 \begin{equation*}
 \begin{aligned}
\mathcal E_r &= \{f \in {\rm Mor}(\mathcal A)\ \mid \  r(f)\text{ is an isomorphism}\}
\\
\mathcal M_r
&=
\left\{
f: X\longrightarrow Y\ \middle|\begin{array}{c}
\xymatrix@C=0.9cm@R=0.4cm{
X\ar[r]^-{\eta_{_X}}\ar[d]_f & rX\ar[d]^{r(f)}\\
Y\ar[r]^-{\eta_{_Y}} & rY
}
\end{array}\text{is a pullback}
\right\}.
\end{aligned}
\end{equation*}

\vskip5pt

Put $\Weq:=\mathcal M_t\circ\mathcal E_r$. If $\mathcal E_r\perp\mathcal M_t$, then the following conditions are equivalent:

\vskip5pt
\hskip20pt ${\rm (1)}$ \   $\mathcal E_r\circ\mathcal M_t\subseteq\mathcal M_t\circ\mathcal E_r$.

\vskip5pt
\hskip20pt ${\rm (2)}$ \   $r\mathcal M_t\subseteq\mathcal M_t$.

\vskip5pt
\hskip20pt ${\rm (2')}$ \   $t\mathcal E_r\subseteq\mathcal E_r$.

\vskip5pt
\hskip20pt ${\rm (3)}$ \   $\Weq$ has the two-out-of-three property.

\end{prop}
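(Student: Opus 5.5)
The plan is to show (1)$\Leftrightarrow$(3) using Proposition~\ref{prop:weq-only-composition}, then (1)$\Leftrightarrow$(2) directly, and to get (1)$\Leftrightarrow$(2') by duality.

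\textbf{(1)$\Leftrightarrow$(3).} We apply Proposition~\ref{prop:weq-only-composition} with $(\mathcal E,\mathcal M)=(\mathcal E_r,\mathcal M_r)$ and $(\mathcal E',\mathcal M')=(\mathcal E_t,\mathcal M_t)$. The hypotheses hold: $\mathcal E_r\perp\mathcal M_t$ is assumed, and $\mathcal E_r$, $\mathcal M_t$ have the two-out-of-three property by Lemma~\ref{lem:sle-reflective-ofs}. So $\Weq$ has the two-out-of-three property if and only if it is closed under composition.
\begin{itemize}
\item If (1) holds, then $(\mathcal M_t\circ\mathcal E_r)\circ(\mathcal M_t\circ\mathcal E_r)\subseteq \mathcal M_t\circ\mathcal M_t\circ\mathcal E_r\circ\mathcal E_r\subseteq\Weq$, since each class of an orthogonal factorization system is closed under composition.
\item Conversely, identities lie in $\mathcal E_r\cap\mathcal M_t$, so $\mathcal E_r\subseteq\Weq$ and $\mathcal M_t\subseteq\Weq$. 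Closure under composition then gives $\mathcal E_r\circ\mathcal M_t\subseteq\Weq$, which is (1).
\end{itemize}

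\textbf{(1)$\Rightarrow$(2).} Let $m:X\to Y$ be in $\mathcal M_t$. Naturality of $\eta$ gives $\eta_Y\circ m=r(m)\circ\eta_X$.
\begin{itemize}
\item By Fact~\ref{fact:approximation-basic}(1)(iii), $r(\eta_Y)$ is an isomorphism, so $\eta_Y\in\mathcal E_r$.
\item By (1), $\eta_Y\circ m=m'\circ e$ with $e\in\mathcal E_r$ and $m'\in\mathcal M_t\subseteq\mathcal M_r$.
\item Any morphism between objects of $\mathcal F$ lies in $\mathcal M_r$, since its $\eta$-square has isomorphic horizontal arrows and is therefore a pullback. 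Hence $r(m)\circ\eta_X$ is also an $(\mathcal E_r,\mathcal M_r)$-factorization of the same morphism.
\item By uniqueness of factorizations (Proposition~\ref{prop:OFS-criterion}(3)), $r(m)$ is isomorphic to $m'$ in the arrow category. Since $\mathcal M_t$ is closed under isomorphisms, $r(m)\in\mathcal M_t$.
\end{itemize}

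\textbf{(2)$\Rightarrow$(1).} Take $X\xrightarrow{m}Y\xrightarrow{e}Z$ with $m\in\mathcal M_t$ and $e\in\mathcal E_r$. Factor $e\circ m=m''\circ e''$ with $e''\in\mathcal E_r$ and $m''\in\mathcal M_r$; it suffices to show $m''\in\mathcal M_t$.
\begin{itemize}
\item Applying $r$ gives $r(e)\circ r(m)=r(m'')\circ r(e'')$. Here $r(e)$ and $r(e'')$ are isomorphisms and $r(m)\in\mathcal M_t$ by (2), so $r(m'')\in\mathcal M_t$ because $\mathcal M_t$ contains isomorphisms and is closed under composition.
\item Since $m''\in\mathcal M_r$, it is a pullback of $r(m'')$ along $\eta$.
\item The coreflector $t$ is a right adjoint, so it preserves pullbacks; more precisely, $t$ of a pullback square in $\mathcal A$ is a pullback square in $\mathcal C$. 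Thus $t(m'')$ is a pullback of the isomorphism $t(r(m''))$, hence an isomorphism. So $m''\in\mathcal M_t$.
\end{itemize}

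\textbf{(1)$\Leftrightarrow$(2').} This is the dual argument.
\begin{itemize}
\item For (1)$\Rightarrow$(2'), take $e:X\to Y$ in $\mathcal E_r$ and use $e\circ\varepsilon_X=\varepsilon_Y\circ t(e)$ with $\varepsilon_X\in\mathcal M_t$. Apply (1) and uniqueness of $(\mathcal E_t,\mathcal M_t)$-factorizations, noting that morphisms between objects of $\mathcal C$ lie in $\mathcal E_t$. This gives $t(e)\in\mathcal E_r$.
\item For (2')$\Rightarrow$(1), factor $e\circ m$ through $(\mathcal E_t,\mathcal M_t)$ and show the $\mathcal E_t$-part lies in $\mathcal E_r$. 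That part is a pushout of $t$ of itself, and $r$, being a left adjoint, preserves pushouts.
\end{itemize}

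\textbf{Main obstacle.} The delicate point is the step in (2)$\Rightarrow$(1) (and its dual) where a limit-preservation property of the adjoint has to be transported across the inclusion. One must check that $t$ applied to the pullback square gives a pullback in $\mathcal C$ whose one leg $t(r(m''))$ is an isomorphism. That suffices, since a pullback of an isomorphism is an isomorphism in any category.
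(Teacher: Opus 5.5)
Your proof is correct and follows essentially the paper's route. You get (1)$\Leftrightarrow$(3) from Proposition~\ref{prop:weq-only-composition} exactly as the paper does. Your key step in (2)$\Rightarrow$(1) is the paper's (2)$\Rightarrow$(3) argument: $t$ preserves the pullback square of $m''\in\mathcal M_r$, so $t(m'')$ is a pullback of the isomorphism $(t\circ r)(m'')$. The paper uses this to show $\Weq=\{f\mid (t\circ r)f \text{ is an isomorphism}\}$.

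The only real variation is that you link (2) to (1) rather than to (3). In (1)$\Rightarrow$(2) you compare the two $(\mathcal E_r,\mathcal M_r)$-factorizations $m'\circ e$ and $r(m)\circ\eta_X$ of $\eta_Y\circ m$ via uniqueness of factorizations. The paper instead uses two-out-of-three to put $r(f)$ in $\Weq$, and then $\Weq\cap\mathcal M_r=\mathcal M_t$ from Proposition~\ref{prop:retract-argument}. Both work.
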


\begin{proof}\ {\rm (1)} $\Longrightarrow$ {\rm (3)}: \ If \ $\mathcal E_r\circ\mathcal M_t\subseteq\mathcal M_t\circ\mathcal E_r$, then
$$\Weq\circ\Weq=\mathcal M_t\circ\mathcal E_r\circ\mathcal M_t\circ\mathcal E_r\subseteq\mathcal M_t\circ\mathcal M_t\circ\mathcal E_r\circ\mathcal E_r\subseteq\mathcal M_t\circ\mathcal E_r=\Weq$$ i.e.,  $\Weq$ is closed under compositions,
and hence  $\Weq$ has the two-out-of-three property, by Proposition~\ref{prop:weq-only-composition}.

\vskip 5pt

{\rm (3)} $\Longrightarrow$ {\rm (1)}: \ Since $\mathcal E_r\subseteq\Weq$ and $\mathcal M_t\subseteq\Weq$, it follows from the two-out-of-three property that $\mathcal E_r\circ\mathcal M_t\subseteq\Weq=\mathcal M_t\circ\mathcal E_r$.

\vskip 5pt

{\rm (2)} $\Longrightarrow$ {\rm (3)}: \  Suppose that $r\mathcal M_t\subseteq\mathcal M_t$. It suffices to prove claim:
$$\Weq=\{f \in {\rm Mor}(\mathcal A)\ \mid \  (t\circ r)f\text{ is an isomorphism}\}.$$

\vskip5pt

In fact, if $f=m\circ e\in \Weq =\mathcal M_t\circ\mathcal E_r$ with $e\in\mathcal E_r$ and $m\in\mathcal M_t$, then  $r(e)$ is an isomorphism (cf. the definition of $\mathcal E_r$), and $r(m)\in r\mathcal \M_t \subseteq \M_t$. Thus $(t\circ r)e = t(r(e))$ is an isomorphism, and $(t\circ r)m = t(r(m))$ is an isomorphism (cf. the definition of $\mathcal M_t$). Thus $(t\circ r)f=(t\circ r)(m\circ e)$ is an isomorphism.

\vskip 5pt

Conversely, suppose that $(t\circ r)f$ is an isomorphism.  Factorize $f:X\longrightarrow Y$ as $f=m\circ e$, where $e:X\longrightarrow P$ lies in $\mathcal E_r$ and $m:P\longrightarrow Y$ lies in $\mathcal M_r$.
It suffices to show $m\in\mathcal M_t$, i.e., $t(m)$ is an isomorphism.
Since $m\in\mathcal M_r$, by definition one has a pullback square in $\mathcal A$:
\[
\xymatrix@C=0.9cm@R=0.6cm{
P\ar[r]^{\eta_P}\ar[d]_{m} & rP\ar[d]^{r(m)}\\
Y\ar[r]^{\eta_{_Y}} & rY.
}
\]
Since $(i, t)$ is an adjoint pair, where $i: \mathcal C \hookrightarrow \mathcal A$ is the inclusion, $t$ preserves pullbacks, and hence one has the pullback square in $\mathcal C$:
\[
\xymatrix@C=0.9cm@R=0.6cm{
tP\ar[r]^-{t(\eta_P)}\ar[d]_{t(m)} & (t\circ r)P \ar[d]^{(t\circ r)m}\\
tY\ar[r]^-{t(\eta_{_Y})} & (t\circ r)Y.
}
\]
Since $e\in \mathcal E_r$, one has $(t\circ r)e$ is an isomorphism. By assumption $(t\circ r)f$ is an isomorphism, then so is $(t\circ r)m$. Therefore $t(m)$ is an isomorphism.

\vskip 5pt

{\rm (3)} $\Longrightarrow$ ${\rm (2)}$: \ Assume that $\Weq =\mathcal M_t\circ\mathcal E_r$ has the two-out-of-three property. We need to prove $r\mathcal M_t\subseteq\mathcal M_t$.
For  $f:X\longrightarrow Y$ in $\mathcal{M}_t$, one has commutative diagram:
\[
\xymatrix@C=0.9cm@R=0.6cm{
X\ar[r]^-{\eta_{_X}}\ar[d]_{f} & rX\ar[d]^{r(f)}\\
Y\ar[r]^-{\eta_{_Y}} & rY
}
\]
where $\eta:{\rm Id}_{\mathcal{A}}\longrightarrow j\circ r$ is the unit of adjoint pair $(r, j)$, and $j:\mathcal{F}\longrightarrow \mathcal{A}$ is the inclusion.
By Fact \ref{fact:approximation-basic}(1)(iii), one has $r(\eta_X)$ is isomorphism, thus $\eta_{_X}\in\mathcal E_r$ and $\eta_{_Y}\in\mathcal E_r$. Since $\eta_{_X}, \eta_{_Y}, f$ are in $\Weq =\mathcal M_t\circ\mathcal E_r$ and $\Weq$ has two-out-of-three property, it follows that $r(f)\in\Weq$.

\vskip 5pt

By the commutative square:
\[
\xymatrix@C=0.9cm@R=0.6cm{
rX\ar[r]^-{r(\eta_{_X})}\ar[d]_{r(f)} & r^2X\ar[d]^{r^2(f)}\\
rY\ar[r]^-{r(\eta_{_Y})} & r^2Y
}
\]
with isomorphisms $r(\eta_{_X})$ and $r(\eta_{_Y})$, one sees that it is a pullback square.  Hence $r(f)\in\mathcal M_r$ (cf. the definition of $\mathcal M_r$).
By Proposition~\ref{prop:retract-argument}, one has $r(f)\in\Weq\cap \mathcal M_r = \mathcal M_t$.

\vskip 5pt

The implications ${\rm (2')}$ $\Longrightarrow ${\rm (3)} and {\rm (3)} $\Longrightarrow {\rm (2')}$ follow by duality.
\end{proof}

Every model structure induced by a bi-reflective pair is bi-reflective.

\begin{prop}\label{prop:bi-reflective-pair-gives-bi-reflective-model}\ Let $(\mathcal C,\mathcal F)$ be a bi-reflective pair with the coreflector $t$ of $\mathcal C$ and the reflector $r$ of $\mathcal F$,
$(\mathcal E_t,\mathcal M_t)$ and $(\mathcal E_r,\mathcal M_r)$ the orthogonal factorization systems given in {\rm Lemma \ref{lem:sle-reflective-ofs}}, i.e.,
\begin{equation*}
\begin{aligned}
\mathcal E_t &= \left\{
f:X\longrightarrow Y\ \middle|\begin{array}{c}
\xymatrix@C=0.9cm@R=0.4cm{
tX \ar[r]^-{\varepsilon_{_X}} \ar[d]_{t(f)} & X \ar[d]^{f} \\
tY \ar[r]^-{\varepsilon_{_Y}} & Y
}
\end{array}\text{ is a pushout}
\right\}
\\
\mathcal M_t
&= \{\ f \in {\rm Mor}(\mathcal A)\ \mid \  t(f)\text{ is an isomorphism}\}
\end{aligned}
\end{equation*}
 \begin{equation*}
 \begin{aligned}
\mathcal E_r &= \{f \in {\rm Mor}(\mathcal A)\ \mid \  r(f)\text{ is an isomorphism}\}
\\
\mathcal M_r
&=
\left\{
f: X\longrightarrow Y\ \middle|\begin{array}{c}
\xymatrix@C=0.9cm@R=0.4cm{
X\ar[r]^-{\eta_{_X}}\ar[d]_f & rX\ar[d]^{r(f)}\\
Y\ar[r]^-{\eta_{_Y}} & rY
}
\end{array}\text{is a pullback}
\right\}.
\end{aligned}
\end{equation*}

\vskip5pt
\noindent Then $(\mathcal E_t, \ \mathcal M_r, \ \mathcal M_t\circ \mathcal E_r)$ is a bi-reflective model structure, $\mathcal C$ is the class of cofibrant objects, and $\mathcal F$ is the class of fibrant objects, in the sense of
{\rm Definition \ref{defn:cofibrant-fibrant-objects}}.
\end{prop}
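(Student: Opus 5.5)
The plan is to verify first that the triple $(\mathcal E_t,\mathcal M_r,\mathcal M_t\circ\mathcal E_r)$ is an orthogonal model structure. I will do this through Proposition~\ref{prop:weq-two-out-of-three}, and then check (M1)--(M3$'$) of Definition~\ref{defn:bi-reflective-orthogonal-model-structure} together with the identification of cofibrant and fibrant objects. Put $\CoFib=\mathcal E_t$, $\Fib=\mathcal M_r$ and $\Weq=\mathcal M_t\circ\mathcal E_r$.

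\emph{Step 1: the model structure.} By Lemma~\ref{lem:sle-reflective-ofs}, $(\mathcal E_t,\mathcal M_t)$ and $(\mathcal E_r,\mathcal M_r)$ are orthogonal factorization systems, hence weak factorization systems. By (A2) we have $\mathcal E_r\subseteq\mathcal E_t$. Proposition~\ref{prop:retract-argument} then gives
\[
\CoFib\cap\Weq=\mathcal E_r,\qquad \Fib\cap\Weq=\mathcal M_t .
\]
Consequently $(\CoFib,\TFib)=(\mathcal E_t,\mathcal M_t)$ and $(\TCoFib,\Fib)=(\mathcal E_r,\mathcal M_r)$ are orthogonal factorization systems. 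By (A2) and (A3), Proposition~\ref{prop:bi-reflective-A3} shows that $\Weq$ has the two-out-of-three property. Proposition~\ref{prop:weq-two-out-of-three} now yields an orthogonal model structure. Condition (M2) is immediate, since $\TCoFib=\mathcal E_r$ and $\TFib=\mathcal M_t$ satisfy two-out-of-three by Lemma~\ref{lem:sle-reflective-ofs}.

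\emph{Step 2: cofibrant objects and (M1).} Let $C\in\mathcal C$. Take $p:X\to Y$ in $\mathcal M_t$ and any $u:C\to Y$. By the adjunction, $\Hom(C,-)\cong\Hom_{\mathcal C}(C,t(-))$ naturally, and $t(p)$ is an isomorphism. Hence $u$ lifts uniquely through $p$, so $C$ is cofibrant. For an arbitrary $A$, the counit $\varepsilon_A:tA\to A$ has $t(\varepsilon_A)$ invertible by Fact~\ref{fact:approximation-basic}(iii$'$). Thus $\varepsilon_A\in\mathcal M_t=\TFib$ with $tA$ cofibrant, and this gives enough cofibrant objects. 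Conversely, if $X$ is cofibrant, then $\varepsilon_X$ is a trivial fibration between cofibrant objects. By Proposition~\ref{prop:elementary-CF}(2) it is an isomorphism, so $X\cong tX\in\mathcal C$; here I use that the full subcategory $\mathcal C$ is replete, i.e.\ closed under isomorphism. Dually, $\mathcal F$ is exactly the class of fibrant objects, and each $\eta_A$ is a fibrant replacement.

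\emph{Step 3: (M3) and (M3$'$).} I expect this to be the main point. Let $p:C\to D$ be in $\TFib=\mathcal M_t$ with $C\in\mathcal C$, and let $u:C\to E$ be in $\mathcal E_t$.

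First, $p$ is a right $\mathcal C$-strong-approximation of $D$. Indeed, $t(p)$ is an isomorphism, so $p$ agrees with $\varepsilon_D$ up to the isomorphism $C\cong tC\cong tD$, and the universal property transfers.

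Second, $E\in\mathcal C$. Since $u\in\mathcal E_t$, the square formed by $\varepsilon_C$, $t(u)$, $u$ and $\varepsilon_E$ is a pushout. In it $\varepsilon_C$ is an isomorphism because $C\in\mathcal C$, and a pushout of an isomorphism is an isomorphism. Hence $\varepsilon_E$ is an isomorphism and $E\cong tE\in\mathcal C$.

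Now semi-right-exactness of $t$, applied to $c=p$ and $h=u$, gives a pushout square with $q:E\to Q$ a right $\mathcal C$-strong-approximation of $Q$. By Fact~\ref{fact:approximation-basic}(ii$'$), $q\cong\varepsilon_Q$, so $t(q)$ is an isomorphism and $q\in\mathcal M_t=\TFib$. This proves (M3). Condition (M3$'$) is proved dually: use semi-left-exactness of $r$, the fact that $i\in\mathcal E_r$ with $Z\in\mathcal F$ is a left $\mathcal F$-strong-approximation, and the fact that the domain $Y$ of $v\in\mathcal M_r$ lies in $\mathcal F$ because the pullback of an isomorphism is an isomorphism.
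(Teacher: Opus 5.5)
Your proof is correct and follows the paper's own route. The paper likewise uses the retract argument together with Propositions~\ref{prop:bi-reflective-A3} and~\ref{prop:weq-two-out-of-three} for the model structure, takes the counit and unit as the replacements, and inlines exactly your appeal to Proposition~\ref{prop:elementary-CF}(2) for the converse inclusions. It also obtains (M3) from semi-right-exactness together with Fact~\ref{fact:approximation-basic}$(1')$(ii$'$). The deviations are only local: you get $E\in\mathcal C$ by pushing out the isomorphism $\varepsilon_C$, where the paper cites Proposition~\ref{prop:elementary-CF}(1). You derive the strong-approximation property of $p$ from naturality of $\varepsilon$, where the paper uses the lifting property of cofibrant objects. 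You also make explicit the repleteness of $\mathcal C$ and $\mathcal F$, which the paper uses tacitly.
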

\begin{proof} \ Put $\Weq =\mathcal M_t\circ \mathcal E_r$. Since $(\mathcal C,\mathcal F)$ is a bi-reflective pair, by definition one has $\mathcal E_r\circ\mathcal M_t\subseteq\mathcal M_t\circ\mathcal E_r$.
By Proposition~\ref{prop:bi-reflective-A3}, $\Weq =\mathcal M_t\circ \mathcal E_r$ has the two-out-of-three property.

\vskip5pt

As claimed in {\rm Lemma \ref{lem:sle-reflective-ofs}}, $(\mathcal E_t, \mathcal M_t)$ and $(\mathcal E_r, \mathcal M_r)$ are the orthogonal factorization systems with $\mathcal E_r\subseteq \mathcal E_t$
(cf.  Definition \ref{defn:bi-reflectivepair}(A2)). Applying Proposition~\ref{prop:retract-argument} one has $$\mathcal E_t\cap\Weq=\mathcal E_r, \ \ \mathcal M_r\cap\Weq=\mathcal M_t.$$
Thus, $(\mathcal E_t, \mathcal M_r\cap\Weq)$ and $(\mathcal E_t\cap\Weq, \mathcal M_r)$ are the orthogonal factorization systems and $\Weq$ has the two-out-of-three property.
It follows from Proposition~\ref{prop:weq-two-out-of-three} that $(\mathcal E_t, \ \mathcal M_r, \ \mathcal M_t\circ \mathcal E_r)$ is an orthogonal model structure.

\vskip5pt

We need to prove that the conditions (M1), (M2), (M3) and ${\rm (M3')}$ in Definition \ref{defn:bi-reflective-orthogonal-model-structure} are satisfied.

\vskip 5pt

First, we claim that the class of fibrant objects of $(\mathcal E_t,\mathcal M_r,\mathcal M_t\circ \mathcal E_r)$ is precisely $\mathcal F$.
In fact, for any object $F\in\mathcal F$, let $e:X\longrightarrow Y$ be a morphism in $\mathcal E_r$ and $u:X\longrightarrow F$ a morphism. Let $i: \mathcal F\hookrightarrow \mathcal A$ be the inclusion.
Then one has commutative  diagram:
\[
\xymatrix@C=0.9cm@R=0.6cm{
X\ar[rr]^-{u}\ar[d]_-{e}\ar[dr]^{\eta_{_X}} && F\ar[dr]^{\eta_{_F}} &\\
Y\ar[dr]_-{\eta_{_Y}} & rX\ar[rr]^-{r(u)}\ar[d]^{r(e)} && rF\\
& rY &
}
\]
where $\eta:{\rm Id}_{\mathcal{A}}\longrightarrow i\circ r$ is the unit of the adjoint pair $(r, i)$.
Since $e\in\mathcal E_r$, by definition $r(e)$ is an isomorphism.
Since $i: \mathcal F\hookrightarrow \mathcal A$ is fully faithful, it follows that $\eta_{_{i(F)}}: i(F) \longrightarrow (i\circ r\circ i)(F)$ is an isomorphism for any $F\in\mathcal F$, i.e.,
$\eta_{_F}: F \longrightarrow r(F)$ is an isomorphism. Put $$\alpha:=\eta_{_F}^{-1}\circ r(u)\circ r(e)^{-1}\circ \eta_{_Y}: Y\longrightarrow F.$$ Then
$\alpha\circ e = u.$ Such an $\alpha$ is unique: indeed, if $\beta: Y\longrightarrow F$ also satisfies $\beta\circ e=u$, then  $r(\beta)\circ r(e)=r(u)$ and $r(\beta)=r(u)\circ r(e)^{-1}$.
On the other hand, one has $\eta_{_F} \circ \beta= r(\beta)\circ \eta_{_Y}$.
Thus $$\beta=\eta_{_F}^{-1}\circ r(\beta)\circ \eta_{_Y}=\eta_{_F}^{-1}\circ r(u)\circ r(e)^{-1}\circ \eta_{_Y}=\alpha.$$
By definition $F$ is a fibrant object.

\vskip 5pt

Conversely, let $F'$ be a fibrant object. By Fact \ref{fact:approximation-basic}$(1)${\rm (iii)},  \ $r(\eta_{_{F'}}):rF'\longrightarrow r^2F'$ is an isomorphism.
By the construction $\eta_{_{F'}}:F'\longrightarrow rF'$ is in $\mathcal E_r,$ i.e.,  $\eta_{_{F'}}: F'\longrightarrow rF'$ is a trivial cofibration. By the definition of a fibrant object, there exists a unique $s:rF'\longrightarrow F'$ such that $s\circ \eta_{_{F'}}=\Id_{F'}:$
\[
\xymatrix{
F'\ar@{=}[r]\ar[d]_{\eta_{_{F'}}} & F'\\
rF'\ar@{-->}[ur]_{s}
}
\]
 Since $rF'\in\mathcal F$ is fibrant and both $\Id_{rF'}$ and $\eta_{_{F'}}\circ s$  make the following diagram commute:
\[
\xymatrix{
F'\ar[rr]^{\eta_{_{F'}}}\ar[d]_{\eta_{_{F'}}}& & rF'\\
rF'\ar@{-->}[urr]^{\Id_{rF'}}_{\eta_{_{F'}}\circ s} &
}
\]
by the uniqueness one has $\eta_{_{F'}}\circ s=\Id_{rF'}$. It follows that $\eta_{_{F'}}$ is an isomorphism, and hence $F'\in\mathcal F$. This proves the claim.

\vskip5pt

Dually, the class of cofibrant objects is precisely $\mathcal C$.

\vskip 5pt

{\rm (M1)}: \ For any object $X \in \mathcal A$,  by Fact \ref{fact:approximation-basic}$(1'){\rm (iii')}$,  \ $t(\varepsilon_{_X}): t^2X\longrightarrow tX$ is an isomorphism.
By the construction $\varepsilon_{_X}:tX\longrightarrow X$  is in $\mathcal M_t$ with $tX\in\mathcal C$,  thus  $\varepsilon_{_X}:tX\longrightarrow X$ is a trivial fibration.
By definition $\mathcal A$ has enough cofibrant objects.

Similarly, $\mathcal A$ has enough fibrant objects.

\vskip 5pt

{\rm (M2)}:  By the construction one sees that $\mathcal E_r$ and $\mathcal M_t$ have the two-out-of-three property.

\vskip 5pt

{\rm (M3)}:  Let $p:C\longrightarrow D$ be a morphism in $\mathcal M_t$ with $C\in\mathcal C$ and $u:C\longrightarrow E$ a morphism in $\mathcal E_t$.
By Proposition \ref{prop:elementary-CF}$(1)$ one has $E\in\mathcal C$.
Since $p:C\longrightarrow D$ is a trivial fibration and $\mathcal C$ is the class of cofibrant objects, for any morphism $a:C'\longrightarrow D$ with $C'\in\mathcal C$, by definition of cofibrant objects,
there is a unique $\bar{a}:C'\longrightarrow C$ such that $p\circ\bar{a} = a$:
\[\xymatrix@C=1.0cm@R=0.5cm{
& C\ar[d]^p & {} \\
C'\ar[r]^a \ar@{-->}[ur]^-{\overline a} & D.
}\]
By Definition~\ref{defn:strong-approximation}, $p:C\longrightarrow D$ is a right $\mathcal C$-strong-approximation of $D$.
Since the coreflector $t$ of $\mathcal{C}$ is semi-right-exact, there exists a pushout square
\[
\xymatrix@C=0.9cm@R=0.6cm{
C\ar[r]^{u}\ar[d]_{p} & E\ar[d]^{q}\\
D\ar[r] & Q
}
\]
such that $q:E\longrightarrow Q$ is a right $\mathcal C$-strong-approximation of $Q$. It remains to show $q\in \mathcal M_t$.
Since $\varepsilon_{_Q}:tQ\longrightarrow Q$ is right $\mathcal C$-strong-approximation of $Q$ (cf. Fact~\ref{fact:approximation-basic}$(1'){\rm (i')}$),
it follows from Fact~\ref{fact:approximation-basic}$(1'){\rm (ii')}$ that there is a unique isomorphism $\tau: E\longrightarrow tQ$ such that $q =\varepsilon_{_Q}\circ \tau$.
Since $\varepsilon_{_Q}\in \mathcal M_t$ (cf. the proof of (M1)), one has $q\in \mathcal M_t$.

\vskip 5pt
${\rm (M3')}$:  This is the dual of {\rm (M3)}.
\end{proof}

Conversely, every bi-reflective model structure determines a bi-reflective pair.

\begin{prop}\label{prop:bi-reflective-model-gives-bi-reflective-pair} \ Let $(\CoFib,\Fib,\Weq)$ be a bi-reflective model structure on category $\mathcal A$.
Then $(\mathcal C, \mathcal F)$ is a bi-reflective pair, where $\mathcal C$ and $\mathcal F$ are the classes of cofibrant objects and  fibrant objects, respectively.

\vskip 5pt

Explicitly, $\mathcal C$ is a coreflective subcategory of $\mathcal A$ with coreflector $t: \mathcal A\longrightarrow \mathcal C$ and $t$ is semi-right-exact$;$
$\mathcal F$ is a reflective subcategory of $\mathcal A$ with reflector $r:\mathcal A\longrightarrow \mathcal F$ and $r$ is semi-left-exact.
Let $(\mathcal E_t,\mathcal M_t)$ and $(\mathcal E_r,\mathcal M_r)$ be the orthogonal factorization systems given in {\rm Lemma \ref{lem:sle-reflective-ofs}}, i.e.,
\begin{equation*}
\begin{aligned}
\mathcal E_t &= \left\{
f:X\longrightarrow Y\ \middle|\begin{array}{c}
\xymatrix@C=0.9cm@R=0.4cm{
tX \ar[r]^-{\varepsilon_{_X}} \ar[d]_{t(f)} & X \ar[d]^{f} \\
tY \ar[r]^-{\varepsilon_{_Y}} & Y
}
\end{array}\text{ is a pushout}
\right\}
\\
\mathcal M_t
&= \{\ f \in {\rm Mor}(\mathcal A)\ \mid \  t(f)\text{ is an isomorphism}\}
\end{aligned}
\end{equation*}
 \begin{equation*}
 \begin{aligned}
\mathcal E_r &= \{f \in {\rm Mor}(\mathcal A)\ \mid \  r(f)\text{ is an isomorphism}\}
\\
\mathcal M_r
&=
\left\{
f: X\longrightarrow Y\ \middle|\begin{array}{c}
\xymatrix@C=0.9cm@R=0.4cm{
X\ar[r]^-{\eta_{_X}}\ar[d]_f & rX\ar[d]^{r(f)}\\
Y\ar[r]^-{\eta_{_Y}} & rY
}
\end{array}\text{is a pullback}
\right\}.
\end{aligned}
\end{equation*}
\vskip 5pt
\noindent Then $(\mathcal E_t,\mathcal M_t)=(\CoFib,\TFib)$ and $(\mathcal E_r, \mathcal M_r) = (\TCoFib, \Fib)$.
\end{prop}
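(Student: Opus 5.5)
The plan is to build the coreflector $t$ and the reflector $r$ from the cofibrant and fibrant replacements. Then I identify the four classes $\mathcal E_t,\mathcal M_t,\mathcal E_r,\mathcal M_r$ with $\CoFib,\TFib,\TCoFib,\Fib$. The axioms (A1)--(A3) of Definition~\ref{defn:bi-reflectivepair} should then follow from the model-structure axioms.

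\textbf{Step 1: the coreflector $t$.} By (M1), every object $X$ has a cofibrant replacement $q_X:QX\longrightarrow X$. As in the proof of Theorem~\ref{thm:orthogonal-ho-CF}, $X\mapsto QX$ extends uniquely to a functor $t:=Q:\mathcal A\longrightarrow\mathcal C$ with $q_Y\circ t(f)=f\circ q_X$. By Definition~\ref{defn:cofibrant-fibrant-objects}(1), every $u:C\longrightarrow X$ with $C\in\mathcal C$ factors uniquely through the trivial fibration $q_X$. So $q_X$ is a right $\mathcal C$-strong-approximation, $t$ is right adjoint to the inclusion, and its counit is $\varepsilon=q$. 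Dually, the fibrant replacement $r_X$ yields the reflector $r$ with unit $\eta_X=r_X$.

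\textbf{Step 2: semi-right-exactness of $t$.} Take a right $\mathcal C$-strong-approximation $c:C\longrightarrow A$ and $h:C\longrightarrow D$ with $D\in\mathcal C$. By Fact~\ref{fact:approximation-basic}$(1')$(ii$'$), $c$ is isomorphic to $\varepsilon_A=q_A$, so $c$ is a trivial fibration with $C\in\mathcal C$. By Proposition~\ref{prop:elementary-CF}(3), $h$ is a cofibration. Axiom (M3) then gives a pushout square whose right side $c':D\longrightarrow Q$ is a trivial fibration. Since $D$ is cofibrant, $c'$ is a right $\mathcal C$-strong-approximation of $Q$. Semi-left-exactness of $r$ is dual, using (M3$'$).

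\textbf{Step 3: identifying the factorization systems.} This is the main obstacle. I would show $\TFib=\mathcal M_t$ directly.

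If $p:X\longrightarrow Y$ is a trivial fibration, then $p\circ q_X:tX\longrightarrow Y$ is a trivial fibration from a cofibrant object. By Proposition~\ref{prop:elementary-CF}(4) it is isomorphic to $q_Y$ over $Y$, and this forces $t(p)$ to be an isomorphism.

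Conversely, suppose $t(p)$ is an isomorphism. Then $p\circ q_X=q_Y\circ t(p)\in\TFib$ and $q_X\in\TFib$, so $p\in\TFib$ by the two-out-of-three property of $\TFib$ from (M2).

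Hence $\mathcal M_t=\TFib$. Both $(\mathcal E_t,\mathcal M_t)$ (Lemma~\ref{lem:sle-reflective-ofs}) and $(\CoFib,\TFib)$ are orthogonal factorization systems, so $\mathcal E_t={}^{\perp}\mathcal M_t=\CoFib$. Dually, $\mathcal E_r=\TCoFib$ and $\mathcal M_r=\Fib$.

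\textbf{Step 4: (A2) and (A3).} Condition (A2) becomes $\TCoFib\perp\TFib$. This holds because $\TCoFib\subseteq\CoFib$ and $\CoFib\perp\TFib$. For (A3), $\Weq=\TFib\circ\TCoFib=\mathcal M_t\circ\mathcal E_r$ by Fact~\ref{elementpropmodel}(1), and $\Weq$ has the two-out-of-three property. Proposition~\ref{prop:bi-reflective-A3} ((3)$\Rightarrow$(1)) then yields $\mathcal E_r\circ\mathcal M_t\subseteq\mathcal M_t\circ\mathcal E_r$, completing the proof.
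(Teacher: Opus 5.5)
Your proposal is correct and follows the paper's proof essentially step by step. You build the coreflector and reflector from the replacements, and get semi-right/left-exactness from (M3)/(M3$'$) together with Proposition~\ref{prop:elementary-CF}$(3)$, $(3')$. You then identify the functor-inverted class with the trivial class using (M2), recover the other class by orthogonality, and deduce (A2) and (A3) via Proposition~\ref{prop:bi-reflective-A3}.

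The only variation is in the inclusion $\TFib\subseteq\mathcal M_t$. You use uniqueness of cofibrant replacements, Proposition~\ref{prop:elementary-CF}$(4)$. The paper proves the dual inclusion $\TCoFib\subseteq\mathcal E_r$ differently: it applies two-out-of-three to the naturality square, then notes that $r(f)$ is a fibration between fibrant objects, so it lies in $\TCoFib\cap\Fib=\Iso(\mathcal A)$.
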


\begin{proof}

We first show that $\mathcal{F}$ is a reflective subcategory.

\vskip 5pt

Since $\mathcal A$ has enough fibrant objects (cf. Definition~\ref{defn:bi-reflective-orthogonal-model-structure}(M1)), for every object $X$,
we can choose a trivial cofibration $\eta_{_X}: X\longrightarrow rX$ with $rX\in\mathcal F$ (cf. Definition \ref{defn:cofibrant-fibrant-objects}$(2')$) such that if $X\in\mathcal F$ then $rX=X$ and $\eta_{_X}=\Id_X$.
Given a morphism $f:X\longrightarrow Y$ in $\mathcal A$, consider the morphism $\eta_{_Y}\circ f:X\longrightarrow rY$. Since $rY\in\mathcal F$, by the definition of a fibrant object (cf. Definition \ref{defn:cofibrant-fibrant-objects}$(1')$), there is a unique $r(f):rX\longrightarrow rY$ such that $r(f)\circ \eta_{_X} = \eta_{_Y}\circ f$:
\[
\xymatrix@C=1.0cm@R=0.7cm{
X\ar[r]^f\ar[d]_{\eta_{_X}} & Y\ar[r]^{\eta_{_Y}}&rY.\\
rX\ar@{-->}[urr]_{r(f)} &
}
\]
The uniqueness implies $r({\rm Id}_X) = {\rm Id}_{r(X)}$ and $r(g\circ f) = r(g)\circ r(f)$. This gives a functor $r: \mathcal{A}\longrightarrow \mathcal{F}$.
Let $i:\mathcal{F}\longrightarrow \mathcal{A}$ be the inclusion functor. For any morphism $a:X\longrightarrow F$ in $\mathcal{A}$ with $F\in\mathcal F$,
since $F$ is fibrant, by definition there is a unique morphism $\overline a:rX\longrightarrow F$ such that
$\overline a\circ \eta_{_X}=a$:
\[
\xymatrix@C=1.0cm@R=0.7cm{
X\ar[r]^a\ar[d]_{\eta_{_X}} & F.\\
rX\ar@{-->}[ur]_{\overline a}
}
\]
This gives a natural isomorphism $\Phi_{X,F}:\operatorname{Hom}_{\mathcal{F}}(r(X),F)\longrightarrow \operatorname{Hom}_{\mathcal{A}}(X,i(F)), u\longmapsto u\circ\eta_{_X}$.
Thus $(r,i)$ is an adjoint pair.
Hence $\mathcal F$ is reflective, with reflector $r$ and unit $\eta:{\rm Id}_{\mathcal{A}}\longrightarrow i\circ r$.

\vskip 5pt

{\bf Claim:} \ A morphism $u:X\longrightarrow F$ is a left $\mathcal{F}$-strong-approximation of $X$ if and only if $u$ is a trivial cofibration with $F\in\mathcal F$.

In fact, by the definition of the fibrant objects, every trivial cofibration $u:X\longrightarrow F$ with $F$ fibrant is a left $\mathcal F$-strong-approximation of $X$.
Conversely, let $u:X\longrightarrow F$ be a left $\mathcal F$-strong-approximation of $X$.
As we have known,  $\eta_{_X}:X\longrightarrow rX$ is a left $\mathcal F$-strong-approximation of $X$.
By Fact \ref{fact:approximation-basic}(1)(ii) there is a unique isomorphism $\varphi:rX\longrightarrow F$ such that $u=\varphi\circ \eta_{_X}$.
Since $\eta_{_X}\in\TCoFib$ and $\varphi\in \TCoFib$, one has $u\in\TCoFib$.

\vskip 5pt

Now we prove the reflector $r:\mathcal{A}\longrightarrow \mathcal{F}$ is semi-left-exact (cf. Definition \ref{defn:strong-approximation}(3)).
Let $u:X\longrightarrow F$ be a left $\mathcal F$-strong-approximation of $X$ and $g:E\longrightarrow F$ a morphism with $E\in\mathcal F$.
Then $u\in\TCoFib$, by {\bf Claim}. By Proposition \ref{prop:elementary-CF}$(3')$, $g:E\longrightarrow F$ is a fibration.
By ${\rm (M3')}$ in Definition \ref{defn:bi-reflective-orthogonal-model-structure}, one has a pullback square
\[
\xymatrix@C=0.9cm@R=0.6cm{
P\ar[r]\ar[d]_{u'} & X\ar[d]^{u}\\
E\ar[r]^{g} & F
}
\]
such that  $u': P\longrightarrow E$ is a trivial cofibration.  Thus $u'$ is a left $\mathcal F$-strong-approximation of $P$, by {\bf Claim}.
This proves that $r$ is semi-left-exact.

\vskip 5pt

By Lemma~\ref{lem:sle-reflective-ofs}$(1)$, one gets the orthogonal factorization system $(\mathcal E_r, \mathcal M_r)$.

\vskip 5pt

Now we prove  the equality $(\mathcal E_r, \mathcal M_r) = (\TCoFib, \Fib)$. It suffices to prove $$\TCoFib=\{f \in {\rm Mor}(\mathcal A)\ \mid \  r(f)\text{ is an isomorphism}\}=\mathcal E_r.$$
For every morphism $f:X\longrightarrow Y$, one has the following commutative square:
\[
\xymatrix{
X\ar[r]^{\eta_{_X}}\ar[d]_{f} & rX\ar[d]^{r(f)}\\
Y\ar[r]^{\eta_{_Y}} & rY.
}
\]
Since $\eta_{_X}\in\TCoFib$ and $\eta_{_Y}\in\TCoFib$ and $\TCoFib$ satisfies the two-out-of-three property (cf. Definition~\ref{defn:bi-reflective-orthogonal-model-structure}(M2)), one has $f\in\TCoFib$ if and only if $r(f)\in\TCoFib$.

Thus, if $f\in \mathcal E_r,$ then $r(f)$ is an isomorphism, and hence $r(f)\in\TCoFib$, therefore $f\in\TCoFib$, i.e., $\mathcal E_r \subseteq \TCoFib.$ Conversely, if $f\in\TCoFib$, then $r(f)\in\TCoFib$.
However $r(f)$ is a morphism between fibrant objects, it follows from Proposition~\ref{prop:elementary-CF}$(3')$ that $r(f) \in \Fib$. Thus
$r(f) \in \TCoFib \cap \Fib$. It follows that  $r(f)$ is an isomorphism, i.e., $f\in \mathcal E_r$.  This proves  the equality.

\vskip 5pt

Dually, $\mathcal C$ is a coreflective subcategory such that the coreflector $t: \mathcal A \longrightarrow \mathcal C$ is semi-right-exact,
and one has $(\mathcal E_t, \mathcal M_t)=(\CoFib, \TFib)$.

\vskip 5pt

The condition {\rm (A2)} follows from $\mathcal E_r=\TCoFib\perp\TFib=\mathcal M_t$.

\vskip 5pt

To see the condition {\rm (A3)}, i.e., $\mathcal E_r \circ \mathcal M_t\subseteq \mathcal M_t\circ\mathcal E_r$, note that
$\Weq= \TFib\circ \TCoFib = \mathcal M_t\circ\mathcal E_r$. Since $\Weq = \mathcal M_t\circ\mathcal E_r$ has the two-out-of-three property, it follows from Proposition \ref{prop:bi-reflective-A3} that
$\mathcal E_r \circ \mathcal M_t\subseteq \mathcal M_t\circ\mathcal E_r$.

\vskip 5pt
This proves that $(\mathcal C,\mathcal F)$ is a bi-reflective pair. \end{proof}

Now we are in position to state the main result of this section.

\begin{thm}\label{thm:bi-reflective-pairs-bi-reflective-orthogonal-model-structures}\ Let $\mathcal A$ be a  category. Then
\begin{equation*}
\begin{aligned}
\Phi:\{\text{bi-reflective pairs in} \ \mathcal A\} &\longrightarrow \{\text{bi-reflective model structures on} \ \mathcal A\}
\\ (\mathcal C,\mathcal F) &\longmapsto (\mathcal E_t, \ \mathcal M_r, \ \mathcal M_t\circ \mathcal E_r)
\end{aligned}
\end{equation*}
is a bijection, where $t$ is the coreflector of $\mathcal C$ with the counit $\varepsilon$, $r$ is the reflector of $\mathcal F$ with the unit $\eta$,
\begin{equation*}
\begin{aligned}
\mathcal E_t &= \left\{
f:X\longrightarrow Y\ \middle|\begin{array}{c}
\xymatrix@C=0.9cm@R=0.4cm{
tX \ar[r]^-{\varepsilon_{_X}} \ar[d]_{t(f)} & X \ar[d]^{f} \\
tY \ar[r]^-{\varepsilon_{_Y}} & Y
}
\end{array}\text{ is a pushout}
\right\}
\\
\mathcal M_t
&= \{\ f \in {\rm Mor}(\mathcal A)\ \mid \  t(f)\text{ is an isomorphism}\}
\end{aligned}
\end{equation*}
\begin{equation*}
\begin{aligned}
\mathcal E_r &= \{f \in {\rm Mor}(\mathcal A)\ \mid \  r(f)\text{ is an isomorphism}\}
\\
\mathcal M_r
&=
\left\{
f: X\longrightarrow Y\ \middle|\begin{array}{c}
\xymatrix@C=0.9cm@R=0.4cm{
X\ar[r]^-{\eta_{_X}}\ar[d]_f & rX\ar[d]^{r(f)}\\
Y\ar[r]^-{\eta_{_Y}} & rY
}
\end{array}\text{is a pullback}
\right\}.
\end{aligned}
\end{equation*}

\vskip 5pt
\noindent
The inverse of $\Phi$ is $\Psi:(\CoFib, \Fib, \Weq) \longmapsto (\mathcal C,\mathcal F)$, where \ $\mathcal C$ and $\mathcal F$ are respectively the classes of cofibrant objects and fibrant objects.
\end{thm}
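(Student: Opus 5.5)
The plan follows the pattern of the proof of Theorem \ref{thm:ttf-triples-ttf-orthogonal-model-structures}. By Proposition \ref{prop:bi-reflective-pair-gives-bi-reflective-model}, $\Phi$ is well defined: $\Phi(\mathcal C,\mathcal F)=(\mathcal E_t,\mathcal M_r,\mathcal M_t\circ\mathcal E_r)$ is a bi-reflective model structure. By Proposition \ref{prop:bi-reflective-model-gives-bi-reflective-pair}, $\Psi$ is well defined: the classes of cofibrant and fibrant objects of a bi-reflective model structure form a bi-reflective pair. It remains to check that $\Psi\circ\Phi$ and $\Phi\circ\Psi$ are identities.

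For $\Psi\circ\Phi=\Id$, I would use the second half of Proposition \ref{prop:bi-reflective-pair-gives-bi-reflective-model}. It states that the cofibrant objects of $\Phi(\mathcal C,\mathcal F)$ are exactly $\mathcal C$ and the fibrant objects are exactly $\mathcal F$. Hence $\Psi\Phi(\mathcal C,\mathcal F)=(\mathcal C,\mathcal F)$.

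For $\Phi\circ\Psi=\Id$, start with a bi-reflective model structure $(\CoFib,\Fib,\Weq)$ and put $(\mathcal C,\mathcal F)=\Psi(\CoFib,\Fib,\Weq)$. This requires choosing a coreflector $t$ and a reflector $r$. Proposition \ref{prop:bi-reflective-model-gives-bi-reflective-pair} supplies particular choices for which $(\mathcal E_t,\mathcal M_t)=(\CoFib,\TFib)$ and $(\mathcal E_r,\mathcal M_r)=(\TCoFib,\Fib)$. Then
\[
\Phi\Psi(\CoFib,\Fib,\Weq)=(\CoFib,\ \Fib,\ \TFib\circ\TCoFib)=(\CoFib,\Fib,\Weq),
\]
where the last equality is Fact \ref{elementpropmodel}(1).

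The one point I expect to need care is independence of choices. The map $\Phi$ is defined using some coreflector $t$ and reflector $r$, and these are determined only up to natural isomorphism. So I must show that $\mathcal E_t,\mathcal M_t,\mathcal E_r,\mathcal M_r$ do not change when $t$ is replaced by $t'\cong t$ compatibly with the counits, and similarly for $r$. This should be direct. Whether $r(f)$ is an isomorphism is invariant under a natural isomorphism $r\cong r'$. The pullback condition defining $\mathcal M_r$ is also invariant, because the two naturality squares differ by isomorphisms at $rX$ and $rY$ that commute with the units; this uses the uniqueness in Fact \ref{fact:approximation-basic}(1)(ii). The dual argument handles $t$. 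With this remark, the comparison in the previous paragraph is legitimate for any choice, and the bijection follows.
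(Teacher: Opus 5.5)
Your proposal is correct and follows the paper's proof essentially step by step. You get well-definedness from Propositions \ref{prop:bi-reflective-pair-gives-bi-reflective-model} and \ref{prop:bi-reflective-model-gives-bi-reflective-pair}, $\Psi\circ\Phi=\Id$ from the identification of the cofibrant and fibrant objects, and $\Phi\circ\Psi=\Id$ from $(\mathcal E_t,\mathcal M_t)=(\CoFib,\TFib)$, $(\mathcal E_r,\mathcal M_r)=(\TCoFib,\Fib)$ and $\Weq=\TFib\circ\TCoFib$; your extra check that these four classes do not depend on the chosen (co)reflector, up to the unique unit/counit-compatible natural isomorphism, is sound and makes explicit a point the paper leaves implicit.
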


\begin{proof}\ By Proposition~\ref{prop:bi-reflective-pair-gives-bi-reflective-model},  the map $\Phi$ is well-defined;
by Proposition~\ref{prop:bi-reflective-model-gives-bi-reflective-pair}, the map $\Psi$ is well-defined.

\vskip5pt

For any bi-reflective pair $(\mathcal C, \mathcal F)$, by Propositions \ref{prop:bi-reflective-pair-gives-bi-reflective-model},
$\Phi(\mathcal C,\mathcal F) = (\mathcal E_t, \ \mathcal M_r, \ \mathcal M_t\circ \mathcal E_r)$ is a bi-reflective model structure, and
$\mathcal{C}$ and $\mathcal{F}$ are precisely the classes of cofibrant objects and fibrant objects of $(\mathcal E_t, \ \mathcal M_r, \ \mathcal M_t\circ \mathcal E_r)$, respectively.
And then by definition $$(\Psi\circ\Phi)(\mathcal C, \mathcal F) = \Psi(\mathcal E_t, \ \mathcal M_r, \ \mathcal M_t\circ \mathcal E_r) = (\mathcal C, \mathcal F).$$

\vskip 5pt
For any bi-reflective model structure $(\CoFib, \Fib, \Weq)$, by Proposition~\ref{prop:bi-reflective-model-gives-bi-reflective-pair},
$\Psi(\CoFib, \Fib, \Weq) = (\mathcal C, \mathcal F)$ is a bi-reflective pair, where $\mathcal{C}$ and $\mathcal{F}$ are the classes of cofibrant objects and fibrant objects of $(\CoFib, \Fib, \Weq)$, respectively.
And then by definition
$$(\Phi\circ\Psi)(\CoFib, \Fib, \Weq) =  \Phi(\mathcal C, \mathcal F) = (\mathcal E_t, \ \mathcal M_r, \ \mathcal M_t\circ \mathcal E_r)$$
and by Proposition~\ref{prop:bi-reflective-model-gives-bi-reflective-pair} one has $(\mathcal E_t,\mathcal M_t)=(\CoFib,\TFib)$ and $(\mathcal E_r, \mathcal M_r) = (\TCoFib, \Fib)$.
Comparing the two model structures
$$(\CoFib, \ \Fib, \ \Weq) \ \ \ \ \mbox{and} \ \ \ \ (\mathcal E_t, \ \mathcal M_r, \ \mathcal M_t\circ \mathcal E_r)$$
with $\CoFib = \mathcal{E}_t$ and $\Fib = \mathcal{M}_r$, it follows that
$$(\Phi\circ\Psi)(\CoFib, \Fib, \Weq) = (\mathcal E_t, \ \mathcal M_r, \ \mathcal M_t\circ \mathcal E_r) = (\CoFib, \ \Fib, \ \Weq).$$
This completes the proof.
\end{proof}

\begin{rem}\label{both-TTF-and-bi-reflective} \ Even on an abelian category, a TTF model structure is not bi-reflective, in general. In fact, a TTF model structure  on an abelian category $\mathcal{A}$, induced by TTF triple $(\mathcal C, \mathcal W, \mathcal F)$,
is bi-reflective if and only if $\mathcal W = 0$.

\vskip5pt

Indeed, if the TTF model structure is bi-reflective, then by Proposition~\ref{prop:TTF-orthogoanl-equivalent-condition},  $\TCoFib=\{f \in {\rm Mor}(\mathcal A)\ \mid \  f \text{ is epimorphism and} \Ker f\in\mathcal{W} \}$.
By Definition \ref{defn:bi-reflective-orthogonal-model-structure}(M2),  $\TCoFib$ has the two-out-of-three property.
For any $W\in\mathcal W$, since $W\longrightarrow 0$ and $0\longrightarrow 0$ are in $\TCoFib$, it follows that $0\longrightarrow W$ is in $\TCoFib$, which implies $W = 0$.
Conversely, if $\mathcal W=0$, then TTF triple $(\mathcal C, \mathcal W, \mathcal F)$ is TTF triple $(\mathcal A, 0, \mathcal A)$, and then the induced TTF model structure is $(\Mor\mathcal{A},\Mor\mathcal{A},\Iso \mathcal{A})$,
which is bi-reflective given by the bi-reflective pair $(\mathcal{A},\mathcal{A})$.
\end{rem}

\subsection{\bf Examples} \ We will give an application of Theorem \ref{thm:bi-reflective-pairs-bi-reflective-orthogonal-model-structures} in the next section. However, we first include some specific examples.

\begin{exm} \  Let $\mathcal A$ be the category given by a totally ordered set $\{0, c, d, 1\}$ with $0 < c < d < 1.$ Put $\mathcal C=\{0,d,1\}$ and $\mathcal F=\{0,c,1\}$.
Then $(\mathcal C,\mathcal F)$ is a bi-reflective pair.

In fact, the coreflector $t:\mathcal A\longrightarrow\mathcal C$ is given by $t(0)=t(c)=0$, $t(d)=d$, $t(1)=1$.  The right $\mathcal C$-strong-approximation of $c$ is $\varepsilon_c:0\longrightarrow c$.
An analysis in detail shows that $t$ is semi-right-exact. Then Lemma \ref{lem:sle-reflective-ofs} gives an  orthogonal factorization system $(\mathcal E_t, \mathcal M_t)$, where
\begin{equation*}
\begin{aligned}
\mathcal E_t
&=
\operatorname{Iso}\mathcal A
\cup
\{0\longrightarrow d,\ 0\longrightarrow1,\
 c\longrightarrow d,\ c\longrightarrow1,\ d\longrightarrow1\},\\
\mathcal M_t
&=
\operatorname{Iso}\mathcal A
\cup\{0\longrightarrow c\},
\end{aligned}
\end{equation*}
Also, the reflector $r:\mathcal A\longrightarrow\mathcal F$ is given by $r(0)=0$, $r(c)=c$, $r(d)=r(1)=1$. The left $\mathcal F$-strong-approximation of $d$ is $\eta_d: d\longrightarrow 1$.
An analysis in detail shows that $r$ is semi-left-exact. Then Lemma \ref{lem:sle-reflective-ofs} gives an  orthogonal factorization system $(\mathcal E_r, \mathcal M_r)$, where
\begin{equation*}
\begin{aligned}
\mathcal E_r
&=
\operatorname{Iso}\mathcal A
\cup\{d\longrightarrow1\},\\
\mathcal M_r
&=
\operatorname{Iso}\mathcal A
\cup
\{0\longrightarrow c,\ 0\longrightarrow d,\ 0\longrightarrow1,\
 c\longrightarrow d,\ c\longrightarrow1\}.
\end{aligned}
\end{equation*}
Clearly one has \textup{(A2)}: $\mathcal E_r\perp \mathcal M_t$ and \textup{(A3)}: $\mathcal E_r\circ \mathcal M_t\subseteq \mathcal M_t\circ \mathcal E_r$.
This justifies that $(\mathcal C,\mathcal F)$ is a bi-reflective pair.

\vskip5pt

By Proposition~\ref{prop:bi-reflective-pair-gives-bi-reflective-model}, one gets a bi-reflective model structure $(\mathcal E_t,\mathcal M_r,\mathcal M_t\circ \mathcal E_r)$, where
$$\mathcal M_t\circ \mathcal E_r = \operatorname{Iso}\mathcal A
\cup\{d\longrightarrow1, 0\longrightarrow c\}.$$
\end{exm}

\vskip5pt

\begin{exm}\  Let $G$ be a group. Consider the reflective subcategory $\mathsf{Set}$ of $G\text{-}\mathsf{Set}$, with semi-left-exact reflector $r$, as given in Proposition~\ref{reflG-set}.
This gives an  orthogonal factorization system $(\mathcal E_{\mathrm{orb}},\mathcal M_{\mathrm{orb}})$, where
\begin{equation*}
\begin{aligned}
\mathcal E_{\mathrm{orb}}
&=
\{\ G\text{-map} \ f:X\longrightarrow Y \  \mid \ r(f):X/G\longrightarrow Y/G
 \text{ is a bijection}\}\\
\mathcal M_{\mathrm{orb}}
&=
\left\{G\text{-map} \ f:X\longrightarrow Y \ \middle| \ \begin{array}{c}
\xymatrix@C=1.0cm@R=0.4cm{
X\ar[r]^-{\eta_{_X}}\ar[d]_{f} & X/G\ar[d]^-{r(f)}\\
Y\ar[r]^-{\eta_{_Y}} & Y/G
}
\end{array}\text{ is a pullback in} \ G\text{-}\mathsf{Set}
\right\}.
\end{aligned}
\end{equation*}

\noindent Also, $G\text{-}\mathsf{Set}$ is a coreflective subcategory of itself, with identity as the semi-right-exact coreflector. Then Lemma \ref{lem:sle-reflective-ofs} gives
an  orthogonal factorization system $(\Mor G\text{-}\mathsf{Set}, \Iso G\text{-}\mathsf{Set})$.
Then one has a bi-reflective pair $(G\text{-}\mathsf{Set}, \mathsf{Set})$, and then by Proposition~\ref{prop:bi-reflective-pair-gives-bi-reflective-model}, one has a bi-reflective model structure $(\Mor G\text{-}\mathsf{Set},\mathcal{M}_{\mathrm{orb}},\mathcal{E}_{\mathrm{orb}})$
on $G\text{-}\mathsf{Set}$.
This is a cofibrant model structure,  in the sense that each object is cofibrant.
\end{exm}

\begin{exm}\ The category $\mathbb Q\text{-Vect}$ is a reflective subcategory of $\operatorname{Ab}$ with semi-left-exact reflector $-\otimes_{\mathbb Z}\mathbb Q: \operatorname{Ab} \longrightarrow \mathbb Q\text{-Vect}$,
as given in Proposition~\ref{reflAb}. This gives an  orthogonal factorization system $(\mathcal E_{\mathbb Q}, \mathcal M_{\mathbb Q})$, where
\begin{equation*}
\begin{aligned}
\mathcal E_{\mathbb Q}
&=
\{f \in {\rm Mor}({\rm Ab})\ \mid \  \Ker f\text{ and }\operatorname{Coker}f
 \text{ are abelian torsion groups}\}\\
\mathcal M_{\mathbb Q}
&=
\left\{
f:A\longrightarrow B\ \middle|\begin{array}{c}
\xymatrix@C=1.0cm@R=0.4cm{
A\ar[r]^{\eta_{_A}}\ar[d]_{f}
 & A\otimes_{\mathbb Z}\mathbb Q
     \ar[d]^{f\otimes_{\mathbb Z}\mathbb Q}\\
B\ar[r]_{\eta_{_B}}
 & B\otimes_{\mathbb Z}\mathbb Q
}
\end{array}\text{ is a pullback in} \ {\rm Ab}
\right\}.
\end{aligned}
\end{equation*}

\noindent Also, $\operatorname{Ab}$ is a coreflective subcategory of itself, and this leads to an orthogonal factorization system $(\Mor \operatorname{Ab}, \Iso \operatorname{Ab})$.
Then one has a bi-reflective pair $(\operatorname{Ab},\mathbb{Q}\text{-Vect})$, and then by Proposition~\ref{prop:bi-reflective-pair-gives-bi-reflective-model}, one has a bi-reflective model structure
$(\Mor\operatorname{Ab}, \mathcal{M}_{\mathbb{Q}},\mathcal{E}_{\mathbb{Q}})$ on $\operatorname{Ab}$.
This is a cofibrant model structure, however, it is neither an abelian model structure (\cite{H2, G2}), nor an $\omega$-model structure, nor a $\mathcal W$-model structure (cf. \cite{BR}, \cite{LZ}).
\end{exm}

\section{\bf Torsion model structures}

This section introduces torsion model structures, together with their basic properties and examples.

\subsection{\bf Torsion model structures on abelian categories}

\begin{defn}\label{defn:torsion-OFS-and-torsion-model} \ Let $\mathcal A$ be a category.

\vskip5pt

$(1)$ \ {\rm(\cite[4.4]{RT})} \  A \emph{torsion factorization system} in $\mathcal A$ is an orthogonal factorization system $(\mathcal E, \mathcal M)$ such that both $\mathcal E$ and $\mathcal M$ have the two-out-of-three property.

\vskip5pt

$(2)$ \ An orthogonal model structure $(\CoFib,\Fib,\Weq)$ on $\mathcal A$ is a \emph{torsion model structure} if $\CoFib$ and $\Fib$ have the two-out-of-three property.

\vskip5pt

$(3)$ {\rm (\cite{T})}\ Assume that $\mathcal A$ is an abelian category. {\it A twin torsion pair} in $\mathcal A$ is a quadruple $(\mathcal T_1, \mathcal F_1; \mathcal T_2, \mathcal F_2)$,  where
$(\mathcal T_1, \mathcal F_1)$ and $(\mathcal T_2, \mathcal F_2)$ are torsion pairs in $\mathcal A$, such that $\mathcal T_2 \subseteq \mathcal T_1$.

\end{defn}

In analogy with Proposition~\ref{prop:weq-two-out-of-three}, one has the following criterion of torsion model structures in terms of two torsion factorization systems.

\begin{prop}\label{prop:weq-torsion}\ Let $\mathcal A$ be a category,  $(\TCoFib, \Fib)$ and $(\CoFib,\TFib)$ be torsion factorization systems in $\mathcal A$, with $\TCoFib\subseteq\CoFib$. Put $\Weq:=\TFib\circ\TCoFib$. Then $(\CoFib,\Fib,\Weq)$ is a torsion model structure if and only if $\Weq$ is closed under compositions.
\end{prop}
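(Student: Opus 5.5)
The plan is to reduce everything to Proposition~\ref{prop:weq-two-out-of-three} and Proposition~\ref{prop:weq-only-composition}. Necessity is immediate: in any model structure $\Weq$ has the two-out-of-three property, so in particular it is closed under compositions. For sufficiency, assume $\Weq=\TFib\circ\TCoFib$ is closed under compositions, and show three things: that $(\CoFib,\Fib,\Weq)$ is an orthogonal model structure; that $\CoFib$ and $\Fib$ have the two-out-of-three property; and that the trivial cofibrations and trivial fibrations of this structure are exactly the given classes $\TCoFib$ and $\TFib$. The second point holds by hypothesis, since $(\TCoFib,\Fib)$ and $(\CoFib,\TFib)$ are torsion factorization systems.

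First, I would check that the notation is consistent, i.e.\ $\CoFib\cap\Weq=\TCoFib$ and $\Fib\cap\Weq=\TFib$. Orthogonal factorization systems are weak factorization systems and $\TCoFib\subseteq\CoFib$, so Proposition~\ref{prop:retract-argument} gives exactly this. Consequently $(\CoFib,\Fib\cap\Weq)=(\CoFib,\TFib)$ and $(\CoFib\cap\Weq,\Fib)=(\TCoFib,\Fib)$ are orthogonal factorization systems. By Proposition~\ref{prop:weq-two-out-of-three}, it then only remains to show that $\Weq$ has the two-out-of-three property.

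For that, I would apply Proposition~\ref{prop:weq-only-composition} with $(\mathcal E,\mathcal M)=(\TCoFib,\Fib)$ and $(\mathcal E',\mathcal M')=(\CoFib,\TFib)$, so that $\mathcal M'\circ\mathcal E=\TFib\circ\TCoFib=\Weq$. Its hypotheses hold as follows. First, $\mathcal E\perp\mathcal M'$ holds because $\TCoFib\subseteq\CoFib$ and $\CoFib\perp\TFib$. Second, $\mathcal E=\TCoFib$ and $\mathcal M'=\TFib$ have the two-out-of-three property, since both come from torsion factorization systems. Under these hypotheses, closure of $\Weq$ under compositions is equivalent to two-out-of-three, and the latter is what we need.

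The only step needing care is the hidden requirement in Proposition~\ref{prop:weq-only-composition} that $\mathcal M'\subseteq\mathcal M$, i.e.\ $\TFib\subseteq\Fib$. This is used in its proof when writing $q\in\mathcal M'\subseteq\mathcal M$. I would derive it from $\TCoFib\subseteq\CoFib$: since $\Fib=\TCoFib^{\perp}\supseteq\CoFib^{\perp}=\TFib$, the inclusion holds. With this in place the argument closes, and $(\CoFib,\Fib,\Weq)$ is a torsion model structure.
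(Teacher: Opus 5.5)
Your proof is correct and is exactly the argument the paper intends: the paper states this proposition without proof, but it is assembled from Propositions~\ref{prop:retract-argument}, \ref{prop:weq-two-out-of-three} and \ref{prop:weq-only-composition} just as you do. Your worry about $\TFib\subseteq\Fib$ is well placed but harmless, because it already follows from the hypothesis $\mathcal E\perp\mathcal M'$ of Proposition~\ref{prop:weq-only-composition} together with $\mathcal M=\mathcal E^{\perp}$ for an orthogonal factorization system; this is precisely your derivation.
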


\begin{rem}\label{prop:torsion-ofs-trivial-torsion-model}\
$(1)$ \ Every torsion factorization system $(\mathcal E,\mathcal M)$ in $\mathcal A$ determines the three torsion model structures:
$(\mathcal E,\mathcal M,\operatorname{Mor}\mathcal A), \ (\operatorname{Mor}\mathcal A,\mathcal M,\mathcal E), \ (\mathcal E,\operatorname{Mor}\mathcal A,\mathcal M).$

\vskip5pt
$(2)$ \
There is a one to one correspondence between torsion factorization systems and trivial torsion model structures on $\mathcal A$ with:
\[
(\mathcal E,\mathcal M)\longmapsto(\mathcal E,\mathcal M,\operatorname{Mor}\mathcal A),
\qquad
(\CoFib,\Fib,\operatorname{Mor}\mathcal A)\longmapsto(\CoFib,\Fib).
\]
\end{rem}

\vskip5pt

In an abelian category, we will see that, there is a one to one correspondence between torsion model structures and twin torsion pairs.
This is based on a one to one correspondence between torsion factorization systems and torsion pairs, as follows.

\vskip 5pt

{\bf Notations:} \  Let $(\mathcal T,\mathcal F)$ be a torsion pair in an abelian category $\mathcal A$.
For $X\in \mathcal A$, fix an exact sequence
$$0\longrightarrow t(X)\longrightarrow X \longrightarrow r(X)\longrightarrow 0 $$
with $t(X)\in\mathcal{T}$ and  $r(X)\in\mathcal{F}$.
For $f:X\longrightarrow Y$, there are unique morphisms $t(f)$ and $r(f)$ such that the diagram
\[
\xymatrix@C=0.9cm@R=0.7cm{
0\ar[r] & t(X)\ar[r]\ar[d]_{t(f)} & X\ar[r]\ar[d]^{f} & r(X)\ar[r]\ar[d]^{r(f)} & 0\\
0\ar[r] & t(Y)\ar[r] & Y\ar[r] & r(Y)\ar[r] & 0
}
\]

\vskip5pt \noindent commutes. This gives functors $t:\mathcal A\longrightarrow \mathcal T$ and $r:\mathcal A\longrightarrow \mathcal F$.
When the two functors need to be specified, we will denote a torsion pair $(\mathcal T,\mathcal F)$ by $(\mathcal T, \mathcal F, t, r)$, and a twin torsion pair  $(\mathcal T_1, \mathcal F_1; \mathcal T_2, \mathcal F_2)$ by
$$(\mathcal T_1, \mathcal F_1, t_1, r_1; \mathcal T_2, \mathcal F_2, t_2, r_2).$$

\vskip 5pt

\begin{prop}\label{prop:abelian-torsion-OFS} {\rm (\cite[Theorem~5.2]{RT})} \ Let $\mathcal A$ be an abelian category.

\vskip5pt

$(1)$ \  Let $(\mathcal T,\mathcal F, t ,r)$ be a torsion pair in $\mathcal A$. Then $\mathcal T$ is a coreflective subcategory of $\mathcal A$ with semi-right-exact coreflector $t$, and $\mathcal F$ is a reflective subcategory
of $\mathcal A$ with semi-left-exact reflector $r$, such that the associated orthogonal factorization systems given in {\rm Lemma~\ref{lem:sle-reflective-ofs}} coincide,
i.e., $(\mathcal E_t,\mathcal M_t)=(\mathcal E_r,\mathcal M_r)$, where
$$\mathcal E_r = \{f \in {\rm Mor}(\mathcal A) \mid   r(f)\text{ is isomorphism}\},
\ \ \mathcal M_r =  \{f \in {\rm Mor}(\mathcal A) \mid  t(f)\text{ is isomorphism}\}.$$
Moreover, it is a torsion factorization system.

\vskip5pt

$(2)$ \ There is a one to one correspondence
$$\Phi:\{\text{torsion pairs in} \ \mathcal A\}\longrightarrow \{\text{torsion factorization systems in} \ \mathcal A\}, \ \  (\mathcal{T},\mathcal{F},t,r)\longmapsto (\mathcal E_r,\mathcal M_r)$$
with the inverse $\Psi:(\mathcal{E},\mathcal{M})\longmapsto (\mathcal{T},\mathcal{F}),$
where $$\mathcal{T} = \{X\mid 0\longrightarrow X \ \mbox{is in} \ \mathcal E\},  \ \ \ \ \mathcal{F} = \{X\mid X\longrightarrow 0\ \mbox{is in} \ \mathcal M\}.$$
\end{prop}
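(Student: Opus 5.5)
For part (1), I would first identify the coreflector and reflector. The inclusion $\mathcal T\hookrightarrow\mathcal A$ has right adjoint $t$, because any $T\to X$ with $T\in\mathcal T$ composed with $X\to r(X)$ vanishes (as $\Hom(\mathcal T,\mathcal F)=0$). It therefore factors uniquely through $t(X)\to X$, which is monic. Dually, $r$ is left adjoint to $\mathcal F\hookrightarrow\mathcal A$. So $\varepsilon_X$ is the torsion monomorphism and $\eta_X$ the torsion-free epimorphism.

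For semi-left-exactness, I would take $u=\eta_A:A\to rA$ (any left strong approximation is isomorphic to it by Fact~\ref{fact:approximation-basic}) and $g:E\to rA$ with $E\in\mathcal F$. I would form the pullback $P$. Since $\eta_A$ is epic with kernel $t(A)$, the pulled-back map $u':P\to E$ is epic with kernel $t(A)\in\mathcal T$. As $E\in\mathcal F$, the sequence $0\to t(A)\to P\to E\to0$ is the torsion sequence of $P$, so $u'\cong\eta_P$. Semi-right-exactness is dual, using pushouts of monomorphisms.

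Next I would show $\mathcal E_r=\mathcal E_t$ and $\mathcal M_r=\mathcal M_t$. By Lemma~\ref{lem:sle-reflective-ofs} both pairs are orthogonal factorization systems, so it suffices to prove $\mathcal M_t\subseteq\mathcal M_r$ and $\mathcal M_r\subseteq\mathcal M_t$; the $\mathcal E$ classes then agree as left orthogonals.

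\begin{itemize}
\item If $t(f)$ is an isomorphism, then in the map of torsion sequences the square $X\to rX$ over $Y\to rY$ is a pullback. This holds because the rows are short exact with isomorphic kernels, by the standard lemma that a morphism of extensions that is an isomorphism on kernels yields a pullback on the right square.
\item Conversely, a pullback of the epimorphism $\eta_Y$ has kernel isomorphic to $\ker\eta_Y=t(Y)$, compatibly with $t(f)$, so $t(f)$ is an isomorphism.
\end{itemize}

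This makes $\mathcal M_r=\{f\mid t(f)\ \text{iso}\}$, matching the statement. The two-out-of-three property then comes free: $\mathcal E_r$ has it by Lemma~\ref{lem:sle-reflective-ofs}(1), and $\mathcal M_t$ has it by Lemma~\ref{lem:sle-reflective-ofs}(1'). Hence $(\mathcal E_r,\mathcal M_r)$ is a torsion factorization system.

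For part (2), $\Phi$ is well defined by (1), and $\Psi\Phi=\Id$ is easy. Indeed, $0\to X$ lies in $\mathcal E_r$ iff $rX=0$ iff $X\in\mathcal T$, and $X\to0$ lies in $\mathcal M_r$ iff $t(X)=0$ iff $X\in\mathcal F$.

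The main obstacle is showing that $\Psi(\mathcal E,\mathcal M)$ is a torsion pair and that $\Phi\Psi=\Id$.

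\begin{itemize}
\item \emph{Hom-orthogonality.} Given $T\in\mathcal T$, $F\in\mathcal F$ and $h:T\to F$, the square with $0\to T$ in $\mathcal E$ and $F\to0$ in $\mathcal M$ lifts uniquely. Both $h$ and the zero map are lifts, so $h=0$.
\item \emph{Decomposition.} For $X$, factor $X\to0$ as $X\xrightarrow{e}Y\xrightarrow{m}0$ with $e\in\mathcal E$ and $m\in\mathcal M$, so $Y\in\mathcal F$. Then factor $0\to X$ as $0\to Z\xrightarrow{m'}X$, so $Z\in\mathcal T$.
\item \emph{Exactness.} I must show $0\to Z\to X\to Y\to0$ is short exact, which I expect to be the hardest step. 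First I would show $\mathcal E\subseteq$ epimorphisms fails in general, so instead use two-out-of-three. The composite $Z\to X\to Y$ is zero by orthogonality. Let $K=\ker e$. The map $K\to0$ composes into $e$, and via $0\to K\to X$ I would aim to show $0\to K$ lies in $\mathcal E$ using two-out-of-three. Concretely, the composite $K\to X\to Y$ is zero, i.e. it factors as $K\to0\to Y$, and I would try to deduce that $K\to0$ is in $\mathcal E$ from two-out-of-three applied to the square formed with the pullback. If that route stalls, I would instead exploit that $\mathcal E$ and $\mathcal M$ are closed under (co)base change in an abelian setting, via orthogonality and two-out-of-three, to show $e$ is epi with $\ker e\in\mathcal T$ and $m'$ mono with $\operatorname{Coker} m'\in\mathcal F$.
\end{itemize}

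Finally, $\Phi\Psi=\Id$ follows because the $\mathcal M$ classes coincide (both characterized via kernel/torsion data) and an orthogonal factorization system is determined by its right class.
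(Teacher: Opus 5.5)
\textbf{There is a genuine gap in part (2).} Part (1) of your proposal is fine. You compare the $\mathcal M$-classes through the pullback criterion, where the paper compares the $\mathcal E$-classes through the pushout criterion; the two are dual and equally valid. The identity $\Psi\Phi=\Id$ and the Hom-orthogonality step in (2) are also correct.

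The gap is the step you yourself flag: you never show that $\Psi(\mathcal E,\mathcal M)$ is a torsion pair, i.e.\ that $e\colon X\to Y$ is an epimorphism with $\Ker e\in\mathcal T$.
\begin{itemize}
\item Your first route fails. Applying two-out-of-three to $e\circ k=(0\to Y)\circ(K\to 0)$ would need $0\to Y\in\mathcal E$. That holds only when $Y\in\mathcal T\cap\mathcal F=0$.
\item Your fallback is not a usable principle. For every orthogonal factorization system, left classes are stable under pushout and right classes under pullback, but this says nothing about $\Ker e$. Moreover $\mathcal E$ is not pullback-stable in general. For torsion/torsion-free abelian groups, $n\mapsto(n,0)\colon\mathbb Z\to\mathbb Z\oplus\mathbb Z/2$ lies in $\mathcal E$. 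Its pullback along $m\mapsto(2m,\bar m)$ is multiplication by $2$ on $\mathbb Z$, which does not lie in $\mathcal E$.
\item Any correct argument must use the half of two-out-of-three that general orthogonal factorization systems lack. For $(\operatorname{Epic},\operatorname{Monic})$, $\Psi$ returns $(0,0)$, which is not a torsion pair.
\end{itemize}

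\textbf{The missing idea} (this is how the paper proceeds) is to prove $\mathcal F=\mathcal T^{\perp_0}$ first.
\begin{itemize}
\item Take $X\in\mathcal T^{\perp_0}$. The $\mathcal M$-part $i_X\colon T_X\to X$ of $0\to X$ is zero. Hence $0$ and $\Id_{T_X}$ are both lifts in the square with left side $0\to T_X$, right side $i_X$ and bottom edge $i_X$. This forces $T_X=0$, so $0\to X\in\mathcal M$.
\item Two-out-of-three applied to $0\to X\to 0$ then gives $X\to 0\in\mathcal M$, so $X\in\mathcal F$.
\item Dually $\mathcal T={}^{\perp_0}\mathcal F$. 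In particular $\mathcal F$ is closed under subobjects and extensions.
\end{itemize}
With this in hand the exactness follows.
\begin{itemize}
\item $e$ is a left $\mathcal F$-strong-approximation, by unique lifting against $F\to 0$.
\item $e$ is epic. Write $e=jq$ with $j\colon\operatorname{Im}e\to Y$ monic. Since $\operatorname{Im}e\in\mathcal F$, $q$ factors as $q=se$, and uniqueness gives $js=\Id_Y$. So $j$ is an isomorphism.
\item $\Ker e\in\mathcal T$. Take $h\colon\Ker e\to F$ with $F\in\mathcal F$, and push out $0\to\Ker e\to X\to Y\to 0$ along $h$. The middle term $E$ lies in $\mathcal F$ by extension-closure. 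So $X\to E$ factors through $e$, which forces $h=0$. Hence $\Ker e\in{}^{\perp_0}\mathcal F=\mathcal T$.
\end{itemize}

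Your last step, $\Phi\Psi=\Id$, also needs an actual argument rather than ``both characterized via kernel/torsion data''. Once $r(X)=Y$, apply two-out-of-three to $r(f)\circ\eta_X=\eta_Y\circ f$ (with $\eta_X,\eta_Y\in\mathcal E$) and to $(r(X)\to 0)=(r(Y)\to 0)\circ r(f)$. This shows $f\in\mathcal E$ if and only if $r(f)\in\mathcal E\cap\mathcal M=\operatorname{Iso}(\mathcal A)$, which identifies the two factorization systems.
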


\begin{proof}  This is a main result in \cite{RT}, and it is stated for normal torsion factorization systems in homological categories. Since we need to use this result, for convenience we include a direct proof for abelian categories.

\vskip5pt
$(1)$ \
Let $(\mathcal T,\mathcal F,t,r)$ be a torsion pair in the abelian category
$\mathcal A$. Thus, for every $X\in\mathcal A$, there is an exact
sequence
\[
0\longrightarrow t(X)\overset{\varepsilon_{_X}}{\longrightarrow}X
\overset{\eta_{_X}}{\longrightarrow}r(X)\longrightarrow 0,
\]
where $t(X)\in\mathcal T$ and $r(X)\in\mathcal F$.

\vskip5pt

We first show that $\mathcal T$ is a coreflective subcategory of $\mathcal A$. Let $T\in\mathcal T$ and let $f:T\longrightarrow X$ be a morphism. Since $\eta_{_X}f:T\longrightarrow r(X)$ is zero by $\Hom_{\mathcal A}(\mathcal T,\mathcal F)=0$, there is a unique morphism $l:T\longrightarrow t(X)$ such that $f = l\circ \varepsilon_{_X}$. Hence $\Hom_{\mathcal{A}}(T,X)\cong \Hom_{\mathcal{T}}(T,t(X))$, and it follows that $t$ is a coreflector of $\mathcal T$. Moreover, the argument above implies that $\varepsilon_{_X}:t(X) \longrightarrow X$ is the right $\mathcal T$-strong approximation of $X$.

\vskip5pt

We next prove that $t$ is semi-right-exact. Let $\varepsilon_{_X}:t(X)\longrightarrow X$ be the right $\mathcal T$-strong approximation of $X$ and let $h:t(X)\longrightarrow T$ be a morphism with $T\in\mathcal T$.
Consider the pushout diagram
\[
\xymatrix@C=1.1cm@R=0.8cm{
  t(X)\ar@{^{(}->}[r]^-{\varepsilon_{_X}}\ar[d]_-{h} & X\ar@{->>}[r]\ar[d] & r(X)\ar@{=}[d] \\
  T\ar@{^{(}->}[r]^-{c} & Q\ar@{->>}[r] & r(X)
}\]

Since $\varepsilon_{_X}$ is a monomorphism and $\mathcal A$ is abelian, $c$ is also a monomorphism.
Since the second row of the diagram above is an exact sequence with $T \in \mathcal T$ and  $r(X)\in \mathcal F$, there is a unique isomorphism $u:T \longrightarrow t(Q)$ such that $\varepsilon_{_Q}\circ u=c$. Thus $c$ is the right $\mathcal T$-strong approximation of $Q$ and hence $t$ is semi-right-exact.

\vskip5pt

Dually, $\mathcal F$ is a reflective subcategory of $\mathcal A$ and  $r$ is the semi-left-exact reflector.

\vskip5pt

By Lemma~\ref{lem:sle-reflective-ofs} one gets two orthogonal
factorization systems $(\mathcal E_t,\mathcal M_t)$ and $(\mathcal E_r,\mathcal M_r),$
where
\begin{equation*}
\begin{aligned}
\mathcal E_t &= \left\{
f:X\longrightarrow Y\ \middle|\begin{array}{c}
\xymatrix@C=0.9cm@R=0.4cm{
t(X) \ar[r]^-{\varepsilon_{_X}} \ar[d]_{t(f)} & X \ar[d]^{f} \\
t(Y) \ar[r]^-{\varepsilon_{_Y}} & Y
}
\end{array}\text{ is a pushout}
\right\}
\\
\mathcal M_t
&= \{\ f \in {\rm Mor}(\mathcal A)\ \mid \  t(f)\text{ is an isomorphism}\}
\end{aligned}
\end{equation*}
\begin{equation*}
\begin{aligned}
\mathcal E_r &= \{f \in {\rm Mor}(\mathcal A)\ \mid \  r(f)\text{ is an isomorphism}\}
\\
\mathcal M_r
&=
\left\{
f: X\longrightarrow Y\ \middle|\begin{array}{c}
\xymatrix@C=0.9cm@R=0.4cm{
X\ar[r]^-{\eta_{_X}}\ar[d]_f & r(X)\ar[d]^{r(f)}\\
Y\ar[r]^-{\eta_{_Y}} & r(Y)
}
\end{array}\text{is a pullback}
\right\}.
\end{aligned}
\end{equation*}

\vskip5pt

We now show that $\mathcal E_t=\mathcal E_r$.
Let $f:X\longrightarrow Y$ be a morphism in $\mathcal A$. Then one has the following diagram with exact rows
\[
\xymatrix@C=0.9cm@R=0.7cm{
0 \ar[r] & t(X) \ar[r]\ar[d]_{t(f)} & X \ar[r]\ar[d]^{f} & r(X) \ar[r]\ar[d]^{r(f)} & 0
\\
0 \ar[r] & t(Y) \ar[r] & Y \ar[r] & r(Y) \ar[r] & 0.
}
\]
Note that $r(f)$ is an isomorphism if and only if the left-hand-side square is a pushout square. It follows that $f \in \mathcal E_t$ if and only if $f \in \mathcal E_r$.

\vskip 5pt

Since $(\mathcal E_t,\mathcal M_t)$ and $(\mathcal E_r,\mathcal M_r)$ are orthogonal factorization systems with the same left class, one has $(\mathcal E_t,\mathcal M_t) = (\mathcal E_r,\mathcal M_r).$
By definition, both $\mathcal E_r$ and $\mathcal M_r$ have the
two-out-of-three property. Thus $(\mathcal E_r,\mathcal M_r)$ is a torsion factorization system.

\vskip 5pt
$(2)$ \  Let $(\mathcal T,\mathcal F)$ be a torsion pair.
Set $\mathcal{E} = \{f \in {\rm Mor}(\mathcal A)\ \mid \  r(f)\text{ is an isomorphism}\}$ and $\mathcal{M} = \{f \in {\rm Mor}(\mathcal A)\ \mid \  t(f)\text{ is an isomorphism}\}$.
We have shown that $(\mathcal{E},\mathcal{M})$ is a torsion factorization system. Thus $\Phi$ is well-defined.

\vskip 5pt

Conversely, let $(\mathcal E,\mathcal M)$ be a torsion factorization system. Set $\mathcal T:=\{X\mid 0\longrightarrow X\in\mathcal E\}$ and $\mathcal F:=\{X\mid X\longrightarrow0\in\mathcal M\}$.
Now we prove $(\mathcal T,\mathcal F)$ is a torsion pair.
For $T\in\mathcal T$ and $F\in\mathcal F$, every morphism $h:T\longrightarrow F$ is a lifting in the commutative square
\[
\xymatrix@C=1.0cm@R=0.7cm{
0\ar[r]\ar[d] & F\ar[d]\\
T\ar[r]\ar@{-->}[ur]^{h} & 0.
}
\]
By definition of torsion factorization system, the lifting is unique. Since the zero morphism is also a lifting, one has $\Hom_{\mathcal A}(\mathcal T,\mathcal F)=0$, and hence $\mathcal F\subseteq\mathcal T^{\perp_0}$.

\vskip 5pt

We claim that $\mathcal F=\mathcal T^{\perp_0}$.
In fact, let $X\in\mathcal T^{\perp_0}$.
Factorize $0\longrightarrow X$ as $0\longrightarrow T_X\overset{i_X}{\longrightarrow}X$, with $0\longrightarrow T_X$ in $\mathcal E$ and $i_X$ in $\mathcal M$.
Then one has $T_X \in \mathcal T$ by definition. Since $X\in\mathcal T^{\perp_0}$, it follows that $i_X=0$. Thus both $0$ and ${\rm Id}_{T_X}$ are liftings in
\[
\xymatrix@C=1.0cm@R=0.7cm{
0\ar[r]\ar[d] & T_X\ar[d]^{i_X}\\
T_X\ar[r]_{i_X}\ar@{-->}[ur]_0^{{\rm Id}_{T_X}} & X,
}
\]
and hence $T_X=0$.
Thus $0\longrightarrow X$ lies in $\mathcal M$.
Since the composition of $0\longrightarrow X\longrightarrow0$ lies in $\mathcal M$ and $\mathcal M$ has the two-out-of-three property, one has $X\longrightarrow0$ in $\mathcal M$ and hence $X\in\mathcal F$.
Thus $\mathcal T^{\perp_0}\subseteq \mathcal F$ and it follows that $\mathcal F=\mathcal T^{\perp_0}$.
The dual of the claim gives $\mathcal T={}^{\perp_0}\mathcal F$.
The claim implies that $\mathcal T$ is closed under quotients and extensions; and $\mathcal F$ is closed under subobjects and extensions.

\vskip 5pt

Now we prove that every object $X\in\mathcal A$
admits an exact sequence $0\longrightarrow T\longrightarrow X\longrightarrow F\longrightarrow0$ with $T\in\mathcal T$ and $F\in\mathcal F$.
For $X\in\mathcal A$, factorize $0\longrightarrow X$ as $0\longrightarrow T_X\overset{i_X}{\longrightarrow}X$ with $T_X \in \mathcal T$ and $i_X \in \mathcal M$.
Similarly, factorize $X\longrightarrow0$ as $X\overset{p_X}{\longrightarrow}F_X\longrightarrow0$ with $F_X \in \mathcal F$ and $p_X \in \mathcal E$.
For any $s:T\longrightarrow X$ with $T \in \mathcal{T}$, there is a morphism $\tau:T\longrightarrow T_X$ such that the following diagram commutes:
\[
\xymatrix@C=1.0cm@R=0.7cm{
0\ar[r]\ar[d] & T_X\ar[d]^{i_X}\\
T\ar[r]_{s}\ar@{-->}[ur]\ar@{-->}[ur]\ar@{-->}[ur]^{\tau} & X.
}
\]
Thus $i_X$ is the strong right $\mathcal T$-approximation of $X$ (cf. Definition~\ref{defn:strong-approximation}). Similarly, $p_X$ is a left $\mathcal F$-strong-approximation of $X$.
Consider epi--mono factorization of $i_X$: $T_X \overset{q}{\longrightarrow} \operatorname{Im}i_X \overset{j}{\longrightarrow} X$. Since $\operatorname{Im}i_X$ is a quotient of $T_X\in\mathcal T$ and $\mathcal T={}^{\perp_0}\mathcal F$ is closed under quotients, one has $\operatorname{Im}i_X\in\mathcal T$.
Since $i_X$ is strong right $\mathcal T$-approximation of $X$, there is a unique morphism $h:\operatorname{Im}i_X\longrightarrow T_X$ such that $j=i_X\circ h$.
Since $j=i_X\circ h=j\circ q \circ h$ and $j$ is a monomorphism, one gets $q\circ h={\rm Id}_{\operatorname{Im}i_X}$.
Consider the following commutative square:
\[
\xymatrix@C=1.0cm@R=0.7cm{
0\ar[r]\ar[d] & T_X\ar[d]^{q}\\
T_X\ar[r]_{q}\ar@{-->}[ur]^{hq}_{{\rm Id}_{T_X}} & \operatorname{Im}i_X,
}
\]
where $h\circ q$ and ${\rm Id}_{T_X}$ are both liftings, it follows that $h\circ q={\rm Id}_{T_X}$.
Thus $q$ is an isomorphism and $i_X$ is monic.
Dually, $p_X$ is an epimorphism.

\vskip 5pt

It remains to prove that $T_X = \Ker p_X$. Let $k:K\longrightarrow X$ be the kernel of $p_X$.
For every $f:K\longrightarrow F$ with $F\in\mathcal F$, form the pushout
\[
\xymatrix@C=0.9cm@R=0.7cm{
0\ar[r] & K\ar[r]^{k}\ar[d]_{f} & X\ar[r]^{p_X}\ar[d]^{f'} & F_X\ar[r]\ar@{=}[d] & 0\\
0\ar[r] & F\ar[r]^{\alpha} & E\ar[r]^{\beta} & F_X\ar[r] & 0.
}
\]
Since $\mathcal F$ is closed under extensions, $E\in\mathcal F$.
Then there exists a unique morphism $s:F_X\longrightarrow E$ such that $s\circ p_X=f'$.
Hence $\alpha\circ f=f'\circ k=s\circ p_X\circ k=0$.
Since $\alpha$ is monic, one has $f=0$.
Thus $K\in{}^{\perp_0}\mathcal F=\mathcal T$.

\vskip 5pt

Since $\operatorname{Hom}_{\mathcal{A}}(T_X,F_X) = 0$, one has $p_X\circ i_X=0$. There is a unique $a:T_X\longrightarrow K$ with $i_X=k\circ a$.
$K\in\mathcal T$ gives a unique $b:K\longrightarrow T_X$ with $k=i_X\circ b$.
Since $i_X$ and $k$ are monic, one has $b\circ a=1_{T_X}$ and $a\circ b=1_K$.
Therefore $T_X\cong K$, and hence $0\longrightarrow T_X\overset{i_X}{\longrightarrow}X\overset{p_X}{\longrightarrow}F_X\longrightarrow0$ is exact.
Thus $(\mathcal T,\mathcal F, t ,r)$ is a torsion pair, where $t(X)=T_X$ and $r(X)=F_X$.
This shows that $\Phi$ is well-defined.

\vskip 5pt

We now prove that the maps $\Phi$ and $\Psi$ are bijection.

\vskip 5pt

First, let $(\mathcal T,\mathcal F,t,r)$ be a torsion pair. Then $\Phi(\mathcal T,\mathcal F)=(\mathcal E,\mathcal M),$
where
\[
\mathcal E = \{f\in\operatorname{Mor}(\mathcal A)\mid r(f)\text{ is an isomorphism}\},
\ \ \ \ \
\mathcal M = \{f\in\operatorname{Mor}(\mathcal A)\mid t(f)\text{ is an isomorphism}\}.
\]
Applying $\Psi$, one gets a torsion pair $(\mathcal T',\mathcal F')$, with
\[
\mathcal T' = \{X\in\mathcal A\mid 0\longrightarrow X\text{ lies in }\mathcal E\},
\ \ \ \ \
\mathcal F' = \{X\in\mathcal A\mid X\longrightarrow0\text{ lies in }\mathcal M\}.
\]
By the definition of $\mathcal E$, one has
\[
\mathcal T' = \{X\in\mathcal A\mid 0\longrightarrow X\text{ lies in }\mathcal E\} = \{X\in\mathcal A\mid rX\cong 0\} = \mathcal T.
\]
Thus $\mathcal T'=\mathcal T$ and $\mathcal F'=\mathcal F$, and hence $$(\Psi\circ\Phi)(\mathcal T,\mathcal F) = \Psi(\mathcal E,\mathcal M)= (\mathcal T,\mathcal F).$$

\vskip5pt

Conversely, let $(\mathcal E,\mathcal M)$ be a torsion factorization
system. Then $\Psi(\mathcal E,\mathcal M)=(\mathcal T,\mathcal F,t,r)$, where
\[
\mathcal T = \{X\in\mathcal A\mid 0\longrightarrow X\text{ lies in }\mathcal E\},
\ \ \ \ \
\mathcal F = \{X\in\mathcal A\mid X\longrightarrow0\text{ lies in }\mathcal M\}.
\]
Applying $\Phi$ one gets a torsion factorization system $(\mathcal E',\mathcal M')=\Phi(\mathcal T,\mathcal F,t,r),$
where
\[
\mathcal E' = \{f\in\operatorname{Mor}(\mathcal A)\mid r(f)\text{ is an isomorphism}\}, \ \ \ \
\mathcal M' = \{f\in\operatorname{Mor}(\mathcal A)\mid t(f)\text{ is an isomorphism}\}.
\]

\vskip5pt

It suffices to prove that $(\mathcal E',\mathcal M')=(\mathcal E,\mathcal M).$

\vskip5pt

For every $X\in\mathcal A$, choose a factorization $X\overset{\eta_{_X}}{\longrightarrow}F_X \overset{q_{_X}}{\longrightarrow}0,$ with $\eta_{_X}\in\mathcal E$ and $q_{_X}\in\mathcal M$.
By the construction of $(\mathcal T,\mathcal F)$ above, one has $F_X=r(X)$.
Then there exists the following commutative diagram
\[
\xymatrix@C=1.1cm@R=0.8cm{
X \ar[r]^-{\eta_{_X}} \ar[d]_{f} & F_X=r(X) \ar[r]^-{q_{_X}} \ar[d]^{r(f)} & 0 \ar@{=}[d] \\
Y \ar[r]_-{\eta_{_Y}} & F_Y=r(Y) \ar[r]_-{q_{_Y}} & 0.
}
\]
If $f:X \longrightarrow Y$ in $\mathcal E$, since $\eta_{_X}\in\mathcal E$ and $\eta_{_Y}\in\mathcal E$, one has $r(f)\in\mathcal E$ by the two-out-of-three property of $\mathcal E$.
Since $q_{_X}\in\mathcal M$ and $q_{_Y}\in\mathcal M$, one has $r(f)\in\mathcal M$ by the two-out-of-three property of $\mathcal M$.
Thus $r(f)\in \mathcal E\cap\mathcal M=\operatorname{Iso}(\mathcal A)$ and hence $f \in \mathcal E'$.

\vskip5pt

Conversely, if $f\in\mathcal E'$, one has $r(f)$ is an isomorphism.
Since $\eta_{_X}\in\mathcal E$ and $\eta_{_Y}\in\mathcal E$, by the two-out-of-three property of $\mathcal E$ one has $f\in\mathcal E$.
Hence $\mathcal E=\mathcal E'$.
Since $(\mathcal E,\mathcal M)$ and $(\mathcal E',\mathcal M')$ have the same left class, one has $(\Phi\circ\Psi)(\mathcal E,\mathcal M) = (\mathcal E',\mathcal M') = (\mathcal E,\mathcal M).$
This ends the proof.
\end{proof}

\vskip5pt

\begin{prop}\label{prop:exact-nested-torsion-pairs-bi-reflective} \ Let  $(\mathcal T_1, \mathcal F_1, t_1, r_1; \mathcal T_2, \mathcal F_2, t_2,r_2)$ be a twin torsion pair in abelian category $\mathcal A$.
Then $(\CoFib,\Fib,\Weq)$ is a torsion model structure on $\mathcal A$, where
\begin{equation*}
\begin{aligned}
\CoFib & =\{f\in {\rm Mor}(\mathcal A) \ \mid \ r_1(f) \text{ is an isomorphism}\ \}
\\
\Fib &=\{f\in {\rm Mor}(\mathcal A) \ \mid \ t_2(f) \text{ is an isomorphism}\ \}
\\
\Weq &=\{f\in {\rm Mor}(\mathcal A) \ \mid \ H(f) \text{ is an isomorphism}\ \}
\end{aligned}
\end{equation*}
with $ H\cong t_1\circ r_2 \cong r_2\circ t_1: \mathcal A \longrightarrow \mathcal T_1\cap \mathcal F_2.$
\end{prop}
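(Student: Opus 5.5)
Here is the plan. By Proposition~\ref{prop:abelian-torsion-OFS}, the torsion pair $(\mathcal T_1,\mathcal F_1,t_1,r_1)$ gives a torsion factorization system $(\mathcal E_{r_1},\mathcal M_{r_1})$ with $\mathcal E_{r_1}=\CoFib$ and $\mathcal M_{r_1}=\{f\mid t_1(f)\text{ iso}\}$. Likewise $(\mathcal T_2,\mathcal F_2,t_2,r_2)$ gives $(\mathcal E_{r_2},\mathcal M_{r_2})$ with $\mathcal M_{r_2}=\Fib$. I will set $\TCoFib:=\mathcal E_{r_2}$ and $\TFib:=\mathcal M_{r_1}$, and then apply Proposition~\ref{prop:weq-torsion}.

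The inclusion $\TCoFib\subseteq\CoFib$ should follow from $\mathcal T_2\subseteq\mathcal T_1$, which gives $\mathcal F_1\subseteq\mathcal F_2$. Let $f$ have $r_2(f)$ an isomorphism. By the proof of Proposition~\ref{prop:abelian-torsion-OFS}(2), the morphism $\eta^2_X:X\to r_2X$ lies in $\mathcal E_{r_2}$. I claim it also lies in $\mathcal E_{r_1}$: its kernel $t_2X$ lies in $\mathcal T_2\subseteq\mathcal T_1$, so $r_1$ kills the kernel. Concretely, $\Hom(r_2X,F)\to\Hom(X,F)$ is bijective for $F\in\mathcal F_1\subseteq\mathcal F_2$, hence $r_1(\eta^2_X)$ is an isomorphism. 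Now $r_1(f)$ is obtained from $r_1r_2(f)$ by composing with the isomorphisms $r_1(\eta^2)$, so $r_1(f)$ is an isomorphism.

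Next set $\Weq:=\TFib\circ\TCoFib$. By Proposition~\ref{prop:weq-torsion}, it remains to show that $\Weq$ is closed under compositions. At the same time I must identify $\Weq$ with $\{f\mid H(f)\text{ iso}\}$. The plan follows the proof of (2)$\Rightarrow$(3) in Proposition~\ref{prop:bi-reflective-A3}. First I show $t_1r_2\cong r_2t_1$, which defines $H$. For $X$, the object $t_1X$ sits in $0\to t_1X\to X\to r_1X\to 0$. Applying the torsion-pair decomposition for pair 2 to $t_1X$ gives $t_2t_1X=t_2X$, since $t_2X\in\mathcal T_2\subseteq\mathcal T_1$ factors through $t_1X$; the resulting quotient $t_1X/t_2X$ is $r_2t_1X$. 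Dually, $r_2X$ has $t_1$-part $t_1X/t_2X$, because $t_1X/t_2X\in\mathcal T_1$ (quotient-closed) and $r_2X/(t_1X/t_2X)\cong r_1X\in\mathcal F_1$. Thus both composites are $t_1X/t_2X$, naturally, and this lies in $\mathcal T_1\cap\mathcal F_2$.

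Then I show $\Weq=\{f\mid H(f)\text{ iso}\}$. The first direction: if $f=m\circ e$ with $r_2(e)$ and $t_1(m)$ isomorphisms, then $t_1r_2(e)$ is an isomorphism. For $m$ I need $r_2t_1(m)$ to be an isomorphism, and this follows from $t_1(m)$ being an isomorphism. Since $H$ is a functor, $H(f)$ is an isomorphism. For the converse, factor $f=m\circ e$ with $e\in\mathcal E_{r_2}$ and $m\in\mathcal M_{r_2}$; I need $t_1(m)$ iso. Because $m\in\mathcal M_{r_2}$, the square $P\to r_2P$ over $Y\to r_2Y$ is a pullback. The coreflector $t_1$ is a right adjoint and preserves pullbacks, so $t_1(m)$ is a pullback of $t_1r_2(m)=H(m)$. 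Since $H(e)$ and $H(f)$ are isomorphisms, $H(m)$ is an isomorphism, hence $t_1(m)$ is too. A cleaner abelian-category alternative: $t_1(m)$ is an isomorphism iff its restriction on the $t_2$-parts, which is $t_2(m)$, is an isomorphism (true since $m\in\mathcal M_{r_2}$) and the induced map on $t_1/t_2=H$ is an isomorphism, by the five lemma.

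From this description, $\Weq$ is closed under composition, since $H$ is a functor. The same description shows directly that $\Weq$ satisfies two-out-of-three. Proposition~\ref{prop:weq-torsion} then finishes the proof. The main obstacle I expect is making $H$ a genuine functor and verifying the five-lemma step. This needs care that $t_2\le t_1$ as subfunctors, so that the maps $t_2(m)$, $t_1(m)$ and $H(m)$ fit into a morphism of short exact sequences $0\to t_2\to t_1\to H\to 0$. Once that natural short exact sequence is established, the rest is formal.
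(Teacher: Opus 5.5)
Your proposal is correct and follows essentially the paper's proof. It uses the same two torsion factorization systems and the same $\Weq=\mathcal M_1\circ\mathcal E_2$, identified with $\{f\mid H(f)\ \text{is an isomorphism}\}$ via the natural sequence $0\to t_2\to t_1\to H\to 0$ and the five lemma (your \emph{cleaner alternative} is exactly the paper's argument), followed by Proposition~\ref{prop:weq-torsion}. Your explicit check that $\mathcal E_{r_2}\subseteq\mathcal E_{r_1}$ supplies the hypothesis $\TCoFib\subseteq\CoFib$ of that proposition, which the paper's proof uses without comment.
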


\begin{proof}
For every $X\in\mathcal A$, one has the following commutative diagram with exact rows:
\[
\xymatrix@C=0.9cm@R=0.7cm{
0\ar[r] & t_2(X)\ar[r]^{\varepsilon_2}\ar[d]_{u_{_X}} & X\ar[r]^{\eta_2}\ar@{=}[d] & r_2(X)\ar[r]\ar[d]^{v_{_X}} & 0\\
0\ar[r] & t_1(X)\ar[r]^{\varepsilon_1} & X\ar[r]^{\eta_1} & r_1(X)\ar[r] & 0,
}
\]
where $t_1(X) \in \mathcal T_1, \ t_2(X) \in \mathcal T_2$, $r_1(X) \in \mathcal F_1$ and $r_2(X) \in \mathcal F_2$. Since by definition $\mathcal T_2 \subseteq \mathcal T_1$,
$\mathcal T_2 \cap \mathcal F_1=0$. Thus there is a unique morphism $u_{_X}:t_2(X) \longrightarrow t_1(X)$ such that $\varepsilon_1 \circ u_{_X} =\varepsilon_2$, and there is a unique morphism $v_{_X}:r_2(X) \longrightarrow r_1(X)$ such that $v_{_X} \circ \eta_2=\eta_1$.
By the snake lemma, one gets $\Ker v_{_X}\cong \Coker u_{_X}$. Put $H(X) = \Ker v_{_X}$. Then one has exact sequences:
\begin{equation}
    0\longrightarrow t_2(X)\overset{u_{_X}}{\longrightarrow}t_1(X)\longrightarrow H(X)\longrightarrow0
\end{equation}
\vskip -15pt
\begin{equation}
    0\longrightarrow H(X)\longrightarrow r_2(X)\overset{v_{_X}}{\longrightarrow}r_1(X)\longrightarrow0.
\end{equation}
Since $t_1(X) \in \mathcal T_1$ and $r_2(X) \in \mathcal F_2$, one has $H(X)\in\mathcal T_1\cap\mathcal F_2$. By the functoriality of the connecting morphism one gets a functor $H: \mathcal A \longrightarrow \mathcal T_1\cap \mathcal F_2$.
Since $t_2 (X) \in \mathcal T_2$ and $H(X) \in \mathcal F_2$, by $(6.1)$ one has $H(X) \cong (r_2 \circ t_1)X$.
Similarly, by $(6.2)$ one has $H(X) \cong  (t_1\circ r_2)X$.
Thus $H\cong r_2\circ t_1\cong t_1\circ r_2$.

\vskip 5pt

By Proposition~\ref{prop:abelian-torsion-OFS}, one gets two torsion factorization systems $(\mathcal E_1,\mathcal M_1)$ and $(\mathcal E_2, \mathcal M_2)$, where
\begin{equation*}
\begin{aligned}
\mathcal E_1 & =\{f\in {\rm Mor}(\mathcal A) \ \mid \ r_1(f) \text{ is an isomorphism}\ \}
\\
\mathcal M_1 &=\{f\in {\rm Mor}(\mathcal A) \ \mid \ t_1(f) \text{ is an isomorphism}\ \}
\\
\mathcal E_2 & =\{f\in {\rm Mor}(\mathcal A) \ \mid \ r_2(f) \text{ is an isomorphism}\ \}
\\
\mathcal M_2 &=\{f\in {\rm Mor}(\mathcal A) \ \mid \ t_2(f) \text{ is an isomorphism}\  \}.
\end{aligned}
\end{equation*}

Set $\CoFib:=\mathcal E_1$, $\Fib:=\mathcal M_2$, and $\Weq:=\mathcal M_1\circ\mathcal E_2$.
We claim that $$\Weq = \{f \mid H(f) \text{ is an isomorphism}\}.$$

Let $f\in \Weq = \mathcal M_1\circ\mathcal E_2$. Then $f = m_1\circ e_2$ with $m_1\in\mathcal M_1$ and $e_2\in\mathcal E_2$.
Thus
\[
H(f) = H(m_1)\circ H(e_2) \cong r_2(t_1(m_1))\circ t_1(r_2(e_2)).
\]
By definition $t_1(m_1)$ and $r_2(e_2)$ are isomorphisms, and hence $H(f)$ is an isomorphism.
This proves $\Weq =\mathcal M_1\circ\mathcal E_2\subseteq \{f \mid H(f) \text{ is an isomorphism}\}$.

\vskip 5pt

Conversely, let $H(f)$ be an isomorphism. Factorize $f=m_2\circ e_2$ with $e_2:X\longrightarrow Z$ in $\mathcal E_2$ and $m_2:Z\longrightarrow Y$ in $\mathcal{M}_2$.
Thus $H(e_2)$ is an isomorphism, so is $H(m_2)$. Then $(6.1)$ gives commutative diagram with exact rows:
\[
\xymatrix@C=0.9cm@R=0.7cm{
0\ar[r] & t_2(Z)\ar[r]^{u_Z}\ar[d]_{t_2(m_2)}^{\cong} & t_1(Z)\ar[r]\ar[d]^{t_1(m_2)} & H(Z)\ar[r]\ar[d]^{H(m_2)}_{\cong} & 0\\
0\ar[r] & t_2(Y)\ar[r]_{u_Y} & t_1(Y)\ar[r] & H(Y)\ar[r] & 0.
}
\]
By the snake lemma $t_1(m_2)$ is an isomorphism, so $m_2\in\mathcal M_1$ and $f\in\mathcal M_1\circ\mathcal E_2 =\Weq$.
Thus $\Weq=\{f \mid H(f) \text{ is an isomorphism}\}$, and  then $\Weq$ is closed under compositions.

\vskip 5pt

It follows from  Proposition~\ref{prop:weq-torsion} that $(\CoFib,\Fib,\Weq)$ is a torsion model structure.
\end{proof}

\vskip 5pt

\begin{thm}\label{thm:abelian-bijection-models-torsion-pairs} \ Let $\mathcal A$ be an abelian category.

\vskip 5pt

$(1)$ \ There is a one to one correspondence between torsion model structures on $\mathcal A$ and twin torsion pairs in $\mathcal A$.
Explicitly,
$$\Phi:  \left\{\text{\rm twin torsion pairs} \ \mbox{in} \ \mathcal A\right\} \longrightarrow \left\{\text{\rm torsion model structures on }\mathcal A \right\}$$
$$(\mathcal T_1, \mathcal F_1, t_1, r_1; \mathcal T_2, \mathcal F_2, t_2, r_2)  \longmapsto (\CoFib, \Fib, \Weq)$$
is a bijection, where
\begin{equation*}
\begin{aligned}
\CoFib & =\{f\in {\rm Mor}(\mathcal A) \ \mid \ r_1(f) \text{ is an isomorphism}\ \}
\\
\Fib &=\{f\in {\rm Mor}(\mathcal A) \ \mid \ t_2(f) \text{ is an isomorphism}\ \}
\\
\Weq &=\{f\in {\rm Mor}(\mathcal A) \ \mid \ H(f) \text{ is an isomorphism}\ \}
\end{aligned}
\end{equation*}
with $ H\cong t_1\circ r_2 \cong r_2\circ t_1: \mathcal A \longrightarrow \mathcal T_1\cap \mathcal F_2;$
and the inverse is $$\Psi:(\CoFib, \Fib, \Weq) \longmapsto (\mathcal T_1, \mathcal F_1; \mathcal T_2, \mathcal F_2)$$ where
\begin{equation*}
\begin{aligned}
&\mathcal T_1:=\{X\in \mathcal A \ \mid \ 0\longrightarrow X \ \mbox{is in} \ \CoFib\},\ \ \ \ \ \mathcal F_1:=\{X\in \mathcal A \ \mid \ X\longrightarrow 0 \ \mbox{is in} \ \TFib\}
\\
&\mathcal T_2:=\{X\in \mathcal A \ \mid \ 0\longrightarrow X \ \mbox{is in} \ \TCoFib\},\ \ \ \mathcal F_2:=\{X\in \mathcal A \ \mid \ X \longrightarrow 0 \ \mbox{is in} \ \Fib\}.
\end{aligned}
\end{equation*}

\vskip 5pt

$(2)$ \ Any torsion model structure on $\mathcal A$ is a bi-reflective model structure.

\vskip 5pt

$(3)$ \ Let $(\CoFib, \Fib, \Weq)$ be a torsion model structure given by twin torsion pair $(\mathcal T_1, \mathcal F_1; \mathcal T_2, \mathcal F_2)$.
Then the class of cofibrant objects is $\mathcal T_1$, the class of fibrant objects is $\mathcal F_2$, and the class of trivial objects is
$$\mathcal T_2*\mathcal F_1: = \{X\in \mathcal A \mid \exists\ \text{an exact sequence } 0\longrightarrow T_2 \longrightarrow X \longrightarrow F_1 \longrightarrow 0, \ T_2 \in \mathcal T_2, \ F_1 \in \mathcal F_1\}.$$
The homotopy category $\Ho(\mathcal A)$ is a quasi-abelian category, and it is equivalent to $\mathcal T_1\cap\mathcal F_2$ as a category.
In particular, every torsion class and every torsion-free class can be realized as the homotopy category of a torsion model structure.
\end{thm}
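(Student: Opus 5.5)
We prove (1) first, then use it for (2) and (3).

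\emph{Part (1).} Proposition~\ref{prop:exact-nested-torsion-pairs-bi-reflective} already shows that $\Phi$ is well defined. In that proof we have $\CoFib=\mathcal E_1$, $\Fib=\mathcal M_2$ and $\Weq=\mathcal M_1\circ\mathcal E_2$, where $(\mathcal E_i,\mathcal M_i)$ is the torsion factorization system attached to $(\mathcal T_i,\mathcal F_i)$ by Proposition~\ref{prop:abelian-torsion-OFS}. The inclusion $\mathcal T_2\subseteq\mathcal T_1$ gives $\mathcal E_2\subseteq\mathcal E_1$: if $r_2(f)$ is an isomorphism, apply $r_1$ and use $r_1\cong r_1 r_2$, which holds because $t_2X\in\mathcal T_1$. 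Hence Proposition~\ref{prop:retract-argument} yields $\TCoFib=\mathcal E_2$ and $\TFib=\mathcal M_1$.

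For the converse direction, let $(\CoFib,\Fib,\Weq)$ be a torsion model structure. Then $(\CoFib,\TFib)$ and $(\TCoFib,\Fib)$ are orthogonal factorization systems by Proposition~\ref{prop:weq-two-out-of-three}. I will show that both are torsion factorization systems. $\CoFib$ and $\Fib$ have two-out-of-three by definition, and $\TCoFib=\CoFib\cap\Weq$ and $\TFib=\Fib\cap\Weq$ inherit it from $\Weq$. Proposition~\ref{prop:abelian-torsion-OFS}(2) then produces torsion pairs $(\mathcal T_1,\mathcal F_1)$ from $(\CoFib,\TFib)$ and $(\mathcal T_2,\mathcal F_2)$ from $(\TCoFib,\Fib)$, given exactly by the formulas for $\Psi$. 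Since $\TCoFib\subseteq\CoFib$, we get $\mathcal T_2\subseteq\mathcal T_1$, so $\Psi$ is well defined.

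The two compositions are identities by the bijection in Proposition~\ref{prop:abelian-torsion-OFS}(2):
\begin{itemize}
\item $\Psi\Phi=\mathrm{id}$, because $\Psi$ recovers $(\mathcal T_1,\mathcal F_1)$ from $(\mathcal E_1,\mathcal M_1)=(\CoFib,\TFib)$ and $(\mathcal T_2,\mathcal F_2)$ from $(\mathcal E_2,\mathcal M_2)=(\TCoFib,\Fib)$.
\item $\Phi\Psi=\mathrm{id}$, because $\CoFib$ and $\Fib$ are recovered, and then $\Weq$ is determined by Fact~\ref{elementpropmodel}(1).
\end{itemize}

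\emph{Part (2).} I will check conditions (M1)--(M3$'$) of Definition~\ref{defn:bi-reflective-orthogonal-model-structure} directly.
\begin{itemize}
\item (M1): $\mathcal A$ has a zero object, so it has enough cofibrant and fibrant objects by the Remark after Definition~\ref{defn:cofibrant-fibrant-objects}.
\item (M2): $\TCoFib=\mathcal E_2$ and $\TFib=\mathcal M_1$ have two-out-of-three by Proposition~\ref{prop:abelian-torsion-OFS}(1).
\item (M3): Take $p\colon C\to D$ in $\TFib=\mathcal M_1$ with $C\in\mathcal T_1$, and $u\colon C\to E$ in $\CoFib$. Form the pushout in $\mathcal A$, with $q\colon E\to Q$ the map opposite $p$. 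Since $t_1(p)$ is an isomorphism and $C\in\mathcal T_1$, the map $p$ is the torsion inclusion $t_1D\to D$. In particular $p$ is a monomorphism with $\Coker p=r_1D\in\mathcal F_1$. Pushouts preserve monomorphisms and cokernels, so $q$ is a monomorphism with $\Coker q\cong r_1D\in\mathcal F_1$. Moreover $E\in\mathcal T_1$, because $\mathcal T_1$ is closed under quotients and extensions and $\Coker u\in\mathcal T_1$ when $r_1(u)$ is an isomorphism. Hence $q$ is the $\mathcal T_1$-torsion inclusion of $Q$, so $t_1(q)$ is an isomorphism and $q\in\TFib$.
\item (M3$'$): this is dual to (M3).
\end{itemize}
I expect (M3) to be the main point needing care. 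The fallback is to verify (A1)--(A3) for the pair $(\mathcal T_1,\mathcal F_2)$ and invoke Theorem~\ref{thm:bi-reflective-pairs-bi-reflective-orthogonal-model-structures}. In that approach semi-exactness of $t_1$ and $r_2$ comes from Proposition~\ref{prop:abelian-torsion-OFS}(1), and (A3) comes from two-out-of-three for $\Weq$ together with Proposition~\ref{prop:bi-reflective-A3}.

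\emph{Part (3).} With a zero object, cofibrant means $0\to X$ lies in $\mathcal E_1$, i.e.\ $r_1X=0$, i.e.\ $X\in\mathcal T_1$. Dually, the fibrant objects are $\mathcal F_2$.

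An object $X$ is trivial iff $H(X)=0$, i.e.\ $r_2t_1X=0$, i.e.\ $t_1X\in\mathcal T_2$. If $t_1X\in\mathcal T_2$, then $0\to t_1X\to X\to r_1X\to0$ exhibits $X\in\mathcal T_2*\mathcal F_1$. Conversely, given $0\to T_2\to X\to F_1\to0$ with $T_2\in\mathcal T_2\subseteq\mathcal T_1$ and $F_1\in\mathcal F_1$, uniqueness of the torsion decomposition for $(\mathcal T_1,\mathcal F_1)$ gives $t_1X\cong T_2\in\mathcal T_2$.

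The equivalence $\Ho(\mathcal A)\simeq\mathcal T_1\cap\mathcal F_2$ is Theorem~\ref{thm:orthogonal-ho-CF}. For quasi-abelianness I would argue that $\mathcal T_1\cap\mathcal F_2$ is a torsion-free class in the abelian category $\mathcal T_1$; indeed $(\mathcal T_2,\mathcal T_1\cap\mathcal F_2)$ is a torsion pair in $\mathcal T_1$, using $r_2$ restricted to $\mathcal T_1$. Torsion(-free) classes of abelian categories are quasi-abelian, so this settles it. Finally, for a torsion pair $(\mathcal T,\mathcal F)$:
\begin{itemize}
\item the twin pair $(\mathcal T,\mathcal F;0,\mathcal A)$ gives $\Ho(\mathcal A)\simeq\mathcal T$;
\item the twin pair $(\mathcal A,0;\mathcal T,\mathcal F)$ gives $\Ho(\mathcal A)\simeq\mathcal F$.
\end{itemize}
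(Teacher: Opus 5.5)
Your arguments for (1), for (2), and for the descriptions of the cofibrant, fibrant and trivial objects in (3) are correct.

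In (2) you take a different route from the paper. You verify (M1)--(M3$'$) directly. The key observation is that $p\in\mathcal M_1$ with $C\in\mathcal T_1$ is, up to isomorphism, the torsion inclusion $\varepsilon_D$. Hence its pushout is a monomorphism out of $E\in\mathcal T_1$ with cokernel $r_1D\in\mathcal F_1$, and dually for pullbacks. The paper instead checks (A1)--(A3) for $(\mathcal T_1,\mathcal F_2)$ and invokes Theorem~\ref{thm:bi-reflective-pairs-bi-reflective-orthogonal-model-structures}. Your version needs only how monomorphisms behave under pushout and epimorphisms under pullback in $\mathcal A$. The paper's version reuses the machinery of Section 5 together with Proposition~\ref{prop:abelian-torsion-OFS}(1).

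The genuine gap is in the quasi-abelian claim of (3). You say $\mathcal T_1\cap\mathcal F_2$ is a torsion-free class in ``the abelian category $\mathcal T_1$'', and then use that torsion-free classes of abelian categories are quasi-abelian. But a torsion class is not abelian in general. Take the torsion pair (divisible groups, reduced groups) in $\operatorname{Ab}$. In the full subcategory of divisible groups, the map $\mathbb Q\to\mathbb Q/\mathbb Z$ is epic. It is also monic there, since any map from a divisible group into $\mathbb Z$ vanishes. Yet it is not an isomorphism. So $\mathcal T_1$ is only quasi-abelian (its kernels are $t_1(\Ker f)$, not $\Ker f$), and the lemma you quote does not apply.

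Your observation that $(\mathcal T_2,\mathcal T_1\cap\mathcal F_2)$ is a torsion pair in $\mathcal T_1$ is correct. To finish from there, you would need the stronger fact that a torsion-free class of a torsion pair in a quasi-abelian category is again quasi-abelian. That has to be proved: one must check, inside $\mathcal T_1\cap\mathcal F_2$, that strict epimorphisms are stable under pullback and strict monomorphisms under pushout. Alternatively, do what the paper does and cite \cite[Theorem~3.2]{T}, which says directly that $\mathcal T_1\cap\mathcal F_2$ is quasi-abelian for a twin torsion pair. Then transport this along the equivalence $\Ho(\mathcal A)\simeq\mathcal T_1\cap\mathcal F_2$.
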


\begin{proof}\ $(1)$ \ By Proposition~\ref{prop:exact-nested-torsion-pairs-bi-reflective}, $\Phi$ is well-defined.

\vskip 5pt

Let $(\CoFib, \Fib, \Weq)$ be a torsion model structure. By Definition \ref{defn:torsion-OFS-and-torsion-model}(1), $(\CoFib,\TFib)$ and $(\TCoFib,\Fib)$ are torsion factorization systems.
By Proposition~\ref{prop:abelian-torsion-OFS}, one has two torsion pairs $(\mathcal T_1,\mathcal F_1,t_1,r_1)$ and $(\mathcal T_2,\mathcal F_2,t_2,r_2)$, where
\begin{equation*}
\begin{aligned}
&\mathcal T_1:=\{X\in \mathcal A \ \mid \ 0\longrightarrow X \ \mbox{is in} \ \CoFib\},\ \ \ \ \ \mathcal F_1:=\{X\in \mathcal A \ \mid \ X\longrightarrow 0 \ \mbox{is in} \ \TFib\}
\\
&\mathcal T_2:=\{X\in \mathcal A \ \mid \ 0\longrightarrow X \ \mbox{is in} \ \TCoFib\},\ \ \ \mathcal F_2:=\{X\in \mathcal A \ \mid \ X \longrightarrow 0 \ \mbox{is in} \ \Fib\}.
\end{aligned}
\end{equation*}
Since $\TCoFib\subseteq\CoFib$, one has $\mathcal T_2\subseteq\mathcal T_1$, and hence $(\mathcal T_1, \mathcal F_1, t_1, r_1; \mathcal T_2, \mathcal F_2, t_2, r_2)$ is a twin torsion pair.
Thus $\Psi$ is well-defined.

\vskip 5pt

Starting from the torsion factorization system $(\CoFib,\TFib)$,  by Proposition~\ref{prop:abelian-torsion-OFS}, one gets the corresponding torsion pair $(\mathcal T_1, \mathcal F_1, t_1, r_1)$;
and for the torsion pair $(\mathcal T_1, \mathcal F_1, t_1, r_1)$, again by Proposition~\ref{prop:abelian-torsion-OFS},
one gets the corresponding torsion factorization system $(\mathcal E_1, \mathcal M_1)$, where
$$\mathcal E_1 =\{f \mid  r_1(f) \text{ is an isomorphism}\}, \ \ \
\mathcal M_1 =\{f \mid  t_1(f) \text{ is an isomorphism}\}.$$ The one to one correspondence in Proposition~\ref{prop:abelian-torsion-OFS} implies
$(\CoFib, \TFib) = (\mathcal E_1, \mathcal M_1).$ Thus
$$\CoFib  =\{f\in {\rm Mor}(\mathcal A) \ \mid \ r_1(f) \text{ is an isomorphism}\ \}.$$
Starting from the torsion factorization system $(\TCoFib, \Fib)$, by the similar argument one gets
 $$\Fib  =\{f\in {\rm Mor}(\mathcal A) \ \mid \ t_2(f) \text{ is an isomorphism}\ \}.$$
Thus, by definition one has  $$(\Phi\circ\Psi)(\CoFib, \Fib, \Weq) = \Phi((\mathcal T_1, \mathcal F_1, t_1, r_1; \mathcal T_2, \mathcal F_2, t_2, r_2)) = (\CoFib, \ \Fib, \ \Weq').$$
By Fact \ref{elementpropmodel}(1) one has $(\Phi\circ\Psi)(\CoFib, \Fib, \Weq) = (\CoFib, \ \Fib, \ \Weq).$

\vskip 5pt

Conversely, let $(\mathcal T_1, \mathcal F_1, t_1, r_1; \mathcal T_2, \mathcal F_2, t_2, r_2)$ be a twin torsion pair in $\mathcal A$.
By Proposition~\ref{prop:exact-nested-torsion-pairs-bi-reflective}, $$\Phi((\mathcal T_1, \mathcal F_1, t_1, r_1; \mathcal T_2, \mathcal F_2, t_2, r_2)) = (\CoFib, \Fib, \Weq)$$ is a torsion model structure on $\mathcal A$, where
\begin{equation*}
\begin{aligned}
\CoFib & =\{f\in {\rm Mor}(\mathcal A) \ \mid \ r_1(f) \text{ is an isomorphism}\ \}
\\
\Fib &=\{f\in {\rm Mor}(\mathcal A) \ \mid \ t_2(f) \text{ is an isomorphism}\ \}
\\
\Weq &=\{f\in {\rm Mor}(\mathcal A) \ \mid \ H(f) \text{ is an isomorphism}\ \}
\end{aligned}
\end{equation*}
with $ H\cong t_1\circ r_2 \cong r_2\circ t_1: \mathcal A \longrightarrow \mathcal T_1\cap \mathcal F_2$.
Then by definition one has $$(\Psi\circ\Phi)((\mathcal T_1, \mathcal F_1, t_1, r_1; \mathcal T_2, \mathcal F_2, t_2, r_2))  = \Psi(\CoFib, \Fib, \Weq) = (\mathcal T_1', \mathcal F_1', t'_1, r'_1; \mathcal T'_2, \mathcal F'_2, t'_2, r'_2)$$
where
\begin{align*}\mathcal T'_1 & =\{X\in \mathcal A \ \mid \ 0\longrightarrow X \ \mbox{is in} \ \CoFib\}
= \{X\in \mathcal A \ \mid \ 0\longrightarrow r_1(X) \ \mbox{is an isomorphism}\}\\ & = \{X\in \mathcal A \ \mid \ r_1(X)\cong 0\} = \mathcal T_1\\
\mathcal F'_2 & =\{X\in \mathcal A \ \mid \ X\longrightarrow 0 \ \mbox{is in} \ \Fib\}
= \{X\in \mathcal A \ \mid \ t_2(X)\longrightarrow 0 \ \mbox{is an isomorphism}\}\\ & = \{X\in \mathcal A \ \mid \ t_2(X)\cong 0\} = \mathcal F_2.
\end{align*}
Therefore
$$(\Psi\circ\Phi)((\mathcal T_1, \mathcal F_1, t_1, r_1; \mathcal T_2, \mathcal F_2, t_2, r_2)) =
(\mathcal T_1', \mathcal F_1', t'_1, r'_1; \mathcal T'_2, \mathcal F'_2, t'_2, r'_2) = (\mathcal T_1, \mathcal F_1, t_1, r_1; \mathcal T_2, \mathcal F_2, t_2, r_2)$$
where the last equality follows from the fact that a torsion pair is uniquely determined by one of its parts. This completes the proof.

\vskip5pt
$(2)$ \ Let $(\CoFib, \Fib, \Weq)$ be a torsion model structure. By (1) it is given by a twin torsion pair $(\mathcal T_1, \mathcal F_1, t_1, r_1; \mathcal T_2, \mathcal F_2, t_2, r_2)$.
By Proposition~\ref{prop:abelian-torsion-OFS}, the two torsion pairs in this twin torsion pair give torsion factorization systems $(\mathcal E_1,\mathcal M_1)$ and $(\mathcal E_2, \mathcal M_2)$,  where
\begin{equation*}
\begin{aligned}
\mathcal E_1 & =\{f\in {\rm Mor}(\mathcal A) \ \mid \ r_1(f) \text{ is an isomorphism}\ \}
\\
\mathcal M_1 &=\{f\in {\rm Mor}(\mathcal A) \ \mid \ t_1(f) \text{ is an isomorphism}\ \}
\\
\mathcal E_2 & =\{f\in {\rm Mor}(\mathcal A) \ \mid \ r_2(f) \text{ is an isomorphism}\ \}
\\
\mathcal M_2 &=\{f\in {\rm Mor}(\mathcal A) \ \mid \ t_2(f) \text{ is an isomorphism}\  \}.
\end{aligned}
\end{equation*}
However, the torsion model structure $(\CoFib, \Fib, \Weq)$ is given by the twin torsion pair \newline $(\mathcal T_1, \mathcal F_1, t_1, r_1; \mathcal T_2, \mathcal F_2, t_2, r_2)$, which means
$\CoFib = \mathcal E_1,   \ \Fib = \mathcal M_2.$
Since $(\mathcal E_1,\mathcal M_1)$ and $(\CoFib,\TFib)$ are torsion factorization systems with the same left class, one has $(\mathcal E_1,\mathcal M_1) = (\CoFib,\TFib).$
Similarly, $(\mathcal E_2,\mathcal M_2) = (\TCoFib,\Fib).$
Hence $$\mathcal M_1\circ\mathcal E_2 = \TFib\circ \TCoFib = \Weq = \{f\in {\rm Mor}(\mathcal A) \ \mid \ H(f) \text{ is an isomorphism}\ \}$$ with $ H\cong t_1\circ r_2 \cong r_2\circ t_1: \mathcal A \longrightarrow \mathcal T_1\cap \mathcal F_2$.

\vskip 5pt

We claim that $(\mathcal T_1,\mathcal F_2)$ is a bi-reflective pair.
One needs to show (A1), (A2) and (A3) in Definition~\ref{defn:bi-reflectivepair}.

\vskip 5pt
\noindent
\textbf{(A1).} \ By Proposition~\ref{prop:abelian-torsion-OFS}(1), $\mathcal T_1$ is the coreflective subcategory of $\mathcal A$ with semi-right-exact coreflector $t_1$, and $\mathcal F_2$ is the reflective subcategory of $\mathcal A$ with semi-left-exact reflector $r_2$.
The associated orthogonal factorization systems are $(\mathcal E_{t_1},\mathcal M_{t_1})$ and $(\mathcal E_{r_2},\mathcal M_{r_2})$, where
\begin{equation*}
\begin{aligned}
\mathcal E_{t_1} &= \left\{
f:X\longrightarrow Y\ \middle|\begin{array}{c}
\xymatrix@C=0.9cm@R=0.4cm{
{t_1}X \ar[r]^-{\varepsilon_{_X}} \ar[d]_{{t_1}(f)} & X \ar[d]^{f} \\
{t_1}Y \ar[r]^-{\varepsilon_{_Y}} & Y
}
\end{array}\text{ is a pushout}
\right\}
\\
\mathcal M_{t_1}
&= \{\ f \in {\rm Mor}(\mathcal A)\ \mid \  {t_1}(f)\text{ is an isomorphism}\}
\end{aligned}
\end{equation*}
 \begin{equation*}
 \begin{aligned}
\mathcal E_{r_2} &= \{f \in {\rm Mor}(\mathcal A)\ \mid \  {r_2}(f)\text{ is an isomorphism}\}
\\
\mathcal M_{r_2}
&=
\left\{
f: X\longrightarrow Y\ \middle|\begin{array}{c}
\xymatrix@C=0.9cm@R=0.4cm{
X\ar[r]^-{\eta_{_X}}\ar[d]_f & {r_2}X\ar[d]^{{r_2}(f)}\\
Y\ar[r]^-{\eta_{_Y}} & {r_2}Y
}
\end{array}\text{is a pullback}
\right\}.
\end{aligned}
\end{equation*}
Note that by construction $\mathcal M_{t_1}=\mathcal M_1$ and $\mathcal E_{r_2}=\mathcal E_2$.
Since $(\mathcal E_1,\mathcal M_1)$ and $(\mathcal E_{t_1},\mathcal M_{t_1})$ are torsion factorization systems with the same right class, one has $(\mathcal E_{t_1},\mathcal M_{t_1}) = (\mathcal E_1,\mathcal M_1)$.
Similarly, $(\mathcal E_{r_2},\mathcal M_{r_2}) = (\mathcal E_2,\mathcal M_2)$.
\vskip 5pt
\noindent
\textbf{(A2).} \ Since $\mathcal E_{r_2} = \mathcal E_2 = \TCoFib $ and $\mathcal M_{t_1} = \mathcal M_1 = \TFib$, it follows that $\mathcal E_{r_2} \perp \mathcal M_{t_1}.$
\vskip 5pt
\noindent
\textbf{(A3).} \
Since $\mathcal E_{r_2} = \mathcal E_2\subseteq \Weq$ and $\mathcal M_{t_1} = \mathcal M_1\subseteq \Weq$, one has $$\mathcal E_{r_2}\circ\mathcal M_{t_1} = \mathcal E_2\circ\mathcal M_1\subseteq \Weq = \mathcal M_1\circ \mathcal E_2 = \mathcal M_{t_1}\circ\mathcal E_{r_2}.$$

\vskip 5pt

By Theorem~\ref{thm:bi-reflective-pairs-bi-reflective-orthogonal-model-structures}, the bi-reflective model structure given by $(\mathcal T_1,\mathcal F_2)$ is $(\mathcal E_{t_1}, \mathcal M_{r_2}, \mathcal M_{t_1}\circ \mathcal E_{r_2})$.
Thus $$(\CoFib, \Fib, \Weq) = (\mathcal E_1, \mathcal M_2, \mathcal M_1\circ \mathcal E_2) = (\mathcal E_{t_1}, \mathcal M_{r_2}, \mathcal M_{t_1}\circ \mathcal E_{r_2})$$ is a bi-reflective model structure.

\vskip5pt
$(3)$ \ By (1), $\mathcal T_1$ is precisely the class of cofibrant objects of $(\CoFib, \Fib, \Weq)$ and $\mathcal F_2$ is precisely the class of fibrant objects of $(\CoFib, \Fib, \Weq)$.

\vskip 5pt

Now we prove that the class of trivial objects is precisely
$$\mathcal T_2*\mathcal F_1 = \{X\in \mathcal A \mid \exists\ \text{an exact sequence } 0\longrightarrow T_2 \longrightarrow X \longrightarrow F_1 \longrightarrow 0, \ T_2 \in \mathcal T_2, \ F_1 \in \mathcal F_1\}.$$
Let $W$ be a trivial object. By definition, $0\longrightarrow W$ is a weak equivalence, i.e., $H(W)\cong (t_1\circ r_2)W\cong (r_2\circ t_1)W \cong 0$.
Let $$0\longrightarrow t_1(W) \longrightarrow W \longrightarrow r_1(W) \longrightarrow 0$$ be the exact sequence with $t_1(W)\in\mathcal T_1$ and $r_1(W)\in\mathcal F_1$.
One has $$0\longrightarrow t_2(t_1(W)) \longrightarrow t_1(W) \longrightarrow r_2(t_1(W)) \longrightarrow 0$$ is the exact sequence with $t_2(t_1(W))\in\mathcal T_2$ and $r_2(t_1(W))\in\mathcal F_2$.
Since $(r_2\circ t_1)W \cong 0$, one has $t_1(W)\cong t_2(t_1(W))\in\mathcal T_2$. Thus $W\in \mathcal T_2*\mathcal F_1$. Conversely, let $X\in \mathcal T_2*\mathcal F_1$. Then there is an exact sequence $$0\longrightarrow T_2 \longrightarrow X \longrightarrow F_1 \longrightarrow 0$$ with $T_2 \in \mathcal T_2, \ F_1 \in \mathcal F_1$.
Since $r_2$ is the left adjoint of the inclusion $\mathcal F_2\hookrightarrow \mathcal A$, it is a right exact functor.
Applying $r_2$ to the above exact sequence, one gets $$r_2(T_2) = 0 \longrightarrow r_2(X) \longrightarrow r_2(F_1) \longrightarrow 0.$$
Since $\mathcal F_1 \subseteq \mathcal F_2$, one has $r_2(F_1)\cong F_1$.
It follows that $r_2(X)\cong r_2(F_1)\cong F_1$.
Thus $t_1(r_2(X))\cong t_1(F_1)\cong 0$. Thus $0\longrightarrow X$ is a weak equivalence, i.e., $X$ is a trivial object.

\vskip 5pt

By Theorem~\ref{thm:orthogonal-ho-CF}, the homotopy category of $(\CoFib, \Fib, \Weq)$  is equivalent to $\mathcal T_1\cap\mathcal F_2$.
By \cite[Theorem 3.2]{T}, $\mathcal T_1\cap\mathcal F_2$ is a quasi-abelian category.

\vskip 5pt

Moreover, let $(\mathcal T,\mathcal F)$ be a torsion pair in an abelian category $\mathcal A$. Consider the torsion model structure $(\CoFib, \Fib, \Weq)$ given by the twin torsion pair $(\mathcal T, \mathcal F; 0, \mathcal A)$.
By the preceding paragraph,  the homotopy category of $(\CoFib, \Fib, \Weq)$ is equivalent to $\mathcal T\cap\mathcal A = \mathcal T$.

\vskip 5pt

Similarly,  the homotopy category of the torsion model structure given by the twin torsion pair $(\mathcal A, 0; \mathcal T, \mathcal F)$ is equivalent to $\mathcal F$.
\end{proof}

\begin{rem}\label{both-TTF-and-torsion} \ $(1)$ \ On an abelian category $\mathcal{A}$, a TTF model structure is not torsion, in general. In fact, a TTF model structure induced by TTF triple $(\mathcal C, \mathcal W, \mathcal F)$
is a torsion model structure if and only if $\mathcal W = 0$.

\vskip5pt
$(2)$ \ Proposition~\ref{prop:exact-nested-torsion-pairs-bi-reflective} can be generalized to a weakly idempotent complete exact category $\mathcal A$.
However, Theorem~\ref{thm:abelian-bijection-models-torsion-pairs} cannot be generalized to $\mathcal A$, in general.
\end{rem}

\subsection{\bf Torsion model structures on posets}

A poset $(P,\leq)$ is a category: objects are elements of $P$, and there is a unique morphism $p\longrightarrow q$ if and only if $p\leq q$.
This category has no initial object or terminal object, in general. We will construct a torsion model structure which is also a bi-reflective model structure on $P$.

\vskip 5pt

Let  $(P,\leq)$ be a poset and $p\in P$. We fix the following notations. Set
$$L_p:=\{a\longrightarrow b\mid b\leq p\}\cup\{\operatorname{Id}_x\mid x\in P\}, \ \ \ \  \ R_p:=\{a\longrightarrow b\mid p\leq a\}\cup\{\operatorname{Id}_x\mid x\in P\}.$$
$$P_{\leq p}:=\{x\in P\mid x\leq p\}, \ \ \ \ P_{\geq p}:=\{x\in P\mid p\leq x\}.$$
For $m,n\in P$ with $m\le  n$, set $P_{[m,n]} :=\{x\in P\mid m\leq x\leq n\}.$

\begin{defn}\ Let $(P,\leq)$ be a poset.
An element $m\in P$ is a \emph{a cut} if, for every $x\in P$, exactly one of $m<x$, $m=x$, and $x< m$ holds.
\end{defn}

\begin{lem}\label{prop:cut-point-torsion-ofs}\ Let $(P,\leq)$ be a poset, and $m$ a cut of $P$. Then $(L_m,R_m)$ is a torsion factorization system in $P$.
\end{lem}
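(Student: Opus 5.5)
The plan is to verify condition (2) of Proposition~\ref{prop:OFS-criterion} directly, and then check the two-out-of-three property for both classes. In a poset every hom-set has at most one element, so uniqueness of liftings is automatic and commutativity of diagrams is automatic. Moreover, the only isomorphisms are identities, by antisymmetry. Hence $L_m$ and $R_m$, which contain all identities, are trivially closed under isomorphisms. It therefore remains to prove three things: every morphism factors as $R_m\circ L_m$; $L_m$ has the lifting property with respect to $R_m$; and both classes satisfy two-out-of-three.

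\emph{Factorization.} Let $a\le b$. Since $m$ is a cut, we split into three cases.
\begin{enumerate}
\item If $b\le m$, then $a\to b$ lies in $L_m$, and we take $a\to b$ followed by $\Id_b$.
\item If $m\le a$, then $a\to b$ lies in $R_m$, and we take $\Id_a$ followed by $a\to b$.
\item Otherwise, $m\not\le a$ and $b\not\le m$. The cut property then gives $a<m$ and $m<b$. We factor $a\to b$ as $a\to m\to b$, where $a\to m\in L_m$ (its target is $\le m$) and $m\to b\in R_m$ (its source is $\ge m$).
\end{enumerate}
This third case is exactly where the cut hypothesis is used; without it there need be no comparable intermediate element.

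\emph{Lifting.} Consider a commutative square with $a\to b$ in $L_m$, $x\to y$ in $R_m$, and $a\le x$, $b\le y$. We need to show $b\le x$.
\begin{enumerate}
\item If $a\to b$ is an identity, then $b=a\le x$.
\item If $x\to y$ is an identity, then $b\le y=x$.
\item Otherwise, $b\le m$ and $m\le x$, so $b\le x$.
\end{enumerate}
The lift is unique because hom-sets have at most one element. Hence $L_m\perp R_m$, and $(L_m,R_m)$ is an orthogonal factorization system.

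\emph{Two-out-of-three.} Note that $L_m$ consists of exactly the identities together with the morphisms whose target is $\le m$. Take $a\le b\le c$.
\begin{enumerate}
\item If $a\to b$ and $b\to c$ lie in $L_m$: either $b\to c$ is an identity, so the composite equals $a\to b\in L_m$; or $c\le m$, so the composite lies in $L_m$.
\item If $a\to b$ and $a\to c$ lie in $L_m$: either $a\to c$ is an identity, in which case $a=b=c$ by antisymmetry; or $c\le m$, which gives $b\to c\in L_m$.
\item If $b\to c$ and $a\to c$ lie in $L_m$: either $b=c$, so $a\to b=a\to c\in L_m$; or $c\le m$, so $b\le c\le m$ and $a\to b\in L_m$.
\end{enumerate}
The argument for $R_m$ is the dual one, using sources $\ge m$. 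No step is a real obstacle. The only points needing care are the case analysis forced by the identities adjoined to $L_m$ and $R_m$, where antisymmetry is needed, and the use of the cut in the factorization step.
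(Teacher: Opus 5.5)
Your proof is correct and follows the same route as the paper: check the factorization and the unique lifting (splitting on whether the $L_m$- or $R_m$-morphism is an identity, and otherwise using $b\le m\le x$), invoke Proposition~\ref{prop:OFS-criterion}, and then verify two-out-of-three for both classes. Your explicit third factorization case $a<m<b$, factoring through $m$ itself, is the one place where the cut hypothesis is really needed; it fills in what the paper dismisses as ``clear'' from the presence of identities alone, just as your case check of two-out-of-three spells out what the paper leaves to the reader.
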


\begin{proof} \ Let $f:x \longrightarrow y$ be a morphism in $P$.
Since both $L_m$ and $R_m$ contain all the identity morphisms,
it is clear that every morphism can be factorized into $f=h\circ g$ with $g\in L_m$ and $h \in R_m$.
For example, if $m<x$, then $f = f\circ {\rm Id}_X$.

\vskip5pt

Given a commutative square
\[
\xymatrix@C=1.2cm@R=0.9cm{
a\ar[r]\ar[d]_l & c\ar[d]^r\\
b\ar[r] & d
}
\]
with $l\in L_m$ and $r\in R_m$, if $l$ and $r$ are not isomorphisms, one has
$a \leq b \leq m \leq c \leq d$.
Then $b\longrightarrow c$ is the unique lifting.
If one of $l$ and $r$ is isomorphism, the lifting is clear.
Hence $L_m\perp R_m$. By Proposition \ref{prop:OFS-criterion}, $(L_m,R_m)$ is an orthogonal factorization system.

\vskip5pt

It is clear that $L_m$ and $R_m$ have the two-out-of-three property. By definition $(L_m,R_m)$ is a torsion factorization system.
\end{proof}

\begin{prop}\label{prop:two-cut-points-poset-model} \ Suppose that $(P,\leq)$ is a poset, $m$ and $n$ are cuts of $P$ with $m \le  n$.
Then $(L_n,R_m,L_m\cup R_n)$ is a torsion model structure and also a bi-reflective model structure, the class $\mathcal C$ of cofibrant objects is $P_{\leq n}$, the class $\mathcal F$ of fibrant objects is $P_{\geq m}$,
and the homotopy category is $P_{[m,n]}$.
\end{prop}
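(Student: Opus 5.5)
I would build the model structure from two torsion factorization systems and then invoke Proposition~\ref{prop:weq-torsion}. By Lemma~\ref{prop:cut-point-torsion-ofs}, $(L_n,R_n)$ and $(L_m,R_m)$ are torsion factorization systems in $P$. I put $\CoFib:=L_n$, $\TFib:=R_n$, $\TCoFib:=L_m$ and $\Fib:=R_m$.

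Since $m\le n$, a non-identity morphism $a\to b$ with $b\le m$ also has $b\le n$, so $L_m\subseteq L_n$. Hence Proposition~\ref{prop:weq-torsion} applies with $\Weq:=\TFib\circ\TCoFib=R_n\circ L_m$.

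The first step is to identify $\Weq$ with $L_m\cup R_n$. Both $L_m$ and $R_n$ contain identities, so $L_m\cup R_n\subseteq R_n\circ L_m$. Conversely, let $a\to c\to d$ be a composite with $a\to c$ in $L_m$ and $c\to d$ in $R_n$.
\begin{itemize}
\item If both maps are non-identities, then $c\le m$ and $n\le c$. Hence $n\le c\le m\le n$, so $c=m=n$. This forces $d\ge n$, so the composite lies in $R_n$.
\item If one of the two maps is an identity, the composite lies in $L_m$ or $R_n$ directly.
\end{itemize}

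The second step, which I expect to be the main obstacle, is closure of $\Weq$ under composition. Let $f\colon a\to b$ and $g\colon b\to c$ lie in $L_m\cup R_n$. The four cases are:
\begin{itemize}
\item Both in $L_m$, or both in $R_n$: the composite stays in that class.
\item $f\in L_m$, $g\in R_n$: this is the composite analysed in the first step, so it lies in $\Weq$.
\item $f\in R_n$ and $g\in L_m$, both non-identities: then $n\le a\le b\le c\le m\le n$, so all elements equal $m=n$ and the maps are identities, a contradiction.
\end{itemize}
Hence $\Weq$ is closed under composition, and Proposition~\ref{prop:weq-torsion} gives a torsion model structure $(L_n,R_m,L_m\cup R_n)$. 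The cut hypothesis is what makes each case collapse, through comparability with $m$ and $n$.

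For the objects, I would use Definition~\ref{defn:cofibrant-fibrant-objects}. Suppose $C\le n$, $p\colon x\to y$ lies in $\TFib=R_n$, and $C\le y$. If $p$ is an identity, the lift is clear. Otherwise $n\le x$, so $C\le n\le x$, and the lift exists and is unique because $P$ is a poset. Conversely, suppose $C$ is cofibrant and $C>n$ (possible since $n$ is a cut). Then the map $n\to C$ lies in $R_n$ and is not an identity. Applying the definition to $u=\Id_C$ yields $C\le n$, a contradiction. So $\mathcal C=P_{\le n}$, and dually $\mathcal F=P_{\ge m}$.

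For every object $x$ I need a cofibrant replacement. If $x\le n$, take $\Id_x$. Otherwise $n<x$ by the cut property, and $n\to x$ is a trivial fibration with $n$ cofibrant. Fibrant replacements are handled dually.

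Bi-reflectivity then follows by checking (M1)–(M3$'$):
\begin{itemize}
\item (M1) is the existence of replacements just shown.
\item (M2) holds because $L_m$ and $R_n$ have the two-out-of-three property.
\item For (M3), take $p\colon C\to D$ in $R_n$ with $C\le n$. If $p$ is not an identity, then $C=n$, so $u\colon n\to E$ in $L_n$ has $E\le n$ and hence $E=n$, making $u$ an identity. Either way one of $p,u$ is an identity, and the pushout is trivial with $q$ equal to $p$ or an identity.
\item (M3$'$) is dual.
\end{itemize}
Alternatively, one can argue as in Theorem~\ref{thm:abelian-bijection-models-torsion-pairs}(2) after identifying the bi-reflective pair $(P_{\le n},P_{\ge m})$.

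Finally, Theorem~\ref{thm:orthogonal-ho-CF} gives $\Ho(P)\simeq\mathcal C\cap\mathcal F=P_{[m,n]}$.
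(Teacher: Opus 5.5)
Your overall route is the paper's: the two torsion factorization systems from Lemma~\ref{prop:cut-point-torsion-ofs}, identifying $\Weq=R_n\circ L_m$ with $L_m\cup R_n$, Proposition~\ref{prop:weq-torsion}, the object-level checks and (M1)--(M3$'$), and Theorem~\ref{thm:orthogonal-ho-CF}. Those later parts are fine. The gap is in your first step, in the case where both maps are non-identities.

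Membership in $R_n$ is a condition on the \emph{source}: $R_n=\{a\to b\mid n\le a\}\cup\{\text{identities}\}$. So $d\ge n$ tells you nothing about $a\to d$. In that case you actually have $a<c=m=n<d$, strictly, since neither map is an identity. Hence $a\to d$ lies neither in $R_n$ (which needs $n\le a$) nor in $L_m$ (which needs $d\le m$). The case cannot be absorbed into $L_m\cup R_n$; it has to be excluded. This is exactly where $m<n$ is needed, and the paper's proof uses it (``Since $m<n$''). With $m<n$, the chain $n\le c\le m<n$ is impossible, so the case is vacuous. Then $\Weq=L_m\cup R_n$ holds, and so does your composition analysis, whose case $f\in L_m$, $g\in R_n$ relies on this step.

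When $m=n$ the identification genuinely fails, and so does the statement as literally written. Since $(L_m,R_m)$ is a factorization system, $R_m\circ L_m=\Mor(P)$, while $L_m\cup R_m$ can be strictly smaller. For example, in the chain $0<m<1$ the maps $0\to m$ and $m\to 1$ lie in $L_m\cup R_m$ but $0\to 1$ does not. So $L_m\cup R_m$ violates two-out-of-three, and $(L_m,R_m,L_m\cup R_m)$ is not a model structure. You should therefore either assume $m<n$, or treat $m=n$ separately with $\Weq=\Mor(P)$. In the latter case your arguments for the cofibrant and fibrant objects, for (M1)--(M3$'$), and for $\Ho(P)\simeq P_{[m,m]}$ still go through unchanged, since they only use $\TCoFib=L_m$ and $\TFib=R_n$.
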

\begin{proof} \ By Lemma~\ref{prop:cut-point-torsion-ofs}, one has torsion factorization systems $(L_m, R_m)$ and $(L_n, R_n)$.
Since $m < n$, one has $L_m \subseteq L_n$.

\vskip5pt

Put $\Weq: = R_n \circ L_m$. By definition one can deduce $\Weq = L_m \cup R_n$. It is clear that $ L_m \cup R_n$ is closed under compositions.
Put $(\CoFib, \Fib, \Weq):=(L_n, R_m, L_m \cup R_n)$. By Proposition~\ref{prop:weq-torsion}, $(\CoFib, \Fib, \Weq) =(L_n, R_m, L_m \cup R_n)$ is a torsion model structure, with
$$\TCoFib = L_n \cap (L_m \cup R_n) = L_m, \ \ \ \TFib =  R_m \cap (L_m \cup R_n) = R_n.$$

\vskip 5pt

To see it is a bi-reflective model structure, one needs to verify conditions {\rm (M1)}, {\rm (M2)}, {\rm (M3)}, and ${\rm (M3')}$ in Definition~\ref{defn:bi-reflective-orthogonal-model-structure}.

\vskip 5pt
{\rm (M1)} \ We claim that the class $\mathcal C$ of cofibrant objects is $P_{\leq n}$. Since the category $P$ has no initial object and terminal object, in general, we need to use Definition \ref{defn:cofibrant-fibrant-objects}.
Let $c \in P_{\leq n}$, i.e., $c \leq n$.
Let $\alpha: x \longrightarrow y $ be a morphism in $ R_n$ and $\beta: c \longrightarrow y$ a morphism.
If $\alpha = {\rm Id}_x$, then ${\rm Id}_x \circ \beta = \beta$.
If $\alpha\ne {\rm Id}_x$, then  $c \leq n \leq x \leq y$. Thus there is a unique $\theta: c \longrightarrow x$ with $\alpha \circ \theta = \beta$.
Thus $c$ is a cofibrant object.
Conversely, let $c \in P$ be a cofibrant object. We need to show $c \leq n$. Otherwise $n < c$, since $n$ is a cut. For the trivial fibration $n \longrightarrow c$ and the identity ${\rm Id}_c$, there is no morphism from $c$ to $n$,
which contradicts the assumption that $c$ is a cofibrant object. This proves the claim.

\vskip 5pt

For any object $x \in P$, one needs to construct a trivial fibration $\varepsilon: c \longrightarrow x$ with $c \in \mathcal C = P_{\leq n}$.
If $x \leq n$, one can take $\varepsilon = \operatorname{Id}_x$.
If $n < x$, one takes $\varepsilon: n \longrightarrow x$.
This proves that $P$ has enough cofibrant objects.

\vskip 5pt

Dually, the class $\mathcal F$ of fibrant objects is $P_{\geq m}$, and $P$ has enough fibrant objects.

\vskip 5pt

{\rm (M2)} \ It is clear that both $\TCoFib = L_m$ and $\TFib  = R_n$ have the two-out-of-three property.

\vskip 5pt

{\rm (M3)} \ Let   $\alpha: c \longrightarrow d$ be a morphism in $\TFib=R_n$ with $c \in \mathcal C = P_{\leq n}$, and $\beta: c \longrightarrow e$ a morphism in $\CoFib=L_n$.

If one of $\alpha$ and $\beta$ is an identity, then one has the corresponding pushout square.

The remaining case does not occur: namely, if $\alpha$ and $\beta$ are not identities, then  $c \leq e \leq n \leq c \leq d$. Thus $c = e = n$ and hence $\beta = {\rm Id}_n$, a contradiction.

\vskip 5pt
${\rm (M3')}$ \ Dual to {\rm (M3)}.

\vskip 5pt

By Theorem~\ref{thm:orthogonal-ho-CF}, one has $\Ho(P) \simeq  P_{\leq n} \cap P_{\geq m} = \{x\in P\mid m\leq x\leq n\}=P_{[m,n]}.$
\end{proof}

\subsection{\bf Torsion model structures on $G\text{-}\mathsf{Set}$} \ We will construct a torsion model structure on the category $G\text{-}\mathsf{Set}$. In general, this model structure is not bi-reflective.
The category $G\text{-}\mathsf{Set}$ has the speciality that it has the initial object $\varnothing$ and the terminal object $*$, where $*$ is the $G$-set with a unique element.
Thus,  $G\text{-}\mathsf{Set}$ has no zero object.

\vskip 5pt

Let $G$ be a group, $N$ a normal subgroup of $G$.
For a $G$-set $X$, denote by $X^N$ the set of fixed points of $N$, i.e., $X^N:=\{x\in X\mid nx=x, \ \forall \ n\in N\}.$
It is clear that  $X^N$ is a $G$-set.

\vskip 5pt

Denote by $\mathcal C_N$ the full subcategory of $G\text{-}\mathsf{Set}$ of $G$-sets $X$ with $X=X^N$.
Then one has a functor $(-)^N:G\text{-}\mathsf{Set}\longrightarrow \mathcal C_N.$

\begin{lem}\label{lem:gset-fixed-point-torsion-ofs}\ Let $N$ be a normal subgroup of group $G$.
Then $\mathcal C_N$ is a coreflective subcategory of $G\text{-}\mathsf{Set}$ such that the coreflector $(-)^N$ is semi-right-exact, and the corresponding orthogonal factorization system $(\mathcal E_N,\mathcal M_N)$ is a torsion factorization system, where
\begin{equation*}
\begin{aligned}
\mathcal{E}_{N} &= \{f:X\longrightarrow Y \ \mbox{in} \  G\text{-}\mathsf{Set} \ \mid  \ f \ \mbox{induces a bijection} \ f|_{X\setminus X^N}:X\setminus X^N
\overset{\cong}{\longrightarrow}Y\setminus Y^N \}\\
\mathcal M_N&= \{f:X\longrightarrow Y \ \mbox{in} \  G\text{-}\mathsf{Set} \ \mid \ f \ \mbox{induces a bijection} \ f^N:X^N\longrightarrow Y^N\}.
\end{aligned}
\end{equation*}
\end{lem}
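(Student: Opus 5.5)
We first check that $(-)^N$ is a coreflector. For a $G$-set $X$, the subset $X^N$ is $G$-stable: if $x\in X^N$, $g\in G$ and $n\in N$, then $n(gx)=g(g^{-1}ng)x=gx$ because $N$ is normal. Any $G$-map $f:C\longrightarrow X$ with $C\in\mathcal C_N$ satisfies $nf(c)=f(nc)=f(c)$, so it factors uniquely through the inclusion $\varepsilon_X:X^N\hookrightarrow X$. Hence $\varepsilon_X$ is a right $\mathcal C_N$-strong-approximation, and $(-)^N$ is right adjoint to the inclusion.

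For semi-right-exactness, Fact \ref{fact:approximation-basic}$(1')$(ii$'$) lets us take the approximation to be $\varepsilon_X$ itself. Given a $G$-map $h:X^N\longrightarrow D$ with $D\in\mathcal C_N$, we form the pushout in $G\text{-}\mathsf{Set}$ explicitly:
$$Q=D\sqcup(X\setminus X^N),$$
with the obvious $G$-action. The complement $X\setminus X^N$ is $G$-stable because $X^N$ is. The map $X\longrightarrow Q$ is $h$ on $X^N$ and the identity elsewhere, and $D\longrightarrow Q$ is the inclusion. Since $\varepsilon_X$ is injective, this is the pushout: the pushout in $\mathsf{Set}$ of an injection is obtained by gluing along it, and pushouts in $G\text{-}\mathsf{Set}$ are computed underlying. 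It remains to see $Q^N=D$, i.e.\ that no point of $X\setminus X^N$ becomes $N$-fixed in $Q$. This holds because the action on that summand is unchanged. So $D\longrightarrow Q$ identifies with $\varepsilon_Q$ and is a right strong approximation.

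By Lemma \ref{lem:sle-reflective-ofs}$(1')$ we obtain an orthogonal factorization system $(\mathcal E_t,\mathcal M_t)$, and $\mathcal M_t$ has two-out-of-three. By definition $\mathcal M_t=\{f\mid f^N \text{ bijective}\}=\mathcal M_N$. We identify $\mathcal E_t$ by the same computation: the square with corners $X^N,X,Y^N,Y$ is a pushout iff the canonical map $Y^N\sqcup(X\setminus X^N)\longrightarrow Y$ is a bijection. If $f$ restricts to a bijection $X\setminus X^N\longrightarrow Y\setminus Y^N$, this map is clearly bijective. Conversely, if the map is bijective, then it must send $Y^N$ onto $Y^N$ and $X\setminus X^N$ injectively into $Y\setminus Y^N$, since disjoint summands go to disjoint images covering $Y$; its restriction to $X\setminus X^N$ is then a bijection onto $Y\setminus Y^N$. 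Note that $f(X^N)\subseteq Y^N$ always holds. Hence $\mathcal E_t=\mathcal E_N$.

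The remaining point, and the only genuinely new one, is two-out-of-three for $\mathcal E_N$. Let $f:X\longrightarrow Y$ and $g:Y\longrightarrow Z$. Any $G$-map sends fixed points to fixed points, but it may send non-fixed points to fixed points. The condition that $f$ induces a bijection $X\setminus X^N\longrightarrow Y\setminus Y^N$ therefore has two parts: $f$ maps $X\setminus X^N$ into $Y\setminus Y^N$, and it does so bijectively. We check the three cases.
\begin{itemize}
\item If $f,g\in\mathcal E_N$, then the restrictions compose, so $gf\in\mathcal E_N$.
\item If $f,gf\in\mathcal E_N$, take $y\in Y\setminus Y^N$ and write $y=f(x)$ with $x\notin X^N$. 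Then $g(y)=gf(x)\notin Z^N$, and $g|$ equals $(gf)|\circ(f|)^{-1}$, which is a bijection.
\item If $g,gf\in\mathcal E_N$, take $x\notin X^N$. Since $gf(x)\notin Z^N$, we get $f(x)\notin Y^N$, because $g$ preserves fixed points. Then $f|$ equals $(g|)^{-1}\circ(gf)|$, which is a bijection.
\end{itemize}
This part needs the most care, because of the possibility that non-fixed points are sent to fixed points. Together with two-out-of-three for $\mathcal M_N$, it shows that $(\mathcal E_N,\mathcal M_N)$ is a torsion factorization system.
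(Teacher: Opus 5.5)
Your proposal is correct and follows the paper's proof essentially step for step. Both arguments show the coreflection via the inclusion $X^N\hookrightarrow X$ and obtain semi-right-exactness from the explicit pushout $D\sqcup(X\setminus X^N)$, whose $N$-fixed part is $D$. Both identify $\mathcal E_N$ by comparing with $Y^N\sqcup(X\setminus X^N)$, and check two-out-of-three for $\mathcal E_N$ in the same three cases. Your version is only slightly more explicit about normality giving $G$-stability of $X^N$, and about $g$ sending non-fixed points to non-fixed points in the case $f,\,g\circ f\in\mathcal E_N$.
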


\begin{proof} \ Let $i:\mathcal C_N\hookrightarrow G\text{-}\mathsf{Set}$ be the inclusion functor.
For every $Z\in\mathcal C_N$ and $X\in G\text{-}\mathsf{Set}$, one has a natural bijection $\operatorname{Hom}_{G\text{-}\mathsf{Set}}(iZ,X) \cong \operatorname{Hom}_{\mathcal C_N}(Z,X^N),\ f\longmapsto f^N.$
Hence  $(i, (-)^N)$ is an adjoint pair, and the counit is given by the inclusion $\varepsilon_{_X}:X^N\longrightarrow X.$
Thus  $\mathcal C_N$ is a coreflective subcategory of $G\text{-}\mathsf{Set}$ with coreflector $(-)^N$.

\vskip 5pt

We will show that the coreflector $(-)^N$ is semi-right-exact.
For every $X\in G\text{-}\mathsf{Set}$, by Fact~\ref{fact:approximation-basic}$(1')$, a right strong-$\mathcal C_N$-approximation of $X$ is of the form $\varepsilon_{_X}:X^N\longrightarrow X$.
Let $h:X^N\longrightarrow Z$ be a morphism with $Z\in\mathcal C_N$. Put $P = Z\overset \cdot \cup (X\setminus X^N).$ Then $P\in G\text{-}\mathsf{Set}$ with the obvious action. It is well-known that the square
\[
\xymatrix@C=1.2cm@R=0.5cm{
X^N\ar[d]_-{\varepsilon_{_X}}\ar[r]^-{h} & Z\ar[d]^-{j_Z} \\
X\ar[r] & P
}
\]
is  a pushout square, where $j_Z$ is the inclusion.  Since $Z=Z^N$ and $(X\setminus X^N)^N=\varnothing$, one has $P^N=Z.$
By Fact~\ref{fact:approximation-basic}$(1'){\rm (i')}$ the induced map $j_Z: Z\longrightarrow P$ is a strong right $\mathcal C_N$-approximation. Thus the coreflector $(-)^N$ is semi-right-exact.

\vskip 5pt

By Lemma~\ref{lem:sle-reflective-ofs}, one obtains an orthogonal factorization system $(\mathcal E_N,\mathcal M_N)$, where
\begin{equation*}
\begin{aligned}
\mathcal{E}_{N} &= \left\{
f:X\longrightarrow Y\ \middle|\begin{array}{c}
\xymatrix@C=0.9cm@R=0.4cm{
X^N \ar[r]^-{\varepsilon_X} \ar[d]_-{f^N} & X  \ar[d]^-f \\
Y^N \ar[r]^-{\varepsilon_Y} & Y
}
\end{array}\text{is a pushout}
\right\}\\
\mathcal M_N&= \{f:X\longrightarrow Y\mid f^N:X^N\longrightarrow Y^N \text{ is a bijection}\}.
\end{aligned}
\end{equation*}

\vskip 5pt
\noindent
We claim
\[
\mathcal E_N=
\{f:X\longrightarrow Y \ \mbox{in} \  G\text{-}\mathsf{Set} \ \mid  \ f \ \mbox{induces a bijection} \ f|_{X\setminus X^N}:X\setminus X^N
\overset{\cong}{\longrightarrow}Y\setminus Y^N \}.
\]
For a $G$-map $f:X\longrightarrow Y$, it is clear that the square
\[
\xymatrix@C=1.1cm@R=0.5cm{
X^N\ar[r]^{\varepsilon_{_X}}\ar[d]_{f^N} & X\ar[d]^-g \\
Y^N\ar@{^(->}[r]^-{\sigma} & Y^N\overset\cdot \cup (X\setminus X^N)
}
\]
is a pushout square, where $g(x) = \begin{cases} f^N(x),  & x\in X^N \\ x, & x\notin X^N.\end{cases}$
\ \ If $f\in \mathcal E_N$, then one gets a bijection $\alpha: Y^N\overset\cdot \cup (X\setminus X^N) \longrightarrow Y$ such that $\alpha\circ \sigma= \varepsilon_Y, \ \alpha \circ g = f.$ This implies that
$f|_{X\setminus X^N}:X\setminus X^N
\longrightarrow Y\setminus Y^N$ is a bijection. Conversely, one can see that $Y = Y^N\overset\cdot \cup (X\setminus X^N)$ and $f\in \mathcal E_N$.

\vskip 5pt

To see that $(\mathcal E_N, \mathcal M_N)$ is a torsion factorization system, it remains to
show that $\mathcal E_N$ has the two-out-of-three property (since $(-)^N$ is a functor, it is clear that $\mathcal M_N$ has the two-out-of-three property).
Let $X\overset{f}{\longrightarrow}Y\overset{g}{\longrightarrow}Z$ be $G$-maps.

If $f, g\in\mathcal E_N$, then it is clear that $g\circ f\in\mathcal E_N$.

If $f, g\circ f\in\mathcal E_N$, then
$
g|_{Y\setminus Y^N} = (g\circ f)|_{X\setminus X^N} \circ \left(f|_{X\setminus X^N}\right)^{-1},
$
so $g\in\mathcal E_N$.

Assume that $g, g\circ f\in\mathcal E_N$.  If $x\in X\setminus X^N$, then $f(x)\in Y\setminus Y^N$ (otherwise, $f(x)\in Y^N$ and $g\circ f(x)\in Z^N$, which contradicts $g\circ f\in\mathcal E_N$),
and hence
$f|_{X\setminus X^N} = \left(g|_{Y\setminus Y^N}\right)^{-1} \circ (g\circ f)|_{X\setminus X^N}$
is a bijection, i.e.,  $f\in\mathcal E_N$.
\end{proof}

\begin{prop}\label{prop:gset-normal-subgroup-torsion-model}\ Let $N$ be the normal subgroup of group $G$.
Then $(\CoFib,\Fib,\Weq)$ is a torsion model structure on $G\text{-}\mathsf{Set}$, where
\[
\begin{aligned}
\CoFib&=\{
f:X\longrightarrow Y\ \mid\
f|_{X\setminus X^N}:X\setminus X^N
\overset{\cong}{\longrightarrow}Y\setminus Y^N
\}\\
\Fib&=\{f:X\to Y\mid f^G:X^G\overset{\cong}{\longrightarrow}Y^G\}\\
\Weq&=\{f:X\longrightarrow Y\ \mid\
f|_{B_N(X)}:B_N(X)\overset{\cong}{\longrightarrow}B_N(Y)\}
\end{aligned}
\]
with $B_N(X):=X^N\setminus X^G$.
The class of cofibrant objects is $\mathcal C_N$, the class of fibrant objects is $\{X \in G\text{-}\mathsf{Set} \ \mid \ |X^{G}|=1\}$,
and the homotopy category is\[
\operatorname{Ho}(G\text{-}\mathsf{Set})\simeq\{X\in(G/N)\text{-}\mathsf{Set}\mid |X^{G/N}|=1\}.
\]
\end{prop}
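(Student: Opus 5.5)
Apply Lemma~\ref{lem:gset-fixed-point-torsion-ofs} twice: once with the normal subgroup $N$, and once with $G$ itself (which is normal in $G$). This gives two torsion factorization systems
$$(\mathcal E_N,\mathcal M_N)\quad\text{and}\quad(\mathcal E_G,\mathcal M_G).$$
Put $\CoFib:=\mathcal E_N$, $\TFib:=\mathcal M_N$, $\TCoFib:=\mathcal E_G$, $\Fib:=\mathcal M_G$, and $\Weq:=\mathcal M_N\circ\mathcal E_G$. Then $\CoFib$ and $\Fib$ are exactly the classes in the statement.

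To invoke Proposition~\ref{prop:weq-torsion}, two things remain.

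First, $\mathcal E_G\subseteq\mathcal E_N$. Since $X^G\subseteq X^N$, a map $f\in\mathcal E_G$ restricts to a bijection $X\setminus X^G\to Y\setminus Y^G$. A $G$-map sends $N$-fixed points to $N$-fixed points, so $f$ maps $X^N\setminus X^G$ into $Y^N\setminus Y^G$. For the complement, if $x\in X\setminus X^N$ had $f(x)\in Y^N$, then for $n\in N$ both $x$ and $nx\ne x$ lie in $X\setminus X^G$ and have the same image, contradicting injectivity. Hence $f$ maps $X\setminus X^N$ into $Y\setminus Y^N$, and surjectivity onto $Y\setminus Y^N$ follows by the same splitting.

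Second, I identify $\Weq$ with the class $\{f\mid f|_{B_N(X)}:B_N(X)\xrightarrow{\cong}B_N(Y)\}$, which is clearly closed under composition because $f^N$ and $f^G$ are functorial, so $f$ restricts to a map $B_N(X)\to B_N(Y)$.
\begin{itemize}
\item[$\subseteq$:] If $e\in\mathcal E_G$, the first step shows $e$ is bijective on $X^N\setminus X^G$. If $m\in\mathcal M_N$, then $m$ is bijective on $X^N$ and also on $X^G=(X^N)^G$, hence on $B_N$.
\item[$\supseteq$:] Given $f$ bijective on $B_N$, factor $f=m\circ e$ through the $(\mathcal E_G,\mathcal M_G)$ factorization. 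The explicit pushout construction in the proof of Lemma~\ref{lem:gset-fixed-point-torsion-ofs} gives the middle object $Z=Y^G\,\dot\cup\,(X\setminus X^G)$. Then check that $m$ is bijective on $Z^N=Y^G\,\dot\cup\,B_N(X)$, using the hypothesis on $B_N$ together with $f^G$ mapping into $Y^G$. This is the main fiddly point: verify carefully that $m^N$ is a bijection.
\end{itemize}
Proposition~\ref{prop:weq-torsion} then yields a torsion model structure. By Proposition~\ref{prop:retract-argument}, the trivial classes are indeed $\mathcal E_G$ and $\mathcal M_N$.

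Next, the cofibrant and fibrant objects. $G\text{-}\mathsf{Set}$ has initial object $\varnothing$ and terminal object $*$, so the Remark after Definition~\ref{defn:cofibrant-fibrant-objects} applies. $X$ is cofibrant iff $\varnothing\to X$ lies in $\mathcal E_N$, i.e. $X\setminus X^N=\varnothing$, i.e. $X\in\mathcal C_N$. $X$ is fibrant iff $X\to *$ lies in $\mathcal M_G$, i.e. $X^G\to *$ is bijective, i.e. $|X^G|=1$. The same Remark gives enough cofibrant and fibrant objects.

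Finally, Theorem~\ref{thm:orthogonal-ho-CF} gives
$$\Ho(G\text{-}\mathsf{Set})\simeq\mathcal C\cap\mathcal F=\{X\mid X=X^N,\ |X^G|=1\}.$$
A $G$-set with $X=X^N$ is the same as a $G/N$-set, $G$-maps between such sets are the same as $G/N$-maps, and $X^G=X^{G/N}$. This yields the stated description of the homotopy category as $\{X\in(G/N)\text{-}\mathsf{Set}\mid |X^{G/N}|=1\}$.
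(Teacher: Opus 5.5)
Your proposal is correct and follows essentially the same route as the paper. You take the torsion factorization systems of Lemma~\ref{lem:gset-fixed-point-torsion-ofs} for $N$ and for $G$, check $\mathcal E_G\subseteq\mathcal E_N$, identify $\mathcal M_N\circ\mathcal E_G$ with the maps bijective on $B_N$ via the middle object $Y^G\overset\cdot\cup(X\setminus X^G)$, and then apply Proposition~\ref{prop:weq-torsion} and Theorem~\ref{thm:orthogonal-ho-CF}, as the paper does. One small slip: an arbitrary $G$-map need not send $B_N(X)$ into $B_N(Y)$, since a point of $X^N\setminus X^G$ may land in $Y^G$; closure of $\Weq$ under composition still holds trivially, because membership already requires $f$ to carry $B_N(X)$ bijectively onto $B_N(Y)$.
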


\begin{proof} \ Proposition~\ref{lem:gset-fixed-point-torsion-ofs} gives torsion factorization systems $(\mathcal E_N,\mathcal M_N)$ and $(\mathcal E_G,\mathcal M_G)$, where

\begin{equation*}
\begin{aligned}
\mathcal{E}_{N} &= \left\{f:X\longrightarrow Y\ \middle|\
f|_{X\setminus X^N}:X\setminus X^N
\overset{\cong}{\longrightarrow}Y\setminus Y^N
\right\}\\
\mathcal M_N&= \{f:X\longrightarrow Y\mid f^N:X^N\longrightarrow Y^N \text{ is a bijection}\}\\
\mathcal{E}_{G} &= \left\{f:X\longrightarrow Y\ \middle|\
f|_{X\setminus X^G}:X\setminus X^G
\overset{\cong}{\longrightarrow}Y\setminus Y^G
\right\}\\
\mathcal M_G&= \{f:X\longrightarrow Y\mid f^G:X^G\longrightarrow Y^G \text{ is a bijection}\}.
\end{aligned}
\end{equation*}

\vskip5pt

Since $X^G\subseteq X^N$, one has $\mathcal E_G\subseteq\mathcal E_N$.
Set $B_N(X):=X^N\setminus X^G$.
We claim that
\[
\mathcal M_N\circ\mathcal E_G
=
\left\{
f:X\longrightarrow Y\ \middle|\
f|_{B_N(X)}:B_N(X)\overset{\cong}{\longrightarrow}B_N(Y)
\right\}.
\]

\vskip5pt

First let $f=m\circ e:X\longrightarrow Y$ be a $G$-map with $e:X\longrightarrow P$ in $\mathcal E_G$ and $m:P\longrightarrow Y$ in $\mathcal M_N$.
Since $e\in\mathcal E_G$, one has a bijection $X\setminus X^G\cong P\setminus P^G$. Therefore $P = P^G\overset\cdot\cup(P\setminus P^G) \cong P^G\overset\cdot\cup(X\setminus X^G)$,
and hence $P^N = (P^G)^N\overset\cdot\cup(X\setminus X^G)^N \cong P^G\overset\cdot\cup(X^N\setminus X^G)$,
it follows that $B_N(X) = X^N\setminus X^G\cong P^N\setminus P^G = B_N(P)$.
This induces a bijection $B_N(e):B_N(X) \longrightarrow B_N(P)$.
Since $m\in \mathcal M_N$, one has $m^N:P^N\longrightarrow Y^N$ is a bijection, it follows that $m^G: P^G\longrightarrow Y^G$ is a bijection.
This induces a bijection $B_N(m):B_N(P) = P^N\setminus P^G\longrightarrow Y^N\setminus Y^G = B_N(Y)$.
Therefore $f = m\circ e$ induces a bijection $B_N(f) = B_N(m)\circ B_N(e): B_N(X)\longrightarrow B_N(Y)$.

\vskip 5pt

Conversely, suppose that $f|_{B_N(X)}:B_N(X)\longrightarrow B_N(Y)$ is a
bijection.
Since $(\mathcal E_G,\mathcal M_G)$ is a torsion factorization system, one can factorize $f = m \circ e$ with $e:X\longrightarrow P$ in $\mathcal E_G$ and $m:P\longrightarrow Y$ in $\mathcal M_G$.
Since $e\in\mathcal E_G$, one has $X\setminus X^G \cong P\setminus P^G$.
Since $m\in \mathcal M_G$, one has $P^G\cong Y^G$.
Thus $P = P^G\overset\cdot\cup (P\setminus P^G) \cong Y^G \overset\cdot\cup (X\setminus X^G)$,
it follows that $P^N\cong (Y^G)^N \overset\cdot\cup (X\setminus X^G)^N = Y^G \overset\cdot\cup (X^N\setminus X^G) = Y^G \overset\cdot\cup B_N(X)$.
Since $Y^N = Y^G\overset\cdot\cup B_N(Y)$ and $B_N(X)\longrightarrow B_N(Y)$ is a bijection, one has $m^N:P^N \cong Y^G \overset\cdot\cup B_N(X) \longrightarrow Y^N = Y^G \overset\cdot\cup B_N(Y)$ is a bijection.
Thus $m\in\mathcal M_N$, and consequently $f=m\circ e\in\mathcal M_N\circ\mathcal E_G$.
This proves the claim.

\vskip 5pt

By the claim, $\mathcal M_N\circ\mathcal E_G$ is closed under compositions.
By Proposition~\ref{prop:weq-torsion}, one gets the torsion model structure $(\mathcal E_N,\mathcal M_G,\mathcal M_N\circ\mathcal E_G)$.

\vskip 5pt

A $G$-set $X$ is cofibrant if and only if $\varnothing\longrightarrow X$ lies in $\mathcal E_N$, i.e.,  $X=X^N$. Thus the class of cofibrant objects is $\mathcal C_N$.

A $G$-set $X$ is fibrant if and only if $X\longrightarrow *$ lies in $\mathcal M_G$, i.e., $|X^G|=1$.
Thus the class of fibrant objects is $\mathcal F_G = \{X \in G\text{-}\mathsf{Set} \ \mid \ |X^{G}|=1\}$.
By Theorem~\ref{thm:orthogonal-ho-CF} one has
$\operatorname{Ho}(G\text{-}\mathsf{Set})\simeq \mathcal C_N \cap \mathcal F_G=
\{X\in(G/N)\text{-}\mathsf{Set}\mid |X^{G/N}|=1\}.$ \end{proof}

\begin{exm} \ The torsion model structure given in Proposition
\ref{prop:gset-normal-subgroup-torsion-model} is not bi-reflective,  in
general. Let $G=\mathbb Z_2$ and $N=\{e\}.$  In this case the torsion model structure $(\CoFib,\Fib,\Weq)$ on $\mathbb Z_2\text{-}\mathsf{Set}$ constructed in Proposition
\ref{prop:gset-normal-subgroup-torsion-model}   is
\[
\begin{aligned}
\CoFib
&=\operatorname{Mor}(\mathbb Z_2\text{-}\mathsf{Set}),\\
\Fib
&=\left\{f:X\longrightarrow Y\ \middle|\
f^{\mathbb Z_2}:X^{\mathbb Z_2}
\overset{\cong}{\longrightarrow}Y^{\mathbb Z_2}\right\},\\
\Weq
&=\left\{f:X\longrightarrow Y\ \middle|\
f|_{X\setminus X^{\mathbb Z_2}}:
X\setminus X^{\mathbb Z_2}
\overset{\cong}{\longrightarrow}
Y\setminus Y^{\mathbb Z_2}\right\}.
\end{aligned}
\]
The class of cofibrant objects is $\mathbb Z_2\text{-}\mathsf{Set}$, the class $\mathcal{F}$ of fibrant objects is the class of $\mathbb Z_2$-sets $X$ with $|X^{\mathbb Z_2}|=1$.
We claim that this torsion model structure is not bi-reflective.

\vskip 5pt

To see this, we need to define a functor $r: \mathbb Z_2\text{-}\mathsf{Set} \longrightarrow \mathcal{F}$ as follows. For any $\mathbb Z_2$-set $X$,  $r(X):=(X\setminus X^{\mathbb Z_2})\overset\cdot\cup *$.
For a $\mathbb Z_2$-map $f: X\longrightarrow Y$, $r(f): (X\setminus X^{\mathbb Z_2})\overset\cdot\cup * \longrightarrow (Y\setminus Y^{\mathbb Z_2})\overset\cdot\cup *$ is defined as
$$r(f) (x) = \begin{cases}
f(x), &f(x)\notin Y^{\mathbb Z_2}, \\
*,&f(x)\in Y^{\mathbb Z_2}
\end{cases}$$
here for convenience we denote  the unique element in
the terminal $\mathbb Z_2$-set $*$ still by $*$.
Then $(r, i)$ is an adjoint pair, the unit is given by
\[
\eta_{_X}:X\longrightarrow r(X),\qquad
x\longmapsto
\begin{cases}
x,&x\in X\setminus X^{\mathbb Z_2},\\
*,&x\in X^{\mathbb Z_2}.
\end{cases}
\]
Thus,  $\mathcal F$ is a reflective subcategory of $\mathbb Z_2\text{-}\mathsf{Set}$ with the reflector $r$.

\vskip5pt

Now, assume otherwise that $(\CoFib,\Fib,\Weq)$ is a bi-reflective model structure. Then by Theorem~\ref{thm:bi-reflective-pairs-bi-reflective-orthogonal-model-structures}, $r$ should be semi-left-exact.
However, $r$ is not semi-left-exact, as explained below.

\vskip5pt

Let $X=\{x_1,x_2\}$ with trivial $\mathbb Z_2$-action, and $h:E\longrightarrow r(X)=*$ be the unique $G$-map where $E=*\overset\cdot\cup \mathbb Z_2$.
Then $E\in\mathcal F$ and any left $\mathcal F$-strong-approximation of $X$ is of the form $\eta_X:X\longrightarrow *$.
The pullback of $\eta_X:X\longrightarrow *$ along $h$ is
\[
\xymatrix@C=0.9cm@R=0.6cm{
P\ar[r]\ar[d]_{p} & X\ar[d]^{\eta_X} \\
E\ar[r]^-h & {*}
}
\]
where  $p:P=E\times X\longrightarrow E$ is the projection.
Since $X$ has two elements, one has $P=E \overset\cdot\cup E= *\overset\cdot\cup*\overset\cdot\cup G\overset\cdot\cup G$ and hence $r(P)=*\overset\cdot\cup \mathbb Z_2\overset\cdot\cup \mathbb Z_2$.
Since $r(E)=*\overset\cdot\cup \mathbb Z_2$,
$p$ is not a left $\mathcal F$-strong-approximation of $P$ (otherwise, by Fact~\ref{fact:approximation-basic}(1)(ii) one must have $r(P)\cong E=*\overset\cdot\cup \mathbb Z_2$).
This shows that $r$ is not semi-left-exact.
\end{exm}

\subsection{\bf Torsion model structures on $\mathsf{TopGrp}$}\ We will construct a torsion model structure on the category $\mathsf{TopGrp}$ of topological groups,
and this model structure is not bi-reflective.
The category $\mathsf{TopGrp}$ has the zero object $\{1\}$.

\vskip 5pt

By \cite[Theorem 50]{BC},  $\mathsf{TopGrp}$ is a homological category. Thus,
in $\mathsf{TopGrp}$ the pullback of an epimorphism along any morphism is again an epimorphism, and one has the following lemma.

\begin{lem} \ \label{lem:topgrp-short-five}\ The short five lemma holds for $\mathsf{TopGrp}$, i.e.,
in a commutative diagram of morphisms in $\mathsf{TopGrp}$ with exact rows
\[
\xymatrix@C=0.8cm{
1\ar[r] & A\ar[r]\ar[d]_{\alpha}^{\cong} & B\ar[r]^{q}\ar[d]_{\beta} & C\ar[r]\ar[d]_{\gamma}^{\cong} & 1\\
1\ar[r] & A'\ar[r] & B'\ar[r]^{q'} & C'\ar[r] & 1
}
\]
if $\alpha$ and $\gamma$ are isomorphisms in $\mathsf{TopGrp}$, then so is $\beta$.
\end{lem}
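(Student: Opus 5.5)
We will prove the short five lemma in $\mathsf{TopGrp}$ in two stages: first at the level of underlying groups, where it is the classical short five lemma, and then at the level of topologies, where the real content lies. Exactness of a row $1\to A\to B\xrightarrow{q} C\to 1$ in the homological category $\mathsf{TopGrp}$ means that $A\to B$ is the kernel of $q$, i.e. $A$ is (isomorphic to) the subgroup $\Ker q$ with the subspace topology, and $q$ is a regular epimorphism. Concretely, $q$ is a surjective open homomorphism, so $C$ carries the quotient topology $B/\Ker q$.

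\emph{Algebraic step.} Forgetting topologies, the rows are short exact sequences of groups and $\alpha,\gamma$ are bijective homomorphisms. The usual diagram chase shows $\beta$ is bijective. Injectivity: if $\beta(b)=1$ then $\gamma q(b)=q'\beta(b)=1$, so $q(b)=1$ and $b\in A$; then $\alpha(b)=\beta(b)=1$, hence $b=1$. Surjectivity: for $b'\in B'$ choose $b$ with $\gamma q(b)=q'(b')$; then $b'\beta(b)^{-1}\in A'=\alpha(A)$, giving $b'\in\beta(B)$. So $\beta$ is a continuous group isomorphism, and it remains to show that $\beta$ is open.

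\emph{Topological step (main obstacle).} It suffices to show that $\beta(U)$ is a neighbourhood of $1$ for every neighbourhood $U$ of $1$ in $B$, because translations are homeomorphisms. Choose a neighbourhood $V$ of $1$ with $V^{-1}V\subseteq U$ (take $V^2\subseteq U$, $V=V^{-1}$). Since $\alpha$ is a homeomorphism onto $A'$ with the subspace topology, there is an open $W'\ni 1$ in $B'$ with $W'\cap A'\subseteq\alpha(V\cap A)$. Since $q$ is open and $\gamma$ is a homeomorphism, $q'^{-1}(\gamma q(V))$ is an open neighbourhood of $1$ in $B'$.

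We aim to show that a neighbourhood of the form $N'=q'^{-1}(\gamma q(V_0))\cap W_0'$, with suitably shrunken $V_0\subseteq V$ and $W_0'$, is contained in $\beta(U)$. Let $b'\in N'$. Then $q'(b')=\gamma q(v)=q'\beta(v)$ for some $v\in V_0$, so $\beta(v)^{-1}b'\in A'$. We need $\beta(v)^{-1}b'$ to lie in $W'$. This follows by continuity of multiplication together with continuity of $\beta$: choose $V_0$ with $\beta(V_0)^{-1}W_0'\subseteq W'$, where $W_0'$ is a neighbourhood of $1$ with $W_0'W_0'\subseteq W'$ and $\beta(V_0)\subseteq W_0'$. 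Then $\beta(v)^{-1}b'\in A'\cap W'\subseteq\alpha(V\cap A)=\beta(V\cap A)$, so $b'\in\beta(V_0)\beta(V)\subseteq\beta(U)$.

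Care is needed in the order of choices: first choose $W'$ from $V$, then $W_0'$, then $V_0$, and we will check that this is consistent. Hence $\beta$ is open, and therefore an isomorphism in $\mathsf{TopGrp}$.

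Alternatively, one could cite the general fact that homological categories satisfy the short five lemma (Borceux--Bourn), with \cite[Theorem 50]{BC} supplying homologicity. We would mention this as the conceptual reason, but present the direct argument above.
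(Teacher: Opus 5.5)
Your argument is correct, but it is not how the paper proceeds. The paper gives no separate proof: it notes via \cite[Theorem 50]{BC} that $\mathsf{TopGrp}$ is a homological category, and the lemma is then the short five lemma valid in any homological (pointed, regular, protomodular) category. That is the ``conceptual reason'' you mention only at the end.

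Your main route instead splits the claim into two parts: the classical algebraic short five lemma, giving bijectivity of $\beta$, and a hands-on neighbourhood argument for openness of $\beta$. The neighbourhood argument works. $W'$ depends only on $V$, $W_0'$ only on $W'$, and $V_0\subseteq V$ is chosen from $W_0'$ by continuity of $\beta$, so there is no circularity. Moreover $N'=(q')^{-1}(\gamma q(V_0))\cap W_0'$ is a neighbourhood of $1$ contained in $\beta(VV)\subseteq\beta(U)$.

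The only slip is cosmetic. From $\beta(V_0)\subseteq W_0'$ you get $\beta(V_0)^{-1}W_0'\subseteq W'$ only if $W_0'$ is symmetric, or if you choose $W_0'$ with $(W_0')^{-1}W_0'\subseteq W'$; you should say so.

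What your approach buys is self-containment and transparency. It makes explicit what exactness means in $\mathsf{TopGrp}$: kernels are subgroup embeddings with the subspace topology, and regular epimorphisms are open surjections. It also shows exactly which topological facts are used: openness of $q$, the subspace topology on $A'$, and $\alpha,\gamma$ being homeomorphisms. Openness of $q'$ plays no role. The paper's citation buys brevity and keeps the lemma inside the homological-category framework it already relies on (e.g.\ stability of regular epimorphisms under pullback, invoked just before the lemma), at the cost of outsourcing the actual content to external theory.
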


For $G\in\mathsf{TopGrp}$, let $N(G):=\overline{\{1_G\}}$ be the closure of $1_G$, and  $\Gamma(G)$ the connected component of $1_G$.

\begin{fact}\label{lem:topgrp-N-Gamma}\ The subsets $N(G)$ and $\Gamma(G)$ are closed normal subgroups of $G$, and $N(G)\trianglelefteq\Gamma(G)$.
Every morphism $f:G\longrightarrow H$ restricts to morphisms $N(f):N(G)\longrightarrow N(H)$ and $\Gamma(f):\Gamma(G)\longrightarrow\Gamma(H)$.
Thus $N$ and $\Gamma$ define endofunctors of $\mathsf{TopGrp}$.
\end{fact}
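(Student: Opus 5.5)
The plan is to verify the three claims of Fact~\ref{lem:topgrp-N-Gamma} directly from elementary facts about topological groups, working in order: closedness and normality, the inclusion $N(G)\subseteq\Gamma(G)$, and then functoriality.

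\emph{Closedness and normality.} For $N(G)=\overline{\{1_G\}}$, closedness holds by definition. To see it is a subgroup, I will use that multiplication $m\colon G\times G\to G$ and inversion are continuous. Then
\[
m\bigl(\overline{\{1\}}\times\overline{\{1\}}\bigr)\subseteq \overline{m(\{1\}\times\{1\})}=\overline{\{1\}},
\]
and similarly the inverse of $\overline{\{1\}}$ lies in $\overline{\{1\}}$. Normality follows because conjugation $c_g$ is a homeomorphism fixing $1$, so $c_g(\overline{\{1\}})=\overline{\{1\}}$.

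For $\Gamma(G)$, closedness holds because connected components are closed. It is a subgroup because $\Gamma(G)\times\Gamma(G)$ is connected, so its image under $(x,y)\mapsto xy^{-1}$ is connected, contains $1$, and therefore lies in $\Gamma(G)$. Normality follows because $c_g(\Gamma(G))$ is connected and contains $1$.

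\emph{The inclusion $N(G)\trianglelefteq\Gamma(G)$.} The singleton $\{1\}$ is connected, so its closure $N(G)$ is connected. Since $N(G)$ contains $1$, we get $N(G)\subseteq\Gamma(G)$. Being normal in $G$, $N(G)$ is a fortiori normal in $\Gamma(G)$.

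\emph{Functoriality.} Let $f\colon G\to H$ be a continuous homomorphism. Continuity gives
\[
f\bigl(\overline{\{1_G\}}\bigr)\subseteq\overline{f(\{1_G\})}=\overline{\{1_H\}},
\]
and $f(\Gamma(G))$ is connected and contains $1_H$, so it lies in $\Gamma(H)$. The restrictions $N(f)$ and $\Gamma(f)$ are then continuous homomorphisms for the subspace topologies, hence morphisms of $\mathsf{TopGrp}$. Restriction clearly respects identities and composition, so $N$ and $\Gamma$ are endofunctors of $\mathsf{TopGrp}$.

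None of these steps is a real obstacle. The only point needing care is the subgroup property of $N(G)$, which relies on the continuity of multiplication on the product $G\times G$ together with $\overline{A}\times\overline{B}=\overline{A\times B}$.
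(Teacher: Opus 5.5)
Your proposal is correct and follows essentially the same route as the paper: continuity of the group operations gives the subgroup properties, conjugation $c_g$ gives normality, and the inclusions $f(\overline{A})\subseteq\overline{f(A)}$ and $f(\Gamma(G))\subseteq\Gamma(H)$ (images of connected sets are connected) give functoriality. The only cosmetic difference is in showing $N(G)\subseteq\Gamma(G)$: the paper simply notes that $\Gamma(G)$ is a closed set containing $1_G$ and hence contains $\overline{\{1_G\}}$, whereas you argue that $\overline{\{1_G\}}$ is connected. Both arguments work.
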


\begin{proof}
Since $d:G\times G\longrightarrow G,(x,y)\mapsto xy^{-1}$ is continuous, one sees that $N(G)$ is a subgroup of $G$.
For $g\in G$, the map $c_g:G\longrightarrow G$, $x \mapsto gxg^{-1}$, is a homeomorphism and satisfies $c_g(1_G)=1_G$.
Thus $N(G)\trianglelefteq G$.

\vskip 5pt

By definition $\Gamma(G)$ is closed. Since the multiplication and the inverse are continuous maps, one has $\Gamma(G)\Gamma(G)^{-1}=d(\Gamma(G)\times\Gamma(G))\subseteq\Gamma(G)$, so $\Gamma(G)$ is a subgroup of $G$.
Since $c_g(\Gamma(G))$ is connected and contains $1_G$, one has $c_g(\Gamma(G))\subseteq\Gamma(G)$, which implies that $\Gamma(G)\trianglelefteq G$.

\vskip 5pt

Since $\Gamma(G)$ is closed, one has $N(G)\subseteq\Gamma(G)$. For every continuous homomorphism $f:G\longrightarrow H$, one has $f(N(G)) = f\bigl(\overline{\{1_G\}}\bigr)\subseteq\overline{\{1_H\}}=N(H)$ and $f(\Gamma(G))\subseteq\Gamma(H)$.
Hence the restrictions $N(f)$ and $\Gamma(f)$ are well-defined and functorial.
\end{proof}

Put $K(G):=G/N(G)$, $\pi_0(G):=G/\Gamma(G)$, and $H_0(G):=\Gamma(G)/N(G)$.
Set $$\mathcal F_N:=\{G\in\mathsf{TopGrp}\mid N(G)= \{1\}\} \ \ \ \mbox{and} \ \  \mathcal F_\Gamma:=\{G\in\mathsf{TopGrp}\mid\Gamma(G)= \{1\}\}.$$

\begin{lem}\label{lem:topgrp}\
${\rm (1)}$\  The full subcategory $\mathcal F_N$ of $\mathsf{TopGrp}$ is a reflective subcategory such that the reflector $K$ is semi-left-exact, and the corresponding orthogonal factorization system $(\mathcal E_N,\mathcal M_N)$ is a torsion factorization system, where
\[
\mathcal{E}_{N} = \{f\mid K(f) \text{ is an isomorphism}\},\ \ \ \
\mathcal M_N= \{f\mid N(f) \text{ is an isomorphism}\}.
\]

\vskip 5pt

${\rm (2)}$\  The full subcategory $\mathcal F_\Gamma$ of $\mathsf{TopGrp}$ is a reflective subcategory such that the reflector $\pi_0$ is semi-left-exact, and the corresponding orthogonal factorization system $(\mathcal E_\Gamma,\mathcal M_\Gamma)$ is a torsion factorization system, where
\[
\mathcal{E}_{\Gamma} = \{f\mid \pi_0(f) \text{ is an isomorphism}\},\ \ \ \
\mathcal{M}_{\Gamma}  = \{f\mid \Gamma(f) \text{ is an isomorphism}\}.
\]
\end{lem}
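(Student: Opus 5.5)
I will treat (1) and (2) in parallel, since they share the same template. For a topological group $G$ set $N=N(G)$ (respectively $\Gamma=\Gamma(G)$), and let $\eta_G:G\longrightarrow K(G)=G/N(G)$ (respectively $G\longrightarrow \pi_0(G)$) be the quotient map, which is open.

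\textbf{Step 1: reflectivity.} First I check that $K(G)\in\mathcal F_N$. Because the quotient map is open and $N(G)$ is closed, the point $\{1\}$ is closed in $G/N(G)$. Similarly $\pi_0(G)$ is totally disconnected at the identity: a connected subgroup of $G/\Gamma(G)$ pulls back to a connected set containing $\Gamma(G)$, using that $\Gamma(G)$ is connected, so $\Gamma(\pi_0 G)=1$. Next I check the universal property. If $f:G\longrightarrow H$ with $H\in\mathcal F_N$, then $f(N(G))\subseteq N(H)=1$ by Fact~\ref{lem:topgrp-N-Gamma}, so $f$ factors uniquely and continuously through the open quotient. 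The same argument works for $\Gamma$. This gives reflectors $K$ and $\pi_0$, and these are functors by Fact~\ref{lem:topgrp-N-Gamma}.

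\textbf{Step 2: semi-left-exactness.} By Fact~\ref{fact:approximation-basic} it suffices to treat $u=\eta_X$. Take $g:E\longrightarrow K(X)$ with $E\in\mathcal F_N$, and form the pullback $P=E\times_{K(X)}X$ with the subspace topology. Its projection $u':P\longrightarrow E$ is surjective, with kernel $1\times N(X)$. I then aim to show two things: that $N(P)=1\times N(X)$, and that $u'$ is a topological quotient, i.e.\ open. For the first, $\overline{\{1\}}$ in $P$ lies in $\ker u'$ because $E\in\mathcal F_N$. Conversely, $1\times N(X)$ is contained in the closure because every neighborhood of $(1,n)$ contains $(1,1)$, as $X$ does not separate $n$ from $1$. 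For openness, a basic open set of $P$ is $(V\times U)\cap P$, and its image is $V\cap g^{-1}(\eta_X(U))$, which is open since $\eta_X$ is open. Hence $u'$ identifies $E$ with $P/N(P)=K(P)$, so $u'$ is a left $\mathcal F_N$-strong-approximation. For $\Gamma$ I proceed identically, but now I must show $\Gamma(P)=1\times\Gamma(X)$. The inclusion $\supseteq$ is clear because $1\times\Gamma(X)\cong\Gamma(X)$ is connected and contains the identity. The inclusion $\subseteq$ holds because $u'(\Gamma(P))$ is connected in $E\in\mathcal F_\Gamma$. I expect this topological identification of $N(P)$, $\Gamma(P)$ and of the openness of $u'$ to be the main obstacle. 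Lemma~\ref{lem:sle-reflective-ofs}(1) then yields the orthogonal factorization systems $(\mathcal E_K,\mathcal M_K)$ and $(\mathcal E_{\pi_0},\mathcal M_{\pi_0})$, and shows that $\mathcal E$ has the two-out-of-three property.

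\textbf{Step 3: identifying $\mathcal M$ and two-out-of-three.} It remains to show that $\mathcal M_K$, defined via pullback squares, equals $\{f\mid N(f)\text{ iso}\}$; the two-out-of-three property of the latter is then immediate from functoriality of $N$. Suppose the square $X\to KX$, $Y\to KY$ is a pullback. Taking kernels of the horizontal maps, $N(f)$ is the induced map on kernels, which is an isomorphism of topological groups because kernels in pullbacks coincide, with subspace topologies. Conversely, suppose $N(f)$ is an isomorphism. Compare $X$ with the pullback $Y\times_{KY}KX$. Both are extensions $1\to N(\cdot)\to\,\cdot\,\to KX\to1$: for the pullback, the kernel of the projection to $KX$ is $N(Y)$, and its quotient map is open by Step 2. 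The comparison map is then an isomorphism by the short five lemma, Lemma~\ref{lem:topgrp-short-five}. The $\Gamma$ case is identical, using $\pi_0$ and $\Gamma$.
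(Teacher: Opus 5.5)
Your proposal is correct and follows essentially the same route as the paper: $K$ and $\pi_0$ are quotient reflectors; semi-left-exactness comes from the explicit pullback with the subspace topology, where you compute $N(P)$ (resp.\ $\Gamma(P)$) and show the projection is open; and the short five lemma identifies $\mathcal M_N$ with $\{f\mid N(f)\text{ is an isomorphism}\}$. Beyond the paper's proof, you also verify that $K(G)\in\mathcal F_N$ and $\pi_0(G)\in\mathcal F_\Gamma$, and you carry out the $\Gamma$ case ($\Gamma(P)=1\times\Gamma(X)$); the paper leaves the first implicit and omits the second.
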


\begin{proof} \ (1) \ Denote by  $\eta_G: G\longrightarrow K(G)$ the quotient map. The functor $K: \mathsf{TopGrp} \longrightarrow \mathcal F_N$ given by $K(G):=G/N(G)$ is the left adjoint of $i: \mathcal F_N \hookrightarrow  \mathsf{TopGrp}$,
and the unit is given by $\eta_G$. In fact, if $H\in\mathcal F_N$ and $f:G\longrightarrow H$ is a continuous homomorphism, then $f(N(G))\subseteq N(H)= \{1\}$.
By the quotient topology,  this induces a continuous map $\tilde{f}: K(G)\longrightarrow H$ such that $\tilde{f}\circ \eta_G = f$.
Thus one has a natural bijection $\operatorname{Hom}_{\mathcal F_N}(K(G),H) \cong \operatorname{Hom}_{\mathsf{TopGrp}}(G,H).$
Therefore $\mathcal F_N$ is reflective with reflector $K$.

\vskip 5pt

Next, we show that $K$ is semi-left-exact.
It is well-known that in $\mathsf{TopGrp}$, the pullbacks and pushouts exist.
For every $G\in \mathsf{TopGrp}$, by Fact~\ref{fact:approximation-basic}$(1)$, a left strong-$\mathcal F_N$-approximation of $G$ is of the form $\eta_{_G}:G\longrightarrow K(G)$.
Let $h:E\longrightarrow K(G)$ be a morphism with $E\in\mathcal{F}_N$.
Let $P=\{(g,e)\in G\times E\mid \eta_G(g)=h(e)\},$ equipped with the subspace topology, $p:P\longrightarrow E,\  (g,e)\longmapsto e,$ and $q:P\longrightarrow G,\ (g,e)\longmapsto g$ be two projections.
Then $P$ is the pullback of $\eta_G:G\longrightarrow K(G)$ along $h$.
Since $P$ has the subspace topology, then one has \[N(P)=P\cap\bigl(N(G)\times N(E)\bigr) =P\cap\bigl(N(G)\times\{1_E\}\bigr) =N(G)\times\{1_E\} = \Ker p.\]
Hence $p$ induces a continuous bijective homomorphism $\overline p:P/N(P)\longrightarrow E,\ [(g,e)]\longmapsto e.$
Since for open subsets $U_G\subseteq G$ and $U_E\subseteq E$, $p\bigl(P\cap(U_G\times U_E)\bigr) = U_E\cap h^{-1}\bigl(\eta_G(U_G)\bigr)$ is an open set ($\eta_G$ is open by quotient topology), it follows that $p$ is open.
Thus $\overline p$ is an open continuous bijection, and therefore $P/N(P)\cong E$ in $\mathsf{TopGrp}$.

\vskip 5pt

Thus one gets the following commutative diagram with exact rows:
\[
\xymatrix@C=1.0cm@R=0.7cm{
1\ar[r] & N(P)\ar[r]\ar@{=}[d] & P\ar[r]^{p}\ar[d]^{q} & E\ar[d]^{h}\ar[r] & 1\\
1\ar[r] & N(G)\ar[r] & G\ar[r]^-{\eta_{_G}} & K(G)\ar[r] & 1
}
\]
where the right hand side square is pullback.
Therefore $p:P\longrightarrow E\cong K(P)$ is a left $\mathcal F$-strong-approximation of $P$, by Fact~\ref{fact:approximation-basic}$(1)$.
By definition, $K$ is semi-left-exact.

\vskip 5pt

By Lemma~\ref{lem:sle-reflective-ofs}, one gets an orthogonal factorization system $(\mathcal{E}_N,\mathcal M_N)$ in $\mathsf{TopGrp}$, where
\begin{equation*}
\begin{aligned}
\mathcal{E}_N &= \{f \in {\rm Mor}(\mathsf{TopGrp})\ \mid \  K(f)\text{ is an isomorphism}\}
\\
\mathcal M_N
&=
\left\{
f: X\longrightarrow Y\ \mbox{in} \mathsf{TopGrp}\ \middle|\begin{array}{c}
\xymatrix@C=0.9cm@R=0.4cm{
X\ar[r]^-{\eta_{_X}}\ar[d]_f & K(X)\ar[d]^{K(f)}\\
Y\ar[r]^-{\eta_{_Y}} & K(Y)
}
\end{array}\text{is a pullback}
\right\}.
\end{aligned}
\end{equation*}

\vskip 5pt

We claim that $\mathcal{M}_N  = \{f\mid N(f) \text{ is an isomorphism}\}$.

\vskip 5pt

For $f:G\longrightarrow H$, consider the pullback of $\eta_{_H}:H\longrightarrow K(H)$ along $K(f):K(G) \longrightarrow K(H)$, one has the following commutative diagram with exact rows:
\[
\xymatrix@C=0.8cm{
1\ar[r] & N(G)\ar[r]\ar[d]_{N(f)} & G\ar[r]^{\eta_{_G}}\ar[d]_{\varphi} & K(G)\ar[r]\ar@{=}[d] & 1\\
1\ar[r] & N(H)\ar[r]\ar@{=}[d] & Q\ar[r]\ar[d] & K(G)\ar[r]\ar[d]^{K(f)} & 1\\
1\ar[r] & N(H)\ar[r] & H\ar[r]^{\eta_{_H}} & K(H)\ar[r] & 1.
}
\]

\vskip 5pt

If $N(f):N(G)\longrightarrow N(H)$ is an isomorphism, then by Lemma~\ref{lem:topgrp-short-five}, one has $\varphi$ is an isomorphism. Thus $f\in\mathcal M_N$.
Conversely, suppose that $f\in\mathcal M_N$. By definition, one has $\varphi$ is an isomorphism.
Thus $N(f)$ is an isomorphism, see the first row in the diagram above.
This proves the claim.

\vskip 5pt

Since $N$ and $K$ are functors, $\mathcal E_N$ and $\mathcal M_N$ have the two-out-of-three property.
By definition the orthogonal factorization system $(\mathcal E_N,\mathcal M_N)$ is a torsion factorization system.

\vskip 5pt

(2) \ The proof is similar to (1). We omit the details.
\end{proof}

\begin{rem}\label{rem:description-of-F-in-topgrp}\ In fact, $\mathcal F_N$ is the full subcategory of $T_0$ topological groups, and $\mathcal F_\Gamma$ is the full subcategory of totally disconnected topological groups.
\end{rem}

\begin{prop}\label{prop:topological-group-torsion-model-structure}\ There is a torsion model structure $(\CoFib,\Fib,\Weq)$ on $\mathsf{TopGrp}$ given by
\[
\begin{aligned}
\CoFib&=\{f\in\Mor (\mathsf{TopGrp})\mid\pi_0(f)\text{ is an isomorphism}\}\\
\Fib&=\{f\in\Mor (\mathsf{TopGrp})\mid N(f)\text{ is an isomorphism}\}\\
\Weq&=\{f\in\Mor (\mathsf{TopGrp})\mid H_0(f)\text{ is an isomorphism}\}.
\end{aligned}
\]
The class $\mathcal C$ of cofibrant objects is the class of connected topological groups,
the class $\mathcal F$ of fibrant objects is the class of $T_0$-topological groups,
and the homotopy category is
\[
\operatorname{Ho}(\mathsf{TopGrp})\simeq\{G\in\mathsf{TopGrp}\mid\Gamma(G)=G\text{ and }N(G)=1\}.
\]
\end{prop}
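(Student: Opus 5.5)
The plan is to mirror the proof of Proposition \ref{prop:gset-normal-subgroup-torsion-model} and apply Proposition \ref{prop:weq-torsion}.

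By Lemma \ref{lem:topgrp} there are two torsion factorization systems, $(\mathcal E_\Gamma,\mathcal M_\Gamma)$ and $(\mathcal E_N,\mathcal M_N)$. We set
\[
\CoFib:=\mathcal E_\Gamma,\qquad \TFib:=\mathcal M_\Gamma,\qquad \TCoFib:=\mathcal E_N,\qquad \Fib:=\mathcal M_N .
\]
The hypothesis $\TCoFib\subseteq\CoFib$ holds because $N(G)\subseteq\Gamma(G)$. Indeed, $\pi_0(G)=K(G)/H_0(G)$, and $H_0$ is the image of $\Gamma$ in $K$. Hence $\pi_0$ is the composite of $K$ with the reflector onto totally disconnected groups, restricted to $T_0$ groups. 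Concretely, if $K(f)$ is an isomorphism, then it carries $\Gamma(K(G))=H_0(G)$ onto $H_0(H)$. Here I use that $\Gamma(G/N(G))=\Gamma(G)/N(G)$, which follows since $N(G)$ is connected (the closure of a point) and quotient maps by connected normal subgroups preserve identity components. It follows that $\pi_0(f)$ is an isomorphism.

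Next I would identify $\Weq:=\mathcal M_\Gamma\circ\mathcal E_N$ with $\{f\mid H_0(f)\text{ is an isomorphism}\}$, in analogy with the claim in Proposition \ref{prop:exact-nested-torsion-pairs-bi-reflective}.

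For the inclusion $\subseteq$, let $f=m\circ e$. Since $K(e)$ is an isomorphism and $H_0$ factors through $K$ (as $H_0=\Gamma\circ K$ by the identity above), $H_0(e)$ is an isomorphism. Since $\Gamma(m)$ is an isomorphism, it induces an isomorphism on $N(\Gamma(-))=N(-)$, so $N(m)$ is an isomorphism, and the quotient $H_0(m)$ is then an isomorphism as well.

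For the inclusion $\supseteq$, factor $f=m\circ e$ with $e\in\mathcal E_N$ and $m\in\mathcal M_N$. Then $H_0(m)$ is an isomorphism by two-out-of-three, and $N(m)$ is an isomorphism. The short exact sequence $1\to N(-)\to\Gamma(-)\to H_0(-)\to 1$ together with the short five lemma (Lemma \ref{lem:topgrp-short-five}) then gives that $\Gamma(m)$ is an isomorphism, so $m\in\mathcal M_\Gamma$ and $f\in\mathcal M_\Gamma\circ\mathcal E_N$.

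Because $H_0$ is a functor, $\Weq$ is closed under compositions, and Proposition \ref{prop:weq-torsion} yields the torsion model structure.

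For the objects, $\mathsf{TopGrp}$ has a zero object, so the remark after Definition \ref{defn:cofibrant-fibrant-objects} applies. An object $G$ is cofibrant iff $1\to G$ lies in $\mathcal E_\Gamma$, i.e.\ $\pi_0(G)=1$, i.e.\ $G$ is connected. It is fibrant iff $G\to1$ lies in $\mathcal M_N$, i.e.\ $N(G)=1$, i.e.\ $G$ is $T_0$ (Remark \ref{rem:description-of-F-in-topgrp}). Enough cofibrant and fibrant objects is automatic, and Theorem \ref{thm:orthogonal-ho-CF} gives $\Ho(\mathsf{TopGrp})\simeq\mathcal C\cap\mathcal F$.

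The main obstacle I expect is the topological bookkeeping. First, the identity $\Gamma(G/N(G))=\Gamma(G)/N(G)$ must hold as topological groups. Second, the short five lemma must be applied to $1\to N\to\Gamma\to H_0\to1$, which requires checking that this is an exact sequence in $\mathsf{TopGrp}$ in the homological-category sense, i.e.\ that $\Gamma(G)\to H_0(G)$ is a quotient map and $N(G)$ carries the subspace topology. I would first try to verify these by working directly with quotient topologies: a closed normal subgroup with connected fibres gives open quotient maps, and components map onto components.
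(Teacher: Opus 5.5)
Your proposal is correct and follows essentially the same route as the paper. It uses the same two torsion factorization systems from Lemma \ref{lem:topgrp} and the inclusion $\mathcal E_N\subseteq\mathcal E_\Gamma$ via $H_0(G)=\Gamma(K(G))$. It identifies $\mathcal M_\Gamma\circ\mathcal E_N=\{f\mid H_0(f)\text{ is an isomorphism}\}$ by factoring through $(\mathcal E_N,\mathcal M_N)$ and applying the short five lemma to $1\to N\to\Gamma\to H_0\to 1$, and then concludes with Proposition \ref{prop:weq-torsion} and Theorem \ref{thm:orthogonal-ho-CF}. Your explicit check that $\Gamma(G/N(G))=\Gamma(G)/N(G)$ as topological groups (open quotient map with connected cosets) fills in a point the paper simply asserts ``by definition''.
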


\begin{proof} \ Lemma~\ref{lem:topgrp} gives torsion factorization systems $(\mathcal E_N,\mathcal M_N)$ and $(\mathcal E_\Gamma,\mathcal M_\Gamma)$, where
\[
\begin{aligned}
\mathcal{E}_{N}& = \{f\mid K(f) \text{ is an isomorphism} \}, \ \ \ \
\mathcal M_N= \{f\mid N(f) \text{ is an isomorphism} \}, \\
\mathcal{E}_{\Gamma}& = \{f\mid \pi_0(f) \text{ is an isomorphism} \}, \ \ \ \
\mathcal{M}_{\Gamma}  = \{f\mid \Gamma(f) \text{ is an isomorphism} \}.
\end{aligned}
\]

\vskip5pt

For every $G\in\mathsf{TopGrp}$, there are natural short exact
sequences
\[
1\longrightarrow N(G)\longrightarrow\Gamma(G)\longrightarrow H_0(G)
\longrightarrow1,
\qquad
1\longrightarrow H_0(G)\longrightarrow K(G)\longrightarrow\pi_0(G)
\longrightarrow1.
\]
Moreover, $\Gamma(K(G))\cong H_0(G)$ by definition.
If $e:G\longrightarrow G'$ belongs to $\mathcal E_N$, then $K(e)$ is an isomorphism.
Since $H_0(G)=\Gamma(K(G))$, the morphism $K(e)$ induces an isomorphism $H_0(e):H_0(G)\longrightarrow H_0(G')$.
The second short exact sequence then gives that $\pi_0(e)$ is an isomorphism.
Hence $e\in\mathcal E_\Gamma$, and therefore $\mathcal E_N\subseteq\mathcal E_\Gamma$ and $\mathcal M_\Gamma\subseteq\mathcal M_N.$
We claim that
\[
\Weq: = \{\ f\mid H_0(f) \text{ is an isomorphism}\ \} = \mathcal M_\Gamma\circ\mathcal E_N.
\]

\vskip5pt

First, let $f=m\circ e$, where $e\in\mathcal E_N$ and $m\in\mathcal M_\Gamma$.
As above, $H_0(e)$ is an isomorphism.
Since $\Gamma(m)$ is an isomorphism and $N(m)$ is its restriction to the closed normal subgroup $N(G)$, $N(m)$ is also an isomorphism.
Therefore the first short exact sequence and Lemma~\ref{lem:topgrp-short-five} imply that $H_0(m)$ is an isomorphism.
Consequently, $H_0(f)=H_0(m)\circ H_0(e)$ is an isomorphism.
Thus $\mathcal M_\Gamma\circ\mathcal E_N\subseteq\Weq.$

\vskip5pt

Conversely, let $f:G\longrightarrow G'$ belong to $\Weq$, i.e., $H_0(f)$ is an isomorphism.
Since  $(\mathcal E_N,\mathcal M_N)$ is a torsion factorization system, one can factorize $f$ as $f=m\circ e$ with $e\in\mathcal E_N,\ m\in\mathcal M_N.$
Since $H_0(e)$ is an isomorphism, one has $H_0(m)=H_0(f)\circ H_0(e)^{-1}$, which is an isomorphism.
Moreover, $m\in\mathcal M_N$ implies that $N(m)$ is an isomorphism.
Since one has the following commutative diagram with exact rows:
\[
\xymatrix@C=1.0cm@R=0.7cm{
1\ar[r] & N(G)\ar[r]\ar[d]^{N(m)}_{\cong} & \Gamma(G)\ar[r]^-{\eta_{_G}}\ar[d]^{\Gamma(m)} & H_0(G)\ar[d]^{H_0(m)}_{\cong}\ar[r] & 1\\
1\ar[r] & N(G')\ar[r] & \Gamma(G')\ar[r]^-{\eta_{_G'}} & H_0(G')\ar[r] & 1
}
\]
By Lemma~\ref{lem:topgrp-short-five} one has $\Gamma(m)$ is an isomorphism.
Hence $m\in\mathcal M_\Gamma$, and therefore $f=m\circ e\in\mathcal M_\Gamma\circ\mathcal E_N.$
This proves the claim.

\vskip5pt

By the claim, $\Weq=\mathcal M_\Gamma\circ\mathcal E_N$ is closed under
compositions. Proposition~\ref{prop:weq-torsion} therefore gives the
torsion model structure $(\CoFib,\Fib,\Weq) = (\mathcal E_\Gamma,\mathcal M_N,\mathcal M_\Gamma\circ\mathcal E_N).$

\vskip 5pt

A topological group $G$ is cofibrant if and only if $1\longrightarrow G$ lies in $\mathcal E_\Gamma$, i.e., $\Gamma(G)=G$. Thus the class of cofibrant objects consists of all connected topological groups.
It is fibrant if and only if $G\longrightarrow1$ lies in $\mathcal M_N$, i.e., $N(G)=1$. Thus the class of fibrant objects consists of all $T_0$-topological groups.
By Theorem~\ref{thm:orthogonal-ho-CF} one has $\operatorname{Ho}(\mathsf{TopGrp})\simeq\{G\in\mathsf{TopGrp}\mid\Gamma(G)=G\text{ and }N(G)=1\}.$
\end{proof}

\begin{exm}\label{rem:topgrp-not-bi-reflective} \ The torsion model structure $(\CoFib,\Fib,\Weq)$ constructed
in Proposition~\ref{prop:topological-group-torsion-model-structure} is not bi-reflective.

\vskip 5pt

Let $\mathcal C$ be the class of cofibrant objects, i.e., $\mathcal C:=\{G\in\mathsf{TopGrp}\mid \Gamma(G)=G\}$.
Then $\mathcal C$ is a coreflective subcategory of $\mathsf{TopGrp}$ with the coreflector $\Gamma:\mathsf{TopGrp}\longrightarrow\mathcal C$.
Assume otherwise that $(\CoFib,\Fib,\Weq)$ is a bi-reflective model structure.
Then by Theorem~\ref{thm:bi-reflective-pairs-bi-reflective-orthogonal-model-structures}, $\Gamma$ should be semi-right-exact.
However, $\Gamma$ is not semi-right-exact, as explained below.

\vskip 5pt

Consider the topological group $\mathbb Z_2$  and the connected topological group $(\mathbb R,+)\in \mathcal C$. Then $\Gamma(\mathbb Z_2)=\{1\}$, and
one has the pushout square
\[
\xymatrix@C=1.2cm@R=0.8cm{
\{1\}\ar[r]\ar[d] & \mathbb R\ar[d]^{i}\\
\mathbb Z_2\ar[r]_{j} & P
}
\]
where $P=\mathbb R *\mathbb Z_2$ is the coproduct of $\mathbb R$ and $\mathbb Z_2$ in $\mathsf{TopGrp}$. Let $s$ be the nonidentity element of $\mathbb Z_2$.
The $i(\mathbb R)$ and $j(s)i(\mathbb R)j(s)^{-1}$ are connected subspaces of $P$ containing the identity.
Hence both are contained in $\Gamma(P)$.
Choose $0\neq c\in \mathbb R$.
In the coproduct $\mathbb R *\mathbb Z_2$, the element $i(c)$ is represented by a reduced word of length one.
On the other hand, $j(s)i(c)j(s)^{-1}$ is represented by a reduced word of length three.
Therefore $j(s)i(c)j(s)^{-1}\notin i(\mathbb R)$, thus $i(\mathbb R)$ is not a normal subgroup. It follows that  $i(\mathbb R)\subsetneqq\Gamma(P).$

\vskip 5pt

If $\Gamma$ were semi-right-exact, then $i:\mathbb R\longrightarrow P$ would be a strong right $\mathcal C$-approximation of $P$.
However, by Fact~\ref{fact:approximation-basic}${\rm (1')}$, the strong right $\mathcal C$-approximation of $P$ is given by the inclusion $\Gamma(P)\longrightarrow P.$
Let $\varepsilon_{_P}:\Gamma(P)\longrightarrow P$ be the inclusion.
If $i:C\longrightarrow P$ is a right strong-$\mathcal C$-approximation,
then $\varepsilon_{_P}$ should factor through $i$ (cf. Definition~\ref{defn:strong-approximation}),
it follows that $\Gamma(P)\subseteq i(C)$,
which contradicts $i(\mathbb R)\subsetneqq\Gamma(P).$
Thus $\Gamma$ is not semi-right-exact.
\end{exm}

\subsection{\bf A torsion model structure without enough fibrant or cofibrant objects} \
By viewing posets as categories, we will construct torsion model structures on products of three posets.
In particular, we obtain an example with neither cofibrant nor fibrant objects.

\vskip 5pt

Let $P$ and $P'$ be posets.
Equip $P\times P'$ with the order: $(x,y)\leq(x',y')$ precisely when $x\leq x'$ and $y\leq y'$.
Let $\pi:P\times P'\longrightarrow P$ and $\pi':P\times P'\longrightarrow P'$ be the projection functors.

\begin{lem}\label{lem:product-poset-torsion-ofs}
There is a torsion factorization system $(\mathcal E_\pi,\mathcal M_{\pi'})$ on $P\times P'$, where
\[
\begin{aligned}
\mathcal E_\pi
&=\{f\in\operatorname{Mor}(P\times P')
       \mid \pi(f)\text{ is an isomorphism}\}\\
\mathcal M_{\pi'}
&=\{f\in\operatorname{Mor}(P\times P')
       \mid \pi'(f)\text{ is an isomorphism}\}.
\end{aligned}
\]
\end{lem}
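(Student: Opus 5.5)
In a poset a morphism is an isomorphism exactly when it is an identity. So $\mathcal E_\pi$ consists of the morphisms $(x,y)\to(x,y')$ with $y\le y'$, and $\mathcal M_{\pi'}$ consists of the morphisms $(x,y)\to(x',y)$ with $x\le x'$. I would verify condition (2) of Proposition~\ref{prop:OFS-criterion}: every morphism factors as an $\mathcal M_{\pi'}$ after an $\mathcal E_\pi$, the two classes satisfy $\mathcal E_\pi\perp\mathcal M_{\pi'}$, and both are closed under isomorphisms. After that I would check the two-out-of-three property.

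For the factorization, take a morphism $(x,y)\to(x',y')$, so $x\le x'$ and $y\le y'$. It factors as
$$(x,y)\longrightarrow(x,y')\longrightarrow(x',y').$$
The first arrow has $\pi$-image $\operatorname{Id}_x$, so it lies in $\mathcal E_\pi$. The second has $\pi'$-image $\operatorname{Id}_{y'}$, so it lies in $\mathcal M_{\pi'}$.

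For orthogonality, take a commutative square with left edge $(a,b)\to(a,b')$ in $\mathcal E_\pi$ and right edge $(c,d)\to(c',d)$ in $\mathcal M_{\pi'}$. The top arrow is $(a,b)\to(c,d)$ and the bottom arrow is $(a,b')\to(c',d)$. The top arrow gives $a\le c$, and the bottom arrow gives $b'\le d$. Hence $(a,b')\le(c,d)$, so a diagonal morphism exists. In a poset any parallel morphisms coincide, so the diagonal is unique and both triangles commute automatically.

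Closure under isomorphisms is trivial here, since the only isomorphisms are identities. So Proposition~\ref{prop:OFS-criterion} shows that $(\mathcal E_\pi,\mathcal M_{\pi'})$ is an orthogonal factorization system.

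For the two-out-of-three property, $\pi$ and $\pi'$ are functors into posets, where a morphism is an identity exactly when it is an isomorphism. Given composable $f,g$, we have $\pi(g\circ f)=\pi(g)\circ\pi(f)$. If two of $\pi(f)$, $\pi(g)$, $\pi(g\circ f)$ are identities, the third is an identity as well: this is immediate when $\pi(f)$ and $\pi(g)$ are identities. When $\pi(g\circ f)$ is an identity, antisymmetry forces the intermediate element to coincide with the endpoints, so $\pi(f)$ and $\pi(g)$ are identities too. The same argument applies to $\pi'$. None of these steps is a real obstacle; the only point needing care is checking existence of the lifting, which is the inequality $(a,b')\le(c,d)$ above.
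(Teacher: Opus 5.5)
Your proposal is correct and follows essentially the same route as the paper: you identify the two classes explicitly, factor $(x,y)\to(x,y')\to(x',y')$, obtain the lifting from $a\le c$ and $b'\le d$, and invoke Proposition~\ref{prop:OFS-criterion}. You also spell out the two-out-of-three argument via antisymmetry, which the paper only asserts.
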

\begin{proof}
By construction, one has
\[
\begin{aligned}
\mathcal E_\pi
&=\{f\in\operatorname{Mor}(P\times P')
       \mid \pi(f)\text{ is an isomorphism}\}=\{(x,y)\longrightarrow(x,y')\mid y\leq y'\},\\
\mathcal M_{\pi'}
&=\{f\in\operatorname{Mor}(P\times P')
       \mid \pi'(f)\text{ is an isomorphism}\}=\{(x,y)\longrightarrow(x',y)\mid x\leq x'\}.
\end{aligned}
\]
Every morphism $f:(x,y)\longrightarrow(x',y')$ admits a factorization $(x,y)\longrightarrow(x,y')\longrightarrow(x',y'),$
whose first map belongs to $\mathcal E_\pi$ and whose second map belongs to $\mathcal M_{\pi'}$.

\vskip 5pt

We next verify $\mathcal E_\pi\perp \mathcal M_{\pi'}$. Consider a commutative square
\[
\xymatrix@C=1.4cm@R=0.7cm{
(x,y)\ar[r]\ar[d]_{e} & (u,v)\ar[d]^{m}\\
(x,y')\ar[r] & (u',v),
}
\]
where $e\in\mathcal E_\pi$ and $m\in\mathcal M_{\pi'}$.
In the commutative diagram, one has $x\leq u$ and $y'\leq v$, it follows that $(x,y')\longrightarrow(u,v)$, which gives the lifting.

\vskip 5pt

By Proposition~\ref{prop:OFS-criterion}, $(\mathcal E_\pi,\mathcal M_{\pi'})$ is an orthogonal factorization system.
Since $\mathcal E_\pi$ and $\mathcal M_{\pi'}$ both have two-out-of-three property, one has $(\mathcal E_\pi,\mathcal M_{\pi'})$ is a torsion factorization system.
\end{proof}
\begin{prop}\label{prop:three-posets-torsion-model}
Let $P_{-1}$, $P_0$, and $P_1$ be posets, and equip
$\mathcal P:=P_{-1}\times P_0\times P_1$ with the order: $(x,y,z)\leq(x',y',z')$ precisely when $x\leq x', y\leq y'$ and $z\leq z'$.
For $i\in\{-1,0,1\}$, let $\pi_i:\mathcal P\longrightarrow P_i$
be the projection functor.
Then there is a torsion model structure $(\CoFib,\Fib,\Weq)$ on $\mathcal P$, where
\[
\begin{aligned}
\CoFib
&=\{f\in\operatorname{Mor}(\mathcal P)
       \mid \pi_{-1}(f)\text{ is an isomorphism}\}\\
\Fib
&=\{f\in\operatorname{Mor}(\mathcal P)
       \mid \pi_1(f)\text{ is an isomorphism}\}\\
\Weq
&=\{f\in\operatorname{Mor}(\mathcal P)
       \mid \pi_0(f)\text{ is an isomorphism}\}.
\end{aligned}
\]
\end{prop}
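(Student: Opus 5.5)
We would apply Proposition~\ref{prop:weq-torsion}, which reduces the claim to three things: exhibiting two torsion factorization systems $(\TCoFib,\Fib)$ and $(\CoFib,\TFib)$ with $\TCoFib\subseteq\CoFib$, and checking that $\Weq:=\TFib\circ\TCoFib$ is closed under compositions. Both systems will come from Lemma~\ref{lem:product-poset-torsion-ofs} after regrouping the three factors of $\mathcal P$ into two.

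\textbf{Step 1 (first factorization system).} Write $\mathcal P=(P_{-1}\times P_0)\times P_1$ and apply Lemma~\ref{lem:product-poset-torsion-ofs} with $P=P_1$ and $P'=P_{-1}\times P_0$, after the obvious isomorphism swapping factors. Since a morphism of a product of posets is an isomorphism exactly when every component is an equality, this gives a torsion factorization system $(\mathcal E_1,\mathcal M_1)$ with
$\mathcal E_1=\{f\mid \pi_1(f)\ \text{iso}\}$ and $\mathcal M_1=\{f\mid \pi_{-1}(f),\pi_0(f)\ \text{iso}\}$.

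\textbf{Step 2 (second factorization system).} Similarly, with $P=P_0\times P_1$ and $P'=P_{-1}$, we get $(\mathcal E_2,\mathcal M_2)$ with $\mathcal E_2=\{f\mid \pi_0(f),\pi_1(f)\ \text{iso}\}$ and $\mathcal M_2=\{f\mid \pi_{-1}(f)\ \text{iso}\}$.

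We then set $\TCoFib:=\mathcal E_2$, $\Fib:=\mathcal M_2$, $\CoFib:=\mathcal E_1$ and $\TFib:=\mathcal M_1$. Two points need care here.
\begin{enumerate}
\item The labeling must match the statement. The statement requires $\CoFib$ to consist of the maps with $\pi_{-1}(f)$ an isomorphism, which is $\mathcal M_2$ as defined above, not $\mathcal E_1$. So the roles have to be swapped: we take $\CoFib=\{f\mid\pi_{-1}(f)\ \text{iso}\}$ as a left class. By Lemma~\ref{lem:product-poset-torsion-ofs} applied with $P=P_{-1}$ and $P'=P_0\times P_1$, the pair $(\{\pi_{-1}\ \text{iso}\},\{\pi_0,\pi_1\ \text{iso}\})$ is a torsion factorization system, so we set $\TFib=\{\pi_0,\pi_1\ \text{iso}\}$.
\item Likewise, the lemma applied with $P=P_{-1}\times P_0$ and $P'=P_1$ gives the torsion factorization system $(\TCoFib,\Fib)=(\{\pi_{-1},\pi_0\ \text{iso}\},\{\pi_1\ \text{iso}\})$.
\end{enumerate}
With these choices $\TCoFib\subseteq\CoFib$ is immediate.

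\textbf{Step 3 (the class $\Weq$).} We identify $\Weq=\TFib\circ\TCoFib$ with $\{f\mid\pi_0(f)\ \text{iso}\}$. If $f=m\circ e$ with $e\in\TCoFib$ and $m\in\TFib$, then $\pi_0(e)$ and $\pi_0(m)$ are both identities, so $\pi_0(f)$ is an identity. Conversely, let $f:(x,y,z)\to(x',y,z')$. Factor it as
\[
(x,y,z)\longrightarrow(x',y,z)\longrightarrow(x',y,z'),
\]
where the first map lies in $\TCoFib$ (its $\pi_{-1}$ and $\pi_0$ components are identities) and the second lies in $\TFib$ (its $\pi_0$ and $\pi_1$ components are identities). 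Since $\pi_0$ is a functor, the class $\{f\mid\pi_0(f)\ \text{iso}\}$ is closed under compositions. Proposition~\ref{prop:weq-torsion} then gives the torsion model structure.

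\textbf{Main obstacle.} The only delicate point is Step~1 and Step~2: matching the orientation of Lemma~\ref{lem:product-poset-torsion-ofs} to the intended left and right classes, since the lemma's left class is determined by which factor is held fixed. Everything else is routine bookkeeping.
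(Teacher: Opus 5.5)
Your overall strategy is the paper's. After the corrections in your items (1)--(2), your two systems are $(\CoFib,\TFib)=(\{\pi_{-1}\ \text{iso}\},\{\pi_0,\pi_1\ \text{iso}\})$ and $(\TCoFib,\Fib)=(\{\pi_{-1},\pi_0\ \text{iso}\},\{\pi_1\ \text{iso}\})$. These are exactly the paper's $(\mathcal E,\mathcal M)$ and $(\mathcal E',\mathcal M')$. You then identify $\TFib\circ\TCoFib$ with $\{f\mid \pi_0(f)\ \text{iso}\}$ and invoke Proposition~\ref{prop:weq-torsion}, as the paper does.

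The orientation worry you flag is milder than you suggest. Applying Lemma~\ref{lem:product-poset-torsion-ofs} to $P'\times P$ shows that both orientations give torsion factorization systems. The only real issue is choosing the one whose left class is the prescribed $\CoFib$.

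\textbf{The error is in Step 3: your factorization is in the wrong order.} Take $f:(x,y,z)\to(x',y,z')$.
\begin{itemize}
\item Your first map $(x,y,z)\to(x',y,z)$ moves the $P_{-1}$-coordinate, so $\pi_{-1}$ of it is not an identity when $x\neq x'$. It is therefore not in $\TCoFib$; it lies in $\TFib$, since its $\pi_0$ and $\pi_1$ components are identities.
\item Your second map $(x',y,z)\to(x',y,z')$ lies in $\TCoFib$, not $\TFib$.
\end{itemize}
As written, you have shown $f\in\TCoFib\circ\TFib$. Proposition~\ref{prop:weq-torsion} needs $f\in\TFib\circ\TCoFib$.

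The fix is to move the $P_1$-coordinate first:
\[
(x,y,z)\to(x,y,z')\to(x',y,z').
\]
Here the first map is in $\TCoFib$ and the second is in $\TFib$. This is precisely the factorization the paper uses, and with this change your argument is complete.
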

\begin{proof}
By Lemma~\ref{lem:product-poset-torsion-ofs}, one has two torsion factorization systems $(\mathcal E,\mathcal M)$ and $(\mathcal E',\mathcal M')$, where
\[
\begin{aligned}
\mathcal E
&=\{f\in\operatorname{Mor}(\mathcal P)
       \mid \pi_{-1}(f)\text{ is an isomorphism}\} \\
\mathcal M
&=\{f\in\operatorname{Mor}(\mathcal P)
       \mid \pi_0(f)\ \mbox{and}\ \pi_1(f)\text{ are isomorphisms}\},\\
\mathcal E'
&=\{f\in\operatorname{Mor}(\mathcal P)
       \mid \pi_{-1}(f)\ \mbox{and}\ \pi_0(f)\text{ are isomorphisms}\} \\
\mathcal M'
&=\{f\in\operatorname{Mor}(\mathcal P)
       \mid \pi_1(f)\text{ is an isomorphism}\}.
\end{aligned}
\]
It is clear that $\mathcal E'\subseteq \mathcal E$.

\vskip 5pt

We claim that $\mathcal M\circ\mathcal E' = \{f\in\operatorname{Mor}(\mathcal P) \mid \pi_0(f)\text{ is an isomorphism}\}$.
In fact, let $f:(a,b,c)\longrightarrow(a',b',c')$ be a morphism in $\mathcal P$.

First suppose that $f=m\circ e$, where $e\in\mathcal E'$ and $m\in\mathcal M$.
Since $e\in\mathcal E'$, it has the form $e:(a,b,c)\longrightarrow(a,b,d).$
Since $m\in\mathcal M$, it has the form $m:(a,b,d)\longrightarrow(a',b,d).$
Consequently, $\pi_0(f)$ is an isomorphism.

Conversely, suppose that $\pi_0(f)$ is an isomorphism, i.e., $b=b'$. Therefore $f$ admits the factorization $(a,b,c)\overset{e}{\longrightarrow}(a,b,c')
\overset{m}{\longrightarrow}(a',b,c').$
Clearly, $e\in\mathcal E'$ and $m\in\mathcal M$, thus $f=m\circ e\in\mathcal M\circ\mathcal E'$.
This proves the claim.

\vskip 5pt

Since $\pi_0$ is a functor, this class is closed under composition.
Therefore, by Proposition~\ref{prop:weq-torsion}, the triple $(\CoFib,\Fib,\Weq):=(\mathcal E,\mathcal M',\mathcal M\circ\mathcal E')$ is a torsion model structure on $\mathcal P$.
\end{proof}

\begin{exm}\label{exm:Z3-no-enough-CF}\ In this example, $\mathbb Z$ is viewed as a poset $(\mathbb Z,\le)$ with natural order. Put $\mathcal P =(\mathbb Z,\le) \times(\mathbb Z,\le) \times(\mathbb Z,\le) =(\mathbb Z^3,\le)$. For simplicity, we will still denote the poset $(\mathbb Z,\le)$ by $\mathbb Z$ and $(\mathbb Z^3,\le)$ by $\mathbb Z^3$, respectively.
By Proposition~\ref{prop:three-posets-torsion-model}, one has a torsion model structure $(\CoFib,\Fib,\Weq)$ where
\[
\begin{aligned}
\CoFib&=\{f:(a,b,c)\longrightarrow(a',b',c')\ \text{in} \ \operatorname{Mor}(\mathcal P)\ \mid \ a=a'\}\\
\Fib&=\{f:(a,b,c)\longrightarrow(a',b',c')\ \text{in} \ \operatorname{Mor}(\mathcal P)\ \mid \ c=c'\}\\
\Weq&=\{f:(a,b,c)\longrightarrow(a',b',c')\ \text{in} \ \operatorname{Mor}(\mathcal P)\ \mid \ b=b'\}.
\end{aligned}
\]

\vskip 5pt

However, this model structure has no cofibrant objects or fibrant objects.
In fact, for any object $(x,y,z)$, the map $p:(x-1,y,z)\longrightarrow(x,y,z)$ is in $\TFib$.
If $(x,y,z)$ is a cofibrant object in $\mathcal P$ (cf. Definition~\ref{defn:cofibrant-fibrant-objects}), then one has a unique morphism $v$ such that $p\circ v={\rm Id}_{(x,y,z)}$:
\[
\xymatrix@C=1.4cm@R=0.8cm{
& (x-1,y,z)\ar[d]^{p}\\
(x,y,z)\ar@{=}[r]\ar@{-->}[ur]^{v} & (x,y,z).
}
\]
But such morphism $v$ would require $x\leq x-1$, which is a contradiction.
Hence there is no cofibrant object in $\mathcal P$.
Dually, there is no fibrant object in $\mathcal P$.

\vskip 5pt

In this case, one can still calculate the homotopy category. We claim that $\Ho(\mathbb Z^3)\simeq\mathbb Z$.
In fact, let $q:\mathbb Z^3\longrightarrow\mathbb Z,\ (a,b,c)\mapsto b$ be a projection, and $i:\mathbb Z\longrightarrow\mathbb Z^3,\ y\mapsto (0,y,0)$ be an inclusion.
Let $\gamma:\mathbb Z^3\longrightarrow\Ho(\mathbb Z^3)$ be the localization functor.
Since $q$ sends every weak equivalence to an isomorphism, there exists a unique functor $\overline q:\Ho(\mathbb Z^3)\longrightarrow \mathbb Z$ such that $q=\overline q\circ\gamma$.
Put $\overline i:=\gamma \circ i:\mathbb Z\longrightarrow\Ho(\mathbb Z^3)$.
Then one has $\overline q\circ \overline i=\overline q \circ \gamma \circ i= q\circ i= \operatorname{Id}_{\mathbb Z}.$

\vskip 5pt

On the other hand, define $h:\mathbb Z^3\longrightarrow\mathbb Z^3$ by
$h(x,y,z)=(\min\{x,0\},y,\min\{z,0\})$.
Then $h$ is an endofunctor of $\mathcal P =\mathbb Z^3$.
Since $h(x,y,z)\leq(x,y,z)$ for every object $(x,y,z)\in\mathbb Z^3$, there is a natural transformation $\alpha:h\longrightarrow\operatorname{Id}_{\mathbb Z^3}$.
Since $h(x,y,z)\leq(0,y,0)=(i\circ q)(x,y,z)$,  there is a natural transformation $\beta:h\longrightarrow i\circ q$.
Applying the localization functor $\gamma:\mathbb Z^3\longrightarrow\Ho(\mathbb Z^3)$ to these natural transformations, since $\gamma$ sends weak equivalences to isomorphisms, one gets the natural isomorphisms
$\gamma(\alpha):\gamma(h)\overset{\cong}{\longrightarrow}\gamma(\operatorname{Id}_{\mathbb Z^3})=\operatorname{Id}_{\Ho(\mathbb Z^3)},$
and
$\gamma(\beta):\gamma(h)\overset{\cong}{\longrightarrow}\gamma(i\circ q)=\gamma\circ i\circ q=\overline i\circ \overline q\circ \gamma,$
and hence $\overline i\circ\overline q\cong\operatorname{Id}_{\Ho(\mathbb Z^3)}$.
Thus $\overline q$ and $\overline i$ are quasi-inverse, and $\Ho(\mathbb Z^3)\simeq\mathbb Z$.

\end{exm}

\end{document}